\def\resetMathstrut@{%
  \setbox\z@\hbox{%
    \mathchardef\@tempa\mathcode`\(\relax
    \def\@tempb##1"##2##3{\the\textfont"##3\char"}%
    \expandafter\@tempb\meaning\@tempa \relax
  }%
  \ht\Mathstrutbox@1.2\ht\z@ \dp\Mathstrutbox@1.2\dp\z@
}
\numberwithin{equation}{section}
\newtheorem*{theorem*}{Theorem}
\newtheorem{lemma}{Lemma}[section]
\newtheorem{proposition}[lemma]{Proposition}
\newtheorem{remark}[lemma]{Remark}
\newtheorem{theorem}[lemma]{Theorem}
\newtheorem{corollary}[lemma]{Corollary}
\newtheorem*{question*}{Question}
\newtheorem*{assumption*}{Assumption}
\newtheorem*{axiom*}{Axiom}
\newcommand{\End}{\operatorname{End}}
\newcommand{\Hom}{\operatorname{Hom}}
\newcommand{\Irr}{\operatorname{Irr}}
\newcommand{\Span}{\operatorname{Span}}
\newcommand{\cInd}{\operatorname{c-Ind}}
\newcommand{\Ind}{\operatorname{Ind}}
\newcommand{\Ha}{\operatorname{H}}
\newcommand{\C}{\mathbb C}
\newcommand{\F}{\mathbb F}
\newcommand{\Q}{\mathbb Q}
\newcommand{\R}{\mathbb R}
\newcommand{\Z}{\mathbb Z}
\newcommand{\GL}{\operatorname{GL}}
\newcommand{\GSp}{\operatorname{GSp}}
\newcommand{\Mp}{\operatorname{Mp}}
\newcommand{\SL}{\operatorname{SL}}
\newcommand{\Sp}{\operatorname{Sp}}
\newcommand{\M}{\operatorname{M}}
\newcommand{\Ps}{\operatorname{Ps}}
\newcommand{\diag}{\operatorname{diag}}
\newcommand{\id}{\operatorname{Id}}
\newcommand{\Za}{{\mathbb Z}}
\newcommand{\supp}{\operatorname{supp}}
\begin{document}
\title{On the lattice model of the  Weil representation}
\date{\today}
\author{Chun-Hui Wang}
\address{School of Mathematics and Statistics\\Wuhan University \\Wuhan, 430072,
P.R. CHINA}
\email{cwang2014@whu.edu.cn}
\keywords{ Metaplectic group, Theta correspondence, Weil representation}
\subjclass[2010]{11F27 (Primary), 20G25 (Secondary).}
\maketitle
\setcounter{tocdepth}{1}
\begin{abstract}
Let $F$ be a local field. In the case of $F$ being  the real field,  Pierre Cartier constructed Heisenberg-Weil representations of a Heisenberg group in families  using non-self-dual lattices.  This result was later reformulated by Jae-Hyun Yang in another paper.  We extend this family of representations to a representation of  a Jacobi subgroup by incorporating a rational Metaplectic group. In the case of $F$ being a non-archimedean local field of odd residual characteristic or a finite unramified extension of the field $\Q_2$ of $2$-adic numbers, we obtain similar results for a Jacobi group by incorporating a Metaplectic group.
\end{abstract}
\maketitle
\setcounter{secnumdepth}{7}
\tableofcontents{}
\section{Introduction}
\subsection{Notations and conventions}
In the whole paper, we will use the following notations and conventions:
\begin{itemize}
\item We will write  $F$ for  a local field. We only consider the following three cases:
\begin{itemize}
\item $F$ is a non-archimedean local field of odd residual characteristic.
\item $F$ is a finite unramified extension  of the field  of $2$-adic numbers.
\item $F$ is the  real field. 
\end{itemize}
If $F$ is a non-archimedean local field,  we denote its residue field as $k_F$ and choose a uniformizer $\varpiup$ for $F$. Let $q$ denote the cardinality of $k_F$.    $\mathcal{O}$ represents the ring of integers, with $\mathcal{P}$ being its prime ideal.  The group of units in  $\mathcal{O}$ is denoted as $U$, whereas $U_n$ is the subgroup containing those    $u \in F^{\times}$  such that   $ u\equiv 1 \mod \mathcal{P}^{n}$.
\item We will let $(W, \langle, \rangle)$ denote  a symplectic vector space  of  dimension $2m$   over  $F$.  Let  $\{e_1, \cdots, e_m; e_1^{\ast}, \cdots, e_m^{\ast}\}$  be a symplectic basis  of $W$ so that $\langle e_i, e_j\rangle=0=\langle e_i^{\ast}, e_j^{\ast}\rangle$, $\langle e_i,e_j^{\ast}\rangle=\delta_{ij}$. Let $X=\Span_{F}\{e_1, \cdots, e_m\}$, $X^{\ast}=\Span_{F}\{e^{\ast}_1, \cdots, e^{\ast}_m\}$.  Let $\Ha(W)=W\oplus F$ denote the usual Heisenberg group,  defined as follows:
 $$(w, t)\cdot (w',t')=(w+w', t+t'+\tfrac{1}{2}\langle w,w'\rangle),$$
 for $w, w'\in W$, $t, t'\in F$. 
 \item If $F$ is a non-archimedean local field, we will employ the notations and results from \cite{BushH} concerning smooth representations of locally profinite groups, without detailed reference. When addressing the real field $F$, we will apply the notations and results from \cite{KaTa}  on unitary representations of locally compact groups, again without extensive elaboration. 
 \item Let $\psi$ be a non-trivial, continuous, and  unitary character of $F$. The Stone-von Neumann theorem asserts that only one unitary, irreducible, and complex representation of $\Ha(W)$ with central character $\psi$ exists, up to unitary equivalence. We refer to this representation as the Heisenberg representation, denoted  by $\pi_{\psi}$.  If $F$ is a non-archimedean local field, $\pi_{\psi}$ will  be a smooth representation.  
 \item Let  $\Sp(W)$ denote the corresponding symplectic group.  By Weil's seminal work,  $\pi_{\psi}$ can give rise  to a projective representation of $\Sp(W)$ and, consequently, to an actual representation of a $\C^{\times}$-covering group over $\Sp(W)$. It is customary to refer to such a group as a Metaplectic group and the actual representation as a Weil representation.  By  Perrin and Rao (cf. \cite{Per, Rao}),  this special  central cover can descend  to an  $8$-degree or a $2$-degree cover over $\Sp(W)$.  Let $c_{PR,X^{\ast}}(-,-)$ denote a  correspondence $8$-degree cocycle, called a Perrin-Rao cocycle.
 \item Let $\Z$ and  $\R$  denote the sets of integer numbers and real numbers, respectively. Let $\F_2$ denote the field of $2$ elements. Furthermore, let $\Q_2$ denote the field of 2-adic numbers. Let $T=\{ e^{it\pi}\}$ be the circle group.
\end{itemize}
\subsection{Main result}
If $F$ is a  non-archimedean local field,  a set $L$ is a \emph{lattice} in $W$ if  it is a free $\mathcal{O}$-module of full rank in $W$.  If $F=\R$, a set $L$ is a \emph{lattice} in $W$ if  it is a free $\Z$-module of full rank in $W$.  Let us define its dual lattice with respect to $\psi$ as follows:
$$L^{\ast}=\{ w\in W\mid \psi(\langle w, l\rangle)=1, \textrm{ for all } l\in L\}.$$
If  $L=L^{\ast}$,  $L$  is called a self-dual lattice with respect to $\psi$.  Let $\Ha(L)=L\times F$, $\Ha(L^{\ast})=L^{\ast}\times F$,  denote  the correspondence Heisenberg  subgroups of $\Ha(W)$. It is well-known that when $L$ is a self-dual lattice, one can  use  the induced operator from  $\Ha(L)$ to $\Ha(W)$  to construct the Heisenberg representation of $\Ha(W)$. Furthermore, this representation can be extended by incorporating the Metaplectic group, yielding the widely recognized Weil representation. (See \cite{AuPr, Bl, Kud2,  LiVe, MVW, Mu} for the details.)  In the present study, we consider the case that $L$ is only a lattice without the self-dual condition. As we found that  when  $F$ is  the real field,  P. Cartier  has already  constructed  Heisenberg representations of $\Ha(W)$ in families by utilizing a non-self-dual lattice in \cite{Ca}.  As such,   Jae-Hyun Yang generalizes them  to   a matrix Heisenberg group in his paper  \cite{Ya}. Motivated by \cite{Ho, MVW,  Ta, Wa}, we generalize Cartier and Yang's results by adding the symplectic group $\Sp(W)$ or its  subgroup. 

  \subsubsection{} In the case of a non-archimedean local field with odd residual characteristic, our result can be summarized as follows:
\begin{itemize}
\item[(1)] Let $\psi_L$ be an extended representation of $\Ha(L )$ from $\psi$ of $F$.  Let $(\sigma=\cInd_{\Ha(L )}^{\Ha(L^{\ast})}\psi_{L }$, $\mathcal{W}=\cInd_{\Ha(L )}^{\Ha(L^{\ast})}\C)$, $\pi_{L ,\psi}=\cInd_{\Ha(L )}^{\Ha(W)} \psi_{L }$, and   $\mathcal{V}_{L , \psi}=\cInd_{\Ha(L^{\ast})}^{\Ha(W)} \mathcal{W}$.
\item[(2)] The representation $\pi_{L ,\psi}$ of $\Ha(W)$ can be realized on $\mathcal{V}_{L , \psi}$. 
\item[(3)]  For $g\in \Sp(W)$, we define 
$$M[g]f(w)=\int_{a\in L^{\ast}} \psi(\tfrac{1}{2}\langle a, w\rangle) \sigma(-a)f((a+w)g) da, \quad  f\in \mathcal{V}_{L , \psi}.$$
\item[(4)] Then there exists a cocycle $c$ such that $M[g]M[g'] =c(g, g')M[gg']$.
Let $1\longrightarrow T \longrightarrow \Mp(W) \longrightarrow \Sp(W) \longrightarrow 1$
be the central extension of $\Sp(W)$ by  $T$  associated to $c(-,-)$.   
\end{itemize}
\begin{theorem*}
 For $[g,t]\in \Mp(W)$, $h\in \Ha(W)$, $f\in \mathcal{V}_{L , \psi}$, let 
$$\pi_{L ,\psi}([(g,t),h])f=tM[g]\pi_{L ,\psi}(h) f.$$
Then $\pi_{L ,\psi}$ defines a representation of $\Mp(W)\ltimes \Ha(W)$.  The restriction of $\pi_{L ,\psi}$  to $\Ha(W)$ contains $\sqrt{|L^{\ast}/L |}$-number of irreducible components and every  component is a Heisenberg representation associated to $\psi$. 
\end{theorem*}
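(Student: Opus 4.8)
The plan is to treat the two assertions of the theorem separately: first that $\pi_{L,\psi}$ is a genuine (not merely projective) representation of $\Mp(W)\ltimes\Ha(W)$, and then the decomposition of its restriction to $\Ha(W)$.

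For the first assertion, write $\Pi([(g,t),h])=tM[g]\pi_{L,\psi}(h)$ as an operator on $\mathcal{V}_{L,\psi}$; everything comes down to showing that $\Pi$ respects the group law of $\Mp(W)\ltimes\Ha(W)$ (with its cocycle $c$ from item~(4) and its $\Sp(W)$-action). I would first record the normalization $M[\id]=\id$ on $\mathcal{V}_{L,\psi}$: unwinding the defining integral and using that elements of $\mathcal{V}_{L,\psi}$ are $\Ha(L^{\ast})$-equivariant while $\sigma$ has central character $\psi$, the integrand becomes constant in $a$, so $M[\id]$ is a positive scalar, and this fixes the normalization of $da$. The second and crucial ingredient is the covariance relation
$$M[g]\,\pi_{L,\psi}(h)\,M[g]^{-1}=\pi_{L,\psi}(g\cdot h),\qquad g\in\Sp(W),\ h\in\Ha(W),$$
where $g\cdot h$ is the $\Sp(W)$-action on $\Ha(W)$ through its action on $W$, fixing the centre. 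Granting this, the homomorphism property is formal: in the product of two elements one moves $\pi_{L,\psi}(h)$ past $M[g']$ by the covariance relation, collects the scalar $c(g,g')$ from item~(4), and checks that the outcome equals $\Pi$ applied to the product computed in $\Mp(W)\ltimes\Ha(W)$. The real work is therefore the covariance relation itself; its proof is a change of variables in the integral defining $M[g]$ together with the fact that $g$ preserves $\langle,\rangle$, so that the auxiliary character $\psi(\tfrac12\langle\cdot,\cdot\rangle)$ transforms correctly. I expect this to be the most delicate point, because of the bookkeeping involved in identifying $\mathcal{V}_{L,\psi}$ with a space of $\mathcal{W}$-valued functions on $W$ on which the formula for $M[g]$ and for $\pi_{L,\psi}(h)$ are simultaneously explicit.

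For the second assertion, note first that $\pi_{L,\psi}=\cInd_{\Ha(L)}^{\Ha(W)}\psi_{L}$ has central character $\psi$, so by the Stone--von Neumann theorem its restriction to $\Ha(W)$ is a direct sum of copies of $\pi_{\psi}$; thus ``every component is a Heisenberg representation associated to $\psi$'' is immediate, and it remains to count the multiplicity $n$. Since $L$ is open in $W$, $\Ha(L)$ is an open subgroup of $\Ha(W)$, so Frobenius reciprocity and Mackey's formula apply to $\End_{\Ha(W)}(\pi_{L,\psi})=\Hom_{\Ha(L)}(\psi_{L},\Res_{\Ha(L)}\pi_{L,\psi})$. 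Running over the double cosets $\Ha(L)\backslash\Ha(W)/\Ha(L)\cong W/L$, and using that conjugation by $(w,0)$ preserves $\Ha(L)$ but twists $\psi_{L}$ by the character $l\mapsto\psi(\langle w,l\rangle)$, one finds that the contribution of $w\in W/L$ is one-dimensional if $w\in L^{\ast}$ and zero otherwise; hence $\dim\End_{\Ha(W)}(\pi_{L,\psi})=|L^{\ast}/L|$. Since $\End(\pi_{\psi})=\C$ and $\pi_{L,\psi}\cong\pi_{\psi}^{\oplus n}$, this forces $n^{2}=|L^{\ast}/L|$, so $n=\sqrt{|L^{\ast}/L|}$ (in particular $|L^{\ast}/L|$ is automatically a perfect square).

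Alternatively, and more in the spirit of the model $\mathcal{V}_{L,\psi}$, I would argue structurally: the pairing $(\,\overline{l_1^{\ast}},\overline{l_2^{\ast}}\,)\mapsto\psi(\langle l_1^{\ast},l_2^{\ast}\rangle)$ is a well-defined non-degenerate alternating pairing on the finite group $L^{\ast}/L$ (its radical being $(L^{\ast})^{\ast}/L=L/L=0$), so $L^{\ast}/L$ carries a symplectic structure; $\Ha(L)$ maps onto the centre of the associated two-step nilpotent quotient of $\Ha(L^{\ast})$, and the Stone--von Neumann theorem for that quotient yields $\sigma=\cInd_{\Ha(L)}^{\Ha(L^{\ast})}\psi_{L}\cong\rho^{\oplus\sqrt{|L^{\ast}/L|}}$ for the unique irreducible $\rho$ of $\Ha(L^{\ast})$ with central character $\psi$, of dimension $\sqrt{|L^{\ast}/L|}$. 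Choosing a Lagrangian $\overline{Y}\subseteq L^{\ast}/L$ with preimage $Y^{\ast}$ in $L^{\ast}$, one checks $(Y^{\ast})^{\ast}=Y^{\ast}$, i.e. $Y^{\ast}$ is self-dual; then by induction in stages and the classical self-dual-lattice model recalled in the introduction, $\cInd_{\Ha(L^{\ast})}^{\Ha(W)}\rho\cong\cInd_{\Ha(Y^{\ast})}^{\Ha(W)}\psi_{Y^{\ast}}\cong\pi_{\psi}$, whence $\pi_{L,\psi}=\cInd_{\Ha(L^{\ast})}^{\Ha(W)}\sigma\cong\pi_{\psi}^{\oplus\sqrt{|L^{\ast}/L|}}$, as claimed.
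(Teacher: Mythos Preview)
Your treatment of the first assertion is essentially the paper's: the covariance relation $\pi_{L,\psi}(h^{g^{-1}})M[g]=M[g]\pi_{L,\psi}(h)$ is verified there by exactly the change of variables you describe, and once item~(4) is in hand the homomorphism property for $\Mp(W)\ltimes\Ha(W)$ is the same formal check you outline.

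For the second assertion your two arguments are correct but take a different route from the paper. The paper works throughout with a \emph{fixed} self-dual lattice $L_1$ with $L\subseteq L_1$ (this is a standing hypothesis in that section), and applies Clifford theory to the finite abelian quotient $L_1/L$: since $\cInd_{\Ha(L)}^{\Ha(L_1)}\psi_L\cong\bigoplus_{\chi\in\Irr(L_1/L)}\psi_{L_1,\chi}$, further induction gives $\pi_{L,\psi}\cong\bigoplus_{\chi}\pi_{L_1,\psi_\chi}$ with each summand a Heisenberg representation, and the multiplicity is $|L_1/L|=\sqrt{|L^\ast/L|}$ via the nondegenerate pairing $L_1/L\times L^\ast/L_1\to T$. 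Your Mackey/endomorphism-dimension count and your Lagrangian-in-$L^\ast/L$ argument both recover this number without fixing $L_1$ in advance, and the second one is close in spirit to the paper (your $Y^\ast$ plays the role of $L_1$). The paper's concrete decomposition $\mathcal{V}_{L,\psi}=\bigoplus_\chi\mathcal{V}_{L_1,\psi_\chi}$ buys something extra, however: it is precisely what is used to \emph{establish} item~(4), by constructing explicit $\Ha(L^\ast)$-intertwiners $\mathcal{A}_{\chi_1,\chi_2}$ between the pieces, checking that they commute with $M[g]$, and concluding that the a~priori separate cocycles $c_\chi$ on the irreducible summands all coincide. So in the paper the two assertions are more tightly coupled than your proposal indicates; if you are taking item~(4) as given, your alternatives for the multiplicity are perfectly fine.
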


\subsubsection{} In the case of $F$ being  a finite unramified extension of the field $\Q_2$ of $2$-adic numbers, we obtain the  analogous results for certain special lattices 
$L$ within $W$ (see  Theorem \ref{theoremnon2}). 
 However,  the situation is a bit complicated for  the definition of $M[g]$.  In the following, we explain these two points.
\paragraph{\emph{Special lattice}} Let $\psi$ be a fixed non-trivial character of $F$ of order $e$.    We  let $L_1=\mathcal{P}^{[\tfrac{e+1}{2}]}e_1 \oplus\cdots \oplus\mathcal{P}^{[\tfrac{e+1}{2}]}e_m  \oplus \mathcal{P}^{[\tfrac{e}{2}]}e_1^{\ast} \oplus \cdots \oplus  \mathcal{P}^{[\tfrac{e}{2}]}e_m^{\ast} $ be a  fixed self-dual lattice of $W$ with respect to $\psi$. We consider a special sublattice $L$ of $L_1$.
Let $d_L=\begin{bmatrix}
g_1& 0\\
0& g_2\end{bmatrix}\in \GSp_{2m}(F)$, such that $(L_1\cap X)g_1 \subseteq L_1\cap X$, $(L_1\cap X^{\ast})g_2 \subseteq L_1\cap X^{\ast}$. Let $L=L_1d_L$.
\paragraph{\emph{The definition of $M[g]$}} 
 Let $d=\diag(d_1, \cdots, d_m; \tfrac{1}{d_1}, \cdots, \tfrac{1}{d_m})$. Let $L''=Ld$. Similar to  the above (1)(2),  we define:
\begin{itemize}
\item $(\sigma=\cInd_{\Ha(L)}^{\Ha(L^{\ast})}\psi_{L}, \mathcal{W}=\cInd_{\Ha(L)}^{\Ha(L^{\ast})}\C)$.
\item $(\sigma''=\cInd_{\Ha(L'')}^{\Ha(L^{''\ast})}\psi_{L''}, \mathcal{W}''=\cInd_{\Ha(L'')}^{\Ha(L^{''\ast})}\C)$.
\item $(\pi_{L ,\psi}=\cInd_{\Ha(L^{\ast})}^{\Ha(W)} \sigma, \mathcal{V}_{L,\psi}= \cInd_{\Ha(L^{\ast})}^{\Ha(W)} \mathcal{W})$.
\item $(\pi_{L'' ,\psi}=\cInd_{\Ha(L^{''\ast})}^{\Ha(W)} \sigma'', \mathcal{V}_{L'',\psi}= \cInd_{\Ha(L^{''\ast})}^{\Ha(W)} \mathcal{W}'')$.
\item $\mathcal{D}:  \mathcal{W}\to  \mathcal{W}''; f \longmapsto \mathcal{D}(f)$, where $\mathcal{D}(f)(l^{''\ast})=f(l^{''\ast} d^{-1})$.
\end{itemize}
For  $g\in \Sp(W)$, we define a $\C$-linear map  $ \mathfrak{i}^g_{L,L''}$ from $\mathcal{V}_{L,\psi}$ to $ \mathcal{V}_{L'',\psi}$ as follows:
\begin{align}\label{interLL''}
\mathfrak{i}^g_{L,L''}(f)(w)&=\int_{a\in L^{''\ast}}\psi(\tfrac{1}{2}\langle a, w\rangle) \sigma''(-a)\mathcal{D}[f((a+w)g)] da.
\end{align}
Under the basis $\{e_1, \cdots, e_m; e_1^{\ast}, \cdots, e_m^{\ast}\}$, we identity $\Sp(W)$ with $\Sp_{2m}(F)$. Let $I$ be the Iwahori subgroup of $\Sp_{2m}(F)$ as given in \cite{HeOi} or \cite{TaWo}. For a subset $S\subseteq \{1, \cdots, m\}$, let us define an element   $\omega_S$ of  $ \Sp(W)$ as follows: $ (e_i)\omega_S=\left\{\begin{array}{lr}
-e_i^{\ast}& i\in S,\\
 e_i & i\notin S,
 \end{array}\right.$ and $ (e_i^{\ast})\omega_S=\left\{\begin{array}{lr}
e_i^{\ast}& i\notin S,\\
 e_i & i\in S.
 \end{array}\right.$ Let $S_m$ denote the permutation group on $m$ elements. For $s\in S_m$, let $\omega_s$ be an element of $\Sp(W)$ defined as $(e_i)\omega_s=e_{s(i)}$ and $(e_j^{\ast}) \omega_s=e_{s(j)}^{\ast}$. Let  $\widetilde{\mathfrak{W}}=\{ \omega_s \omega_S^{k} \mid S\subseteq  \{1, \cdots, m\}, s\in S_m\}$ and $\mathfrak{W}=\{  \omega_s \omega_S \mid S\subseteq  \{1, \cdots, m\}, s\in S_m\}$. Then  $\widetilde{\mathfrak{W}}/[\widetilde{\mathfrak{W}} \cap T(F)]$ is  a Weyl group of $\Sp(W)$ and  $\mathfrak{W}$ is a  representative for  this  Weyl group.  Let 
 $$D=\{ \begin{bmatrix}
d& 0 \\0 &  d^{-1} \end{bmatrix} \mid d = \diag(2^{k_1}, \cdots, 2^{k_m}) \textrm{ with integers } k_1, \cdots,   k_m \}.$$
Let $\widetilde{\mathfrak{W}^{\textrm{eaff}}}=D\rtimes \widetilde{\mathfrak{W}}$ and $\mathfrak{W}^{\textrm{eaff}}=D\rtimes \mathfrak{W}$. By \cite{Vi}, 
$ \mathfrak{W}^{\textrm{eaff}}$ gives a representative for  the Iwahori-Weyl group of $\Sp_{2m}(F)$ and 
$\Sp_{2m}(F)= \sqcup_{w\in \mathfrak{W}^{\textrm{eaff}}} IwI$. Then $\Sp(W)=\sqcup_{\omega\in \mathfrak{W}^{\textrm{eaff}}} [d_L^{-1}Id_L] \cdot [d_L^{-1}\omega d_L]\cdot [d_L^{-1} Id_L]$.

\begin{itemize}
\item For $g\in d_L^{-1}Id_L$,  let 
 \begin{itemize}
\item $L'=\left\{ \begin{array}{cl}
2 L\cap X\oplus \tfrac{1}{2} L\cap X^{\ast} & \textrm{ if } 2\mid e,\\
\tfrac{1}{2} L\cap X\oplus 2 L\cap X^{\ast} & \textrm{ if } 2\nmid e.\end{array}\right.$ 
\end{itemize} We define $M[g]\in \End(\mathcal{V}_{L,\psi})$ as follows:
\begin{align}
M[g]=  \mathfrak{i}_{L',L} \circ\mathfrak{i}^g_{L,L'} .
\end{align}
\item For $g\in d_L^{-1}\mathfrak{W}^{\textrm{eaff}}d_L$, we define a set $\mathcal{F}_g$ and a lattice  $L''_g$(cf. Section \ref{thegeneralcase}).   Similarly,  we define $M[g]\in \End(\mathcal{V}_{L,\psi})$ as follows:
$$ M[g]=\left\{ \begin{array}{cc} 
\mathfrak{i}^{g}_{L,L} & \textrm{ if }  \mathcal{F}_g=\emptyset,\\
  \mathfrak{i}_{L''_g,L} \circ\mathfrak{i}^g_{L,L''_g}   & \textrm{ if }  \mathcal{F}_g\neq \emptyset.
  \end{array}\right.$$
 \item  For an element $g\in \Sp(W)$, we fix a decomposition:
$$g=i_1\omega_g i_2$$
for $i_1, i_2\in [d_L^{-1}Id_L]$ and $\omega_g\in [d_L^{-1}\mathfrak{W}^{\textrm{eaff}}d_L]$.   Define: $M[g]=M[i_1]M[\omega_g]M[i_2]$.
\end{itemize} 

\subsubsection{} In the case of $F$ being  the real field, we get the same result (see Theorem \ref{main3}) for the rational symplectic group. The methodology applied here closely follows that of the preceding case. For brevity, we only explain the lattice that was chosen in this case. 
Let $\kappa=1$ or $-1$. We consider   $\psi$ to be  a  character of $F$ defined as: $ t \longmapsto e^{2\kappa\pi it}$, for $t\in F$.  Let $L_1=\Za  e_1 \oplus  \cdots \oplus \Za  e_m \oplus \Za  e_1^{\ast} \oplus \cdots \oplus \Za e_m^{\ast}$.  Analogous to  the non-archimedean case, we also  consider a special sublattice $L$ of $L_1$.   Let $d_L=\begin{bmatrix}
g_1& 0\\
0& g_2\end{bmatrix}\in \GSp_{2m}(\Q)$, such that $(L_1\cap X)g_1 \subseteq L_1\cap X$, $(L_1\cap X^{\ast})g_2 \subseteq L_1\cap X^{\ast}$. Let $L=L_1d_L$.

\section{The non-archimedean local field case I }\label{pn1}
In this section, we consider $F$ to be a non-archimedean local field of odd residue characteristic. We will let $\psi$ be a fixed non-trivial character of $F$ with order $e$, meaning that $\psi|_{\mathcal{P}^e}=1$, and  $\psi|_{\mathcal{P}^{e-1}}\neq 1$. 
\subsection{Self-dual lattice}
In this subsection, we will let $L_1$ be a self-dual lattice of $W$ with respect to $\psi$. Let $\chi\in \Irr(L_1)$. Let $\psi_{L_1, \chi}$ denote the extended  character of $\Ha(L_1)$  from $\psi$ of  $F$ defined as:
$$\psi_{L_1, \chi}: \Ha(L_1) \longrightarrow \C^{\times}; (l,t) \longmapsto \psi(t)\chi(l),$$ 
for $l\in L_1$. 
Let us define the representation:
 $$(\pi_{L_1,\psi_{\chi}}=\cInd_{\Ha(L_1)}^{\Ha(W)} \psi_{L_1,\chi}, \quad  \mathcal{S}_{L_1,\psi_{\chi}}=\cInd_{\Ha(L_1)}^{\Ha(W)} \C).$$
\begin{lemma}
$\pi_{L_1,\psi_{\chi}}$ defines a Heisenberg representation of $\Ha(W)$ associated to  the center character $\psi$.
\end{lemma} 
\begin{proof}
See \cite[p.42, II.8]{MVW}.
\end{proof}
The space $\mathcal{S}_{L_1,\psi_{\chi}}$ consists  of locally constant, compactly supported functions $f$ on $W$  such that
 $$f(l+w)=\psi(-\tfrac{1}{2}\langle l, w\rangle)\chi(l)f(w),$$
  for $l\in L_1$, $w\in W$.  For $h=(w',t)\in \Ha(W)$,
\begin{equation}\label{BB'}
\pi_{L_1,\psi_{\chi}}(h) f(w)=f([w,0]\cdot [w',t])=f([w+w', t+\tfrac{1}{2}\langle w, w'\rangle])=\psi( t+\tfrac{1}{2}\langle w, w'\rangle) f(w+w').
\end{equation}
\begin{remark}
If $\chi$ is the trivial character, we will  write $\pi_{L_1,\psi}$.  Then $\pi_{L_1,\psi_{\chi}}\simeq \pi_{L_1,\psi}$, for any $\chi$.
\end{remark}
\subsection{Non-self-dual lattice}
Let us consider a non-self-dual lattice.  Keep the above notations. Let $L $ be a sublattice of $L_1$.  Write  $[L_1:L ]=q^r$, for    some $r$. Let $\Ha(L )=L \times F$ denote the corresponding subgroup of $\Ha(W)$.  Let $\psi_{L }$ denote the extended  character of $\Ha(L )$  from $\psi$ of  $F$ defined as:
$$\psi_{L }: \Ha(L ) \longrightarrow \C^{\times}; (l,t) \longmapsto \psi(t).$$ 
   Let us define the representation:
 $$(\pi_{L ,\psi}=\cInd_{\Ha(L )}^{\Ha(W)} \psi_{L }, \quad  \mathcal{S}_{L ,\psi}=\cInd_{\Ha(L )}^{\Ha(W)} \C).$$
 Note that $\Ha(L ) \subseteq \Ha(L_1)$.  Let $\chi$ be a character of $L_1/L $. Then $\psi_{L_1, \chi}|_{\Ha(L )}= \psi_{L }$. 
By Clifford theory, $\cInd_{\Ha(L ) }^{\Ha(L_1) } \psi_{L }\simeq  \oplus_{\chi \in \Irr(L_1/L )} \psi_{L_1, \chi}$. As a consequence, we obtain:
\begin{lemma}
$\pi_{L, \psi}\simeq q^r\pi_{L_1, \psi}.$
\end{lemma}
\begin{proof}
$\pi_{L,\psi}=\cInd_{\Ha(L)}^{\Ha(W)} \psi_{L}\simeq \cInd_{\Ha(L_1)}^{\Ha(W)}  (\cInd_{\Ha(L) }^{\Ha(L_1) } \psi_{L})$
$\simeq \cInd_{\Ha(L_1)}^{\Ha(W)}( \oplus_{\chi \in \Irr(L_1/L)} \psi_{L_1, \chi}) \simeq \oplus_{\chi \in \Irr(L_1/L)}  \pi_{L_1, \psi_{\chi}}$
$\simeq q^r\pi_{L_1, \psi}$.
\end{proof}
The space $\mathcal{S}_{L, \psi}$ consists  of locally constant, compactly supported functions $f$ on $W$  such that
 $$f(l+w)=\psi(-\tfrac{1}{2}\langle l, w\rangle)f(w),$$
  for $l\in L$, $w\in W$.  For $h=(w',t)\in \Ha(W)$,
\begin{equation}\label{BB'1}
\pi_{L,\psi}(h) f(w)=\psi( t+\tfrac{1}{2}\langle w, w'\rangle) f(w+w').
\end{equation}
Let $\mathcal{S}_{L_1,\psi_{\chi}}$  denote the subspace of the elements $f$ in $\mathcal{S}_{L,\psi}$ such that 
$$f(l+w)=\psi(-\tfrac{1}{2}\langle l, w\rangle)\chi(l)f(w),$$
for  $l\in L_1$.  Let $\Lambda_{L_1}$( resp. $\Lambda_{L_1/L}$) be a set of representatives for $W/L_1$( resp. $L_1/L$). Then 
$$\Lambda_{L}=\{ w_0+l_0\mid w_0\in  \Lambda_{L_1}, l_0\in  \Lambda_{L_1/L}\}$$
serves as a representative set for  $W/L$. For an element   $w_0\in\Lambda_{L_1}$, and an element $w_1=w_0+l_0\in \Lambda_{L}$,  we let  $s^{\chi}_{w_0}$ and  $s_{w_1}$  be the functions on $W$ such that 
\begin{itemize}
\item $\supp s^{\chi}_{w_0} \subseteq L_1+w_0$,
\item  $\supp s_{w_1} \subseteq L+w_1$,
\item $s^{\chi}_{w_0}(l+w_0)=\psi(-\tfrac{1}{2}\langle l, w_0\rangle)\chi(l)$, for $l\in L_1$, 
\item $s_{w_1} (l+w_1)=\psi(-\tfrac{1}{2}\langle l, w_1\rangle)$, for $l\in L$.
\end{itemize}
Then $\{ s_{w_1}\}_{w_1\in \Lambda_{L}}$ and $\{ s^{\chi}_{w_0}\}_{w_0\in \Lambda_{L_1}}$ serve as the  bases of $\mathcal{S}_{L, \psi}$ and  $\mathcal{S}_{L_1,\psi_{\chi}}$, respectively.  Let $V_{L_1, w_0}$ be the subspace of $\mathcal{S}_{L, \psi}$, spanned by $\{s_{w_1}, w_1\in w_0+\Lambda_{L_1/L}\}$. For any $w_1=w_0+l_0\in w_0+ \Lambda_{L_1/L}$, $l\in L$, we have:
$$s^{\chi}_{w_0}(w_1)=s^{\chi}_{w_0}(l_0+w_0)=\psi(-\tfrac{1}{2}\langle l_0, w_0\rangle)\chi(l_0),$$
 \begin{align}
 &s^{\chi}_{w_0}(l+w_1)\\
 &=s^{\chi}_{w_0}(l+l_0+w_0)\\
 &=\psi(-\tfrac{1}{2}\langle l+l_0, w_0\rangle)\chi(l+l_0)\\
 &=\psi(-\tfrac{1}{2}\langle l_0, w_0\rangle)\chi(l_0)\psi(-\tfrac{1}{2}\langle l, w_0\rangle) \\
 &=\psi(-\tfrac{1}{2}\langle l, w_0\rangle)s^{\chi}_{w_0}(w_1)\\
 &=\psi(-\tfrac{1}{2}\langle l, w_1\rangle)s^{\chi}_{w_0}(w_1)\\
   &=s_{w_1}(l+w_1) s^{\chi}_{w_0}(w_1).
 \end{align}
   Hence 
 $$s^{\chi}_{w_0}=\sum_{w_1\in w_0+ \Lambda_{L_1/L}} s^{\chi }_{w_0}(w_1) \cdot s_{w_1}\in V_{L_1, w_0}.$$
 Moreover, these $s^{\chi}_{w_0}$ are linear independence as $\chi$ runs through the characters of $L_1/L$. From dimension considerations it follows that $V_{L_1, w_0}$ is spanned by $\{s^{\chi}_{w_0}, \chi \in \Irr(L_1/L)\}$. Hence:
 $$ \mathcal{S}_{L,\psi}=\oplus_{\chi\in \Irr(L_1/L)} \mathcal{S}_{L_1,\psi_{\chi}}.$$
 Note that $\mathcal{S}_{L_1,\psi_{\chi}}$  is an irreducible $\Ha(W)$-module. 
      
  \subsubsection{} 
  For any $\chi \in \Irr(L_1/L)$,  $ (\pi_{L_1,\psi_{\chi}}, \mathcal{S}_{L_1,\psi_{\chi}}) $ is an irreducible representation of $\Ha(W)$. 
 Hence 
 $$(\sigma_{\chi}=\cInd_{\Ha(L_1)}^{\Ha(L^{\ast})} \psi_{L_1,\chi}, \mathcal{W}_{\chi}=\cInd_{\Ha(L_1)}^{\Ha(L^{\ast})} \C)$$ is an irreducible representation of $\Ha(L^{\ast})$.  The vector space $\mathcal{W}_{\chi}$ consists of the  functions $f$ on $L^{\ast}$ such that 
 $$f(l+l^{\ast})= \chi(l) \psi(-\tfrac{1}{2} \langle l, l^{\ast}\rangle)f(l^{\ast}),$$
 for $l\in L_1$, $l^{\ast}\in L^{\ast}$. 
 \begin{lemma}
 $\sigma_{\chi}(l)f(l^{\ast})= \chi(l) \psi(- \langle l, l^{\ast}\rangle)f(l^{\ast})$, for $l\in L_1$.
 \end{lemma}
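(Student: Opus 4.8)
The plan is to work with the standard model of the compactly induced representation $\sigma_{\chi}=\cInd_{\Ha(L_1)}^{\Ha(L^{\ast})}\psi_{L_1,\chi}$ — namely the space of functions $F\colon \Ha(L^{\ast})\to\C$ with $F(h_0h)=\psi_{L_1,\chi}(h_0)F(h)$ for $h_0\in\Ha(L_1)$, compactly supported mod $\Ha(L_1)$, on which $\Ha(L^{\ast})$ acts by right translation — and to transport the assertion to the concrete space $\mathcal{W}_{\chi}$ through the identification $f(l^{\ast}):=F((l^{\ast},0))$.

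First I would record the dictionary between the two models. Writing $(l^{\ast},t)=(0,t)\cdot(l^{\ast},0)$ and noting $(0,t)\in\Ha(L_1)$, one gets $F((l^{\ast},t))=\psi(t)f(l^{\ast})$; then applying the covariance condition to $(l,0)\cdot(l^{\ast},0)=(l+l^{\ast},\tfrac12\langle l,l^{\ast}\rangle)$ for $l\in L_1$ reproduces precisely the transformation law $f(l+l^{\ast})=\chi(l)\psi(-\tfrac12\langle l,l^{\ast}\rangle)f(l^{\ast})$ that defines $\mathcal{W}_{\chi}$, which checks that the identification is internally consistent.

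Next, since $L\subseteq L_1=L_1^{\ast}$ forces $L_1\subseteq L^{\ast}$, the element $(l,0)$ with $l\in L_1$ indeed lies in $\Ha(L^{\ast})$, so $\sigma_{\chi}(l)=\sigma_{\chi}((l,0))$ acts by right translation: $(\sigma_{\chi}((l,0))F)((l^{\ast},0))=F((l^{\ast},0)\cdot(l,0))=F((l^{\ast}+l,\tfrac12\langle l^{\ast},l\rangle))$. Applying the dictionary twice, this equals $\psi(\tfrac12\langle l^{\ast},l\rangle)f(l^{\ast}+l)=\psi(\tfrac12\langle l^{\ast},l\rangle)\chi(l)\psi(-\tfrac12\langle l,l^{\ast}\rangle)f(l^{\ast})$, and the antisymmetry $\langle l^{\ast},l\rangle=-\langle l,l^{\ast}\rangle$ collapses the two character factors into $\psi(-\langle l,l^{\ast}\rangle)$, yielding $\sigma_{\chi}(l)f(l^{\ast})=\chi(l)\psi(-\langle l,l^{\ast}\rangle)f(l^{\ast})$ as claimed.

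I do not expect a genuine obstacle here; the only delicate point is bookkeeping of conventions — that induction is realized via right translation with no modular factor in the normalization used throughout the paper (consistent with \eqref{BB'}, \eqref{BB'1}), that $(0,t)\in\Ha(L_1)$, and the sign of the commutator term $\tfrac12\langle\cdot,\cdot\rangle$ in the Heisenberg law — so I would simply make these explicit at the outset and let the short computation run.
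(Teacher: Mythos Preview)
Your proof is correct and follows essentially the same route as the paper: apply the right-translation action formula $\sigma_{\chi}(l)f(l^{\ast})=\psi(\tfrac12\langle l^{\ast},l\rangle)f(l^{\ast}+l)$, then invoke the transformation law $f(l+l^{\ast})=\chi(l)\psi(-\tfrac12\langle l,l^{\ast}\rangle)f(l^{\ast})$ and combine the two $\psi$-factors. The paper's proof is the one-line version of your computation; you have simply made explicit the derivation of the action formula from the abstract induced-representation model, which the paper takes for granted.
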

 \begin{proof}
 $\sigma_{\chi}(l)f(l^{\ast})=f(l+l^{\ast})\psi(- \tfrac{1}{2}\langle l, l^{\ast}\rangle)=\chi(l) \psi(- \langle l, l^{\ast}\rangle)f(l^{\ast}).$
 \end{proof}

Let $$\sigma=\cInd_{\Ha(L)}^{\Ha(L^{\ast})}\psi_{L},  \mathcal{W}=\cInd_{\Ha(L)}^{\Ha(L^{\ast})}\C.$$
The vector space $\mathcal{W}$ consists of the functions $f:L^{\ast}/L\longrightarrow \C$.  The action is given as follows:
$$[\sigma(l^{\ast}) f](\dot{l}^{\ast}_1)= \psi(\tfrac{1}{2} \langle l_1^{\ast}, l^{\ast}\rangle) f(\dot{l^{\ast}}+ \dot{l}^{\ast}_1).$$
\begin{lemma}
For $l\in L$,  $[\sigma(l) f](\dot{l}^{\ast})= f(\dot{l}^{\ast})$. 
\end{lemma}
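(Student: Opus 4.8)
The plan is to obtain the claim as an immediate specialization of the action formula for $\sigma$ on $\mathcal{W}$ recorded just above, namely $[\sigma(l^{\ast}) f](\dot{l}^{\ast}_1)= \psi(\tfrac{1}{2} \langle l_1^{\ast}, l^{\ast}\rangle)\, f(\dot{l}^{\ast}+ \dot{l}^{\ast}_1)$, upon taking $l^{\ast}=l\in L$. First I would note that since $l$ already lies in $L$, its image $\dot l$ in $L^{\ast}/L$ is $0$, so the translation part of the formula is trivial: $f(\dot l + \dot l^{\ast}_1)=f(\dot l^{\ast}_1)$ for every $l^{\ast}_1\in L^{\ast}$. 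Thus the lemma reduces to the assertion that the scalar $\psi(\tfrac12\langle l^{\ast}_1,l\rangle)$ equals $1$ for all $l\in L$ and $l^{\ast}_1\in L^{\ast}$.

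For this, I would invoke the very definition of the dual lattice: since $l^{\ast}_1\in L^{\ast}$ and $l\in L$, we have $\psi(\langle l^{\ast}_1,l\rangle)=1$, i.e.\ $\langle l^{\ast}_1,l\rangle$ lies in the kernel of $\psi$. This is the single place where the hypothesis that $F$ has odd residual characteristic is used: $\ker\psi$ is a fractional $\mathcal{O}$-ideal, and $2$ is then a unit of $\mathcal{O}$, so $\ker\psi$ is stable under multiplication by $\tfrac12$; consequently $\tfrac12\langle l^{\ast}_1,l\rangle\in\ker\psi$ and $\psi(\tfrac12\langle l^{\ast}_1,l\rangle)=1$. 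Combining this with the previous paragraph gives $[\sigma(l)f](\dot l^{\ast}_1)=f(\dot l^{\ast}_1)$, as required.

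Should one wish to bypass the displayed action formula, the same conclusion can be read off directly from the definition $\mathcal{W}=\cInd_{\Ha(L)}^{\Ha(L^{\ast})}\C$: representing a class in $\Ha(L)\backslash\Ha(L^{\ast})\cong L^{\ast}/L$ by $(l^{\ast}_1,0)$ and using the Heisenberg law to write $(l^{\ast}_1,0)\cdot(l,0)=\big(l,\langle l^{\ast}_1,l\rangle\big)\cdot(l^{\ast}_1,0)$ with $\big(l,\langle l^{\ast}_1,l\rangle\big)\in\Ha(L)$, one sees that $\sigma(l)$ multiplies the value of $f$ at that class by $\psi_L\big(l,\langle l^{\ast}_1,l\rangle\big)=\psi(\langle l^{\ast}_1,l\rangle)=1$. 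Either way there is no genuine obstacle here: the only substantive input is the elementary remark that halving is harmless in $\mathcal{O}$ when the residual characteristic is odd, and the rest is bookkeeping of coset representatives.
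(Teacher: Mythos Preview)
Your argument is correct and follows exactly the paper's approach: the paper's one-line proof simply applies the displayed action formula and asserts $\psi(\tfrac{1}{2}\langle l^{\ast},l\rangle)=1$, while you spell out the reason (namely that $2\in\mathcal{O}^{\times}$ in odd residual characteristic, so $\ker\psi$ is stable under halving). Your alternative derivation from the Heisenberg relation is also fine and in fact avoids the halving step altogether, since there one only needs $\psi(\langle l^{\ast}_1,l\rangle)=1$.
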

\begin{proof}
 $[\sigma(l) f](\dot{l}^{\ast})=\psi(\tfrac{1}{2} \langle l^{\ast},l\rangle) f(\dot{l}^{\ast})= f(\dot{l}^{\ast})$.
\end{proof}
We can identity  $(\sigma_{\chi}, \mathcal{W}_{\chi})$ as a sub-representation of $(\sigma, \mathcal{W})$ such that $ \mathcal{W}_{\chi}$ consists of the elements 
$f$ satisfying  $f(\dot{l}+\dot{l}^{\ast})= \chi(l) \psi(-\tfrac{1}{2} \langle l, l^{\ast}\rangle)f(\dot{l}^{\ast})$ for $l\in L_1$.
\subsubsection{}
 Note that $$\psi: L_1/L \times L^{\ast}/L_1 \longrightarrow T; (x,y^{\ast}) \longrightarrow \psi(\langle x, y^{\ast}\rangle ),$$
 defines a non-degenerate bilinear map. So for any $\chi \in \Irr(L_1/L)$, there exists an element $y^{\ast}_{\chi} \in L^{\ast}/L_1 $ such that 
 $$\psi(\langle  x, y_{\chi}^{\ast}\rangle )=\chi(x), \quad\quad \quad x\in L_1/L.$$
 Let us define a function:
  $$\mathcal{A}_{\chi_1, \chi_2}: \mathcal{W}_{\chi_1} \longrightarrow \mathcal{W}_{\chi_2},$$ 
  given by 
 \begin{align}
 \mathcal{A}_{\chi_1, \chi_2}(f)(l^{\ast})=f(l^{\ast}+y^{\ast}_{\chi_1}-y^{\ast}_{\chi_2}) \psi(-\tfrac{1}{2}\langle l^{\ast}, y^{\ast}_{\chi_1}-y^{\ast}_{\chi_2}\rangle).
 \end{align}
  For $f\in  \mathcal{W}_{\chi_1}$, $l\in L_1$, we have:
 \begin{align}
 &\mathcal{A}_{\chi_1, \chi_2}(f)(l+l^{\ast})\\
 &=f(l+l^{\ast}+y^{\ast}_{\chi_1}-y^{\ast}_{\chi_2})\psi(-\tfrac{1}{2}\langle l+l^{\ast}, y^{\ast}_{\chi_1}-y^{\ast}_{\chi_2}\rangle)\\
 &=\chi_1(l) \psi(-\tfrac{1}{2} \langle l, l^{\ast}+y^{\ast}_{\chi_1}-y^{\ast}_{\chi_2}\rangle)f(l^{\ast}+y^{\ast}_{\chi_1}-y^{\ast}_{\chi_2})\psi(-\tfrac{1}{2}\langle l+l^{\ast}, y^{\ast}_{\chi_1}-y^{\ast}_{\chi_2}\rangle)\\
 &=\chi_2(l) \psi(-\tfrac{1}{2} \langle l, l^{\ast}\rangle)\mathcal{A}_{\chi_1, \chi_2}(f)(l^{\ast}).
 \end{align}
 Hence $ \mathcal{A}_{\chi_1, \chi_2}(f) \in\mathcal{W}_{\chi_2}$.  So $\mathcal{A}_{\chi_1, \chi_2}$ is well-defined.
 \begin{lemma}\label{iter}
 $\mathcal{A}_{\chi_1, \chi_2}$ defines an intertwining operator from $\sigma_{\chi_1}$ to $\sigma_{\chi_2}$.
 \end{lemma}
 \begin{proof}
For $f \in \mathcal{W}_{\chi_1}$, $l^{\ast}\in L^{\ast}$,  $h=[l_1^{\ast}, t]\in \Ha(L^{\ast})$, we have:
\begin{align}
\mathcal{A}_{\chi_1, \chi_2}[\sigma_{\chi_1}(h) f](l^{\ast})&=[\sigma_{\chi_1}(h) f](l^{\ast}+y^{\ast}_{\chi_1}-y^{\ast}_{\chi_2}) \psi(-\tfrac{1}{2}\langle l^{\ast}, y^{\ast}_{\chi_1}-y^{\ast}_{\chi_2}\rangle)\\
&= f(l_1^{\ast}+l^{\ast}+y^{\ast}_{\chi_1}-y^{\ast}_{\chi_2}) \psi( t+\tfrac{1}{2}\langle l^{\ast}+y^{\ast}_{\chi_1}-y^{\ast}_{\chi_2}, l_1^{\ast}\rangle) \psi(-\tfrac{1}{2}\langle l^{\ast}, y^{\ast}_{\chi_1}-y^{\ast}_{\chi_2}\rangle)\\
&=f(l_1^{\ast}+l^{\ast}+y^{\ast}_{\chi_1}-y^{\ast}_{\chi_2}) \psi( t+\tfrac{1}{2}\langle l^{\ast}, l_1^{\ast}\rangle) \psi(-\tfrac{1}{2}\langle l^{\ast}+l_1^{\ast}, y^{\ast}_{\chi_1}-y^{\ast}_{\chi_2}\rangle);
\end{align}
\begin{align}
\sigma_{\chi_2}(h) [\mathcal{A}_{\chi_1, \chi_2}f](l^{\ast})&= \psi( t+\tfrac{1}{2}\langle l^{\ast}, l_1^{\ast}\rangle)[\mathcal{A}_{\chi_1, \chi_2}f](l^{\ast}+l_1^{\ast})\\
&= \psi( t+\tfrac{1}{2}\langle l^{\ast}, l_1^{\ast}\rangle)f(l_1^{\ast}+l^{\ast}+y^{\ast}_{\chi_1}-y^{\ast}_{\chi_2})\psi(-\tfrac{1}{2}\langle l^{\ast}+l_1^{\ast}, y^{\ast}_{\chi_1}-y^{\ast}_{\chi_2}\rangle).
\end{align}
 \end{proof} 

 \subsubsection{}Let us denote  $\mathcal{V}_{L,\psi}= \cInd_{\Ha(L^{\ast})}^{\Ha(W)} \mathcal{W}$. Then $\pi_{L,\psi}$ can be realized on $\mathcal{V}_{L,\psi}$. Note that $L^{\ast}$ is a compact group and $\dim \mathcal{W} <+\infty$. So the vector space  $\mathcal{V}_{L,\psi}$ consists  of locally constant, compactly supported functions $f: W \longrightarrow \mathcal{W}$  such that
 $$f(l^{\ast}_1+w)=\psi(\tfrac{1}{2}\langle w, l_1^{\ast}\rangle)\sigma(l_1^{\ast})f(w),$$
  for $l^{\ast}_1\in L^{\ast}$,  $w\in W$.   For $h=(w',t)\in \Ha(W)$,
\begin{equation}\label{BB'2}
\pi_{L,\psi}(h) f(w)=f([w,0]\cdot [w',t])=f([w+w', t+\tfrac{1}{2}\langle w, w'\rangle])=\psi( t+\tfrac{1}{2}\langle w, w'\rangle) f(w+w').
\end{equation}
 Let us denote  $\mathcal{V}_{L_1 ,\psi_{\chi}}= \cInd_{\Ha(L^{\ast} )}^{\Ha(W)} \mathcal{W}_{\chi}$. This vector space  consists  of locally constant, compactly supported functions $f: W \longrightarrow \mathcal{W}_{\chi}$  such that
 $$f(l^{\ast}+w)=\psi(\tfrac{1}{2}\langle w, l^{\ast}\rangle)\sigma_{\chi}(l^{\ast})f(w),$$
  for $l^{\ast}\in L^{\ast}$,  $w\in W$.  Moreover, $\pi_{L_1,\psi_{\chi}}$ can be realized on $\mathcal{V}_{L_1 ,\psi_{\chi}}$.  Consequently, 
  $$\mathcal{V}_{L ,\psi} \simeq \oplus_{\chi \in \Irr(L_1/L )} \mathcal{V}_{L_1 ,\psi_{\chi}}.$$

\subsubsection{}\label{MgM} 
Following \cite[p.42]{MVW}, for $g\in \Sp(W)$, let us define $M[g]\in \End(\mathcal{V}_{L,\psi})$ as follows:
\begin{align}\label{gactionsigma}
M[g]f(w)=\int_{a\in L^{\ast}} \psi(\tfrac{1}{2}\langle a, w\rangle) \sigma(-a)f((a+w)g) da.
\end{align}
\begin{itemize}
\item[(1)] Let $l\in L^{\ast}$, $f\in   \mathcal{V}_{L,\psi}$. 
\begin{align}\label{gactionsigma1}
M[g]f(l+w)&=\int_{a\in L^{\ast}} \psi(\tfrac{1}{2}\langle a, l+w\rangle) \sigma(-a)f((a+l+w)g) da\\
&=\int_{a\in L^{\ast}} \psi(\tfrac{1}{2}\langle a-l, l+w\rangle) \sigma(-a+l)f((a+w)g) da\\
&= \psi(\tfrac{1}{2}\langle -l, w\rangle)\sigma(l)\int_{a\in L^{\ast}} \psi(\tfrac{1}{2}\langle a,w\rangle) \sigma(-a)f((a+w)g) da\\
&=\psi(\tfrac{1}{2}\langle w, l\rangle)\sigma(l)M[g]f(w).
\end{align}
Hence $M[g]f \in \mathcal{V}_{L,\psi}$, for any  $f\in \mathcal{V}_{L,\psi} $.  This means that $M[g]$ is well-defined.
\end{itemize}
\begin{itemize}
\item[(2)] For $h=(w',t)\in \Ha(W)$,  $hg^{-1}=(w'g^{-1},t)$.
\begin{align*}
&M[g] \pi_{L, \psi}(h) f(w)\\
&=\int_{a\in L^{\ast}} \psi(\tfrac{1}{2}\langle a, w\rangle) \sigma(-a) [\pi_{L, \psi}(h) f]((a+w)g) da\\
&=\int_{a\in L^{\ast}} \psi(\tfrac{1}{2}\langle a, w\rangle) \psi( t+\tfrac{1}{2}\langle (a+w)g, w'\rangle) \sigma(-a) f((a+w)g+w')da;
\end{align*}
\begin{align*}
&\pi_{L, \psi}(hg^{-1}) (M[g]f)(w)\\
&=\psi( t+\tfrac{1}{2}\langle w, w'g^{-1}\rangle) (M[g]f)(w+w'g^{-1})\\
 &=\int_{a\in L^{\ast}} \psi(\tfrac{1}{2}\langle a, w+w'g^{-1}\rangle)\psi( t+\tfrac{1}{2}\langle w, w'g^{-1}\rangle) \sigma(-a) f((a+w+w'g^{-1})g)da\\
 &=\int_{a\in L^{\ast}} \psi(\tfrac{1}{2}\langle a, w\rangle)\psi( t+\tfrac{1}{2}\langle w+a, w'g^{-1}\rangle) \sigma(-a)  f((a+w)g+w')da.
\end{align*}
Hence $\pi_{L, \psi}(hg^{-1}) M[g]= M[g] \pi_{L, \psi}(h) $, for  $g\in \Sp(W)$, $h\in \Ha(W)$.
\end{itemize}
\begin{itemize}
\item[(3)] 
\begin{align}\label{muti}
&M[g]M[g^{-1}](f)(w)\\
&=\int_{a'\in L^{\ast}}\psi(\tfrac{1}{2}\langle a', w\rangle)\sigma(-a')M[g^{-1}](f)((a'+w)g)da'\\
&=\int_{a'\in L^{\ast}}\int_{a \in L^{\ast}}\psi(\tfrac{1}{2}\langle a', w\rangle)\psi(\tfrac{1}{2}\langle a,(a'+w)g\rangle)\sigma(-a')\sigma(-a)f(ag^{-1}+a'+w)da' da\\
&\xlongequal[b=ag^{-1}]{ c_{g}=\tfrac{da}{dag^{-1}}}c_g\int_{a'\in L^{\ast}}\int_{b\in L^{\ast}g^{-1}}\psi(\tfrac{1}{2}\langle a', w\rangle)\psi(\tfrac{1}{2}\langle b,a'+w\rangle)\sigma(-a')\sigma(-bg)f(b+a'+w)da' db\\
&=c_g\int_{a'\in L^{\ast}}\int_{b\in L^{\ast}g^{-1}}\psi(\tfrac{1}{2}\langle a', w\rangle)\psi(\tfrac{1}{2}\langle b,a'+w\rangle)\psi(\tfrac{1}{2}\langle a', bg\rangle) \sigma(-a'-bg)f(b+a'+w)da' db\\
&=c_g\int_{a'\in L^{\ast}}\int_{b\in L^{\ast}g^{-1}}\psi(\tfrac{1}{2}\langle a', w\rangle)f(b-bg+w)\\
&\quad\quad\quad\quad\quad\quad \cdot \psi(\tfrac{1}{2}\langle b,a'+w\rangle)\psi(\tfrac{1}{2}\langle a', bg\rangle) \psi(\tfrac{1}{2}\langle -a'-bg,b+a'+w\rangle) da' db\\
&=c_g\int_{a'\in L^{\ast}}\int_{b\in L^{\ast}g^{-1}}f(b-bg+w)
 \psi(\langle -a', b-bg\rangle) \psi(\tfrac{1}{2}\langle b-bg, w\rangle)  \psi(\tfrac{1}{2}\langle-bg, b\rangle) da' db\\
 &=c_g\mu(L^{\ast})\int_{b\in L^{\ast}g^{-1}, b-bg\in L}f(b-bg+w)
  \psi(\tfrac{1}{2}\langle b-bg, w\rangle)  \psi(\tfrac{1}{2}\langle-bg, b-bg\rangle) db\\ 
 &=c_g\mu(L^{\ast})\mu(\{b\in L^{\ast}g^{-1}, b-bg\in L\}) f(w).
\end{align}
\end{itemize}
 Hence,  $M[g]$ defines an automorphism of $\mathcal{V}_{L ,\psi}$. 
 \subsubsection{}\label{ext}
 We  extend $\mathcal{A}_{\chi_1, \chi_2}$ from  $\sigma_{\chi_1}$  to $\pi_{L_1,\psi}$ as follows:
 $$\mathcal{A}_{\chi_1, \chi_2}: \cInd_{\Ha(L^{\ast} )}^{\Ha(W)} \sigma_{\chi_1} \longrightarrow \cInd_{\Ha(L^{\ast} )}^{\Ha(W)} \sigma_{\chi_2};$$
 $$\quad\quad\quad  f\longmapsto \mathcal{A}_{\chi_1, \chi_2}(f).$$
 \begin{lemma}\label{IntMg}
 $M[g]\mathcal{A}_{\chi_1, \chi_2}(f)=\mathcal{A}_{\chi_1, \chi_2}M[g](f)$, for $f\in \mathcal{W}_{L ,\psi_{\chi_1}}$.
 \end{lemma}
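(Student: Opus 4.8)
The plan is to reduce the identity to the intertwining property of $\mathcal{A}_{\chi_1,\chi_2}$ on the level of $\Ha(L^{\ast})$-modules already established in Lemma \ref{iter}, by commuting $\mathcal{A}_{\chi_1,\chi_2}$ through the integral that defines $M[g]$. Recall from Section \ref{ext} that the extension of $\mathcal{A}_{\chi_1,\chi_2}$ to $\cInd_{\Ha(L^{\ast})}^{\Ha(W)}\sigma_{\chi_1}\to\cInd_{\Ha(L^{\ast})}^{\Ha(W)}\sigma_{\chi_2}$ is, by functoriality of compact induction, just post-composition: $\mathcal{A}_{\chi_1,\chi_2}(f)(w)=\mathcal{A}_{\chi_1,\chi_2}\big(f(w)\big)$ for $f\colon W\to\mathcal{W}_{\chi_1}$, and this is well-defined precisely because $\mathcal{A}_{\chi_1,\chi_2}$ is an $\Ha(L^{\ast})$-map.

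The first step is a bookkeeping observation: $M[g]$ carries the subspace $\mathcal{V}_{L_1,\psi_{\chi}}\subseteq\mathcal{V}_{L,\psi}$ into itself, and there agrees with the operator built from $\sigma_{\chi}$ in place of $\sigma$. Indeed, in the integrand $\psi(\tfrac12\langle a,w\rangle)\sigma(-a)f((a+w)g)$ of \eqref{gactionsigma}, if $f$ takes values in $\mathcal{W}_{\chi}$ then so does $\sigma(-a)f((a+w)g)$, since $\mathcal{W}_{\chi}$ is an $\Ha(L^{\ast})$-submodule of $\mathcal{W}$; hence the whole integral lies in $\mathcal{W}_{\chi}$. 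So both sides of the asserted identity make sense as elements of $\cInd_{\Ha(L^{\ast})}^{\Ha(W)}\sigma_{\chi_2}$.

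The second step is the main computation, which I would carry out pointwise at $w\in W$. Using that $\mathcal{A}_{\chi_1,\chi_2}$ is a linear map between the finite-dimensional spaces $\mathcal{W}_{\chi_1},\mathcal{W}_{\chi_2}$ — hence continuous, hence interchangeable with the $\mathcal{W}_{\chi_1}$-valued integral over the compact group $L^{\ast}$ — one obtains
\begin{align*}
\mathcal{A}_{\chi_1,\chi_2}\big(M[g]f\big)(w)
&=\mathcal{A}_{\chi_1,\chi_2}\!\left(\int_{a\in L^{\ast}}\psi(\tfrac12\langle a,w\rangle)\,\sigma_{\chi_1}(-a)f((a+w)g)\,da\right)\\
&=\int_{a\in L^{\ast}}\psi(\tfrac12\langle a,w\rangle)\,\mathcal{A}_{\chi_1,\chi_2}\big(\sigma_{\chi_1}(-a)f((a+w)g)\big)\,da,
\end{align*}
while directly
\begin{align*}
\big(M[g]\mathcal{A}_{\chi_1,\chi_2}(f)\big)(w)
=\int_{a\in L^{\ast}}\psi(\tfrac12\langle a,w\rangle)\,\sigma_{\chi_2}(-a)\,\mathcal{A}_{\chi_1,\chi_2}\big(f((a+w)g)\big)\,da.
\end{align*}
Applying Lemma \ref{iter} with $h=[-a,0]\in\Ha(L^{\ast})$, namely $\mathcal{A}_{\chi_1,\chi_2}\sigma_{\chi_1}(-a)=\sigma_{\chi_2}(-a)\mathcal{A}_{\chi_1,\chi_2}$, shows the two integrands coincide for every $a$, and the identity follows.

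I do not expect a genuine obstacle here: the content is entirely carried by Lemma \ref{iter}. The only points deserving a word of care are that the extended $\mathcal{A}_{\chi_1,\chi_2}$ is post-composition (functoriality of $\cInd$), that $M[g]$ restricts to each $\mathcal{V}_{L_1,\psi_{\chi}}$, and that $\mathcal{A}_{\chi_1,\chi_2}$ may be pulled inside the integral — all immediate from finite-dimensionality and the module structure, with no analytic subtlety.
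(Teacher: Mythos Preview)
Your proof is correct and follows essentially the same approach as the paper's: both reduce to the intertwining property $\mathcal{A}_{\chi_1,\chi_2}\sigma_{\chi_1}(-a)=\sigma_{\chi_2}(-a)\mathcal{A}_{\chi_1,\chi_2}$ from Lemma \ref{iter}, after identifying the extended $\mathcal{A}_{\chi_1,\chi_2}$ with post-composition. The paper's computation simply transforms one side into the other in a single chain rather than expanding both sides separately, but the content is identical.
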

 \begin{proof}
 \begin{align}
M[g]\mathcal{A}_{\chi_1, \chi_2}(f)(w)&=\int_{a\in L^{\ast}} \psi(\tfrac{1}{2}\langle a, w\rangle) \sigma(-a)\mathcal{A}_{\chi_1, \chi_2}(f)((a+w)g) da\\
&=\int_{a\in L^{\ast}} \psi(\tfrac{1}{2}\langle a, w\rangle) \sigma(-a)\mathcal{A}_{\chi_1, \chi_2}(f((a+w)g)) da\\
&=\mathcal{A}_{\chi_1, \chi_2}\Big(\int_{a\in L^{\ast}} \psi(\tfrac{1}{2}\langle a, w\rangle) \sigma(-a)f((a+w)g) da\Big)\\
&=\mathcal{A}_{\chi_1, \chi_2}M[g](f)(w).
\end{align}
\end{proof}
According to  the above item (2), $M[g]$ sends  $\mathcal{V}_{L_1 ,\psi_{\chi}}$ onto itself. Let us define 
$$\pi_{L ,\psi}([g,h])=M[g]\pi_{L ,\psi}(h), \quad \quad g\in \Sp(W), h\in \Ha(W).$$
 Note that $$[g,h]=[g,0][1,h]=[1,hg^{-1}][g,0].$$  By the above (2), $\pi_{L,\psi}([g,h])$ is well-defined.
By Weil's result,  $\pi_{L ,\psi}$ defines a projective representation of $\Sp(W)\ltimes \Ha(W)$ on each $\mathcal{V}_{L_1 ,\psi_{\chi}}$.  For $[g,h]$,  $[g',h']\in \Sp(W)\ltimes \Ha(W)$, $f\in \mathcal{V}_{L_1 ,\psi_{\chi}}$, there exists a non-trivial $2$-cocycle $c_{\chi}$ of order $2$ in $\Ha^2(\Sp(W), T)$ such that 
\begin{align}
\pi_{L,\psi_{\chi}}([g,h])\pi_{L,\psi_{\chi}}([g',h'])=c_{\chi}(g, g')\pi_{L,\psi_{\chi}}([g,h][g',h']).
\end{align}
By the above lemma \ref{IntMg}, we can see that all the cocycles $c_{\chi}$ are the same. Let 
$$1\longrightarrow T \longrightarrow \Mp(W) \longrightarrow \Sp(W) \longrightarrow 1$$
be the central extension of $\Sp(W)$ by  $T$  associated to $c_{\chi}(-,-)$.  Note that $ \Mp(W)$ is isomorphic to the usual Metaplectic group associated to the Perrin-Rao's cocycle. Finally, we  conclude:
\begin{itemize}
\item[(1)] Let $\sigma=\cInd_{\Ha(L )}^{\Ha(L^{\ast})}\psi_{L }$, $\mathcal{W}=\cInd_{\Ha(L )}^{\Ha(L^{\ast})}\C$, $\mathcal{V}_{L , \psi}=\cInd_{\Ha(L^{\ast})}^{\Ha(W)} \mathcal{W}$.
\item[(2)] $\pi_{L ,\psi}=\cInd_{\Ha(L )}^{\Ha(W)} \psi_{L } \simeq \cInd_{\Ha(L^{\ast})}^{\Ha(W)} \sigma$. Then the representation $\pi_{L ,\psi}$ of $\Ha(W)$ can be realized on $\mathcal{V}_{L , \psi}$. 
\item[(3)]  For $g\in \Sp(W)$, we define 
$$M[g]f(w)=\int_{a\in L^{\ast}} \psi(\tfrac{1}{2}\langle a, w\rangle) \sigma(-a)f((a+w)g) da, \quad  f\in \mathcal{V}_{L , \psi}.$$
\item[(4)] For $[g,t]\in \Mp(W)$, $h\in \Ha(W)$, $f\in \mathcal{V}_{L , \psi}$, let 
$$\pi_{L ,\psi}([(g,t),h])f=tM[g]\pi_{L ,\psi}(h) f.$$
Then $\pi_{L ,\psi}$ defines a representation of $\Mp(W)\ltimes \Ha(W)$.  The restriction of $\pi_{L ,\psi}$  to $\Ha(W)$ contains $\sqrt{|L^{\ast}/L |}$-number of irreducible components and every  component is a Heisenberg representation associated to $\psi$. 
\end{itemize}
\begin{lemma}
If $L^{\ast} g \subseteq L^{\ast}$, then $M[g] f(w)=\int_{a\in L^{\ast}} \psi(\tfrac{1}{2}\langle -a,ag\rangle) \sigma(-a+ag) f(wg) da$.
\end{lemma}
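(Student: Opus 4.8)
The plan is to compute $M[g]f(w)$ directly from its defining formula \eqref{gactionsigma}, using the hypothesis $L^{\ast}g\subseteq L^{\ast}$ to bring the argument of $f$ into a shape where the defining quasi-invariance of elements of $\mathcal{V}_{L,\psi}$ can be applied. Recall that $f\colon W\to\mathcal{W}$ satisfies $f(l^{\ast}+w')=\psi(\tfrac12\langle w',l^{\ast}\rangle)\sigma(l^{\ast})f(w')$ for $l^{\ast}\in L^{\ast}$.

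First I would write $f((a+w)g)=f(ag+wg)$ and note that, since $a\in L^{\ast}$ and $L^{\ast}g\subseteq L^{\ast}$, we have $ag\in L^{\ast}$; applying the transformation law with $l^{\ast}=ag$ and $w'=wg$ gives $f(ag+wg)=\psi(\tfrac12\langle wg,ag\rangle)\,\sigma(ag)f(wg)$. Substituting this into the integral yields
\[
M[g]f(w)=\int_{a\in L^{\ast}}\psi(\tfrac12\langle a,w\rangle)\,\psi(\tfrac12\langle wg,ag\rangle)\,\sigma(-a)\sigma(ag)\,f(wg)\,da.
\]
Next I would use that $g\in\Sp(W)$ preserves the symplectic form, so $\langle wg,ag\rangle=\langle w,a\rangle=-\langle a,w\rangle$; hence the two scalar factors $\psi(\tfrac12\langle a,w\rangle)$ and $\psi(\tfrac12\langle wg,ag\rangle)$ cancel. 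Finally, multiplying in the Heisenberg group, $(-a,0)\cdot(ag,0)=(-a+ag,\tfrac12\langle -a,ag\rangle)$, so $\sigma(-a)\sigma(ag)=\psi(\tfrac12\langle -a,ag\rangle)\,\sigma(-a+ag)$, and the asserted identity drops out.

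There is no genuine obstacle here; it is a short direct computation. The only points calling for (minor) care are the bookkeeping of the conventions $\sigma(\cdot)=\sigma((\cdot,0))$ and $f$ being $\mathcal{W}$-valued with $\sigma$ acting on $\mathcal{W}$, and the observation that no change of variable in the Haar measure $da$ on the compact group $L^{\ast}$ is required, since we never substitute $a\mapsto ag$ — the factor $f(wg)$ is pulled out of the integral unchanged.
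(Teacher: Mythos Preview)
Your proof is correct and follows essentially the same approach as the paper: apply the quasi-invariance of $f\in\mathcal{V}_{L,\psi}$ with $l^{\ast}=ag\in L^{\ast}$ to rewrite $f((a+w)g)$, observe that the resulting $\psi$-factor cancels against $\psi(\tfrac12\langle a,w\rangle)$ since $g$ is symplectic, and then collapse $\sigma(-a)\sigma(ag)$ via the Heisenberg multiplication.
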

\begin{proof}
$ag \in L^{\ast}$. So 
\begin{align}
M[g]f(w)&=\int_{a\in L^{\ast}} \psi(\tfrac{1}{2}\langle a, w\rangle) \sigma(-a)f((a+w)g) da\\
&=\int_{a\in L^{\ast}}  \sigma(-a)\sigma(ag) f(wg) da\\
&=\int_{a\in L^{\ast}} \psi(\tfrac{1}{2}\langle -a, ag\rangle) \sigma(-a+ag) f(wg) da.
\end{align}
\end{proof}
\subsubsection{} Following \cite[Chap.5]{MVW}, let 
$$J_1(L)=\{g\in \Sp(W) \mid L^{\ast} (g-1) \subseteq L\},$$
$$\Ha_1(L)=\{ g\in \Sp(W) \mid L^{\ast} (g-1) \subseteq L_1\}.$$
By  \cite[Chap.5]{MVW}, they are two groups, $  J_1(L) \unlhd \Ha_1(L)$, $\Ha_1(L)/ J_1(L)$ is an abelian group. Moreover, 
$$\Ha_1(L)=\{ g\in \Sp(W) \mid L_1 (g-1) \subseteq L\},$$
$$\Ha_1(L) \subseteq K(L_1)=\{ g\in \Sp(W)\mid gL_1\subseteq L_1\}.$$
Let $\overline{J_1(L)}$, $\overline{\Ha_1(L)}$  be the inverse images of $J_1(L)$ and $\Ha_1(L)$ in $\Mp(W)$, respectively.  Then the exact sequences 
$$1\longrightarrow T\longrightarrow \overline{J_1(L)} \longrightarrow J_1(L) \longrightarrow  1$$
$$1\longrightarrow T \longrightarrow \overline{\Ha_1(L)} \longrightarrow \Ha_1(L) \longrightarrow  1$$
 both are splitting. Recall that $\psi_{L_1,\chi}$ is a character of $\Ha(L_1)$. It can be extended to $\Ha_1(L)\ltimes \Ha(L_1)$ by defining 
$$\psi_{L_1,\chi}([g,(l,t)])= \psi(t) \chi(l).$$
Let us check that it is well-defined. Note:
$$[g,(l,t)]=[1, (lg^{-1},t)] \cdot [g, (0, 1)], $$
$$\psi_{L_1,\chi}([1, (lg^{-1},t)] )\psi_{L_1,\chi}([g, (0, 1)])=\psi(t)\chi(lg^{-1}),$$
$$\chi(lg^{-1}-l)=1.$$
Thus, we can treat  $(\sigma_{\chi}, \mathcal{W}_{\chi})$ as an irreducible representation of $\Ha_1(L) \ltimes \Ha(L^{\ast})$. 
 The action of an element  $g$ in $ \Ha_1(L)$  is given by
$$[\sigma(g) f](l^{\ast})=   f(l^{\ast}g).$$
\begin{lemma}\label{stabl}
 Assume $\mu(L^{\ast})=1$. For $g\in J_1(L)$, $f\in \mathcal{V}_{L_1, \chi}$, $M[g] f(w)=f(wg)$.
\end{lemma}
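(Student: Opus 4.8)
The plan is to reduce the computation to the lemma immediately above and then check that the surviving phase factor is trivial. Since $g\in J_1(L)$ we have $L^{\ast}(g-1)\subseteq L$, and because $L\subseteq L_1\subseteq L^{\ast}$ (as $L\subseteq L_1$ and $L_1$ is self-dual) this gives $ag=a+a(g-1)\in L^{\ast}$ for every $a\in L^{\ast}$, i.e.\ $L^{\ast}g\subseteq L^{\ast}$. Hence the preceding lemma applies and
\begin{align*}
M[g]f(w)=\int_{a\in L^{\ast}}\psi(\tfrac{1}{2}\langle -a,ag\rangle)\,\sigma(-a+ag)\,f(wg)\,da .
\end{align*}

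Next I would simplify the integrand pointwise in $a$. First, $-a+ag=a(g-1)\in L$, so by the earlier lemma stating that $\sigma(l)$ acts as the identity on $\mathcal{W}$ for $l\in L$, we get $\sigma(-a+ag)f(wg)=f(wg)$. For the scalar, using $\langle a,a\rangle=0$ we have $\langle -a,ag\rangle=-\langle a,a+a(g-1)\rangle=-\langle a,a(g-1)\rangle$, so
\begin{align*}
\psi(\tfrac{1}{2}\langle -a,ag\rangle)=\psi(\langle -\tfrac{1}{2}a,\,a(g-1)\rangle);
\end{align*}
since $2$ is a unit in $\mathcal{O}$ (odd residual characteristic), $-\tfrac12 a$ again lies in the $\mathcal{O}$-lattice $L^{\ast}$, while $a(g-1)\in L$, so this phase equals $1$ by the very definition of the dual lattice $L^{\ast}$. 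Therefore the integrand reduces to the constant $a\mapsto f(wg)$, and
\begin{align*}
M[g]f(w)=\Big(\int_{a\in L^{\ast}}da\Big)\,f(wg)=\mu(L^{\ast})\,f(wg)=f(wg)
\end{align*}
by the hypothesis $\mu(L^{\ast})=1$. Note that the argument is insensitive to whether $f$ is taken in $\mathcal{V}_{L,\psi}$ or in its summand $\mathcal{V}_{L_1,\psi_{\chi}}$, so the claim for $f\in\mathcal{V}_{L_1,\chi}$ follows.

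Alternatively one can avoid invoking the preceding lemma and run the computation directly on \eqref{gactionsigma}: write $(a+w)g=ag+wg$ with $ag\in L^{\ast}$, apply the covariance relation defining $\mathcal{V}_{L,\psi}$ to extract $\psi(\tfrac12\langle wg,ag\rangle)\sigma(ag)$, then combine $\sigma(-a)\sigma(ag)$ via the Heisenberg group law into $\psi(-\tfrac12\langle a,ag\rangle)\sigma(a(g-1))$; using symplectic invariance $\langle wg,ag\rangle=\langle w,a\rangle$, the three phases $\psi(\tfrac12\langle a,w\rangle)$, $\psi(\tfrac12\langle wg,ag\rangle)=\psi(-\tfrac12\langle a,w\rangle)$ and $\psi(-\tfrac12\langle a,ag\rangle)$ collapse exactly as above. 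I expect no serious obstacle; the single delicate point — and the only place the odd residual characteristic is used — is the triviality of the phase $\psi(\tfrac12\langle a,a(g-1)\rangle)$, which rests on $\tfrac12$ being a unit so that $\tfrac12 a\in L^{\ast}$. This is precisely the step that will need to be revisited in the $2$-adic setting.
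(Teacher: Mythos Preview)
Your proof is correct and follows essentially the same route as the paper: apply the preceding lemma to reduce to $\int_{a\in L^{\ast}}\psi(\tfrac{1}{2}\langle -a,ag\rangle)\sigma(-a+ag)f(wg)\,da$, then observe that both the phase and the $\sigma$-factor are trivial because $a(g-1)\in L$. The paper's proof is terse and does not spell out why $\psi(\tfrac{1}{2}\langle -a,ag\rangle)=1$; your justification via $\tfrac{1}{2}a\in L^{\ast}$ (using that $2$ is a unit in $\mathcal{O}$) and the definition of the dual lattice is exactly the right explanation, and your closing remark about the $2$-adic case is apt.
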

\begin{proof}
  For $g\in J_1(L)$,
\begin{align}
M[g]f(w)&=\int_{a\in L^{\ast}} \psi(\tfrac{1}{2}\langle -a,ag\rangle) \sigma(-a+ag) f(wg) da\\
&=\int_{a\in L^{\ast}}  f(wg) da=f(wg).
\end{align}
\end{proof}
Write 
$$(\pi'_{L_1,\psi_{\chi}}=\cInd_{\Ha_1(L) \ltimes \Ha(L^{\ast} )}^{\Ha_1(L) \ltimes \Ha(W)}\sigma_{\chi},   \mathcal{V}'_{L_1 ,\psi_{\chi}}= \cInd_{\Ha_1(L) \ltimes \Ha(L^{\ast} )}^{\Ha_1(L) \ltimes  \Ha(W)} \mathcal{W}_{\chi}).$$
Note that each $\pi'_{L_1,\psi_{\chi}}$ is an irreducible representation of $\Ha_1(L) \ltimes \Ha(W)$ and we can identity $\mathcal{V}'_{L_1 ,\psi_{\chi}}$ with $\mathcal{V}_{L_1 ,\psi_{\chi}}$. Note that $M[g]$  acts stably on each $\mathcal{V}_{L_1 ,\psi_{\chi}}$, for $g\in \Ha_1(L)$. So $\pi'_{L_1,\psi_{\chi}}(g)$ and $M[g]$ will differ by a constant on $\mathcal{V}_{L_1 ,\psi_{\chi}}$.  By Lemma \ref{stabl}, on $J_1(L)$, they are the same. Moreover, 
$$\pi'_{L_1,\psi_{\chi}} \simeq  \cInd_{\Ha_1(L) \ltimes\Ha(L_1)}^{ \Ha_1(L) \ltimes \Ha(W)} \psi_{L_1, \chi},$$
and it can also be realized on $\mathcal{S}_{L_1 ,\psi_{\chi}}$.

Let us write 
$$\pi'_{L,\psi}=\cInd_{\Ha_1(L) \ltimes \Ha(L )}^{\Ha_1(L) \ltimes \Ha(W)}\psi_L \simeq \cInd_{\Ha_1(L) \ltimes \Ha(L^{\ast} )}^{\Ha_1(L) \ltimes \Ha(W)}\sigma \simeq \oplus_{\chi \in \Irr(L_1/L)} \pi'_{L_1,\psi_{\chi}}. $$ 
The representation $\pi'_{L,\psi}$ can be realized on $\mathcal{V}_{L ,\psi}$ and $\mathcal{S}_{L ,\psi}$. 
 \subsubsection{}
Let us consider the representation $(\pi'_{L_1,\psi_{\chi}}, \mathcal{S}_{L_1 ,\psi_{\chi}})$. Let $w$ be an element of $ L^{\ast}$. Then $wg\in L^{\ast}$, for $g\in \Ha_1(L)$.  Recall that  $s_w^{\chi}$ is an element of $\mathcal{S}_{L_1 , \psi_{\chi}}$ such that it's supported  on $L_1+w$ and $s_w^{\chi}(w)=1$.
\begin{lemma}
\begin{itemize}
\item[(1)] For $g\in J_1(L)$, $\pi'_{L_1,\psi_{\chi}}(g)s_w^{\chi}=s_w^{\chi}$.
\item[(2)] For $g\in \Ha_1(L)$, $\pi'_{L_1,\psi_{\chi}}(g)s_w^{\chi}=\psi(-\tfrac{\langle wg, w\rangle}{2}) \chi(wg-w) s_w^{\chi}$.
\end{itemize}
\end{lemma}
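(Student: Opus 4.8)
The plan is to run a direct computation in the Schr\"odinger-type model $\mathcal{S}_{L_1,\psi_{\chi}}$ of $\pi'_{L_1,\psi_{\chi}}$. In this model an element $g\in\Ha_1(L)$ acts by right translation on $W$, i.e.\ $[\pi'_{L_1,\psi_{\chi}}(g)f](v)=f(vg)$: this is the extension to $\mathcal{S}_{L_1,\psi_{\chi}}$ of the action $[\sigma_{\chi}(g)f](l^{\ast})=f(l^{\ast}g)$ recorded just before the statement, and one first checks that it does preserve $\mathcal{S}_{L_1,\psi_{\chi}}$, using that $L_1g=L_1$ (as $g,g^{-1}\in K(L_1)$, and $\Ha_1(L)$ is a group) and that $\chi(lg)=\chi(l)$ for every $l\in L_1$, the latter because $l(g-1)\in L_1(g-1)\subseteq L$ and $\chi$ factors through $L_1/L$. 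Hence $[\pi'_{L_1,\psi_{\chi}}(g)s_w^{\chi}](v)=s_w^{\chi}(vg)$, and the whole statement reduces to identifying this function with a scalar multiple of $s_w^{\chi}$.

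I would prove (2) first. The preliminary observation is that for $w\in L^{\ast}$ and $g\in\Ha_1(L)$ one has $wg-w=w(g-1)\in L^{\ast}(g-1)\subseteq L_1$; combined with $L_1g^{-1}=L_1$ this shows that $s_w^{\chi}(\,\cdot\,g)$ is again supported on $L_1+w$, so both sides of the claimed identity vanish off $L_1+w$ and it suffices to evaluate at $v=l+w$ with $l\in L_1$. Writing $(l+w)g=w+(lg+wg-w)$ with $lg+wg-w\in L_1$ and applying the defining transformation law of $s_w^{\chi}$ gives
\[
s_w^{\chi}\bigl((l+w)g\bigr)=\psi\bigl(-\tfrac12\langle lg+wg-w,\,w\rangle\bigr)\,\chi(lg+wg-w).
\]
Now expand $\langle lg+wg-w,w\rangle=\langle lg,w\rangle+\langle wg,w\rangle$ and $\chi(lg+wg-w)=\chi(lg)\,\chi(wg-w)$; it remains only to replace $\chi(lg)$ by $\chi(l)$ and $\psi(-\tfrac12\langle lg,w\rangle)$ by $\psi(-\tfrac12\langle l,w\rangle)$. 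Both replacements follow from $l(g-1)\in L$: for $\chi$ this is immediate, and for $\psi$ one uses that the residual characteristic is odd, so $\tfrac12 L=L$ and hence $\tfrac12\langle l(g-1),w\rangle=\langle\tfrac12 l(g-1),\,w\rangle\in\langle L,L^{\ast}\rangle\subseteq\ker\psi$. This yields exactly $s_w^{\chi}((l+w)g)=\psi(-\tfrac{\langle wg,w\rangle}{2})\,\chi(wg-w)\,s_w^{\chi}(l+w)$, proving (2).

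Then (1) is the specialization of (2) to $g\in J_1(L)\subseteq\Ha_1(L)$: there $wg-w\in L^{\ast}(g-1)\subseteq L$, so $\chi(wg-w)=1$, and by the same halving argument $\psi(-\tfrac{\langle wg,w\rangle}{2})=\psi(-\tfrac12\langle wg-w,w\rangle)=1$, so the scalar is $1$ (this is also consistent with Lemma~\ref{stabl} and the fact that $M[g]$ and $\pi'_{L_1,\psi_{\chi}}(g)$ agree on $J_1(L)$). The only genuinely delicate steps are the support computation and keeping track of which correction terms lie in $L$ rather than merely in $L_1$; this is precisely where oddness of the residual characteristic (hence $\tfrac12 L=L$) enters, and I expect that bookkeeping to be the main point to get right.
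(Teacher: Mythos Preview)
Your proof is correct and follows essentially the same approach as the paper: both use that $g\in\Ha_1(L)$ acts by right translation, check that the support stays on $L_1+w$ (via $wg-w\in L_1$ and $L_1g=L_1$), and then apply the transformation law defining $s_w^{\chi}$. The only difference is that the paper evaluates at the single point $w$ (since an element of $\mathcal{S}_{L_1,\psi_{\chi}}$ supported on $L_1+w$ is determined by its value there), whereas you evaluate at a general $l+w$ and then cancel the extra $l$-dependence using $l(g-1)\in L$; this is a minor presentational difference, and your explicit use of $\tfrac12 L=L$ makes transparent the step the paper leaves implicit.
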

\begin{proof}
1) $\pi'_{L_1,\psi_{\chi}}(g) s_w^{\chi}(w_1)=s_w^{\chi}(w_1g)$. So $\supp \pi'_{L_1,\psi_{\chi}}(g) s_w^{\chi}\subseteq L_1+wg$. Since $wg-w\subseteq L$, $L_1+wg=L_1+w$. Moreover, 
$$ \pi'_{L_1,\psi_{\chi}}(g)s_w^{\chi}(w)=s_w^{\chi}(wg)=s_w^{\chi}(wg-w+w)=s_w^{\chi}(w).$$
Hence $\pi'_{L_1,\psi_{\chi}}(g)s_w^{\chi}=s_w^{\chi}$.\\
2) Similarly, $wg-w\subseteq L_1$, $L_1+wg=L_1+w$. Then
 \begin{align}
 \pi'_{L_1,\psi_{\chi}}(g)s_w^{\chi}(w)&=s_w^{\chi}(wg)=s_w^{\chi}(wg-w+w)\\
 &=\psi(-\tfrac{1}{2}\langle wg-w, w\rangle) \chi(wg-w)s_w^{\chi}(w)\\
 &=\psi(-\tfrac{1}{2}\langle wg, w\rangle) \chi(wg-w)s_w^{\chi}(w).
 \end{align}
\end{proof}
\section{The non-archimedean local field case II$\tfrac{1}{2}$ }\label{pn21/2}
In this section, $F$ is a finite unramified extension  of the field $\Q_2$ of $2$-adic numbers. We will let $\psi$ be a fixed non-trivial character of $F$ with order $e$, meaning that $\psi|_{\mathcal{P}^e}=1$, and  $\psi|_{\mathcal{P}^{e-1}}\neq 1$. Retain the notations from the introduction.  

\subsection{Weil representation}
  For $w=x+x^{\ast}$, $w'=y+y^{\ast}\in W $,  with $x, y\in X$, $x^{\ast}, y^{\ast}\in X^{\ast}$,  let us define
   $$B(w, w')=\langle x, y^{\ast}\rangle.$$ Then $$\langle w, w'\rangle=B(w, w')-B(w', w).$$ Let $H_{B}(W)=W\times F$ denote   the corresponding Heisenberg group, defined as follows:
 $$(w, t)\cdot (w',t')=(w+w', t+t'+B(w,w')).$$

   For $g\in \Sp(W)$, following \cite{GeLy, GuHa, Rao, Ta}, we let $\Sigma_g$  be the set  of  continuous   functions $q$ from $W$ to $F$, such that
 \begin{equation}\label{equiv}
 q(w+w')-q(w)-q (w')=B(wg, w'g)-B(w, w'), \quad \quad w, w'\in W.
 \end{equation}

   Following \cite{Ta}, \cite{We}, let $$Ps(W)=\{(g, q)\mid g\in \Sp(W), q\in \Sigma_g\},$$ which is called the linear pseudosymplectic group with respect to $X, X^{\ast}$. The group law is given as follows: $$( g, q) ( g', q')=(gg', q''),$$ where $q^{''}(w)=q(w)+q'(wg)$, for $w\in W$. The group  $\Ps(W)$ then acts on $H_B(W)$ as follows:
   $$(w, t)\cdot(g,q) = (wg, t + q(w)).$$
   \begin{lemma}\label{split}
   There exists a split exact sequence: $1\longrightarrow W^{\vee} \longrightarrow \Ps(W) \longrightarrow \Sp(W) \longrightarrow 1$, where $W^{\vee}=\Hom_{group}(W, F)$.
   \end{lemma}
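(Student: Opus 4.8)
The plan is to produce the three maps in the sequence explicitly and check exactness and splitting directly from the definitions. First I would define the map $\Ps(W)\to\Sp(W)$ to be the forgetful projection $(g,q)\mapsto g$; this is a group homomorphism by the very form of the group law $(g,q)(g',q')=(gg',q'')$. The injection $W^{\vee}=\Hom_{\mathrm{group}}(W,F)\hookrightarrow\Ps(W)$ should send a homomorphism $\ell\colon W\to F$ to the pair $(1,\ell)$: I must check that $\ell\in\Sigma_{1}$, i.e. that $\ell(w+w')-\ell(w)-\ell(w')=B(w,w')-B(w,w')=0$, which is exactly the statement that $\ell$ is additive, so $\Sigma_1=W^{\vee}$ on the nose. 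That this is a homomorphism into $\Ps(W)$ follows since $(1,\ell)(1,\ell')=(1,\ell+\ell')$. Then $\ker\bigl(\Ps(W)\to\Sp(W)\bigr)=\{(1,q):q\in\Sigma_1\}=\{(1,\ell):\ell\in W^{\vee}\}$, giving exactness at $\Ps(W)$; exactness at $\Sp(W)$ is surjectivity of the projection, for which I need to exhibit, for each $g\in\Sp(W)$, at least one $q\in\Sigma_g$.

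For surjectivity the natural construction is to pick the symplectic basis $\{e_1,\dots,e_m;e_1^{\ast},\dots,e_m^{\ast}\}$ already fixed in the paper and set $q(w)=\tfrac12\bigl(B(wg,wg)-B(w,w)\bigr)$ — or, more robustly, the quadratic-in-coordinates function obtained by writing $w=\sum x_i e_i+\sum x_i^{\ast}e_i^{\ast}$ and declaring $q$ on the lattice of basis vectors and extending by the cocycle relation \eqref{equiv}. The cleanest route is: define $\beta_g(w,w')=B(wg,w'g)-B(w,w')$, observe it is a $2$-cocycle on $W$ valued in $F$ (bi-additivity of $B$ gives the cocycle identity), note that $H^2$ of a vector group over a field of characteristic zero, or here a $\Q_p$-vector space, with coefficients in $F$ is computed by the symmetric part, and use that $\beta_g$ is \emph{symmetric} precisely because $\langle wg,w'g\rangle=\langle w,w'\rangle$ forces $\beta_g(w,w')=\beta_g(w',w)$; a symmetric $2$-cocycle on an $F$-vector space is a coboundary of a quadratic form, yielding the desired $q\in\Sigma_g$. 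I would in fact prefer to avoid cohomological language and just write $q(w)=\tfrac12\beta_g(w,w)$ and verify \eqref{equiv} by expanding, using symmetry of $\beta_g$; this also makes it transparent that $q$ is continuous.

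Finally, splitting: the section $s\colon\Sp(W)\to\Ps(W)$ should be $g\mapsto(g,q_g)$ with $q_g(w)=\tfrac12\bigl(B(wg,wg)-B(w,w)\bigr)$ the canonical choice above. To see $s$ is a homomorphism I compute $s(g)s(g')=(gg',\,q_g(w)+q_{g'}(wg))$ and must check $q_g(w)+q_{g'}(wg)=q_{gg'}(w)$; expanding both sides, $q_g(w)+q_{g'}(wg)=\tfrac12\bigl(B(wg,wg)-B(w,w)\bigr)+\tfrac12\bigl(B(wgg',wgg')-B(wg,wg)\bigr)=\tfrac12\bigl(B(wgg',wgg')-B(w,w)\bigr)=q_{gg'}(w)$, so the telescoping is automatic and $s$ is a genuine group-theoretic section, giving a split sequence.

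The main obstacle is the surjectivity step, i.e. showing $\Sigma_g\neq\emptyset$ for every $g$ with a \emph{continuous} representative; the symmetry of $\beta_g$ coming from $g\in\Sp(W)$ is the crucial input, and one must be slightly careful that division by $2$ is legitimate — it is, since we work over a field of characteristic $\neq 2$ (indeed over $F\supseteq\Q_2$, and $2$ is invertible in $F$) — and that the resulting quadratic function is continuous, which it is because $B$ and $g$ are. Everything else (homomorphism property of the projection, identification of the kernel with $W^{\vee}$, the section being multiplicative) is a short bi-additivity computation.
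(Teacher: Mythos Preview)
Your proof is correct and follows precisely the standard argument that the paper cites from Rao and Takeda. In fact your section $q_g(w)=\tfrac12\bigl(B(wg,wg)-B(w,w)\bigr)$ coincides with the explicit formula \eqref{B} that the paper records immediately after the lemma: using the symplectic relation $\langle xa,x^{\ast}d\rangle-\langle x^{\ast}c,xb\rangle=\langle x,x^{\ast}\rangle$ (obtained from $\langle wg,w'g\rangle=\langle w,w'\rangle$ with $w=x$, $w'=x^{\ast}$) one rewrites $\tfrac12\bigl(B(wg,wg)-B(w,w)\bigr)$ exactly as $\tfrac12 B(xa,xb)+\tfrac12 B(x^{\ast}c,x^{\ast}d)+B(x^{\ast}c,xb)$.
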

   \begin{proof}
   See \cite[p.351, Lem.3.1]{Rao}, or \cite[Lem.2.3]{Ta}.
   \end{proof}
    Moreover, by \cite[p.351, Lem.3.1]{Rao}, \cite[Lem.2.3]{Ta}, \cite[Sect.4]{We}, there exists an explicit group homomorphism
   $\alpha: \Sp(W) \longrightarrow \Ps(W); g\longmapsto (g, q_g)$, where
   \begin{equation}\label{B}
   \begin{split}
   q_g(x+x^{\ast})&=\tfrac{1}{2}B(
xa, xb)+\tfrac{1}{2}B( x^{\ast}c, x^{\ast}d)+B(x^{\ast}c, xb),
\end{split}
\end{equation} for $g=\begin{pmatrix} a& b\\c & d\end{pmatrix}\in \Sp(W)$.
 Let $\Ha(W)$ be the usual Heisenberg group.  Then there exists an isomorphism:
$$\alpha_B:    \Sp(W)\ltimes \Ha(W) \longrightarrow  \Sp(W) \ltimes H_B(W)   ,$$
$$ \qquad  [g,(x,x^{\ast}; t)]  \longmapsto [ \alpha(g), (x, x^{\ast};t+\tfrac{1}{2}B( x, x^{\ast}))].$$
\subsection{Self-dual lattice}   We  let $L_1$ be a self-dual lattice of $W$ with respect to $\psi$ such that $L_1=L_1\cap X\oplus L_1\cap X^{\ast}$. For simplicity, we \emph{assume} that $L_1\cap X=\mathcal{P}^{[\tfrac{e+1}{2}]}e_1 + \cdots + \mathcal{P}^{[\tfrac{e+1}{2}]}e_m $, $L_1\cap X^{\ast}=\mathcal{P}^{[\tfrac{e}{2}]}e_1^{\ast} + \cdots +  \mathcal{P}^{[\tfrac{e}{2}]}e_m^{\ast} $. \emph{ From now on,we will fix such lattice $L_1$  in this  section \ref{pn21/2}.}

Let $\Ha(L_1)=L_1\times F$ denote the corresponding subgroup of $\Ha(W)$.   Let $\chi\in \Irr(L_1)$.
Let $\psi_{L_1, \chi}$ denote the extended  character of $\Ha(L_1)$  from $\psi$ of  $F$ defined as:
$$\psi_{L_1, \chi}: \Ha(L_1) \longrightarrow \C^{\times}; (l,t) \longmapsto \psi(t-\tfrac{1}{2}B(l,l))\chi(l),$$ 
for $l\in L_1$.  Let us check that $\psi_{L_1, \chi}$ is well-defined. 
For two elements $(l,t),(l',t') \in \Ha(L_1)$, 
\begin{align*}
\psi_{L_1, \chi}([l,t])\psi_{L_1, \chi}([l',t'])&=\psi(t-\tfrac{1}{2}B(l,l))\chi(l)\psi(t'-\tfrac{1}{2}B(l',l'))\chi(l')\\
&=\psi(t+t'-\tfrac{1}{2}B(l,l)-\tfrac{1}{2}B(l',l'))\chi(l+l').
\end{align*}
\begin{align*}
\psi_{L_1, \chi}([l,t][l',t'])&=\psi_{L_1, \chi}([l+l', t+t'+\tfrac{\langle l,l'\rangle}{2}])\\
&=\psi(t+t'+\tfrac{1}{2}\langle l,l'\rangle-\tfrac{1}{2}B(l+l',l+l'))\chi(l+l').
\end{align*}
\begin{align*}
&\psi(\tfrac{1}{2}\langle l,l'\rangle-\tfrac{1}{2}B(l+l',l+l'))\\
&=\psi(\tfrac{1}{2}B(l,l')-\tfrac{1}{2}B(l',l)-\tfrac{1}{2}B(l+l',l+l'))\\
&=\psi(-\tfrac{1}{2}B(l,l)-\tfrac{1}{2}B(l',l')-B(l',l))\\
&=\psi(-\tfrac{1}{2}B(l,l)-\tfrac{1}{2}B(l',l')).
\end{align*}
  Let us define the representation:
 $$(\pi_{L_1,\psi_{\chi}}=\cInd_{\Ha(L_1)}^{\Ha(W)} \psi_{L_1,\chi}, \quad  \mathcal{S}_{L_1,\psi_{\chi}}=\cInd_{\Ha(L_1)}^{\Ha(W)} \C).$$
\begin{lemma}
$\pi_{L_1,\psi_{\chi}}$ defines a Heisenberg representation of $\Ha(W)$ associated to  the center character $\psi$.
\end{lemma} 
\begin{proof}
See \cite[p.42, II.8]{MVW}.
\end{proof}
\begin{remark}
If $\chi$ is the trivial character, we will  write $\pi_{L_1,\psi}$.  Then $\pi_{L_1,\psi_{\chi}}\simeq \pi_{L_1,\psi}$, for any $\chi$.
\end{remark}
The space $\mathcal{S}_{L_1,\psi_{\chi}}$ consists  of locally constant, compactly supported functions $f$ on $W$  such that
 $$f(l+w)=\psi(-\tfrac{1}{2}\langle l, w\rangle-\tfrac{1}{2}B(l,l))\chi(l)f(w),$$
  for $l=x_l+x_{l}^{\ast}\in L_1=L_1\cap X\oplus L_1\cap X^{\ast}$, $w\in W$.  For $h=(w',t)\in \Ha(W)$,
\begin{equation}\label{BB'}
\pi_{L_1,\psi_{\chi}}(h) f(w)=f([w,0][w',t])=f([w+w', t+\tfrac{1}{2}\langle w, w'\rangle])=\psi( t+\tfrac{1}{2}\langle w, w'\rangle) f(w+w').
\end{equation}
\subsection{Self-dual lattice II}
Let $L_2$ be another self-dual lattice of $W$ with respect to $\psi$ such that $L_2=L_2\cap X \oplus L_2\cap X^{\ast}$.
Similarly, we can define the Heisenberg  representation:
$$(\pi_{L_2,\psi}=\cInd_{\Ha(L_2)}^{\Ha(W)} \psi_{L_2}, \quad  \mathcal{S}_{L_2,\psi}=\cInd_{\Ha(L_2)}^{\Ha(W)} \C).$$
Then $\pi_{L_1,\psi} \simeq \pi_{L_2,\psi}$. For  $g\in \Sp(W)$, let us define a $\C$-linear map  $ \mathfrak{i}^g_{L_1,L_2}$ from $\mathcal{S}_{L_1,\psi}$ to $ \mathcal{S}_{L_2,\psi}$ as follows:
\begin{align}\label{interL1L2}
\mathfrak{i}^g_{L_1,L_2}(f)(w)=\int_{a\in L_2}\psi(\tfrac{1}{2}\langle a, w\rangle+\tfrac{1}{2}B(a,a))f((a+w)g) da.
\end{align}
\begin{lemma}\label{welldL1L2}
$\mathfrak{i}^g_{L_1,L_2}$ is well-defined.
\end{lemma}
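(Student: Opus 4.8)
The plan is to show that the formula \eqref{interL1L2} defining $\mathfrak{i}^g_{L_1,L_2}(f)$ produces an element of $\mathcal{S}_{L_2,\psi}$, i.e. that the function $w\mapsto \mathfrak{i}^g_{L_1,L_2}(f)(w)$ satisfies the transformation law characterizing $\mathcal{S}_{L_2,\psi}$, namely $F(l+w)=\psi(-\tfrac12\langle l,w\rangle-\tfrac12 B(l,l))F(w)$ for $l\in L_2$, $w\in W$, and that the integral converges (which is automatic since $L_2$ is compact, $f$ is locally constant, and the integrand is bounded). So the content is entirely the covariance identity. First I would fix $l\in L_2$, $w\in W$, write out $\mathfrak{i}^g_{L_1,L_2}(f)(l+w)=\int_{a\in L_2}\psi(\tfrac12\langle a,l+w\rangle+\tfrac12 B(a,a))f((a+l+w)g)\,da$, and perform the change of variable $a\mapsto a-l$, which preserves the Haar measure on $L_2$ since $l\in L_2$. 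This turns the integrand into $\psi(\tfrac12\langle a-l,l+w\rangle+\tfrac12 B(a-l,a-l))f((a+w)g)$.

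Next I would expand the two quadratic-type expressions in the exponent. Using $\langle a-l,l+w\rangle=\langle a,w\rangle+\langle a,l\rangle-\langle l,w\rangle-\langle l,l\rangle=\langle a,w\rangle+\langle a,l\rangle-\langle l,w\rangle$ (since $\langle l,l\rangle=0$), and $B(a-l,a-l)=B(a,a)-B(a,l)-B(l,a)+B(l,l)$, together with the identity $\langle a,l\rangle=B(a,l)-B(l,a)$, one gets $\tfrac12\langle a-l,l+w\rangle+\tfrac12 B(a-l,a-l)=\tfrac12\langle a,w\rangle+\tfrac12 B(a,a)-\tfrac12\langle l,w\rangle-\tfrac12 B(l,l)+\big(\tfrac12\langle a,l\rangle-\tfrac12 B(a,l)-\tfrac12 B(l,a)\big)$. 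The bracketed term equals $\tfrac12\langle a,l\rangle-\tfrac12 B(a,l)-\tfrac12 B(l,a)=\tfrac12\big(B(a,l)-B(l,a)\big)-\tfrac12 B(a,l)-\tfrac12 B(l,a)=-B(l,a)$. Thus the exponent becomes $\tfrac12\langle a,w\rangle+\tfrac12 B(a,a)-\tfrac12\langle l,w\rangle-\tfrac12 B(l,l)-B(l,a)$, and the factor $\psi(-B(l,a))$ must be disposed of: since $a,l\in L_2$ and $L_2=L_2\cap X\oplus L_2\cap X^\ast$ is self-dual with respect to $\psi$, writing $a=x_a+x_a^\ast$, $l=x_l+x_l^\ast$ we have $B(l,a)=\langle x_l,x_a^\ast\rangle$, and $\langle L_2,L_2\rangle\subseteq \mathcal{P}^e=\ker\psi$ gives $\psi(B(l,a))=1$ for all $a\in L_2$. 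Pulling the remaining $w$-only factor $\psi(-\tfrac12\langle l,w\rangle-\tfrac12 B(l,l))$ out of the integral then yields exactly $\psi(-\tfrac12\langle l,w\rangle-\tfrac12 B(l,l))\,\mathfrak{i}^g_{L_1,L_2}(f)(w)$, which is the required covariance.

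It remains to check that $\mathfrak{i}^g_{L_1,L_2}(f)$ is locally constant and compactly supported as a function on $W$. Local constancy follows because $f$ is locally constant and $L_2$-compactness makes the integral a finite sum on any sufficiently small open subgroup; compact support follows from the support condition on $f$ (its support modulo $L_1$ is finite, hence so is the support of $(a+w)g$ as $a$ ranges over the compact $L_2$), so $\mathfrak{i}^g_{L_1,L_2}(f)$ is supported in a compact subset of $W$. I expect the main obstacle to be the bookkeeping in the quadratic expansion of the previous paragraph — specifically keeping careful track of the bilinear form $B$ versus the symplectic form $\langle\,,\rangle$ and correctly using $\langle a,l\rangle=B(a,l)-B(l,a)$ — together with the point that the spurious factor $\psi(-B(l,a))$ is killed precisely by the self-duality hypothesis $L_2=L_2^\ast$; without the decomposition $L_2=L_2\cap X\oplus L_2\cap X^\ast$ and self-duality, the map would not land in $\mathcal{S}_{L_2,\psi}$, so it is essential to invoke these hypotheses at exactly that step.
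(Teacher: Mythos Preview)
Your proof is correct and follows essentially the same route as the paper: substitute $a\mapsto a-l$, expand the exponent, and eliminate the cross term $\psi(-B(l,a))$ using self-duality of $L_2$; the paper then handles compact support and local constancy by the same elementary arguments you sketch. One small bookkeeping slip: expanding $B(a-l,a-l)$ gives $+\tfrac12 B(l,l)$, not $-\tfrac12 B(l,l)$, so the honest output of your computation is $\psi(-\tfrac12\langle l,w\rangle+\tfrac12 B(l,l))$, which is exactly what the paper obtains; this agrees with your stated target $\psi(-\tfrac12\langle l,w\rangle-\tfrac12 B(l,l))$ because $\psi(B(l,l))=1$ for $l\in L_2$ (same self-duality reasoning), hence $\psi(\tfrac12 B(l,l))=\psi(-\tfrac12 B(l,l))$.
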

\begin{proof}
  Let  $f\in \mathcal{S}_{L_1,\psi}$.\\ 
1) For $l\in L_2$, we have:
 \begin{align*}
&\mathfrak{i}^g_{L_1,L_2}(f)(l+w)\\
&=\int_{a\in L_2}  \psi(\tfrac{1}{2}\langle a, w+l\rangle+\tfrac{1}{2}B(a,a))f((a+w+l)g) da\\
&=\int_{a\in L_2}  \psi(\tfrac{1}{2}\langle a-l, w+l\rangle+\tfrac{1}{2}B(a-l,a-l))f((a+w)g) da\\
&=\int_{a\in L_2}\psi(\tfrac{1}{2}\langle a, l\rangle)\psi(-\tfrac{1}{2}B(l,a)-\tfrac{1}{2}B(a,l))\psi(-\tfrac{1}{2}\langle l, w\rangle+\tfrac{1}{2}B(l,l)) \psi(\tfrac{1}{2}\langle a, w\rangle+\tfrac{1}{2}B(a,a))f((a+w)g) da\\
&=\psi(-\tfrac{1}{2}\langle l, w\rangle+\tfrac{1}{2}B(l,l))\mathfrak{i}^g_{L_1,L_2}(f)(w)
.
 \end{align*}
 2) Assume $\supp(f) \subseteq C$, for some compact set $C$ of $W$. If $w\notin Cg^{-1}+L_2$, then $f((a+w)g)=0$,  for any  $a\in L_2$. Hence $\supp (\mathfrak{i}^g_{L_1,L_2}(f)) \subseteq L_2+Cg^{-1}$.\\
 3)  If $f(a+w)=f(w)$, for some open compact subgroup $K$ of $W$ and $a\in W$.  Then for any $l\in 2(L_2\cap Kg^{-1})$, we have: 
 \begin{align*}
&\mathfrak{i}^g_{L_1,L_2}(f)(l+w)\\
&=\int_{a\in L_2}  \psi(\tfrac{1}{2}\langle a, w+l\rangle+\tfrac{1}{2}B(a,a))f((a+w+l)g) da\\
 &=\int_{a\in L_2}  \psi(\tfrac{1}{2}\langle a, l\rangle)\psi(\tfrac{1}{2}\langle a, w\rangle+\tfrac{1}{2}B(a,a))f((a+w)g) da\\
 &=\mathfrak{i}^g_{L_1,L_2}(f)(w).
 \end{align*}
As  $2(L_2\cap Kg^{-1})$ is an open set that contains the identity element, it at least contains an open compact subgroup of $W$.
\end{proof}
\begin{lemma}\label{coomL1L2}
Let $h=(w',t)\in \Ha(W)$. The following diagram is commutative:
\begin{equation}\label{eq1}
\begin{CD}
 \mathcal{S}_{L_1,\psi}@>\mathfrak{i}^g_{L_1,L_2}>> \mathcal{S}_{L_2,\psi}  \\
       @V\pi_{L_1,\psi}(h) VV @VV\pi_{L_2,\psi}(h^{g^{-1}}) V  \\
 \mathcal{S}_{L_1,\psi}@>\mathfrak{i}^g_{L_1,L_2}>> \mathcal{S}_{L_2,\psi} 
\end{CD}
\end{equation} 
\end{lemma}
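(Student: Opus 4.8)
The plan is to prove the commutativity of \eqref{eq1} by evaluating both composite maps on an arbitrary $f\in\mathcal{S}_{L_1,\psi}$ at an arbitrary point $w\in W$ and comparing the resulting integrals, in the same spirit as the computation carried out for $M[g]$ in item~(2) of Section~\ref{MgM}. Throughout, $h=(w',t)$ and, following the convention used there, $h^{g^{-1}}=(w'g^{-1},t)$; note also that both composites do land in $\mathcal{S}_{L_2,\psi}$ thanks to Lemma~\ref{welldL1L2}, so that comparing them as functions on $W$ is meaningful.

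First I would expand the left-hand side: applying \eqref{interL1L2} and then the formula for the $\Ha(W)$-action on $\mathcal{S}_{L_1,\psi}$ at the point $(a+w)g$ gives
\begin{align*}
\mathfrak{i}^g_{L_1,L_2}\bigl(\pi_{L_1,\psi}(h)f\bigr)(w)=\int_{a\in L_2}\psi\Bigl(t+\tfrac{1}{2}\langle a,w\rangle+\tfrac{1}{2}B(a,a)+\tfrac{1}{2}\langle(a+w)g,w'\rangle\Bigr)f\bigl((a+w)g+w'\bigr)\,da.
\end{align*}
Then I would expand the right-hand side: the $\Ha(W)$-action formula for $\pi_{L_2,\psi}(h^{g^{-1}})$ followed by \eqref{interL1L2} evaluated at $w+w'g^{-1}$, together with the identity $(a+w+w'g^{-1})g=(a+w)g+w'$, gives
\begin{align*}
\pi_{L_2,\psi}\bigl(h^{g^{-1}}\bigr)\bigl(\mathfrak{i}^g_{L_1,L_2}f\bigr)(w)=\int_{a\in L_2}\psi\Bigl(t+\tfrac{1}{2}\langle w,w'g^{-1}\rangle+\tfrac{1}{2}\langle a,w\rangle+\tfrac{1}{2}\langle a,w'g^{-1}\rangle+\tfrac{1}{2}B(a,a)\Bigr)f\bigl((a+w)g+w'\bigr)\,da.
\end{align*}
The two integrands have the same function factor, and the term $\tfrac{1}{2}B(a,a)$ occurs identically in both (it involves only the integration variable), so the claim reduces to the pointwise identity of the $\psi$-arguments, i.e. to $\langle(a+w)g,w'\rangle=\langle a+w,w'g^{-1}\rangle$. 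Writing $w'=(w'g^{-1})g$ and using that $g\in\Sp(W)$ preserves $\langle\,,\,\rangle$ settles this, and the lemma follows.

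I do not anticipate a genuine obstacle: this is the $B$-twisted analogue of a computation already done in Section~\ref{MgM}, and the only things requiring attention are the bookkeeping of the two bilinear forms $B$ and $\langle\,,\,\rangle$ — in particular checking that the $B$-twist enters symmetrically on the two sides so as to cancel — and fixing the convention for the twisted $\Ha(W)$-action $h\mapsto h^{g^{-1}}$ consistently with \eqref{interL1L2}. Convergence of the integrals is not an issue, since $L_2$ is compact and the functions involved are locally constant and compactly supported (cf.\ Lemma~\ref{welldL1L2}).
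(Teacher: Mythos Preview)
Your proposal is correct and follows essentially the same approach as the paper's own proof: both compute $\mathfrak{i}^g_{L_1,L_2}(\pi_{L_1,\psi}(h)f)(w)$ and $\pi_{L_2,\psi}(h^{g^{-1}})(\mathfrak{i}^g_{L_1,L_2}f)(w)$ directly from the defining formulas and reduce the comparison to the identity $\langle(a+w)g,w'\rangle=\langle a+w,w'g^{-1}\rangle$ coming from $g\in\Sp(W)$. The only cosmetic difference is the order in which the two sides are expanded.
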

\begin{proof}
  \begin{align*}
 & \pi_{L_2,\psi}(h^{g^{-1}})\mathfrak{i}^g_{L_1,L_2}(f)(w)\\
 &=\pi_{L_2,\psi}(w'g^{-1},t)\mathfrak{i}^g_{L_1,L_2}(f)(w)\\
  &=\psi( t+\tfrac{1}{2}\langle w, w'g^{-1}\rangle)\mathfrak{i}^g_{L_1,L_2}(f)(w+w'g^{-1})\\
 &=\psi( t+\tfrac{1}{2}\langle w, w'g^{-1}\rangle)\int_{a\in L_2}\psi(\tfrac{1}{2}\langle a, w+w'g^{-1}\rangle+\tfrac{1}{2}B(a,a))f((a+w+w'g^{-1})g) da\\
 &=\int_{a\in L_2}\psi( t+\tfrac{1}{2}\langle w, w'g^{-1}\rangle+\tfrac{1}{2}\langle a, w+w'g^{-1}\rangle+\tfrac{1}{2}B(a,a))f(ag+wg+w') da;
  \end{align*}
  \begin{align*}
 &\mathfrak{i}^g_{L_1,L_2}[\pi_{L_2,\psi}(h)f](w)\\
 &=\int_{a\in L_2}\psi(\tfrac{1}{2}\langle a, w\rangle+\tfrac{1}{2}B(a,a))[\pi_{L_2,\psi}(h)f]((a+w)g) da \\
 &=\int_{a\in L_2}\psi(\tfrac{1}{2}\langle a, w\rangle+\tfrac{1}{2}B(a,a))\psi( t+\tfrac{1}{2}\langle (a+w)g, w'\rangle)f((a+w)g+w')da\\
  &=\int_{a\in L_2}\psi( t+\tfrac{1}{2}\langle a, w\rangle+\tfrac{1}{2}B(a,a)+\tfrac{1}{2}\langle a+w, w'g^{-1}\rangle)f((a+w)g+w')da\\
  &=\pi_{L_2,\psi}(h^{g^{-1}})\mathfrak{i}^g_{L_1,L_2}(f)(w).
  \end{align*}
\end{proof}
If $g=1\in \Sp(W)$, we write  $\mathfrak{i}_{L_1,L_2}$ for $\mathfrak{i}^g_{L_1,L_2}$.
\begin{lemma}\label{nonzeroL1L2}
\begin{itemize} 
\item[(1)] $\mathfrak{i}_{L_1,L_2}$ is  a non-zero map.
\item[(2)] $\mathfrak{i}_{L_1,L_2}$ is bijective.
\end{itemize}
\end{lemma}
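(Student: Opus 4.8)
The plan is to prove the two assertions in sequence, with (1) feeding into (2).

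\textbf{Step 1: Non-vanishing of $\mathfrak{i}_{L_1,L_2}$.} I would evaluate $\mathfrak{i}_{L_1,L_2}(f)$ on a carefully chosen test function $f\in\mathcal{S}_{L_1,\psi}$. The natural choice is (a multiple of) the basis element supported on a single coset $L_1+w_0$, e.g. the function $s$ with $\supp s\subseteq L_1+w_0$ and $s(l+w_0)=\psi(-\tfrac12\langle l,w_0\rangle-\tfrac12 B(l,l))$ for $l\in L_1$; the cleanest case is $w_0=0$, giving $s$ supported on $L_1$ with $s(l)=\psi(-\tfrac12 B(l,l))$ (using $\langle l,l\rangle=0$). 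Then
\begin{align*}
\mathfrak{i}_{L_1,L_2}(s)(0)=\int_{a\in L_2}\psi(\tfrac12 B(a,a))\,s(a)\,da=\int_{a\in L_2\cap L_1}\psi(\tfrac12 B(a,a))\psi(-\tfrac12 B(a,a))\,da=\mu(L_1\cap L_2)>0,
\end{align*}
since $s(a)=0$ unless $a\in L_1$. Hence $\mathfrak{i}_{L_1,L_2}$ is non-zero. (If one insists on working with a general $w_0$ to avoid any normalization subtlety, the same computation gives $\mathfrak{i}_{L_1,L_2}(s_{w_0})(w_0)=\mu((L_1-w_0)\cap L_2\cap\cdots)$ up to a unit-modulus scalar; the point is only that some value is nonzero.)

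\textbf{Step 2: Bijectivity.} By Lemma stating $\pi_{L_1,\psi}\simeq\pi_{L_2,\psi}$ are Heisenberg representations with the same central character $\psi$, the Stone–von Neumann theorem (in the smooth setting of \cite{MVW}, \cite{BushH}) tells us both $\mathcal{S}_{L_1,\psi}$ and $\mathcal{S}_{L_2,\psi}$ are \emph{irreducible} $\Ha(W)$-modules, mutually isomorphic. By Lemma~\ref{coomL1L2} with $g=1$, the map $\mathfrak{i}_{L_1,L_2}$ intertwines $\pi_{L_1,\psi}(h)$ with $\pi_{L_2,\psi}(h)$ for every $h\in\Ha(W)$, i.e.\ it is a morphism of $\Ha(W)$-modules. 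A nonzero morphism between irreducible modules is injective (its kernel is a proper submodule, hence zero) and has nonzero image, which is therefore a nonzero submodule of the irreducible $\mathcal{S}_{L_2,\psi}$, hence all of it; so $\mathfrak{i}_{L_1,L_2}$ is surjective. Combining Step 1 with Schur/irreducibility yields bijectivity. (Alternatively, one can exhibit $\mathfrak{i}_{L_2,L_1}$ and compute $\mathfrak{i}_{L_2,L_1}\circ\mathfrak{i}_{L_1,L_2}$ directly, by a Gaussian-integral manipulation analogous to the one in \ref{MgM}(3), showing it is a nonzero scalar; this gives an explicit inverse but requires more computation.)

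\textbf{Main obstacle.} The only real subtlety is making sure the Stone–von Neumann / Schur argument is legitimately available: one must know that $\mathcal{S}_{L_1,\psi}$ and $\mathcal{S}_{L_2,\psi}$ are genuinely \emph{admissible irreducible} smooth representations of $\Ha(W)$ (so that $\Hom_{\Ha(W)}$ is one-dimensional and nonzero maps are isomorphisms), which is exactly the content of the cited \cite[p.42, II.8]{MVW} lemma reproduced just above. Given that, Step 2 is immediate and the whole burden is the short explicit computation in Step 1. If one prefers to avoid invoking Schur's lemma over $\C$ in the smooth category, the fallback is the explicit-inverse route: compute the composite $\mathfrak{i}_{L_2,L_1}\circ\mathfrak{i}_{L_1,L_2}$, collapsing the double integral over $L_1\times L_2$ using the non-degenerate pairing $\psi(\langle\cdot,\cdot\rangle)$ to produce $\mu(L_1)\mu(L_2)\,\mu(\{a\in L_2:\ a\in L_1\})$ times the identity — the same bookkeeping as in the $M[g]M[g^{-1}]$ calculation — and observe this constant is strictly positive.
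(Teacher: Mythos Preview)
Your proposal is correct and matches the paper's proof essentially line for line: for (1) the paper also takes $f_0$ supported on $L_1$ with $f_0(l)=\psi(-\tfrac12 B(l,l))f_0(0)$ and computes $\mathfrak{i}_{L_1,L_2}(f_0)(0)=\mu(L_1\cap L_2)f_0(0)\neq 0$, and for (2) it invokes exactly the irreducibility of both $\Ha(W)$-modules together with Lemma~\ref{coomL1L2} to conclude that the nonzero intertwiner is bijective. Your alternative explicit-inverse route is in fact the content of the lemma immediately following this one in the paper.
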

\begin{proof}
1) Let $f_0\in  \mathcal{S}_{L_1,\psi}$, with support on $L_1$. Then $f_0(l)=\psi(-\tfrac{1}{2}B(l,l))f_0(0)\neq 0$, for $l\in L_1$.  So
 \begin{align*}
 \mathfrak{i}_{L_1,L_2}(f_0)(0)&=\int_{a\in L_2}\psi(\tfrac{1}{2}B(a,a))f_0(a) da\\
 &=\int_{a\in L_2\cap L_1} \psi(\tfrac{1}{2}B(a,a))\psi(-\tfrac{1}{2}B(a,a))f_0(0)da\\
 &=\mu(L_1\cap L_2)f_0(0)\neq 0.
 \end{align*}
2) Considering that both $\mathcal{S}_{L_1,\psi}$, $\mathcal{S}_{L_2,\psi}$ are irreducible $\Ha(W)$-modules, and by virtue of the preceding lemma,  $\mathfrak{i}_{L_1,L_2}$ is established as a non-zero intertwining operator between these two modules. Consequently,  $\mathfrak{i}_{L_1,L_2}$ is a bijective map.
\end{proof}
Similarly, we can define $\mathfrak{i}_{L_2,L_1}$. 
\begin{lemma}
Let $c=\mu(L_2)\mu(L_1\cap L_2)$. Then  $\mathfrak{i}_{L_1,L_2}\circ \mathfrak{i}_{L_2,L_1}=c\id$.
\end{lemma}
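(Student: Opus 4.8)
The plan is to compute the composition $\mathfrak{i}_{L_1,L_2}\circ \mathfrak{i}_{L_2,L_1}$ directly by substituting the defining integral formula \eqref{interL1L2} (with $g=1$) twice, and then to recognize the result as a scalar multiple of the identity. Concretely, for $f\in \mathcal{S}_{L_1,\psi}$ and $w\in W$,
\begin{align*}
\mathfrak{i}_{L_1,L_2}\big(\mathfrak{i}_{L_2,L_1}(f)\big)(w)
&=\int_{a\in L_2}\psi(\tfrac{1}{2}\langle a,w\rangle+\tfrac{1}{2}B(a,a))\,\mathfrak{i}_{L_2,L_1}(f)(a+w)\,da\\
&=\int_{a\in L_2}\int_{b\in L_1}\psi(\tfrac{1}{2}\langle a,w\rangle+\tfrac{1}{2}B(a,a))\psi(\tfrac{1}{2}\langle b,a+w\rangle+\tfrac{1}{2}B(b,b))\,f(b+a+w)\,db\,da.
\end{align*}
First I would use the transformation property of $f\in\mathcal{S}_{L_1,\psi}$, namely $f(b+a+w)=\psi(-\tfrac{1}{2}\langle b,a+w\rangle-\tfrac{1}{2}B(b,b))f(a+w)$ for $b\in L_1$, to collapse the inner integral: the $b$-dependent phases cancel exactly, leaving $\int_{b\in L_1}db = \mu(L_1)$ times $\int_{a\in L_2}\psi(\tfrac12\langle a,w\rangle+\tfrac12 B(a,a))f(a+w)\,da$. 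Wait — this gives $\mu(L_1)\,\mathfrak{i}_{L_2,L_1}(f)(w)$, not a scalar times $f$, so the cancellation must instead be organized the other way.

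The correct organization: apply the transformation property with respect to $a\in L_2$ using the fact that $f(a+w)$, viewed through $\mathcal{S}_{L_1,\psi}$, behaves covariantly only under $L_1$, not $L_2$; so instead I integrate over $a\in L_2$ first and note the integrand in $a$ is a character of $a$ twisted by $f$. The clean route is: change variables and split $L_2 = \bigcup (L_1\cap L_2 + \text{cosets})$, but more efficiently, use the already-established Lemma \ref{coomL1L2} (with $h=0$, so $\mathfrak{i}_{L_1,L_2}$ intertwines $\pi_{L_1,\psi}$ and $\pi_{L_2,\psi}$) together with Lemma \ref{nonzeroL1L2}: since $\mathcal{S}_{L_1,\psi}$ is irreducible, $\mathfrak{i}_{L_1,L_2}\circ\mathfrak{i}_{L_2,L_1}$ is an $\Ha(W)$-endomorphism of $\mathcal{S}_{L_1,\psi}$, hence a scalar $c$ by Schur's lemma. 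To pin down $c$, I evaluate both sides on the test function $f_0\in\mathcal{S}_{L_1,\psi}$ supported on $L_1$ with $f_0(0)\ne 0$, at the point $w=0$. From the proof of Lemma \ref{nonzeroL1L2}, $\mathfrak{i}_{L_2,L_1}(f_0)(0)=\mu(L_1\cap L_2)f_0(0)$, and more generally $\mathfrak{i}_{L_2,L_1}(f_0)$ is (a phase times) the test function supported on $L_2$; feeding this into $\mathfrak{i}_{L_1,L_2}$ and evaluating at $0$ produces $\mu(L_2)\,\mu(L_1\cap L_2)\,f_0(0)$ after the Gaussian phases $\psi(\tfrac12 B(a,a))$ again cancel against the covariance phase of the intermediate function on $L_1\cap L_2$.

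The main obstacle — and the step to carry out carefully — is the bookkeeping of the quadratic phases $\psi(\tfrac12 B(a,a))$: these are \emph{not} additive characters, so the double integral does not factor by a naive Fubini/character-orthogonality argument, and one must track that on the overlap lattice $L_1\cap L_2$ the phase $\psi(\tfrac12 B(a,a))$ coming from $\mathfrak{i}_{L_1,L_2}$ exactly cancels the phase $\psi(-\tfrac12 B(a,a))$ dictated by membership in $\mathcal{S}_{L_1,\psi}$ (or $\mathcal{S}_{L_2,\psi}$). This is exactly the cancellation already used in part (1) of the proof of Lemma \ref{nonzeroL1L2}, so the Schur-plus-test-vector route avoids re-deriving it in full generality. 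I would therefore present the proof as: (i) $\mathfrak{i}_{L_1,L_2}$ and $\mathfrak{i}_{L_2,L_1}$ are intertwiners by Lemma \ref{coomL1L2}, both nonzero and bijective by Lemma \ref{nonzeroL1L2}; (ii) hence their composite is a scalar $c\cdot\id$ by Schur; (iii) evaluate on $f_0$ at $w=0$ to get $c=\mu(L_2)\mu(L_1\cap L_2)$.
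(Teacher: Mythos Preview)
Your proposal has a persistent bookkeeping error: the composite $\mathfrak{i}_{L_1,L_2}\circ\mathfrak{i}_{L_2,L_1}$ is an endomorphism of $\mathcal{S}_{L_2,\psi}$, not of $\mathcal{S}_{L_1,\psi}$, so throughout you should take $f\in\mathcal{S}_{L_2,\psi}$. This is precisely why your first direct-computation attempt stalled. With $f\in\mathcal{S}_{L_2,\psi}$ you can use the $L_2$-covariance of $f$ with respect to the variable $a\in L_2$ (not $L_1$-covariance in $b\in L_1$), and the argument goes through in three lines. That is in fact the paper's proof: write the double integral, use $f(a+a'+w)=\psi(-\tfrac12\langle a,a'+w\rangle-\tfrac12 B(a,a))f(a'+w)$ for $a\in L_2$, collect phases so that the $a$-integral becomes $\int_{a\in L_2}\psi(\langle a',a\rangle)\,da$, which equals $\mu(L_2)$ exactly when $a'\in L_2^{\ast}=L_2$, i.e.\ when $a'\in L_1\cap L_2$, and vanishes otherwise; one more application of $L_2$-covariance for $a'\in L_1\cap L_2$ collapses the remaining integral to $\mu(L_1\cap L_2)f(w)$.

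Your fallback Schur-plus-test-vector route is a legitimate alternative and does pin down $c$ correctly, but two subsidiary claims need fixing: the test function $f_0$ must lie in $\mathcal{S}_{L_2,\psi}$ (the same domain error), and the intermediate function $\mathfrak{i}_{L_2,L_1}(f_0)$ is supported on $L_1+L_2$, not on $L_2$. Neither affects the evaluation at $w=0$, so the approach survives; but the paper's direct computation is shorter and avoids the detour through irreducibility and Schur's lemma entirely.
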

\begin{proof}
Let $f\in \mathcal{S}_{L_2,\psi}$. Then:
\begin{align*}
 & \mathfrak{i}_{L_1,L_2}\circ \mathfrak{i}_{L_2,L_1}(f)(w)\\
 &=\int_{a\in L_2}\psi(\tfrac{1}{2}\langle a, w\rangle+\tfrac{1}{2}B(a,a))[ \mathfrak{i}_{L_2,L_1}(f)](a+w) da\\
  &=\int_{a\in L_2}\int_{a'\in L_1}\psi(\tfrac{1}{2}\langle a, w\rangle+\tfrac{1}{2}B(a,a)) \psi(\tfrac{1}{2}\langle a', a+w\rangle+\tfrac{1}{2}B(a',a'))f(a'+a+w) dada'\\
    &=\int_{a'\in L_1}\int_{a\in L_2}\psi(\langle a', a\rangle) \psi(\tfrac{1}{2}\langle a', w\rangle+\tfrac{1}{2}B(a',a'))f(a'+w) dada'\\
     &=\mu(L_2)\int_{a'\in L_1\cap L_2} \psi(\tfrac{1}{2}\langle a', w\rangle+\tfrac{1}{2}B(a',a'))f(a'+w)  da'\\
 &=\mu(L_2)\mu(L_1\cap L_2)f(w).
  \end{align*}
\end{proof}
\subsection{ Iwahori decomposition}\label{decom} For our purpose, let us recall some results of Iwahori decomposition from  \cite{Vi}.  For a subset $S\subseteq \{1, \cdots, m\}$, let us define an element   $\omega_S$ of  $ \Sp(W)$ as follows: $ (e_i)\omega_S=\left\{\begin{array}{lr}
-e_i^{\ast}& i\in S\\
 e_i & i\notin S
 \end{array}\right.$ and $ (e_i^{\ast})\omega_S=\left\{\begin{array}{lr}
e_i^{\ast}& i\notin S\\
 e_i & i\in S
 \end{array}\right.$. Let $S_m$ denote the permutation group on $m$ elements. For $s\in S_m$, let $\omega_s$ be an element of $\Sp(W)$ defined as $(e_i)\omega_s=e_{s(i)}$ and $(e_j^{\ast}) \omega_s=e_{s(j)}^{\ast}$. Let  $\mathfrak{W}=\{ \omega_s \omega_S \mid S\subseteq  \{1, \cdots, m\}, s\in S_m\}$, $\mathfrak{W}_0=\{\omega_S \mid S\subseteq  \{1, \cdots, m\}\}$ . Then  $\mathfrak{W}$ represents   a Weyl group of $\Sp(W)$. 
 
 Under the basis $\{e_1, \cdots, e_m; e_1^{\ast}, \cdots, e_m^{\ast}\}$, let us identity $\Sp(W)$ with $\Sp_{2m}(F)$. Let $I$ be the Iwahori subgroup of $\Sp_{2m}(F)$ as given in \cite{HeOi} or \cite{TaWo}. Let 
 $$D=\{ \begin{bmatrix}
d& 0 \\0 &  d^{-1} \end{bmatrix} \mid d = \diag(2^{k_1}, \cdots, 2^{k_m}) \textrm{ with integers } k_1, \cdots,   k_m \}.$$
Let $\mathfrak{W}^{\textrm{eaff}}=D\rtimes \mathfrak{W}$ and $\mathfrak{W}_0^{\textrm{aff}}=D\rtimes \mathfrak{W}_0$. By \cite{Vi}, $\mathfrak{W}^{\textrm{eaff}}$ represents  the Iwahori-Weyl group of $\Sp_{2m}(F)$ and 
$\Sp_{2m}(F)= \cup_{w\in \mathfrak{W}^{\textrm{eaff}}} IwI$.
 \subsection{On the Iwahori group $I$}
 Let us define:
\begin{itemize}
\item $L_1'=\left\{ \begin{array}{cl}
2 L_1\cap X\oplus \tfrac{1}{2} L_1\cap X^{\ast} & \textrm{ if } 2\mid e,\\
\tfrac{1}{2} L_1\cap X\oplus 2 L_1\cap X^{\ast} & \textrm{ if } 2\nmid e.\end{array}\right.$ 
\item $L_1^{0}= L_1\cap L_1'$.
 \end{itemize}
 \begin{lemma}\label{trans}
 Let $g\in I$.
 \begin{itemize}
 \item[(1)] $L_1^{0}g=L_1^0$ and $L_1g=L_1$.
 \item[(2)] $\psi(q_g(l))=1$, for $l\in L_1^0$.
 \end{itemize}
 \end{lemma}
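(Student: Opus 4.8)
The plan is to unwind the definition of the Iwahori subgroup $I$ of $\Sp_{2m}(F)$ in terms of the chosen symplectic basis and check directly that each of the finitely many topological generators of $I$ preserves both $L_1$ and $L_1^{0}$, and then leverage the group structure. Recall that $I$ is generated by the torus $\T(\mathcal O)$, the upper-triangular unipotent entries with coefficients in $\mathcal O$, and the lower-triangular unipotent entries with coefficients in $\mathcal P$ (this is the ``standard'' Iwahori attached to the alcove cut out by the affine simple roots, as in \cite{HeOi} or \cite{TaWo}). First I would record, for $L_1 = L_1\cap X \oplus L_1\cap X^{\ast}$ with $L_1\cap X = \mathcal P^{[\frac{e+1}{2}]}e_1+\cdots+\mathcal P^{[\frac{e+1}{2}]}e_m$ and $L_1\cap X^{\ast} = \mathcal P^{[\frac{e}{2}]}e_1^{\ast}+\cdots+\mathcal P^{[\frac{e}{2}]}e_m^{\ast}$, the precise valuations of the coordinate lattices, and the analogous data for $L_1' $ (shifted by $\pm 1$ in the valuation of the $X$-part versus the $X^{\ast}$-part according to the parity of $e$) and hence for $L_1^{0}=L_1\cap L_1'$. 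Then for each generator type I check stability: diagonal torus elements in $\T(\mathcal O)$ act by units on each coordinate, hence fix every coordinate lattice; the upper unipotents $x_\alpha(u)$, $u\in\mathcal O$, move an $e_i$-component into the $e_j$- or $e_j^{\ast}$-component and the gain in valuation from $u$ must compensate the difference between the relevant coordinate valuations of $L_1$ (resp.\ $L_1^{0}$) — here the asymmetry $[\frac{e+1}{2}]$ versus $[\frac{e}{2}]$ is exactly what makes the bookkeeping close; and symmetrically for the lower unipotents, where the extra factor of $\mathcal P$ in the entries supplies the needed valuation. Since $L_1^{0}\subseteq L_1$ and $[L_1:L_1^0]$ is finite and $g$-independent (because $g$ acts with determinant a unit), it suffices to prove $L_1^0 g\subseteq L_1^0$ and $L_1 g\subseteq L_1$, which upgrade to equalities.

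For part (2), I would use the explicit formula \eqref{B} for $q_g$: writing $g=\begin{pmatrix}a&b\\c&d\end{pmatrix}$ and $l = x_l + x_l^{\ast}\in L_1^0$ with $x_l\in L_1^0\cap X$, $x_l^{\ast}\in L_1^0\cap X^{\ast}$, we have
$$
q_g(l) = \tfrac12 B(x_l a, x_l b) + \tfrac12 B(x_l^{\ast} c, x_l^{\ast} d) + B(x_l^{\ast} c, x_l b),
$$
so I must show each of the three $B$-values lies in $\mathcal P^{e}$, where $\psi$ is trivial. For $g\in I$ the off-diagonal blocks $b$ and $c$ have entries in appropriate powers of $\mathcal P$ (with $c$ gaining an extra $\mathcal P$ from the Iwahori condition), and $a,d$ have entries in $\mathcal O$; combining this with the valuations of the coordinates of $x_l, x_l^{\ast}\in L_1^0$ obtained in part (1), a term-by-term valuation estimate on $B(\cdot,\cdot)=\langle\,\cdot\, ,\,\cdot\,\rangle$-restricted-to-$(X,X^{\ast})$ should push everything into $\mathcal P^{e}=\ker\psi$. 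The factor $\tfrac12$ is harmless here since $F$ is unramified over $\Q_2$, so $2$ has valuation $1$ and only costs one unit of valuation, which the $L_1^{0}$ refinement (the whole point of intersecting with $L_1'$) is designed to absorb. I would also use part (1): since $L_1^0 g = L_1^0$, one can alternatively compute $q_g$ on a set of generators of $L_1^0$ and use the cocycle-type identity \eqref{equiv} to reduce the general case to generators, which may shorten the estimate.

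The main obstacle I anticipate is purely combinatorial rather than conceptual: keeping the valuation bookkeeping correct and uniform across the parity cases $2\mid e$ and $2\nmid e$, and making sure the definition of $I$ I use (in particular which root subgroups get the extra $\mathcal P$) matches the one in \cite{HeOi}/\cite{TaWo} and is compatible with the identification $\Sp(W)\cong\Sp_{2m}(F)$ via the fixed symplectic basis — a sign or transpose convention error there would flip $b$ and $c$ and break part (2). To keep this manageable I would treat the two parities in parallel columns, and I would isolate the single lemma ``for $g\in I$, the block entries of $g$ have valuations $\ge(0,0;1,0)$ in the obvious sense, and $g-1$ has strictly positive valuation in every entry except possibly the torus part'' as the one computational input, after which both (1) and (2) follow by routine matrix-times-lattice and valuation arguments.
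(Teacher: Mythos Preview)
Your approach is correct, but for part~(2) the paper takes a genuinely different and shorter route. Rather than estimating each of the three summands in the explicit formula~\eqref{B} for $q_g$, the paper observes the identity
\[
q_g(l)=\tfrac12 B(lg,lg)-\tfrac12 B(l,l),
\]
valid for any $g\in\Sp(W)$ (this follows from~\eqref{B} together with the symplectic relation $\langle xa,x^{\ast}d\rangle-\langle x^{\ast}c,xb\rangle=\langle x,x^{\ast}\rangle$). The lattice $L_1^0=L_1\cap L_1'$ is defined precisely so that $\psi(\tfrac12 B(l,l))=1$ for every $l\in L_1^0$; since part~(1) gives $lg\in L_1^0$, the same holds with $lg$ in place of $l$, and $\psi(q_g(l))=1$ follows immediately. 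Your term-by-term valuation estimate also works---and indeed your own check confirms it closes in each parity case---but it requires tracking the block-valuation structure of $I$ (which you correctly flag as the main bookkeeping risk), whereas the paper's argument uses only the \emph{conclusion} of part~(1) and the single lattice computation $\tfrac12 B(L_1^0,L_1^0)\subseteq\mathcal P^e$. For part~(1), the paper simply cites \cite[Sect.~1.2]{TaWo} and \cite{Wo}; your generator-by-generator verification is the content behind that citation and is perfectly fine, just more explicit.
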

 \begin{proof}
 1) See \cite[Sect.1.2]{TaWo}, or \cite{Wo}.\\
 2) For $l\in L_1^0$, $\psi(\tfrac{1}{2}B(l,l))=1$. Since $lg\in L_1^0$,  $\psi(\tfrac{1}{2}B(lg,lg))=1$. So $\psi(q_g(l))=\psi(\tfrac{1}{2}B(lg,lg)-\tfrac{1}{2}B(l,l))= 1$. 
 \end{proof}

\begin{lemma}\label{nonzeromap12}
\begin{itemize} 
\item[(1)] $\mathfrak{i}^g_{L_1',L_1}$ is  a non-zero map.
\item[(2)] $\mathfrak{i}^g_{L_1',L_1}$ is bijective.
\end{itemize}
\end{lemma}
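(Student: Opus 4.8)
The plan is to deduce both (1) and (2) from a single non-vanishing computation, exactly as in Lemma~\ref{nonzeroL1L2}. \emph{Step 1: put $L_1'$ on the same footing as $L_1$.} First I would check that $L_1'$ is itself a self-dual lattice with $L_1'=(L_1'\cap X)\oplus(L_1'\cap X^{\ast})$. Since $2$ is a uniformizer of the unramified extension $F$, multiplying (resp.\ dividing) a lattice by $2$ shifts its $\mathcal{P}$-filtration by $+1$ (resp.\ $-1$); writing $L_1'$ out in the coordinates $e_1,\dots,e_m,e_1^{\ast},\dots,e_m^{\ast}$ in the two cases $2\mid e$ and $2\nmid e$ and comparing with the definition of the dual lattice gives $(L_1')^{\ast}=L_1'$. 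Hence $\mathcal{S}_{L_1',\psi}$ and $(\pi_{L_1',\psi},\mathcal{S}_{L_1',\psi})$ are defined verbatim as for $L_1$, and by the self-dual case both $(\pi_{L_1',\psi},\mathcal{S}_{L_1',\psi})$ and $(\pi_{L_1,\psi},\mathcal{S}_{L_1,\psi})$ are irreducible representations of $\Ha(W)$ with central character $\psi$.

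\emph{Step 2: reduce to non-vanishing.} By Lemma~\ref{welldL1L2} applied to the pair $(L_1',L_1)$, the map $\mathfrak{i}^g_{L_1',L_1}\colon\mathcal{S}_{L_1',\psi}\to\mathcal{S}_{L_1,\psi}$ is well-defined, and Lemma~\ref{coomL1L2} gives $\mathfrak{i}^g_{L_1',L_1}\circ\pi_{L_1',\psi}(h)=\pi_{L_1,\psi}(h^{g^{-1}})\circ\mathfrak{i}^g_{L_1',L_1}$ for every $h\in\Ha(W)$. Because $g\in\Sp(W)$ acts on $\Ha(W)$ by an automorphism fixing the centre pointwise, $h\mapsto\pi_{L_1,\psi}(h^{g^{-1}})$ is again an irreducible representation with central character $\psi$, hence by the Stone--von Neumann theorem equivalent to $\pi_{L_1',\psi}$; so the space of $\Ha(W)$-intertwiners between them is one-dimensional, and $\mathfrak{i}^g_{L_1',L_1}$ is either $0$ or a bijection. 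Thus (1) and (2) both follow once $\mathfrak{i}^g_{L_1',L_1}\neq 0$ is proved.

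\emph{Step 3: non-vanishing.} I would test $\mathfrak{i}^g_{L_1',L_1}$ on the ``delta-type'' function $f_0\in\mathcal{S}_{L_1',\psi}$ supported on $L_1'$ with $f_0(0)=1$; this exists because $L_1'$ is a self-dual lattice of the form $L_1'\cap X\oplus L_1'\cap X^{\ast}$ (the well-definedness check being the same as for $\mathcal{S}_{L_1,\psi}$), and it satisfies $f_0(l')=\psi(-\tfrac12 B(l',l'))$ for $l'\in L_1'$. Here I use that the Iwahori element $g$ stabilizes $L_1'$ in addition to $L_1$ and $L_1^{0}=L_1\cap L_1'$ (stabilization of $L_1$ and $L_1^{0}$ is Lemma~\ref{trans}(1); that of $L_1'$ is built into the construction of $I$ in \cite{TaWo}, \cite{Wo}). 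Evaluating at $w=0$, the support of $f_0$ forces $a\in L_1\cap L_1'g^{-1}=L_1\cap L_1'=L_1^{0}$, and then $f_0(ag)=\psi(-\tfrac12 B(ag,ag))$, so
\begin{align*}
\mathfrak{i}^g_{L_1',L_1}(f_0)(0)=\int_{a\in L_1^{0}}\psi\!\left(\tfrac12 B(a,a)-\tfrac12 B(ag,ag)\right)da=\mu(L_1^{0})\neq 0,
\end{align*}
since for $a\in L_1^{0}$ one has $ag\in L_1^{0}$ and $\psi(\tfrac12 B(a,a))=\psi(\tfrac12 B(ag,ag))=1$, exactly as in the proof of Lemma~\ref{trans}(2). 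Hence $\mathfrak{i}^g_{L_1',L_1}\neq 0$, and by Step~2 it is bijective, proving both assertions. (The map $\mathfrak{i}^g_{L_1,L_1'}$ going the other way is handled identically, with the roles of $L_1$ and $L_1'$ interchanged.)

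\emph{Main obstacle.} The only genuine content beyond this formal framework is the bookkeeping with the quadratic term $B(ag,ag)-B(a,a)$ (equivalently, with $q_g$) on lattices: one needs $\psi\bigl(\tfrac12 B(\cdot,\cdot)\bigr)$ to be trivial on $L_1^{0}$ and $L_1^{0}$ to be $g$-stable, which is Lemma~\ref{trans}, together with the fact that $g\in I$ stabilizes $L_1'$ itself and not merely $L_1$ and $L_1^{0}$. Everything else is the Stone--von Neumann dichotomy (irreducible, same central character $\Rightarrow$ a nonzero intertwiner is an isomorphism) plus a one-line integral; in particular no new cocycle identity is required, since Lemmas~\ref{welldL1L2} and~\ref{coomL1L2} already encode the equivariance needed.
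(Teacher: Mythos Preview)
Your proof is correct and follows essentially the same route as the paper: test on the delta-type $f_0$ supported on $L_1'$, evaluate at $0$, reduce the integral to $L_1^0$, and invoke Lemma~\ref{trans}(2) to kill the phase, then conclude bijectivity from irreducibility and Lemma~\ref{coomL1L2}. The only cosmetic difference is that you justify $L_1\cap L_1'g^{-1}=L_1^0$ via the extra fact $L_1'g=L_1'$, whereas the paper uses only $L_1g=L_1$ (already in Lemma~\ref{trans}(1)) to write $L_1'g^{-1}\cap L_1=(L_1'\cap L_1)g^{-1}=L_1^0g^{-1}=L_1^0$, which avoids appealing to anything beyond the stated lemma.
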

\begin{proof}
1) Let $f_0\in  \mathcal{S}_{L_1',\psi}$, with support on $L_1'$. Then $f_0(l)=\psi(-\tfrac{1}{2}B(l,l))f_0(0)\neq 0$, for $l\in L_1'$.    So
 \begin{align*}
 \mathfrak{i}^g_{L_1',L_1}(f_0)(0)&=\int_{a\in L_1}\psi(\tfrac{1}{2}B(a,a))f_0(ag) da\\
 &=\int_{a\in L_1'g^{-1}\cap L_1}\psi(\tfrac{1}{2}B(a,a)-\tfrac{1}{2}B(ag,ag)) f_0(0)da\\
 &=\int_{a\in L_1^0g^{-1}=L_1^0}\psi(\tfrac{1}{2}B(a,a)-\tfrac{1}{2}B(ag,ag)) f_0(0)da\\
 &=\mu(L_1^g\cap L_1g^{-1})\psi(-q_{g}(a))f_0(0)\neq 0.
 \end{align*}
 2) Note that  $\mathcal{S}_{L_1,\psi}$, $\mathcal{S}_{L_1',\psi}$ both are  irreducible $\Ha(W)$-modules. By Lemma \ref{coomL1L2}, $\mathfrak{i}^g_{L_1',L_1}$ is  bijective.
\end{proof}

For $g\in I$, let us define $M[g]\in \End(\mathcal{S}_{L_1,\psi})$ as follows:
\begin{align}\label{gactionsigma22}
M[g]=  \mathfrak{i}_{L_1',L_1} \circ\mathfrak{i}^g_{L_1,L_1'} .
\end{align}
\begin{lemma}
$M[g]$ is bijective. 
\end{lemma}
\begin{proof}
It follows from Lemmas \ref{nonzeroL1L2}, \ref{nonzeromap12}.
\end{proof}

\subsection{On the  Iwahori-Weyl group $\mathfrak{W}^{\textrm{eaff}}$}
\begin{lemma}\label{affweylgroup}
Let $g\in \mathfrak{W}^{\textrm{eaff}}$. For $l\in L_1$, $\psi(q_g(l))=1$.
\end{lemma}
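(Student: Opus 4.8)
The plan is to verify the identity $\psi(q_g(l))=1$ for $l\in L_1$ by reducing to the two building blocks of the affine Weyl group $\mathfrak{W}^{\textrm{aff}}=D\rtimes\mathfrak{W}$, and then to combine them using the cocycle-type formula $q_{gg'}(w)=q_g(w)+q_{g'}(wg)$ that defines the group law on $\Ps(W)$. Concretely, write $g=d\cdot w$ with $d\in D$ and $w\in\mathfrak{W}$; since $\mathfrak{W}$ is generated by the permutation elements $\omega_s$ and the sign-change elements $\omega_S$, it suffices (by the cocycle formula and the fact that $L_1$ is stable under each of $D$, $\omega_s$, $\omega_S$, which one checks directly from the explicit action on the basis) to establish $\psi(q_h(l))=1$ for $l\in L_1$ when $h$ runs over the generators $\{d\in D,\ \omega_s,\ \omega_S\}$.

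First I would treat the torus elements $d=\begin{bmatrix} d&0\\0&d^{-1}\end{bmatrix}\in D$ with $d=\diag(2^{k_1},\dots,2^{k_m})$. Here $a=0$, $b=0$, so from \eqref{B} we get $q_d(x+x^{\ast})=\tfrac12 B(xa,xb)+\tfrac12 B(x^{\ast}c,x^{\ast}d)+B(x^{\ast}c,xb)=0$ because $b=c=0$; hence $\psi(q_d(l))=1$ trivially. Next, for a permutation element $\omega_s$ one again has $b=c=0$ and $a$ a permutation matrix, so $q_{\omega_s}=0$ and the claim is immediate. The only genuinely non-trivial generators are the sign-change elements $\omega_S$, where the off-diagonal blocks are non-zero: for $i\in S$ one sends $e_i\mapsto -e_i^{\ast}$ and $e_i^{\ast}\mapsto e_i$. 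For such $\omega_S$ one computes $q_{\omega_S}(x+x^{\ast})$ from \eqref{B}; writing $x=\sum x_ie_i$, $x^{\ast}=\sum x_i^{\ast}e_i^{\ast}$, the only surviving terms involve indices in $S$, and one finds an expression of the shape $q_{\omega_S}(l)=\sum_{i\in S}(\text{quadratic monomial in }x_i,x_i^{\ast})$, coming from $B(e_i^{\ast},e_i)$-type pairings. The point is then that for $l\in L_1$ we have $x_i\in\mathcal{P}^{[(e+1)/2]}$ and $x_i^{\ast}\in\mathcal{P}^{[e/2]}$, so each such monomial lies in $\mathcal{P}^{e}$ (using $[(e+1)/2]+[e/2]=e$ and that squares of elements of $\mathcal{P}^{[(e+1)/2]}$ lie in $\mathcal{P}^{e+1}$, squares of $\mathcal{P}^{[e/2]}$ lie in $\mathcal{P}^{e}$ — here one must be a little careful in the even-$e$ versus odd-$e$ cases), whence $\psi$ kills it.

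The main obstacle I anticipate is precisely this last valuation bookkeeping for $\omega_S$ in the $2$-adic setting: because $F$ has residual characteristic $2$, the factors of $\tfrac12$ in \eqref{B} are not harmless, and one has to track carefully whether the relevant monomials are $\tfrac12 B(x_i^{\ast}e_i^{\ast},\,x_i^{\ast}e_i^{\ast}\text{-image})$ type terms (which gain a factor $2^{-1}$, i.e. lose one unit of valuation) or cross terms $B(x_i^{\ast}e_i^{\ast},x_ie_i\text{-image})$ (which do not). The cleanest route is to invoke Lemma \ref{trans} in the form that was already proved there: $\psi(q_g(l))=\psi(\tfrac12 B(lg,lg)-\tfrac12 B(l,l))$, and to observe that since each generator $h\in\{D,\omega_s,\omega_S\}$ preserves $L_1$ (not merely $L_1^0$), both $l$ and $lh$ lie in $L_1$, on which $\psi(\tfrac12 B(\cdot,\cdot))=1$ by the self-duality of $L_1$ with respect to $\psi$ (this is exactly the well-definedness computation for $\psi_{L_1,\chi}$ carried out earlier in the section). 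That reduces the whole lemma to the single assertion that $L_1 h=L_1$ for each generator $h$, which is an elementary check on the basis, plus the already-recorded fact that $\psi(\tfrac12 B(l,l))=1$ for $l\in L_1$; the cocycle formula for $q$ then propagates the identity to all of $\mathfrak{W}^{\textrm{aff}}$.
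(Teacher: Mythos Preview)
Your overall strategy---reduce to generators via the cocycle identity $q_{gg'}(w)=q_g(w)+q_{g'}(wg)$---matches the paper's. The handling of $d\in D$ and $\omega_s$ (where $b=c=0$ so $q\equiv 0$) is correct and identical to the paper's. The direct computation you sketch for $\omega_S$ is also the paper's route: for $g=\omega_S$ one finds from \eqref{B} that the two $\tfrac12$-terms vanish (since $xa$ and $x^{\ast}d$ involve only indices $i\notin S$, while $xb$ and $x^{\ast}c$ involve only $i\in S$), leaving $q_{\omega_S}(l)=\langle x^{\ast}c,\,xb\rangle=-\sum_{i\in S}x_i x_i^{\ast}\in\mathcal{P}^{[(e+1)/2]+[e/2]}=\mathcal{P}^{e}$. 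So the obstacle you anticipate with $\tfrac12$-factors never materialises for $\omega_S$.

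Your proposed ``cleanest route'', however, has a genuine gap. The identity $q_g(l)=\tfrac12 B(lg,lg)-\tfrac12 B(l,l)$ is fine, but the assertion that $\psi(\tfrac12 B(l,l))=1$ for $l\in L_1$ is \emph{false} in residue characteristic $2$: for $l\in L_1$ one only has $B(l,l)\in\mathcal{P}^{e}$, not $\mathcal{P}^{e+1}$, so $\tfrac12 B(l,l)\in\mathcal{P}^{e-1}$ can lie outside $\ker\psi$ (e.g.\ $m=1$, $e=0$, $l=e_1+e_1^{\ast}$ gives $\tfrac12 B(l,l)=\tfrac12$). The well-definedness computation for $\psi_{L_1,\chi}$ only uses $\psi(B(l,l'))=1$, never $\psi(\tfrac12 B(l,l))=1$; and Lemma~\ref{trans}(2), which you invoke, is stated for $l\in L_1^{0}$, not $L_1$---precisely because the extra factor of $2$ in $L_1^0\cap X$ (or $L_1^0\cap X^{\ast}$) is what pushes $B(l,l)$ into $\mathcal{P}^{e+1}$. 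A related error: $L_1$ is \emph{not} stable under $D$ (any $d\neq 1$ fails) nor under $\omega_S$ when $e$ is odd. The paper sidesteps this by ordering the factors so that the generators with $q\equiv 0$ appear on the right (e.g.\ $g=\omega_S\omega_s$ and $g=g_1 g_2$ with $g_2\in D$), making stability irrelevant. Your decomposition $g=d\cdot w$ can also be made to work, but only by observing that $q_{\omega_S}$ depends on the products $x_i x_i^{\ast}$ alone, which are invariant under $D$ and merely permuted by $\omega_s$---not by invoking a stability that does not hold.
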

\begin{proof}
1) If $g=\omega_s  \in \mathfrak{W}$, for some $s\in S_m$,  then $q_g(w)=0$, for any $w\in W$.\\
2) If $g=\omega_S \in  \mathfrak{W}$, then by (\ref{B}), $\psi(q_g(l))=1$, for any  $l\in L_1$.\\
3) If $g=\omega_S\omega_s   \in  \mathfrak{W}$, then 
$\psi(q_g(l))=\psi(q_{\omega_S}(l)) \psi(q_{\omega_s}(l \omega_S))=1$.\\
4) If $g\in D$, then $q_g(w)=0$, for any $w\in W$.\\
5) If  $g=g_1g_2$, for $g_1\in  \mathfrak{W}$, $g_2\in D$, then $\psi(q_g(l))=\psi(q_{g_1}(l)) \psi(q_{g_2}(l g_1))=1$.
\end{proof}
For $g\in \mathfrak{W}^{\textrm{eaff}}$, let us define $M[g]\in \End(\mathcal{S}_{L_1,\psi})$ as follows:
\begin{align}\label{gactionsigmaafff}
M[g]=   \mathfrak{i}^g_{L_1,L_1}.
\end{align}
\begin{lemma}\label{nonzeromap1}
 $M[g]$ is bijective.
\end{lemma}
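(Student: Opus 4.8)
The plan is to mirror the proof of Lemma \ref{nonzeromap12}: first I would check that the operator $M[g]=\mathfrak{i}^g_{L_1,L_1}$ is non-zero, and then upgrade this to bijectivity using the irreducibility of $\mathcal{S}_{L_1,\psi}$ as an $\Ha(W)$-module together with the (twisted) intertwining property recorded in Lemma \ref{coomL1L2}. The only place where the specific hypothesis $g\in\mathfrak{W}^{\textrm{aff}}$ is needed is the non-vanishing step, and there it enters through Lemma \ref{affweylgroup}.

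For the non-vanishing I would run the computation of the proof of Lemma \ref{nonzeromap12}(1). Pick $f_0\in\mathcal{S}_{L_1,\psi}$ supported on $L_1$ with $f_0(0)\neq 0$, so that $f_0(l)=\psi(-\tfrac12 B(l,l))f_0(0)$ for $l\in L_1$. Since $g\in\mathfrak{W}^{\textrm{aff}}\subseteq\Sp(W)$, the set $L_1g^{-1}$ is again a lattice, hence $L_1\cap L_1g^{-1}$ is an open compact subgroup of $W$ of positive measure. Evaluating the defining integral of $\mathfrak{i}^g_{L_1,L_1}$ at $w=0$ and keeping only the $a$ for which $ag\in L_1$,
\begin{align*}
M[g](f_0)(0)
&=\int_{a\in L_1}\psi\big(\tfrac12 B(a,a)\big)f_0(ag)\,da\\
&=\int_{a\in L_1\cap L_1g^{-1}}\psi\big(\tfrac12 B(a,a)-\tfrac12 B(ag,ag)\big)f_0(0)\,da.
\end{align*}
Exactly as in the proof of Lemma \ref{trans}(2), one has $\psi\big(\tfrac12 B(a,a)-\tfrac12 B(ag,ag)\big)=\psi(-q_g(a))$, and Lemma \ref{affweylgroup} gives $\psi(q_g(a))=1$ for every $a\in L_1$. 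Hence the integrand is the constant $f_0(0)$ and $M[g](f_0)(0)=\mu(L_1\cap L_1g^{-1})f_0(0)\neq 0$, so $M[g]\neq 0$.

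Finally, for bijectivity: $M[g]=\mathfrak{i}^g_{L_1,L_1}$ is well defined by Lemma \ref{welldL1L2}, and by Lemma \ref{coomL1L2} applied with $L_2=L_1$ it satisfies $M[g]\circ\pi_{L_1,\psi}(h)=\pi_{L_1,\psi}(h^{g^{-1}})\circ M[g]$ for all $h\in\Ha(W)$. Therefore $\Ker M[g]$ is a $\pi_{L_1,\psi}$-submodule of $\mathcal{S}_{L_1,\psi}$, and since $\mathcal{S}_{L_1,\psi}$ is irreducible and $M[g]\neq 0$ it must be $\{0\}$; likewise $\Im M[g]$ is a submodule for the representation $h\mapsto\pi_{L_1,\psi}(h^{g^{-1}})$, which is just $\pi_{L_1,\psi}$ precomposed with the automorphism $(w,t)\mapsto(wg^{-1},t)$ of $\Ha(W)$ and hence again irreducible, so $\Im M[g]=\mathcal{S}_{L_1,\psi}$. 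Thus $M[g]$ is bijective. The one point that requires care is the identification $\psi\big(\tfrac12 B(a,a)-\tfrac12 B(ag,ag)\big)=\psi(-q_g(a))$ on $L_1\cap L_1g^{-1}$, compatibly with Lemma \ref{affweylgroup}; granting that, the remainder is a formal consequence of irreducibility and of the already-established well-definedness and intertwining statements.
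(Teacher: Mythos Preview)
Your proof is correct and follows essentially the same route as the paper: compute $M[g](f_0)(0)$ for the basic function $f_0$ supported on $L_1$, reduce the integrand to $\psi(-q_g(a))$, invoke Lemma~\ref{affweylgroup} to see it equals~$1$, and then conclude bijectivity from irreducibility via Lemma~\ref{coomL1L2}. Your final hedge is unnecessary: the identity $q_g(w)=\tfrac{1}{2}B(wg,wg)-\tfrac{1}{2}B(w,w)$ holds for every $g\in\Sp(W)$ (this follows directly from the explicit formula~(\ref{B}) together with the symplectic relation $ad^T-bc^T=1$), so the identification $\psi\big(\tfrac{1}{2}B(a,a)-\tfrac{1}{2}B(ag,ag)\big)=\psi(-q_g(a))$ is exact, not merely compatible.
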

\begin{proof}
1) Let $f_0\in  \mathcal{S}_{L_1,\psi}$, with support on $L_1$. Then $f_0(l)=\psi(-\tfrac{1}{2}B(l,l))f_0(0)\neq 0$, for $l\in L_1$.    
 \begin{align*}
 \mathfrak{i}^g_{L_1,L_1}(f_0)(0)&=\int_{a\in L_1}\psi(\tfrac{1}{2}B(a,a))f_0(ag) da\\
 &=\int_{a\in L_1g^{-1}\cap L_1}\psi(\tfrac{1}{2}B(a,a)-\tfrac{1}{2}B(ag,ag)) f_0(0)da\\
 &=\mu(L_1g^{-1}\cap L_1)f_0(0)\neq 0.
 \end{align*}
 2) Note that  $\mathcal{S}_{L_1,\psi}$ is an irreducible  $\Ha(W)$-module. By Lemma \ref{coomL1L2}, $M[g]$ is  bijective.
\end{proof}
\subsection{On the whole group $\Sp(W)$}
\begin{lemma}\label{coomL2}
Let $h=(w',t)\in \Ha(W)$, $g\in I$ or $\mathfrak{W}^{\textrm{eaff}}$. The following diagram is commutative:
\begin{equation}\label{eq2}
\begin{CD}
 \mathcal{S}_{L_1,\psi}@>M[g]>> \mathcal{S}_{L_1,\psi}  \\
      @V\pi_{L_1,\psi}(h) VV @VV\pi_{L_1,\psi}(h^{g^{-1}}) V  \\
\mathcal{S}_{L_1,\psi}@>M[g]>> \mathcal{S}_{L_1,\psi} 
\end{CD}
\end{equation} 
\end{lemma}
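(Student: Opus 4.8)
The statement to prove is the commutativity of the diagram in Lemma \ref{coomL2}: for $h=(w',t)\in\Ha(W)$ and $g\in I$ or $g\in\mathfrak{W}^{\textrm{aff}}$, one has $M[g]\circ\pi_{L_1,\psi}(h) = \pi_{L_1,\psi}(h^{g^{-1}})\circ M[g]$ on $\mathcal{S}_{L_1,\psi}$. The natural approach is to split into the two cases according to the definition of $M[g]$ and in each case reduce to the already-established intertwining property of the operators $\mathfrak{i}^g_{L_1,L_2}$ from Lemma \ref{coomL1L2}.

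\emph{Case $g\in\mathfrak{W}^{\textrm{aff}}$.} Here $M[g]=\mathfrak{i}^g_{L_1,L_1}$ by \eqref{gactionsigmaafff}, so the claim is literally Lemma \ref{coomL1L2} with $L_2=L_1$; nothing remains to be done beyond citing it. (One should double-check that Lemma \ref{coomL1L2} as stated makes no use of $L_1\neq L_2$ — inspecting its proof, it does not.)

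\emph{Case $g\in I$.} Here $M[g]=\mathfrak{i}_{L_1',L_1}\circ\mathfrak{i}^g_{L_1,L_1'}$ by \eqref{gactionsigma22}. The idea is to paste two instances of Lemma \ref{coomL1L2} side by side. Applying Lemma \ref{coomL1L2} to the pair $(L_1,L_1')$ and the element $g$ gives $\mathfrak{i}^g_{L_1,L_1'}\circ\pi_{L_1,\psi}(h) = \pi_{L_1',\psi}(h^{g^{-1}})\circ\mathfrak{i}^g_{L_1,L_1'}$. Then applying the same lemma to the pair $(L_1',L_1)$ and the identity element of $\Sp(W)$ — i.e.\ to $\mathfrak{i}_{L_1',L_1}$, which by the convention after Lemma \ref{coomL1L2} is $\mathfrak{i}^{1}_{L_1',L_1}$ — gives $\mathfrak{i}_{L_1',L_1}\circ\pi_{L_1',\psi}(h^{g^{-1}}) = \pi_{L_1,\psi}(h^{g^{-1}})\circ\mathfrak{i}_{L_1',L_1}$. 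Composing these two identities yields exactly
\[
M[g]\,\pi_{L_1,\psi}(h) = \mathfrak{i}_{L_1',L_1}\,\mathfrak{i}^g_{L_1,L_1'}\,\pi_{L_1,\psi}(h) = \mathfrak{i}_{L_1',L_1}\,\pi_{L_1',\psi}(h^{g^{-1}})\,\mathfrak{i}^g_{L_1,L_1'} = \pi_{L_1,\psi}(h^{g^{-1}})\,M[g],
\]
which is the desired commutativity.

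\textbf{Main obstacle.} The argument is essentially formal once Lemma \ref{coomL1L2} is in hand, so there is little genuine difficulty. The one point requiring care is bookkeeping on the twist: in the first step the element $h\in\Ha(W)$ gets replaced by $h^{g^{-1}}$, and in the second step the element being transported through $\mathfrak{i}_{L_1',L_1}$ is already $h^{g^{-1}}$, which under the identity element of $\Sp(W)$ is sent to itself — so the twists compose correctly to give $h^{g^{-1}}$ on the right-hand side, not $(h^{g^{-1}})^{g^{-1}}$. One should also confirm that the well-definedness hypotheses of the various $\mathfrak{i}$-operators (Lemma \ref{welldL1L2}, Lemma \ref{nonzeromap12}) are met for the lattice $L_1'$ and the specific $g\in I$, which follows from Lemma \ref{trans}; but these have already been verified in the preceding lemmas, so no new work is needed.
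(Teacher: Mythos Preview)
Your proof is correct and takes essentially the same approach as the paper, which simply says ``It follows from (\ref{eq1})''---i.e., from Lemma \ref{coomL1L2}. You have merely spelled out the two cases and the composition-of-squares argument that the paper leaves implicit.
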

\begin{proof}
It follows from (\ref{eq1}).           
\end{proof}
Recall the  Iwahori decomposition: $\Sp(W)=I \mathfrak{W}^{\textrm{eaff}}I.$ For an element $g\in \Sp(W)$, let us \emph{fix } a decomposition:
$$g=i_1\omega_g i_2$$
for $i_1, i_2\in I$ and $\omega_g\in \mathfrak{W}^{\textrm{eaff}}$.  In this context, it is stipulated that for   $g=1\in \Sp(W)$, $i_1=i_2=1$. 
Then
$$M[1]= \mu(L_1) [\mu(L_1)\mu(L_1\cap L_1') ]\mu(L_1)Id.$$
\begin{remark}
In our analysis of the self-dual lattice $L_1$, we shall select the Haar measure $\mu$ on $W$  with the property that $\mu(L_1) [\mu(L_1)\mu(L_1\cap L_1') ]\mu(L_1)$ equals $1$. 
\end{remark}
 Let us define: 
$$M[g]=M[i_1]M[\omega_g]M[i_2].$$
Then $M[g]\in \End(\mathcal{S}_{L_1,\psi})$ and $M[g]$ is bijective. Moreover, $M[g]$ satisfies the diagram (\ref{eq2}). Hence there exists a non-trivial $2$-cocycle $c$ of order $2$ in $\Ha^2(\Sp(W), T)$ such that 
$$M[g_1]M[g_2]=c(g_1,g_2)M[g_1g_2].$$

Let us define                               
$$\pi_{L_1,\psi}([g,h])=M[g]\pi_{L_1,\psi}(h)$$
for $g\in \Sp(W)$, $h\in \Ha(W)$. Then:
\begin{align}
\pi_{L_1,\psi}([g,h])\pi_{L,\psi}([g',h'])f =c(g, g')\pi_{L_1,\psi}([g,h][g',h']) f.
\end{align}
for $[g,h]$,  $[g',h']\in \Sp(W)\ltimes \Ha(W)$, $f\in \mathcal{S}_{L_1,\psi}$.

 Our next purpose is to extend the above result to a non-self-dual lattice.
\subsection{Non-self-dual lattice I}
Let us consider a non-self-dual lattice.  Keep the above notations.   Let $L$ be a sublattice of $L_1$ such that $L=L\cap X\oplus L\cap X^{\ast}$.   Let  $[L_1:L ]=q^r$, for some $r$. Let $\Ha(L )=L \times F$ denote the corresponding subgroup of $\Ha(W)$. Let $\psi_{L }$ denote the extended  character of $\Ha(L )$  from $\psi$ of  $F$ defined as:
$$\psi_{L }: \Ha(L ) \longrightarrow \C^{\times}; (l,t) \longmapsto \psi(t-\tfrac{1}{2}B(l,l)).$$ 
   Let us define the representation:
 $$(\pi_{L ,\psi}=\cInd_{\Ha(L )}^{\Ha(W)} \psi_{L }, \quad  \mathcal{S}_{L ,\psi}=\cInd_{\Ha(L )}^{\Ha(W)} \C).$$
 Note that $\Ha(L ) \subseteq \Ha(L_1)$.  Let $\chi$ be a character of $L_1/L $. Then $\psi_{L_1, \chi}|_{\Ha(L )}= \psi_{L }$. 
By Clifford theory, $\cInd_{\Ha(L ) }^{\Ha(L_1) } \psi_{L }\simeq  \oplus_{\chi \in \Irr(L_1/L )} \psi_{L_1, \chi}$. As a consequence, we obtain:
\begin{lemma}
$\pi_{L, \psi}\simeq q^r\pi_{L_1, \psi}.$
\end{lemma}
\begin{proof}
$\pi_{L,\psi}=\cInd_{\Ha(L)}^{\Ha(W)} \psi_{L}\simeq \cInd_{\Ha(L_1)}^{\Ha(W)}  (\cInd_{\Ha(L) }^{\Ha(L_1) } \psi_{L})$
$\simeq \cInd_{\Ha(L_1)}^{\Ha(W)}( \oplus_{\chi \in \Irr(L_1/L)} \psi_{L_1, \chi}) \simeq \oplus_{\chi \in \Irr(L_1/L)}  \pi_{L_1, \psi_{\chi}}$
$\simeq q^r\pi_{L_1, \psi}$.
\end{proof}
The space $\mathcal{S}_{L, \psi}$ consists  of locally constant, compactly supported functions $f$ on $W$  such that
 $$f(l+w)=\psi(-\tfrac{1}{2}\langle l, w\rangle-\tfrac{1}{2}B(l,l))f(w),$$
  for $l\in L$, $w\in W$.  For $h=(w',t)\in \Ha(W)$,
\begin{equation}\label{BB'1}
\pi_{L,\psi}(h) f(w)=\psi( t+\tfrac{1}{2}\langle w, w'\rangle) f(w+w').
\end{equation}
Let $\mathcal{S}_{L_1,\psi_{\chi}}$  denote the subspace of the elements $f$ in $\mathcal{S}_{L,\psi}$ such that 
$$f(l+w)=\psi(-\tfrac{1}{2}\langle l, w\rangle-\tfrac{1}{2}B(l,l))\chi(l)f(w),$$
for all $l\in L_1$.  Let $\Lambda_{L_1}$( resp. $\Lambda_{L_1/L}$) be a set of representatives for $W/L_1$( resp. $L_1/L$). Then 
$$\Lambda_{L}=\{ w_0+l_0\mid w_0\in  \Lambda_{L_1}, l_0\in  \Lambda_{L_1/L}\}$$
serves as a representative set for  $W/L$. For any   $w_0\in\Lambda_{L_1}$, $w_1=w_0+l_0\in \Lambda_{L}$,  we let  $s^{\chi}_{w_0}$ and  $s_{w_1}$  be the functions on $W$ such that 
\begin{itemize}
\item $\supp s^{\chi}_{w_0} \subseteq L_1+w_0$,
\item  $\supp s_{w_1} \subseteq L+w_1$,
\item $s^{\chi}_{w_0}(l+w_0)=\psi(-\tfrac{1}{2}\langle l, w_0\rangle-\tfrac{1}{2}B(l,l))\chi(l)$, for $l\in L_1$, 
\item $s_{w_1} (l+w_1)=\psi(-\tfrac{1}{2}\langle l, w_1\rangle-\tfrac{1}{2}B(l,l))$, for $l\in L$.
\end{itemize}
Then $\{ s_{w_1}\}_{w_1\in \Lambda_{L}}$ and $\{ s^{\chi}_{w_0}\}_{w_0\in \Lambda_{L_1}}$ serve as the  bases of $\mathcal{S}_{L, \psi}$ and  $\mathcal{S}_{L_1,\psi_{\chi}}$, respectively.  Let $V_{L_1, w_0}$ be the subspace  spanned by $\{s_{w_1}, w_1\in w_0+\Lambda_{L_1/L}\}$. For any $w_1=w_0+l_0\in w_0+ \Lambda_{L_1/L}$, $l\in L$, we have:
$$s^{\chi}_{w_0}(w_1)=s^{\chi}_{w_0}(l_0+w_0)=\psi(-\tfrac{1}{2}\langle l_0, w_0\rangle-\tfrac{1}{2}B(l_0,l_0))\chi(l_0),$$
 \begin{align}
 &s^{\chi}_{w_0}(l+w_1)\\
 &=s^{\chi}_{w_0}(l+l_0+w_0)\\
 &=\psi(-\tfrac{1}{2}\langle l+l_0, w_0\rangle-\tfrac{1}{2}B(l+l_0,l+l_0))\chi(l+l_0)\\
 &=\psi(-\tfrac{1}{2}\langle l, w_0\rangle-\tfrac{1}{2}B( l, l)-\tfrac{1}{2}B(l,l_0)-\tfrac{1}{2}B(l_0,l))\psi(-\tfrac{1}{2}\langle l_0, w_0\rangle-\tfrac{1}{2}B(l_0,l_0))\chi(l_0)\\
 &=\psi(-\tfrac{1}{2}\langle l, l_0+w_0\rangle-\tfrac{1}{2}B( l, l))s^{\chi}_{w_0}(w_1)\\
   &=s_{w_1}(l+w_1) s^{\chi}_{w_0}(w_1).
 \end{align}
   Hence 
 $$s^{\chi}_{w_0}=\sum_{w_1\in w_0+ \Lambda_{L_1/L}} s^{\chi }_{w_0}(w_1) \cdot s_{w_1}\in V_{L_1, w_0}.$$
 Moreover, these $s^{\chi}_{w_0}$ are linear independence as $\chi$ runs through the characters of $L_1/L$. From dimension considerations it follows that $V_{L_1, w_0}$ is spanned by $\{s^{\chi}_{w_0}, \chi \in \Irr(L_1/L)\}$. Hence:
 $$ \mathcal{S}_{L,\psi}=\oplus_{\chi\in \Irr(L_1/L)} \mathcal{S}_{L_1,\psi_{\chi}}.$$
 Note that $\mathcal{S}_{L_1,\psi_{\chi}}$  is an irreducible $\Ha(W)$-module. 
      
  \subsubsection{} 
  For any $\chi \in \Irr(L_1/L)$,  $ (\pi_{L_1,\psi_{\chi}}, \mathcal{S}_{L_1,\psi_{\chi}}) $ is an irreducible representation of $\Ha(W)$. 
 Hence 
 $$(\sigma_{\chi}=\cInd_{\Ha(L_1)}^{\Ha(L^{\ast})} \psi_{L_1,\chi}, \mathcal{W}_{\chi}=\cInd_{\Ha(L_1)}^{\Ha(L^{\ast})} \C)$$ is an irreducible representation of $\Ha(L^{\ast})$.  The vector space $\mathcal{W}_{\chi}$ consists of the  functions $f$ on $L^{\ast}$ such that 
 $$f(l+l^{\ast})= \chi(l) \psi(-\tfrac{1}{2}\langle l, l^{\ast}\rangle-\tfrac{1}{2}B(l,l))f(l^{\ast}),$$
 for $l\in L_1$, $l^{\ast}\in L^{\ast}$. Note that 
 \begin{align}\label{LL_1}
 \psi: L_1/L \times L^{\ast}/L_1 \longrightarrow T; (x,y^{\ast}) \longrightarrow \psi(\langle x, y^{\ast}\rangle ),
 \end{align}
 defines a non-degenerate bilinear map. So for any $\chi \in \Irr(L_1/L)$, there exists an element $y^{\ast}_{\chi} \in L^{\ast}/L_1 $ such that 
 $$
 \psi(\langle  x, y_{\chi}^{\ast}\rangle )=\chi(x), \quad\quad \quad x\in L_1/L.
 $$
 Let us define a function:
  $$\mathcal{A}_{\chi_1, \chi_2}: \mathcal{W}_{\chi_1} \longrightarrow \mathcal{W}_{\chi_2},$$ 
  given by 
 \begin{align}
 \mathcal{A}_{\chi_1, \chi_2}(f)(l^{\ast})=f(l^{\ast}+y^{\ast}_{\chi_1}-y^{\ast}_{\chi_2}) \psi(\tfrac{1}{2}\langle y^{\ast}_{\chi_1}-y^{\ast}_{\chi_2}, l^{\ast}\rangle).
 \end{align}
  For $f\in  \mathcal{W}_{\chi_1}$, $l\in L_1$, we have:
 \begin{align}
 &\mathcal{A}_{\chi_1, \chi_2}(f)(l+l^{\ast})\\
 &=f(l+l^{\ast}+y^{\ast}_{\chi_1}-y^{\ast}_{\chi_2})\psi(\tfrac{1}{2}\langle  y^{\ast}_{\chi_1}-y^{\ast}_{\chi_2},  l+l^{\ast}\rangle)\\
 &=\chi_1(l) \psi(-\tfrac{1}{2}\langle l, l^{\ast}+y^{\ast}_{\chi_1}-y^{\ast}_{\chi_2}\rangle-\tfrac{1}{2}B(l,l))f(l^{\ast}+y^{\ast}_{\chi_1}-y^{\ast}_{\chi_2})\psi(\tfrac{1}{2}\langle  y^{\ast}_{\chi_1}-y^{\ast}_{\chi_2},  l+l^{\ast}\rangle)\\
 &=\chi_1(l) \psi(-\langle l, y^{\ast}_{\chi_1}-y^{\ast}_{\chi_2}\rangle-\tfrac{1}{2}B(l,l)) \psi(-\tfrac{1}{2}\langle l, l^{\ast}\rangle)f(l^{\ast}+y^{\ast}_{\chi_1}-y^{\ast}_{\chi_2})\psi(\tfrac{1}{2}\langle  y^{\ast}_{\chi_1}-y^{\ast}_{\chi_2},  l^{\ast}\rangle)\\
 &=\chi_2(l) \psi(-\tfrac{1}{2}\langle l, l^{\ast}\rangle-\tfrac{1}{2}B(l,l))\mathcal{A}_{\chi_1, \chi_2}(f)(l^{\ast}).
 \end{align}
 Hence $ \mathcal{A}_{\chi_1, \chi_2}(f) \in\mathcal{W}_{\chi_2}$.  So $\mathcal{A}_{\chi_1, \chi_2}$ is well-defined.
 \begin{lemma}\label{iter}
 $\mathcal{A}_{\chi_1, \chi_2}$ defines an intertwining operator from $\sigma_{\chi_1}$ to $\sigma_{\chi_2}$.
 \end{lemma}
 \begin{proof}
For $f \in \mathcal{W}_{\chi_1}$, $l^{\ast}\in L^{\ast}$,  $h=[l_1^{\ast}, t]\in \Ha(L^{\ast})$, we have:
\begin{align}
\mathcal{A}_{\chi_1, \chi_2}[\sigma_{\chi_1}(h) f](l^{\ast})&=[\sigma_{\chi_1}(h) f](l^{\ast}+y^{\ast}_{\chi_1}-y^{\ast}_{\chi_2})  \psi(\tfrac{1}{2}\langle y^{\ast}_{\chi_1}-y^{\ast}_{\chi_2}, l^{\ast}\rangle)\\
&= f(l_1^{\ast}+l^{\ast}+y^{\ast}_{\chi_1}-y^{\ast}_{\chi_2}) \psi( t+\tfrac{1}{2}\langle  l^{\ast}+y^{\ast}_{\chi_1}-y^{\ast}_{\chi_2}, l_1^{\ast}\rangle) \psi(\tfrac{1}{2}\langle y^{\ast}_{\chi_1}-y^{\ast}_{\chi_2}, l^{\ast}\rangle)\\
&=f(l_1^{\ast}+l^{\ast}+y^{\ast}_{\chi_1}-y^{\ast}_{\chi_2}) \psi( t+\tfrac{1}{2}\langle l^{\ast}, l_1^{\ast}\rangle) \psi(\tfrac{1}{2}\langle  y^{\ast}_{\chi_1}-y^{\ast}_{\chi_2}, l^{\ast}+l_1^{\ast}\rangle);
\end{align}
\begin{align}
\sigma_{\chi_2}(h) [\mathcal{A}_{\chi_1, \chi_2}f](l^{\ast})&= \psi( t+\tfrac{1}{2} \langle l^{\ast}, l_1^{\ast}\rangle)[\mathcal{A}_{\chi_1, \chi_2}f](l^{\ast}+l_1^{\ast})\\
&= \psi( t+\tfrac{1}{2} \langle l^{\ast}, l_1^{\ast}\rangle)f(l_1^{\ast}+l^{\ast}+y^{\ast}_{\chi_1}-y^{\ast}_{\chi_2}) \psi(\tfrac{1}{2}\langle y^{\ast}_{\chi_1}-y^{\ast}_{\chi_2}, l^{\ast}+l_1^{\ast}\rangle).
\end{align}
 \end{proof} 
  \subsubsection{}
Let $$\sigma=\cInd_{\Ha(L)}^{\Ha(L^{\ast})}\psi_{L},  \mathcal{W}=\cInd_{\Ha(L)}^{\Ha(L^{\ast})}\C.$$
The vector space $\mathcal{W}$ consists of the functions $f:L^{\ast}\longrightarrow \C$ such that 
$$f(l+l^{\ast})=\psi(-\tfrac{1}{2}\langle l, l^{\ast}\rangle-\tfrac{1}{2}B(l,l)) f(l^{\ast}), \quad\quad l\in  L.$$ The action is given as follows:
$$[\sigma(l^{\ast}) f](l_1^{\ast})= \psi(\tfrac{1}{2}\langle l_1^{\ast}, l\rangle ) f(l^{\ast}+ l_1^{\ast}).$$
\begin{lemma}\label{Bll}
For $l\in L$,  $[\sigma(l) f](l^{\ast})=\psi(-\tfrac{1}{2}B(l,l))  f(l^{\ast})$. 
\end{lemma}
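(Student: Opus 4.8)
The plan is simply to unwind the two definitions in play — that of $\sigma=\cInd_{\Ha(L)}^{\Ha(L^{\ast})}\psi_{L}$ acting on the model $\mathcal{W}$, and that of the dual lattice $L^{\ast}$ — and to combine them. No structural idea is needed; this is a two-line computation once the conventions are laid out.

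First I would observe that since $L_1$ is self-dual with respect to $\psi$ and $L\subseteq L_1$, one has $L\subseteq L_1=L_1^{\ast}\subseteq L^{\ast}$, so each $l\in L$ is genuinely an element of $L^{\ast}$ and $\sigma(l):=\sigma((l,0))$ is meaningful as the action of an element of $\Ha(L^{\ast})$. Evaluating the right-translation action of $(l,0)$ on $f\in\mathcal{W}$ at the point $l^{\ast}$ gives
$$[\sigma(l)f](l^{\ast})=\psi(\tfrac{1}{2}\langle l^{\ast},l\rangle)\,f(l^{\ast}+l),$$
and then the transformation rule defining $\mathcal{W}$, applied to the argument $l+l^{\ast}$, rewrites $f(l^{\ast}+l)=\psi(-\tfrac{1}{2}\langle l,l^{\ast}\rangle-\tfrac{1}{2}B(l,l))f(l^{\ast})$.

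Multiplying the two exponentials and using skew-symmetry $\langle l^{\ast},l\rangle=-\langle l,l^{\ast}\rangle$ — so that $\tfrac{1}{2}\langle l^{\ast},l\rangle$ and $-\tfrac{1}{2}\langle l,l^{\ast}\rangle$ \emph{add} up to $\langle l^{\ast},l\rangle$ rather than cancelling — one arrives at
$$[\sigma(l)f](l^{\ast})=\psi\big(\langle l^{\ast},l\rangle\big)\,\psi\big(-\tfrac{1}{2}B(l,l)\big)\,f(l^{\ast}).$$
The only substantive input is the remaining factor: since $l\in L$ and $l^{\ast}\in L^{\ast}$, the very definition of $L^{\ast}$ as the $\psi$-dual lattice of $L$ gives $\psi(\langle l^{\ast},l\rangle)=1$, which collapses the first factor and yields the stated identity. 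It is worth noting that here, in contrast with the odd-residue-characteristic case, one cannot legitimately divide the pairing by $2$ over a $2$-adic field, so it is essential that what appears is the \emph{full} pairing $\langle l^{\ast},l\rangle$ — which is precisely why the model $\mathcal{W}$ was set up on $L^{\ast}$ rather than on $L^{\ast}/L$. There is no real obstacle; the only thing to watch is the sign bookkeeping in the step where the two half-pairings combine.
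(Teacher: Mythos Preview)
Your proof is correct and essentially identical to the paper's own argument: both apply the action formula for $\sigma$, substitute the transformation law defining $\mathcal{W}$, combine the half-pairings via skew-symmetry into the full pairing $\langle l^{\ast},l\rangle$, and then kill that factor using the definition of $L^{\ast}$. The paper compresses the last two steps into a single line without comment, while you spell them out explicitly.
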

\begin{proof}
\begin{align}
&[\sigma(l) f](l^{\ast})\\
&=\psi(\tfrac{1}{2}\langle l^{\ast}, l\rangle ) f(l^{\ast}+l)\\
&=\psi(\tfrac{1}{2}\langle l^{\ast}, l\rangle -\tfrac{1}{2}\langle l, l^{\ast}\rangle-\tfrac{1}{2}B(l,l)) f(l^{\ast})\\
&=\psi(-\tfrac{1}{2}B(l,l)) f(l^{\ast}).
\end{align}
\end{proof}
We can identity  $(\sigma_{\chi}, \mathcal{W}_{\chi})$ as a sub-representation of $(\sigma, \mathcal{W})$ such that $ \mathcal{W}_{\chi}$ consists of  elements 
$f$ satisfying  $f(l+l^{\ast})=\chi(l) \psi(-\tfrac{1}{2}\langle l, l^{\ast}\rangle-\tfrac{1}{2}B(l,l))f(l^{\ast})$ for $l\in L_1$. 
\begin{lemma}
For $l\in L_1$, $f\in \mathcal{W}_{\chi}$,  $[\sigma(l) f](l^{\ast})=\psi(\langle l^{\ast}, l\rangle -\tfrac{1}{2}B(l,l)) \chi(l)  f(l^{\ast})$. 
\end{lemma}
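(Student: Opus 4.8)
The plan is to establish the identity by a direct substitution, exactly parallel to the proof of Lemma~\ref{Bll}, the only new ingredient being the character twist by $\chi$. First I would check that $\sigma(l)$ makes sense for $l\in L_1$: since $L\subseteq L_1$ and $L_1$ is self-dual with respect to $\psi$, we have $L_1=L_1^{\ast}\subseteq L^{\ast}$, so $l$ gives the element $(l,0)\in\Ha(L^{\ast})$, and $\sigma(l)$ acts on $\mathcal{W}$, preserving the submodule $\mathcal{W}_{\chi}$ under the identification recorded immediately before the statement.

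Then I would substitute directly. By the formula defining the action of $\sigma$ one has $[\sigma(l)f](l^{\ast})=\psi(\tfrac{1}{2}\langle l^{\ast},l\rangle)\,f(l^{\ast}+l)$, while the transformation law of an element $f\in\mathcal{W}_{\chi}$ gives, for $l\in L_1$, $f(l+l^{\ast})=\chi(l)\,\psi(-\tfrac{1}{2}\langle l,l^{\ast}\rangle-\tfrac{1}{2}B(l,l))\,f(l^{\ast})$. Combining the two, collecting the arguments of $\psi$, and using antisymmetry $\langle l,l^{\ast}\rangle=-\langle l^{\ast},l\rangle$ so that $\tfrac{1}{2}\langle l^{\ast},l\rangle-\tfrac{1}{2}\langle l,l^{\ast}\rangle=\langle l^{\ast},l\rangle$, one obtains $[\sigma(l)f](l^{\ast})=\chi(l)\,\psi(\langle l^{\ast},l\rangle-\tfrac{1}{2}B(l,l))\,f(l^{\ast})$, which is the assertion.

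I do not expect any genuine obstacle: the computation is three lines. The only points requiring a little care are the inclusion $L_1\subseteq L^{\ast}$, which is what legitimizes evaluating $\sigma$ at elements of $L_1$, and the bookkeeping of the order of the arguments of $\langle\,,\,\rangle$, since this is precisely what merges the two half-pairings into the single full pairing $\langle l^{\ast},l\rangle$ appearing in the statement. As a consistency check, when $l\in L$ one has $\chi(l)=1$ and $\psi(\langle l^{\ast},l\rangle)=1$ by the definition of $L^{\ast}$, so the formula degenerates to Lemma~\ref{Bll}.
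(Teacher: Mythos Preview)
Your proposal is correct and follows essentially the same three-line computation as the paper's proof: apply the definition of $\sigma(l)$, invoke the $\chi$-twisted transformation law for $f\in\mathcal{W}_{\chi}$, and collapse the two half-pairings via antisymmetry. Your additional remarks justifying $L_1\subseteq L^{\ast}$ and the consistency check against Lemma~\ref{Bll} are sound and slightly more explicit than the paper, but the argument itself is identical.
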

\begin{proof}
\begin{align}
&[\sigma(l) f](l^{\ast})\\
&=\psi(\tfrac{1}{2}\langle l^{\ast}, l\rangle ) f(l^{\ast}+l)\\
&=\psi(\tfrac{1}{2}\langle l^{\ast}, l\rangle -\tfrac{1}{2}\langle l, l^{\ast}\rangle-\tfrac{1}{2}B(l,l)) \chi(l) f(l^{\ast})\\
&=\psi(\langle l^{\ast}, l\rangle -\tfrac{1}{2}B(l,l)) \chi(l)  f(l^{\ast}).
\end{align}
\end{proof}
Note that these  $\mathcal{W}_{\chi}$ are different  irreducible representations  of $\Ha(L^{\ast})$ as $\chi$ runs through all characters of $L_1/L$. Hence 
$$\mathcal{W}=\oplus_{\chi\in \Irr(L_1/L)} \mathcal{W}_{\chi}.$$
\subsection{} Let us denote  $\mathcal{V}_{L,\psi}= \cInd_{\Ha(L^{\ast})}^{\Ha(W)} \mathcal{W}$. Then $\pi_{L,\psi}$ can be realized on $\mathcal{V}_{L,\psi}$. The vector space  $\mathcal{V}_{L,\psi}$ consists  of locally constant, compactly supported functions $f: W \longrightarrow \mathcal{W}$  such that
 $$f(l^{\ast}_1+w)=\psi(-\tfrac{1}{2}\langle  l_1^{\ast}, w\rangle )\sigma(l_1^{\ast})f(w),$$
  for $l^{\ast}_1\in L^{\ast}$,  $w\in W$.   For $h=(w',t)\in \Ha(W)$,
\begin{equation}\label{BB'2}
\pi_{L,\psi}(h) f(w)=f([w,0]\cdot[w',t])=f([w+w', t+\tfrac{1}{2}\langle w, w'\rangle])=\psi( t+\tfrac{1}{2}\langle w, w'\rangle ) f(w+w').
\end{equation}
Let us denote  $\mathcal{V}_{L_1 ,\psi_{\chi}}= \cInd_{\Ha(L^{\ast} )}^{\Ha(W)} \mathcal{W}_{\chi}$. This vector space  consists  of locally constant, compactly supported functions $f: W \longrightarrow \mathcal{W}_{\chi}$  such that
 $$f(l^{\ast}+w)=\psi(-\tfrac{1}{2}\langle l^{\ast},w\rangle)\sigma_{\chi}(l^{\ast})f(w),$$
  for $l^{\ast}\in L^{\ast}$,  $w\in W$.  Moreover, $\pi_{L_1,\psi_{\chi}}$ can be realized on $\mathcal{V}_{L_1 ,\psi_{\chi}}$.  Consequently, 
  $$\mathcal{V}_{L ,\psi} \simeq \oplus_{\chi \in \Irr(L_1/L )} \mathcal{V}_{L_1 ,\psi_{\chi}}.$$

\subsection{Non-self-dual lattice II}\label{nonselfdualtwo}
Under the basis $\{e_1, \cdots, e_m; e_1^{\ast}, \cdots, e_m^{\ast}\}$, let $d=\diag(d_1, \cdots, d_m; \tfrac{1}{d_1}, \cdots, \tfrac{1}{d_m})$. Let us define:
\begin{itemize}
\item $L''=Ld$,
\item $L''_1=L_1d$,
\item $L^{''\ast}$: the dual of $L''$.
 \end{itemize}
Then $L^{''\ast}$ is equal to $L^{ \ast}d$.  Similarly, we can replace the above $L$ by $L''$, and define the corresponding representations:
$$(\sigma''=\cInd_{\Ha(L'')}^{\Ha(L^{''\ast})}\psi_{L''},\quad  \mathcal{W}''=\cInd_{\Ha(L'')}^{\Ha(L^{''\ast})}\C)$$
 $$(\sigma''_{\chi}=\cInd_{\Ha(L_1'')}^{\Ha(L^{''\ast})} \psi_{L_1'',\chi}, \quad \mathcal{W}''_{\chi}=\cInd_{\Ha(L_1'')}^{\Ha(L^{''\ast})} \C)$$ 
$$(\pi_{L'' ,\psi}=\cInd_{\Ha(L'' )}^{\Ha(W)} \psi_{L''}, \quad  \mathcal{S}_{L'' ,\psi}=\cInd_{\Ha(L'' )}^{\Ha(W)} \C)$$
$$(\pi_{L'' ,\psi}=\cInd_{\Ha(L^{''\ast})}^{\Ha(W)} \sigma'', \quad \mathcal{V}_{L'',\psi}= \cInd_{\Ha(L^{''\ast})}^{\Ha(W)} \mathcal{W}'')$$
$$(\pi_{L'' ,\psi_{\chi}}=\cInd_{\Ha(L^{''\ast})}^{\Ha(W)} \sigma''_{\chi}, \quad \mathcal{V}_{L'',\psi_{\chi}}= \cInd_{\Ha(L^{''\ast})}^{\Ha(W)} \mathcal{W}''_{\chi}).$$
\begin{lemma}
There exists a group isomorphism: 
$$\mathfrak{i}: \Ha(W) \to \Ha(W); [w,t] \longmapsto [wd, t].$$
Moreover, $\mathfrak{i}$ sends $\Ha(L^{\ast})$ to $\Ha(L^{''\ast})$, $\Ha(L_1)$ to $\Ha(L_1^{''})$,  $\Ha(L)$ to $\Ha(L'')$,  $L_1/L$ to $L_1''/L''$, and  $L^{\ast}/L_1$ to $L^{''\ast}/L_1^{''}$.
\end{lemma}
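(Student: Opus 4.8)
The plan is to reduce everything to the single fact that the diagonal element $d$, acting on $W$ by $e_i\mapsto d_ie_i$ and $e_i^{\ast}\mapsto d_i^{-1}e_i^{\ast}$, lies in $\Sp(W)$. This is immediate from the symplectic pairing on the chosen basis: $\langle e_id,e_j^{\ast}d\rangle=d_id_j^{-1}\delta_{ij}=\delta_{ij}$, while $\langle e_id,e_jd\rangle=\langle e_i^{\ast}d,e_j^{\ast}d\rangle=0$, so $\langle wd,w'd\rangle=\langle w,w'\rangle$ for all $w,w'\in W$. Hence
\begin{align*}
\mathfrak{i}([w,t])\,\mathfrak{i}([w',t'])
&=[wd+w'd,\;t+t'+\tfrac12\langle wd,w'd\rangle]\\
&=[(w+w')d,\;t+t'+\tfrac12\langle w,w'\rangle]\\
&=\mathfrak{i}\big([w,t][w',t']\big),
\end{align*}
so $\mathfrak{i}$ is a group homomorphism. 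Since $d$ is invertible with $d^{-1}$ again of the same diagonal shape (hence again in $\Sp(W)$), the map $[w,t]\mapsto[wd^{-1},t]$ is a two-sided inverse, and $\mathfrak{i}$ is an isomorphism.

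For the assertions on subgroups, the key point is that $\mathfrak{i}$ fixes the central coordinate and acts on the $W$-coordinate by the $F$-linear automorphism $w\mapsto wd$; therefore $\mathfrak{i}(\Ha(A))=\mathfrak{i}(A\times F)=(Ad)\times F=\Ha(Ad)$ for every additive subgroup $A\subseteq W$. By the definitions of Section~\ref{nonselfdualtwo}, $Ld=L''$ and $L_1d=L_1''$, which gives $\mathfrak{i}(\Ha(L))=\Ha(L'')$ and $\mathfrak{i}(\Ha(L_1))=\Ha(L_1'')$. For the dual lattice, symplecticity of $d$ yields $\langle w,ld\rangle=\langle wd^{-1},l\rangle$, so $w\in(Ld)^{\ast}$ iff $wd^{-1}\in L^{\ast}$ iff $w\in L^{\ast}d$; thus $L^{''\ast}=L^{\ast}d$ and $\mathfrak{i}(\Ha(L^{\ast}))=\Ha(L^{''\ast})$.

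Finally, $w\mapsto wd$ is an $F$-linear isomorphism of $W$ carrying the chain $L\subseteq L_1\subseteq L^{\ast}$ onto $L''\subseteq L_1''\subseteq L^{''\ast}$, so it descends to group isomorphisms $L_1/L\cong L_1''/L''$ and $L^{\ast}/L_1\cong L^{''\ast}/L_1''$, which are exactly the maps induced by $\mathfrak{i}$ on these quotients. I do not expect any genuine obstacle here: the entire content of the lemma is the remark that $d\in\Sp(W)$, and that is precisely what makes $\mathfrak{i}$ compatible at once with the Heisenberg cocycle $\tfrac12\langle-,-\rangle$ and with the formation of dual lattices; everything else is bookkeeping with the definitions $L''=Ld$ and $L_1''=L_1d$.
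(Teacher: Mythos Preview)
Your proof is correct and is precisely the verification the paper intends: the paper's own proof reads simply ``Straightforward.'' You have made explicit the one substantive point, namely that $d\in\Sp(W)$, from which the Heisenberg-cocycle compatibility, the subgroup correspondences $\Ha(A)\mapsto\Ha(Ad)$, the identity $L^{''\ast}=L^{\ast}d$, and the induced quotient isomorphisms all follow by bookkeeping.
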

\begin{proof}
Straightforward.
\end{proof}
 Similar to (\ref{LL_1}), 
\begin{align}\label{L''L_1''}
 \psi: L_1''/L'' \times L^{''\ast}/L_1'' \longrightarrow T; (x,y^{\ast}) \longrightarrow \psi(\langle x, y^{\ast}\rangle ),
 \end{align}
 defines a non-degenerate bilinear map.  So for any $\chi \in \Irr(L_1''/L'')$, there exists an element $y^{\ast}_{\chi} \in L^{''\ast}/L_1'' $ such that 
 $$
 \psi(\langle  x, y_{\chi}^{\ast}\rangle )=\chi(x), \quad\quad \quad x\in L_1''/L''.
 $$
 For any $\chi\in \Irr(L_1''/L'')$, $\chi \circ \mathfrak{i}\in \Irr( L_1/L)$. Then $\mathfrak{i}(y_{\chi}^{\ast}) = y_{\chi \circ \mathfrak{i}}^{\ast}$. Let us extend $\mathfrak{i}$ to the representation.  
\begin{itemize}\label{DDD}
\item[(1)] Define $\mathcal{D}: \C \to \C; v \longmapsto v$. Then the following diagram is commutative.  $$\begin{CD}\C @>\mathcal{D}>> \C   \\
      @V\psi_{L}([l,t]) VV @VV\psi_{L''}(\mathfrak{i}([l,t])) V  \\\C @>\mathcal{D}>> \C \end{CD}$$
\item[(2)]  Define $\mathcal{D}:  \mathcal{W}\to  \mathcal{W}''; f \longmapsto \mathcal{D}(f)$, where $\mathcal{D}(f)(l^{''\ast})=f(l^{''\ast} d^{-1})$. Then  the following diagram
    $$\begin{CD} \mathcal{W}  @>\mathcal{D}>> \mathcal{W}''\\
      @V\sigma([l^{\ast},t]) VV @VV\sigma''(\mathfrak{i}([l^{\ast},t])) V  \\
      \mathcal{W}  @>\mathcal{D}>>  \mathcal{W}'' \end{CD}$$
       is commutative, for $[l^{\ast},t] \in \Ha(L^{\ast})$. 
       \begin{proof}
       i) For $l''\in L^{''}$, \begin{align*}
       &\mathcal{D}(f)(l''+l^{''\ast})\\
       &=f(l'' d^{-1}+l^{''\ast} d^{-1})\\
       &=\psi(-\tfrac{1}{2}\langle l'' d^{-1}, l^{''\ast} d^{-1}\rangle-\tfrac{1}{2}B( l'' d^{-1}, l'' d^{-1})) f(l^{''\ast} d^{-1})\\
       &=\psi(-\tfrac{1}{2}\langle l'' , l^{''\ast} \rangle-\tfrac{1}{2}B( l'' , l'' )) f(l^{''\ast} d^{-1})\\
       &=\psi(-\tfrac{1}{2}\langle l'' , l^{''\ast} \rangle-\tfrac{1}{2}B( l'' , l'' )) \mathcal{D}(f)(l^{''\ast}).
       \end{align*}
       ii) \begin{align*}
       &\sigma''(\mathfrak{i}([l^{\ast},t])) \circ \mathcal{D}(f)(l^{''\ast})\\
       &= \mathcal{D}(f)(l^{''\ast}+l^{\ast}d)\psi(t+\tfrac{1}{2}\langle l^{''\ast}, l^{\ast}d\rangle)\\
       &=f(l^{''\ast}d^{-1}+l^{\ast})\psi(t+\tfrac{1}{2}\langle l^{''\ast}, l^{\ast}d\rangle).
       \end{align*}
        \begin{align*}
       &\mathcal{D} \circ\sigma([l^{\ast},t])(f)(l^{''\ast})\\
       &=\sigma([l^{\ast},t])(f)(l^{''\ast}d^{-1})\\
       &=f(l^{''\ast}d^{-1}+l^{\ast})\psi(t+\tfrac{1}{2}\langle l^{''\ast}d^{-1}, l^{\ast}\rangle).
       \end{align*}
       \end{proof}
      \item[(3)]  $\mathcal{D}$ sends $\mathcal{W}_{\chi \circ \mathfrak{i}}$ onto $\mathcal{W}''_{\chi}$.
      \begin{proof}
      For $l_1^{''}\in L_1''$, 
      \begin{align*}
      &\mathcal{D}(f)( l_1''+l^{''\ast})\\
      &=f( l_1''d^{-1}+l^{''\ast}d^{-1})\\
      &=\psi(-\tfrac{1}{2}\langle l_1''d^{-1}, l^{''\ast}d^{-1}\rangle-\tfrac{1}{2}B( l_1''d^{-1},l_1''d^{-1}))\chi \circ \mathfrak{i}( l_1''d^{-1})f(l^{''\ast}d^{-1})\\
       &=\psi(-\tfrac{1}{2}\langle l_1'', l^{''\ast}\rangle-\tfrac{1}{2}B(l_1'', l_1''))\chi( l_1'')\mathcal{D}(f)(l^{''\ast}).
       \end{align*}
         \end{proof}
       \item[(4)] The following diagram is commutative.
       $$\begin{CD} \mathcal{W}_{\chi_1 \circ \mathfrak{i}}  @>\mathcal{D}>> \mathcal{W}''_{\chi_1}\\
      @V\mathcal{A}_{\chi_1 \circ \mathfrak{i}, \chi_2 \circ \mathfrak{i}} VV @VV\mathcal{A}_{\chi_1, \chi_2} V  \\
    \mathcal{W}_{\chi_2 \circ \mathfrak{i}} @>\mathcal{D}>>  \mathcal{W}''_{\chi_2}\end{CD}$$
    \begin{proof}
    For $f\in  \mathcal{W}_{\chi_1 \circ \mathfrak{i}} $, we have:
   \begin{align*}
   &\mathcal{A}_{\chi_1, \chi_2} \circ \mathcal{D}(f)(l^{\ast})\\
   &=\mathcal{D}(f)(l^{\ast}+y^{\ast}_{\chi_1}-y^{\ast}_{\chi_2}) \psi(\tfrac{1}{2}\langle y^{\ast}_{\chi_1}-y^{\ast}_{\chi_2}, l^{\ast}\rangle)\\
   &=f(l^{\ast}d^{-1}+y^{\ast}_{\chi_1}d^{-1}-y^{\ast}_{\chi_2}d^{-1}) \psi(\tfrac{1}{2}\langle y^{\ast}_{\chi_1}-y^{\ast}_{\chi_2}, l^{\ast}\rangle);
   \end{align*}
        \begin{align*}
        &\mathcal{D}\circ \mathcal{A}_{\chi_1 \circ \mathfrak{i}, \chi_2 \circ \mathfrak{i}} (f)(l^{\ast})\\
        &= \mathcal{A}_{\chi_1 \circ \mathfrak{i}, \chi_2 \circ \mathfrak{i}} (f)(l^{\ast}d^{-1})\\
        &=f(l^{\ast}d^{-1}+y^{\ast}_{\chi_1}d^{-1}-y^{\ast}_{\chi_2}d^{-1}) \psi(\tfrac{1}{2}\langle y^{\ast}_{\chi_1}d^{-1}-y^{\ast}_{\chi_2}d^{-1}, l^{\ast}d^{-1}\rangle)\\
        &=\mathcal{A}_{\chi_1, \chi_2} \circ \mathcal{D}(f)(l^{\ast}).
        \end{align*}
    \end{proof}
\end{itemize}
\subsection{} For  $g\in \Sp(W)$, let us define a $\C$-linear map  $ \mathfrak{i}^g_{L,L''}$ from $\mathcal{V}_{L,\psi}$ to $ \mathcal{V}_{L'',\psi}$ as follows:
\begin{align}\label{interLL''}
\mathfrak{i}^g_{L,L''}(f)(w)&=\int_{a\in L^{''\ast}}\psi(\tfrac{1}{2}\langle a, w\rangle) \sigma''(-a)\mathcal{D}[f((a+w)g)] da.
\end{align}
\begin{lemma}\label{welldL1L2}
$\mathfrak{i}^g_{L,L''}$ is well-defined.
\end{lemma}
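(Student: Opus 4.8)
The plan is to adapt, essentially verbatim, the three–part argument used above to establish the well-definedness of $\mathfrak{i}^g_{L_1,L_2}$; the only new feature is the $\C$-linear map $\mathcal{D}\colon\mathcal{W}\to\mathcal{W}''$, which is harmless here since it enters only pointwise inside the integrand. Because $L^{''\ast}$ is a compact group, $\dim\mathcal{W}''<+\infty$, and $f$ is locally constant with compact support, the assignment $a\mapsto\psi(\tfrac12\langle a,w\rangle)\,\sigma''(-a)\,\mathcal{D}[f((a+w)g)]$ is a locally constant, compactly supported function $L^{''\ast}\to\mathcal{W}''$; hence the integral converges for every $w$ and yields a function $\mathfrak{i}^g_{L,L''}(f)\colon W\to\mathcal{W}''$. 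It then suffices to verify the three conditions characterising membership in $\mathcal{V}_{L'',\psi}$: the $L^{''\ast}$-covariance, compact support, and local constancy.

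For the covariance I would fix $l^{''\ast}\in L^{''\ast}$, write out $\mathfrak{i}^g_{L,L''}(f)(l^{''\ast}+w)$, and carry out the measure-preserving substitution $a\mapsto a-l^{''\ast}$ on the compact group $L^{''\ast}$. Expanding $\langle a-l^{''\ast},\,l^{''\ast}+w\rangle$ by bilinearity (using $\langle l^{''\ast},l^{''\ast}\rangle=0$) and re-expressing $\sigma''(l^{''\ast}-a)$ through the multiplication law of $\Ha(L^{''\ast})$ — exactly the computation already performed in \eqref{gactionsigma1} — the antisymmetry of $\langle\,,\,\rangle$ forces the $a$-dependent cross terms to cancel, leaving the constant factor $\psi(-\tfrac12\langle l^{''\ast},w\rangle)\,\sigma''(l^{''\ast})$ in front of $\mathfrak{i}^g_{L,L''}(f)(w)$; since $\mathcal{D}[f((a+w)g)]$ is untouched by the substitution, one gets $\mathfrak{i}^g_{L,L''}(f)(l^{''\ast}+w)=\psi(-\tfrac12\langle l^{''\ast},w\rangle)\,\sigma''(l^{''\ast})\,\mathfrak{i}^g_{L,L''}(f)(w)$, which is the defining relation of $\mathcal{V}_{L'',\psi}$.

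For compact support, if $\supp f\subseteq C$ with $C$ compact then $f((a+w)g)=0$ unless $a+w\in Cg^{-1}$, so $\mathfrak{i}^g_{L,L''}(f)(w)=0$ unless $w\in Cg^{-1}-a$ for some $a\in L^{''\ast}$; since $L^{''\ast}$ is compact this gives $\supp\mathfrak{i}^g_{L,L''}(f)\subseteq L^{''\ast}+Cg^{-1}$, which is compact. For local constancy, choose an open compact subgroup $K\subseteq W$ with $f(k+w)=f(w)$ for all $k\in K$, and show that $\mathfrak{i}^g_{L,L''}(f)$ is invariant under the open compact subgroup $2(L''\cap Kg^{-1})$: for $l=2l'$ with $l'\in L''\cap Kg^{-1}$ one has $lg=2(l'g)\in K$, whence $f((a+w+l)g)=f((a+w)g)$, while the phase only gains $\psi(\tfrac12\langle a,l\rangle)=\psi(\langle a,l'\rangle)=1$ for every $a\in L^{''\ast}$ by the very definition of $L^{''\ast}$ as the $\psi$-dual of $L''$; therefore $\mathfrak{i}^g_{L,L''}(f)(l+w)=\mathfrak{i}^g_{L,L''}(f)(w)$, and $\mathfrak{i}^g_{L,L''}(f)$ is locally constant.

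The only step needing real care is the last one: it is the scaled lattice $2(L''\cap Kg^{-1})$ — with the factor $2$, and not $L''\cap Kg^{-1}$ itself — that gives the invariance, this being forced by the residual characteristic $2$ (equivalently, by $\psi$ having finite order while $2$ is a uniformizer of $F$); it is the precise analogue of the use of $2(L_2\cap Kg^{-1})$ in the well-definedness proof for $\mathfrak{i}^g_{L_1,L_2}$. Everything else is a routine transcription, together with the already-established fact (Section \ref{nonselfdualtwo}) that $\mathcal{D}\colon\mathcal{W}\to\mathcal{W}''$ is a well-defined $\C$-linear map. I anticipate no obstacle beyond this bookkeeping.
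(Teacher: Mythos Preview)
Your proposal is correct and follows essentially the same three-part structure as the paper's own proof: covariance under $L^{''\ast}$ via the substitution $a\mapsto a-l^{''\ast}$, compact support contained in $L^{''\ast}+Cg^{-1}$, and local constancy under $2(L''\cap Kg^{-1})$. The only cosmetic difference is that the paper phrases the last step as choosing an open compact subgroup $K_0$ inside $2(L''\cap Kg^{-1})$, whereas you work directly with $2(L''\cap Kg^{-1})$ itself; since in the $p$-adic setting this set is already open compact, there is no substantive distinction.
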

\begin{proof}
  Let  $f\in \mathcal{V}_{L,\psi}$.\\ 
1) For $l\in L^{''\ast}$, we have:
 \begin{align*}
&\mathfrak{i}^g_{L,L''}(f)(l+w)\\
&=\int_{a\in L^{''\ast}}\psi(\tfrac{1}{2}\langle a, w+l\rangle) \sigma''(-a)\mathcal{D}[f((a+w+l)g)] da\\
&=\int_{a\in L^{''\ast}}  \psi(\tfrac{1}{2}\langle a-l, w+l\rangle) \sigma''(-a+l)\mathcal{D}[f(a+w)g]da\\
&=\int_{a\in L^{''\ast}}  \psi(\tfrac{1}{2}\langle a-l, w+l\rangle)\psi( -\tfrac{1}{2}\langle a, l\rangle) \sigma''(l)\sigma''(-a)\mathcal{D}[f(a+w)g] da\\
&=\int_{a\in L^{''\ast}}  \psi(\tfrac{1}{2}\langle -l, w\rangle)\sigma''(l)\psi(\tfrac{1}{2}\langle a, w\rangle) \sigma''(-a)\mathcal{D}[f(a+w)g] da\\
&=\psi(\tfrac{1}{2}\langle -l, w\rangle)\sigma''(l)\mathfrak{i}^g_{L,L''}(f)(w).
 \end{align*}
 2) Assume $\supp(f) \subseteq C$, for some compact set $C$ of $W$. If $w\notin Cg^{-1}+L^{''\ast}$, then $f((a+w)g)=0$,  for any  $a\in L^{''\ast}$. Consequently, $\mathcal{D}[f((a+w)g)]=0$.   Hence $\supp (\mathfrak{i}^g_{L,L''}(f)) \subseteq L^{''\ast}+Cg^{-1}$.\\
 3)  If $f(a+w)=f(w)$, for $a$ belonging to  some open compact subgroup $K$ of $W$.  Then   $\mathcal{D}[f(a+w)]=\mathcal{D}[f(w)]$. Let $K_0$ be an open compact subgroup of $2(L''\cap Kg^{-1})$.  For any $l\in K_0$, we have: 
 \begin{align*}
&\mathfrak{i}^g_{L,L''}(f)(l+w)\\
&=\int_{a\in L^{''\ast}}  \psi(\tfrac{1}{2}\langle a, w+l\rangle)\sigma''(-a)\mathcal{D}[f((a+w+l)g) ]da\\
 &=\int_{a\in L^{''\ast}}  \psi(\tfrac{1}{2}\langle a, l\rangle)\psi(\tfrac{1}{2}\langle a, w\rangle)\sigma''(-a)\mathcal{D}[f((a+w)g) ]da\\
 &=\mathfrak{i}^g_{L,L''}(f)(w).
 \end{align*}
 
\end{proof}

\begin{lemma}\label{coomLL''}
Let $h=(w',t)\in \Ha(W)$. The following diagram is commutative:
\begin{equation}\label{eq1}
\begin{CD}
 \mathcal{V}_{L,\psi}@>\mathfrak{i}^g_{L,L''}>> \mathcal{V}_{L'',\psi}  \\
       @V\pi_{L,\psi}(h) VV @VV\pi_{L'',\psi}(h^{g^{-1}}) V  \\
 \mathcal{V}_{L,\psi}@>\mathfrak{i}^g_{L,L''}>> \mathcal{V}_{L'',\psi} 
\end{CD}
\end{equation} 
\end{lemma}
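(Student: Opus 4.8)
The plan is to establish commutativity of the square by a direct computation, carried out exactly as in the proof of Lemma~\ref{coomL1L2} but now carrying the linear operator $\mathcal{D}$ along inside the integral. Fix $f\in\mathcal{V}_{L,\psi}$ and $w\in W$; by the previous lemma both composites are honest maps into $\mathcal{V}_{L'',\psi}$, so it suffices to compare their values at $w$. Recall that $h^{g^{-1}}=(w'g^{-1},t)$ when $h=(w',t)$, and that the Heisenberg actions are $\pi_{L,\psi}(h)\varphi(w)=\psi\bigl(t+\tfrac12\langle w,w'\rangle\bigr)\varphi(w+w')$ and $\pi_{L'',\psi}(h^{g^{-1}})\varphi(w)=\psi\bigl(t+\tfrac12\langle w,w'g^{-1}\rangle\bigr)\varphi(w+w'g^{-1})$.

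First I would expand the top-then-right composite $\pi_{L'',\psi}(h^{g^{-1}})\circ\mathfrak{i}^g_{L,L''}$ applied to $f$ at $w$: one substitutes the defining formula \eqref{interLL''} into the formula for $\pi_{L'',\psi}(h^{g^{-1}})$, now with argument $w+w'g^{-1}$, and rewrites the innermost point as $(a+w+w'g^{-1})g=(a+w)g+w'$. Collecting exponentials, the result is the integral over $a\in L^{''\ast}$ of $\psi\bigl(t+\tfrac12\langle w,w'g^{-1}\rangle+\tfrac12\langle a,w\rangle+\tfrac12\langle a,w'g^{-1}\rangle\bigr)\,\sigma''(-a)\,\mathcal{D}\bigl[f((a+w)g+w')\bigr]$.

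Next I would expand the left-then-bottom composite $\mathfrak{i}^g_{L,L''}\circ\pi_{L,\psi}(h)$ applied to $f$ at $w$. Here $(\pi_{L,\psi}(h)f)((a+w)g)=\psi\bigl(t+\tfrac12\langle(a+w)g,w'\rangle\bigr)f((a+w)g+w')$, and since $\mathcal{D}$ is $\C$-linear it pulls this scalar out; plugging into \eqref{interLL''} gives the integral over $a\in L^{''\ast}$ of $\psi\bigl(\tfrac12\langle a,w\rangle+t+\tfrac12\langle(a+w)g,w'\rangle\bigr)\,\sigma''(-a)\,\mathcal{D}\bigl[f((a+w)g+w')\bigr]$. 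Finally, since $g\in\Sp(W)$ preserves $\langle\,,\rangle$,
\[
\tfrac12\langle(a+w)g,w'\rangle=\tfrac12\langle a+w,w'g^{-1}\rangle=\tfrac12\langle a,w'g^{-1}\rangle+\tfrac12\langle w,w'g^{-1}\rangle,
\]
so the two integrands agree term by term, and the square commutes.

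I do not expect a genuine obstacle. The only points needing care are bookkeeping ones: that $\mathcal{D}$ is merely $\C$-linear, which is exactly what is needed to move the scalar $\psi(\cdots)$ out of $\mathcal{D}[\cdot]$; and that the rewriting $(a+w+w'g^{-1})g=(a+w)g+w'$ together with the symplecticity of $g$ is what reconciles the exponential factors. No property of the lattices $L$, $L''$, or of the diagonal element $d$, is used beyond what is already recorded. In effect this is formally the computation of Lemma~\ref{coomL1L2}, with $\mathcal{S}$ replaced by $\mathcal{V}$ and the harmless operator $\mathcal{D}$ inserted under the integral sign.
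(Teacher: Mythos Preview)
Your proposal is correct and follows essentially the same approach as the paper: a direct comparison of the two composites by expanding \eqref{interLL''}, pulling the scalar $\psi(\cdots)$ through $\mathcal{D}$ by $\C$-linearity, and using $\langle (a+w)g,w'\rangle=\langle a+w,w'g^{-1}\rangle$ to match the exponentials. The paper's proof is line-for-line the computation you describe.
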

\begin{proof}
  \begin{align*}
 & \pi_{L'',\psi}(h^{g^{-1}})\mathfrak{i}^g_{L,L''}(f)(w)\\
 &=\pi_{L'',\psi}(w'g^{-1},t)\mathfrak{i}^g_{L,L''}(f)(w)\\
  &=\psi( t+\tfrac{1}{2}\langle w, w'g^{-1}\rangle)\mathfrak{i}^g_{L,L''}(f)(w+w'g^{-1})\\
 &=\psi( t+\tfrac{1}{2}\langle w, w'g^{-1}\rangle)\int_{a\in L^{''\ast}}\psi(\tfrac{1}{2}\langle a, w+w'g^{-1}\rangle)\sigma''(-a)\mathcal{D}[f((a+w+w'g^{-1})g)] da\\
 &=\int_{a\in L^{''\ast}}\psi( t+\tfrac{1}{2}\langle w, w'g^{-1}\rangle+\tfrac{1}{2}\langle a, w+w'g^{-1}\rangle)\sigma''(-a)\mathcal{D}[f(ag+wg+w')]da;
  \end{align*}
  \begin{align*}
 &\mathfrak{i}^g_{L,L''}[\pi_{L,\psi}(h)f](w)\\
 &=\int_{a\in L^{''\ast}}\psi(\tfrac{1}{2}\langle a, w\rangle)\sigma''(-a)\mathcal{D}[\pi_{L,\psi}(h)f((a+w)g)] da \\
 &=\int_{a\in L^{''\ast}}\psi(\tfrac{1}{2}\langle a, w\rangle)\psi( t+\tfrac{1}{2}\langle (a+w)g, w'\rangle)\sigma''(-a)\mathcal{D}[f((a+w)g+w')]da\\
  &=\int_{a\in L^{''\ast}}\psi( t+\tfrac{1}{2}\langle a, w\rangle+\tfrac{1}{2}\langle a+w, w'g^{-1}\rangle)\sigma''(-a)\mathcal{D}[f((a+w)g+w')]da\\
  &=\pi_{L'',\psi}(h^{g^{-1}})\mathfrak{i}^g_{L,L''}(f)(w).
  \end{align*}
\end{proof}
\begin{lemma}\label{chiLL''}
$\mathfrak{i}^g_{L,L''}$ sends $\mathcal{V}_{L_1,\psi_{\chi\circ \mathfrak{i}}}$ onto $\mathcal{V}_{L_1'',\psi_{\chi}}$.
\end{lemma}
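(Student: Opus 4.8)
\emph{Plan.} The plan is to prove the two halves of the assertion separately: that $\mathfrak{i}^g_{L,L''}$ carries $\mathcal{V}_{L_1,\psi_{\chi\circ\mathfrak{i}}}$ \emph{into} $\mathcal{V}_{L_1'',\psi_{\chi}}$, and that this restricted map is \emph{surjective}. The inclusion is essentially formal: it rests on the fact (item (3) of Subsection~\ref{nonselfdualtwo}) that $\mathcal{D}$ sends $\mathcal{W}_{\chi\circ\mathfrak{i}}$ onto $\mathcal{W}''_{\chi}$, together with the fact that $(\sigma''_{\chi},\mathcal{W}''_{\chi})$ is an $\Ha(L^{''\ast})$-subrepresentation of $(\sigma'',\mathcal{W}'')$. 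Surjectivity I would then deduce from irreducibility plus a non-vanishing check.

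For the inclusion, let $f\in\mathcal{V}_{L_1,\psi_{\chi\circ\mathfrak{i}}}$, so $f$ is a locally constant, compactly supported function on $W$ with values in the finite-dimensional space $\mathcal{W}_{\chi\circ\mathfrak{i}}$, and fix $w\in W$. For every $a\in L^{''\ast}$ we have $f((a+w)g)\in\mathcal{W}_{\chi\circ\mathfrak{i}}$, hence $\mathcal{D}[f((a+w)g)]\in\mathcal{W}''_{\chi}$; since $\sigma''(-a)$ preserves the subspace $\mathcal{W}''_{\chi}$ for $a\in L^{''\ast}$, the integrand $\psi(\tfrac{1}{2}\langle a,w\rangle)\,\sigma''(-a)\,\mathcal{D}[f((a+w)g)]$ lies in $\mathcal{W}''_{\chi}$. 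As $\mathcal{W}''_{\chi}$ is finite-dimensional, the integral $\mathfrak{i}^g_{L,L''}(f)(w)$ again lies in $\mathcal{W}''_{\chi}$ for all $w$; combined with Lemma~\ref{welldL1L2}, which already gives $\mathfrak{i}^g_{L,L''}(f)\in\mathcal{V}_{L'',\psi}$, this shows $\mathfrak{i}^g_{L,L''}(f)\in\cInd_{\Ha(L^{''\ast})}^{\Ha(W)}\mathcal{W}''_{\chi}=\mathcal{V}_{L_1'',\psi_{\chi}}$.

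For surjectivity, observe that $\mathcal{V}_{L_1,\psi_{\chi\circ\mathfrak{i}}}$ is an irreducible $\Ha(W)$-module under $\pi_{L,\psi}$, and by Lemma~\ref{coomLL''} the restriction of $\mathfrak{i}^g_{L,L''}$ to it intertwines $\pi_{L,\psi}(h)$ with $\pi_{L'',\psi}(h^{g^{-1}})$; since $h\mapsto h^{g^{-1}}$ is an automorphism of $\Ha(W)$, the space $\mathcal{V}_{L_1'',\psi_{\chi}}$ carrying this twisted action is still irreducible. Hence the restricted map is either $0$ or an isomorphism onto $\mathcal{V}_{L_1'',\psi_{\chi}}$, and in view of the inclusion just proved the lemma follows once the restriction is shown to be nonzero. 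To see this I would take a single test vector $f_0\in\mathcal{V}_{L_1,\psi_{\chi\circ\mathfrak{i}}}$ supported on a sufficiently small coset and evaluate $\mathfrak{i}^g_{L,L''}(f_0)$ at a point chosen so that the defining integral collapses to the measure of a nonempty open subgroup times a nonzero vector of $\mathcal{W}''_{\chi}$, in the style of the computations in Lemmas~\ref{nonzeroL1L2} and~\ref{nonzeromap12}. Alternatively, once $\mathfrak{i}^g_{L,L''}$ is known to be bijective on all of $\mathcal{V}_{L,\psi}$, the decompositions $\mathcal{V}_{L,\psi}=\bigoplus_{\chi}\mathcal{V}_{L_1,\psi_{\chi\circ\mathfrak{i}}}$ and $\mathcal{V}_{L'',\psi}=\bigoplus_{\chi}\mathcal{V}_{L_1'',\psi_{\chi}}$ together with the inclusion force equality summand by summand.

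The step I expect to be the real obstacle is the non-vanishing — equivalently, the surjectivity. The symplectic element $g$ makes the collapse of the integral defining $\mathfrak{i}^g_{L,L''}$ less transparent than in the $g=1$ case, so the test function and the evaluation point must be chosen carefully, ensuring that the support condition $\{a\in L^{''\ast}:(a+w)g\in\supp f_0\}$ contains an open subgroup; the alternative is to invest in proving the global bijectivity of $\mathfrak{i}^g_{L,L''}$ (for instance by exhibiting an inverse up to a nonzero scalar, analogous to the identity $\mathfrak{i}_{L_1,L_2}\circ\mathfrak{i}_{L_2,L_1}=c\,\id$). By contrast the inclusion is immediate from item (3) and the $\Ha(L^{''\ast})$-stability of $\mathcal{W}''_{\chi}$.
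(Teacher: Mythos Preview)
Your argument for the inclusion $\mathfrak{i}^g_{L,L''}(\mathcal{V}_{L_1,\psi_{\chi\circ\mathfrak{i}}})\subseteq\mathcal{V}_{L_1'',\psi_{\chi}}$ is correct and is essentially the paper's approach: both rest on item~(3) of Subsection~\ref{nonselfdualtwo} (that $\mathcal{D}$ carries $\mathcal{W}_{\chi\circ\mathfrak{i}}$ into $\mathcal{W}''_{\chi}$) together with the $\Ha(L^{''\ast})$-stability of $\mathcal{W}''_{\chi}$. The paper redoes the transformation-law computation of Lemma~\ref{welldL1L2} verbatim with $\sigma''_{\chi}$ in place of $\sigma''$, whereas you invoke Lemma~\ref{welldL1L2} directly and just check that the values land in $\mathcal{W}''_{\chi}$; these are the same argument, yours stated a bit more economically.

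On surjectivity you actually go further than the paper: despite the word ``onto'' in the statement, the paper's proof of this lemma establishes only the inclusion and says nothing about surjectivity. Your second route --- deducing equality summand-by-summand from the direct-sum decompositions once global bijectivity of $\mathfrak{i}^g_{L,L''}$ is known --- is exactly how the paper eventually uses the result (bijectivity is only proved later, and only for the specific $g$'s in $d_L^{-1}Id_L$ and $d_L^{-1}\mathfrak{W}^{\textrm{aff}}d_L$, via Lemma~\ref{LL'g} and its analogues). So your instinct that the nonvanishing for arbitrary $g$ is the delicate point is well placed, but for the purposes of this paper it is never needed in that generality; matching the paper would mean proving only the inclusion here and deferring surjectivity.
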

\begin{proof}
Accord to \ref{DDD}(3), $\mathcal{D}$ sends $\mathcal{W}_{\chi\circ \mathfrak{i}}$ to $\mathcal{W}_{\chi}$.
 For $f\in \mathcal{V}_{L_1,\psi_{\chi\circ \mathfrak{i}}}$,  $l\in L^{''\ast}$, we have:
 \begin{align*}
&\mathfrak{i}^g_{L,L''}(f)(l+w)\\
&=\int_{a\in L^{''\ast}}\psi(\tfrac{1}{2}\langle a, w+l\rangle) \sigma''(-a)\mathcal{D}[f((a+w+l)g)] da\\
&=\int_{a\in L^{''\ast}}  \psi(\tfrac{1}{2}\langle a-l, w+l\rangle) \sigma''(-a+l)\mathcal{D}[f(a+w)g]da\\
&=\int_{a\in L^{''\ast}}  \psi(\tfrac{1}{2}\langle a-l, w+l\rangle)\psi( -\tfrac{1}{2}\langle a, l\rangle) \sigma''_{\chi}(l)\sigma''_{\chi}(-a)\mathcal{D}[f(a+w)g] da\\
&=\int_{a\in L^{''\ast}}  \psi(\tfrac{1}{2}\langle -l, w\rangle)\sigma''_{\chi}(l)\psi(\tfrac{1}{2}\langle a, w\rangle) \sigma''_{\chi}(-a)\mathcal{D}[f(a+w)g] da\\
&=\psi(\tfrac{1}{2}\langle -l, w\rangle)\sigma''_{\chi}(l)\mathfrak{i}^g_{L,L''}(f)(w).
 \end{align*}
\end{proof}
\begin{lemma}\label{Achi}
The following diagram is commutative:
\begin{equation}\label{eq1}
\begin{CD}
 \mathcal{V}_{L_1,\psi_{\chi_1\circ \mathfrak{i}}}@>\mathfrak{i}^g_{L,L''}>> \mathcal{V}_{L_1'',\psi_{\chi_1}}\\
       @V\mathcal{A}_{\chi_1\circ \mathfrak{i},\chi_2\circ \mathfrak{i}}VV @VV\mathcal{A}_{\chi_1, \chi_2} V  \\
\mathcal{V}_{L_1,\psi_{\chi_2\circ \mathfrak{i}}}@>\mathfrak{i}^g_{L,L''}>> \mathcal{V}_{L_1'',\psi_{\chi_2}} 
\end{CD}
\end{equation} 
\end{lemma}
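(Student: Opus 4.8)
The plan is to reduce the commutativity of the square to three facts that are already available. First, by construction (cf.\ the extension procedure of \S\ref{ext}, carried over verbatim to the present situation), the operators $\mathcal{A}_{\chi_1,\chi_2}\colon\mathcal{V}_{L_1'',\psi_{\chi_1}}\to\mathcal{V}_{L_1'',\psi_{\chi_2}}$ and $\mathcal{A}_{\chi_1\circ\mathfrak{i},\chi_2\circ\mathfrak{i}}\colon\mathcal{V}_{L_1,\psi_{\chi_1\circ\mathfrak{i}}}\to\mathcal{V}_{L_1,\psi_{\chi_2\circ\mathfrak{i}}}$ act pointwise in the variable $w$ by the corresponding fixed $\C$-linear maps on the finite-dimensional spaces $\mathcal{W}''_{\chi}$, resp.\ $\mathcal{W}_{\chi\circ\mathfrak{i}}$. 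Second, by Lemma \ref{iter} applied to the doubly-primed data, $\mathcal{A}_{\chi_1,\chi_2}\colon\mathcal{W}''_{\chi_1}\to\mathcal{W}''_{\chi_2}$ is an intertwining operator for $\sigma''$, so it commutes with every $\sigma''(-a)$, $a\in L^{''\ast}$. Third, the compatibility \ref{DDD}(4) gives $\mathcal{A}_{\chi_1,\chi_2}\circ\mathcal{D}=\mathcal{D}\circ\mathcal{A}_{\chi_1\circ\mathfrak{i},\chi_2\circ\mathfrak{i}}$ as maps $\mathcal{W}_{\chi_1\circ\mathfrak{i}}\to\mathcal{W}''_{\chi_2}$. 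I note in passing that Lemma \ref{chiLL''} guarantees all four corners and all four arrows of the diagram are correctly typed.

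The computation then runs as follows. Fix $f\in\mathcal{V}_{L_1,\psi_{\chi_1\circ\mathfrak{i}}}$ and $w\in W$, and start from $\mathcal{A}_{\chi_1,\chi_2}\big(\mathfrak{i}^g_{L,L''}(f)\big)(w)$. By the pointwise description this equals the fixed map $\mathcal{A}_{\chi_1,\chi_2}$ applied to the vector $\mathfrak{i}^g_{L,L''}(f)(w)=\int_{a\in L^{''\ast}}\psi(\tfrac{1}{2}\langle a,w\rangle)\,\sigma''(-a)\,\mathcal{D}[f((a+w)g)]\,da$; since $\mathcal{A}_{\chi_1,\chi_2}$ is a fixed linear map between finite-dimensional spaces and the integrand is locally constant with compact support in $a$, it passes through the integral. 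Next I move $\mathcal{A}_{\chi_1,\chi_2}$ past $\sigma''(-a)$ using the intertwining property, then past $\mathcal{D}$ using \ref{DDD}(4), obtaining $\int_{a\in L^{''\ast}}\psi(\tfrac{1}{2}\langle a,w\rangle)\,\sigma''(-a)\,\mathcal{D}\big[\mathcal{A}_{\chi_1\circ\mathfrak{i},\chi_2\circ\mathfrak{i}}\big(f((a+w)g)\big)\big]\,da$. Finally, invoking the pointwise description once more — now for $\mathcal{A}_{\chi_1\circ\mathfrak{i},\chi_2\circ\mathfrak{i}}$ acting on $\mathcal{V}_{L_1,\psi_{\chi_1\circ\mathfrak{i}}}$ — this is exactly $\mathfrak{i}^g_{L,L''}\big(\mathcal{A}_{\chi_1\circ\mathfrak{i},\chi_2\circ\mathfrak{i}}(f)\big)(w)$, which proves the square commutes.

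I do not expect a genuine obstacle here. The only points needing care — rather than new ideas — are the interchange of $\mathcal{A}_{\chi_1,\chi_2}$ with the defining integral in the first and last steps, which is legitimate because the functions in $\mathcal{V}$ are locally constant and compactly supported and $\dim\mathcal{W}''<\infty$, and the consistent tracking of which character is attached to which lattice, which is handled once and for all by Lemma \ref{chiLL''} and \ref{DDD}(3). In fact this diagram chase is the exact analogue of the one used to prove Lemma \ref{IntMg} in the odd-residue-characteristic case, with the extra wrinkle being only the presence of the dilation $\mathcal{D}$, whose behaviour with respect to $\mathcal{A}$ was precisely recorded in \ref{DDD}(4).
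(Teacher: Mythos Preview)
Your argument is correct and is essentially the paper's own proof, run in the reverse direction: the paper starts from $\mathfrak{i}^g_{L,L''}\circ\mathcal{A}_{\chi_1\circ\mathfrak{i},\chi_2\circ\mathfrak{i}}$ and works towards $\mathcal{A}_{\chi_1,\chi_2}\circ\mathfrak{i}^g_{L,L''}$, using the same three ingredients you identified (pointwise extension of $\mathcal{A}$, the intertwining property of Lemma~\ref{iter}, and the compatibility \ref{DDD}(4)). The only cosmetic difference is that the paper makes explicit the passage between $\sigma''(-a)$ and $\sigma''_{\chi_i}(-a)$ on the subspaces $\mathcal{W}''_{\chi_i}$, which you absorb into the phrase ``move $\mathcal{A}_{\chi_1,\chi_2}$ past $\sigma''(-a)$''.
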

\begin{proof}
 \begin{align}
&\mathfrak{i}^g_{L,L''}\mathcal{A}_{\chi_1\circ \mathfrak{i},\chi_2\circ \mathfrak{i}}(f)(w)\\
&=\int_{a\in L^{''\ast}}\psi(\tfrac{1}{2}\langle a, w\rangle) \sigma''(-a)\mathcal{D}[\mathcal{A}_{\chi_1\circ \mathfrak{i},\chi_2\circ \mathfrak{i}}(f)((a+w)g)] da \\
&=\int_{a\in L^{''\ast}}\psi(\tfrac{1}{2}\langle a, w\rangle) \sigma''_{\chi_2}(-a)\mathcal{D}[\mathcal{A}_{\chi_1\circ \mathfrak{i},\chi_2\circ \mathfrak{i}}(f)((a+w)g)] da \\
&=\int_{a\in L^{''\ast}}\psi(\tfrac{1}{2}\langle a, w\rangle) \sigma''_{\chi_2}(-a)\circ\mathcal{A}_{\chi_1,\chi_2}\mathcal{D}[f((a+w)g)] da \\
&=\int_{a\in L^{''\ast}}\psi(\tfrac{1}{2}\langle a, w\rangle) \mathcal{A}_{\chi_1,\chi_2}\circ \sigma''_{\chi_1}(-a) \mathcal{D}[f((a+w)g)] da \\
&= \mathcal{A}_{\chi_1,\chi_2}\Big(\int_{a\in L^{''\ast}}\psi(\tfrac{1}{2}\langle a, w\rangle) \sigma''_{\chi_1}(-a) \mathcal{D}[f((a+w)g)] da\Big) \\
&= \mathcal{A}_{\chi_1,\chi_2}\Big(\int_{a\in L^{''\ast}}\psi(\tfrac{1}{2}\langle a, w\rangle) \sigma''(-a) \mathcal{D}[f((a+w)g)] da\Big)\\
&=\mathcal{A}_{\chi_1, \chi_2}\mathfrak{i}^g_{L,L''}(f)(w).
\end{align}
\end{proof}
If $g=1$, we write  $\mathfrak{i}_{L,L''}$ for $\mathfrak{i}^g_{L,L''}$. Similarly, we can define $\mathfrak{i}_{L'',L}$.
\begin{lemma}\label{LL''}
Let $c=\mu(L^{\ast})\mu(\{a''\in L^{\ast}, a''d-a'' \in L\})$. Then  $\mathfrak{i}_{L'',L}\circ \mathfrak{i}_{L,L''}=c\id$.
\end{lemma}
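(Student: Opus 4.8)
The plan is to compute the composite directly, in the same spirit as the self\nobreakdash-dual computation of $\mathfrak{i}_{L_1,L_2}\circ\mathfrak{i}_{L_2,L_1}$ above and as the computation \eqref{muti} of $M[g]M[g^{-1}]$ in Section \ref{MgM}. First I would write down $\mathfrak{i}_{L'',L}$ explicitly: it is given by the formula \eqref{interLL''} with $g=1$, with $L^{\ast}$, $\sigma$ and the inverse map $\mathcal{D}^{-1}\colon\mathcal{W}''\to\mathcal{W}$, $\mathcal{D}^{-1}(F)(l^{\ast})=F(l^{\ast}d)$, playing the roles of $L^{''\ast}$, $\sigma''$ and $\mathcal{D}$. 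Substituting $\mathfrak{i}_{L,L''}(f)$ into $\mathfrak{i}_{L'',L}$ then yields, for $f\in\mathcal{V}_{L,\psi}$ and $w\in W$, the double integral
$$\int_{a'\in L^{\ast}}\int_{a\in L^{''\ast}}\psi\bigl(\tfrac12\langle a',w\rangle\bigr)\psi\bigl(\tfrac12\langle a,a'+w\rangle\bigr)\,\sigma(-a')\,\mathcal{D}^{-1}\sigma''(-a)\mathcal{D}\bigl[f(a+a'+w)\bigr]\,da\,da'.$$

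Next I would simplify the operator part. The commutative diagram in \ref{DDD}(2) gives $\mathcal{D}^{-1}\sigma''(-a)=\sigma(-ad^{-1})\mathcal{D}^{-1}$, and $\mathcal{D}^{-1}\mathcal{D}=\mathrm{Id}_{\mathcal{W}}$, so the operator part collapses to $\sigma(-a')\sigma(-ad^{-1})$ applied to $f(a+a'+w)$. Then I would change variables $b=ad^{-1}$; since $d\in\Sp(W)$ has determinant $1$ on $W$ this is measure\nobreakdash-preserving and $b$ runs over $L^{\ast}$. Writing $a+a'+w=(a'+b)+(bd-b+w)$ with $a'+b\in L^{\ast}$, the equivariance of $\mathcal{V}_{L,\psi}$ together with the Heisenberg cocycle identity $\sigma(-a')\sigma(-b)\sigma(a'+b)=\psi(\tfrac12\langle a',b\rangle)\,\mathrm{Id}$ turns the integrand into a pure phase times $f(bd-b+w)$, with the $a'$- and $w$-dependence collected as $\psi\bigl(\langle bd-b,a'\rangle+\tfrac12\langle bd-b,w\rangle-\tfrac12\langle b,bd\rangle\bigr)$. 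Performing the $a'$\nobreakdash-integration over $L^{\ast}$ by character orthogonality produces a factor $\mu(L^{\ast})$ and restricts the $b$-integration to $\{b\in L^{\ast}:bd-b\in(L^{\ast})^{\ast}=L\}$.

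It then remains to evaluate $\mu(L^{\ast})\int_{\{b\in L^{\ast}:\,bd-b\in L\}}\psi\bigl(\tfrac12\langle bd-b,w\rangle-\tfrac12\langle b,bd\rangle\bigr)f(bd-b+w)\,db$. On the domain of integration $bd-b\in L\subseteq L^{\ast}$, so I would apply the equivariance of $\mathcal{V}_{L,\psi}$ once more and then Lemma \ref{Bll} (legitimate since $bd-b\in L$) to rewrite $f(bd-b+w)=\psi\bigl(-\tfrac12\langle bd-b,w\rangle-\tfrac12 B(bd-b,bd-b)\bigr)f(w)$. The $w$-dependence cancels and one is left with the constant $\mu(L^{\ast})\int_{\{b\in L^{\ast}:\,bd-b\in L\}}\psi\bigl(-\tfrac12\langle b,bd\rangle-\tfrac12 B(bd-b,bd-b)\bigr)\,db$ times $f(w)$.

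The main (indeed only non\nobreakdash-formal) point is to show that the surviving phase $\psi\bigl(-\tfrac12\langle b,bd\rangle-\tfrac12 B(bd-b,bd-b)\bigr)$ equals $1$ on the whole subgroup $\{b\in L^{\ast}:bd-b\in L\}$. Writing $b=y+y^{\ast}$ with $y\in X$, $y^{\ast}\in X^{\ast}$ and using $d=\diag(d_1,\dots,d_m;d_1^{-1},\dots,d_m^{-1})$, a short bookkeeping of the $B$- and $\langle,\rangle$-terms identifies this phase with $\psi\bigl(\langle u,y^{\ast}\rangle\bigr)$, where $u\in X$ denotes the $X$-component of $bd-b$. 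Now the hypotheses $L=(L\cap X)\oplus(L\cap X^{\ast})$ and $d$ block-diagonal force $u\in L\cap X\subseteq L$, while $b\in L^{\ast}$ gives $\psi(\langle b,u\rangle)=1$; since $\langle b,u\rangle=\langle y^{\ast},u\rangle=-\langle u,y^{\ast}\rangle$, the phase is indeed $1$. Hence the integrand is constantly $f(w)$ and the composite equals $\mu(L^{\ast})\,\mu(\{b\in L^{\ast}:bd-b\in L\})\cdot\mathrm{Id}$, as claimed. (Alternatively, one could first argue abstractly that $\mathfrak{i}_{L'',L}\circ\mathfrak{i}_{L,L''}$ is a scalar on each irreducible summand $\mathcal{V}_{L_1,\psi_{\chi\circ\mathfrak{i}}}$ by Lemmas \ref{coomLL''}, \ref{chiLL''} and Schur, that these scalars coincide by Lemma \ref{Achi} and its analogue for $\mathfrak{i}_{L'',L}$, and then pin the common scalar down by the same one-line evaluation on a function supported near $0$; the computation above does both steps at once.)
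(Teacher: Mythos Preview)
Your proposal is correct and follows essentially the same route as the paper's own proof: expand the composite as a double integral, use the intertwining property $\mathcal{D}^{-1}\sigma''(-a)\mathcal{D}=\sigma(-ad^{-1})$ and the substitution $b=ad^{-1}$, collapse the operator part via the Heisenberg commutation relation and the $L^{\ast}$-equivariance of $f$, integrate out the outer variable by character orthogonality to land on $\{b\in L^{\ast}:bd-b\in L\}$, and finally show the residual phase is trivial. Your justification of the last step---identifying the phase with $\psi(\langle u,y^{\ast}\rangle)$ for $u\in L\cap X$ and then using $b\in L^{\ast}$, $u\in L$---is in fact a bit more explicit than the paper, which reaches the equivalent expression $\psi(B(a''d-a'',a''))$ and passes to the conclusion without spelling out why it lies in $\ker\psi$.
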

\begin{proof}
Let $f\in \mathcal{V}_{L,\psi}$. Then:
\begin{align*}
 & \mathfrak{i}_{L'',L}\circ \mathfrak{i}_{L,L''}(f)(w)\\
 &=\int_{a\in L^{\ast}}\psi(\tfrac{1}{2}\langle a, w\rangle) \sigma(-a)\mathcal{D}^{-1}[\mathfrak{i}_{L'',L}(f) (a+w)] da\\
 &=\int_{a\in L^{\ast}}\psi(\tfrac{1}{2}\langle a, w\rangle) \sigma(-a)\mathcal{D}^{-1}\Big[\int_{a'\in L^{''\ast}}\psi(\tfrac{1}{2}\langle a', a+w\rangle) \sigma''(-a')\mathcal{D}[f(a'+ a+w)] da'\Big] da\\
 &=\int_{a\in L^{\ast}}\int_{a'\in L^{''\ast}} \psi(\tfrac{1}{2}\langle a, w\rangle) \psi(\tfrac{1}{2}\langle a', a+w\rangle)\sigma(-a)\sigma(-a' d^{-1}) f(a'+ a+w)] da' da\\
 &\stackrel{a''=a'd^{-1}}{=}\int_{a\in L^{\ast}}\int_{a''\in L^{\ast}} \psi(\tfrac{1}{2}\langle a, w\rangle) \psi(\tfrac{1}{2}\langle a''d, a+w\rangle)\sigma(-a)\sigma(-a'') f(a''d+ a+w) da'' da\\
 &=\int_{a\in L^{\ast}}\int_{a''\in L^{\ast}} \psi(\tfrac{1}{2}\langle a, w\rangle) \psi(\tfrac{1}{2}\langle a''d, a+w\rangle) \psi(\langle a,a''\rangle)\sigma(-a'') \sigma(-a)f(a''d+ a+w) da'' da\\
  &=\int_{a\in L^{\ast}}\int_{a''\in L^{\ast}} \psi(\langle a''d-a'', a\rangle)\psi(\tfrac{1}{2}\langle a''d, w\rangle) \sigma(-a'') f(a''d+w) da'' da
    \end{align*}
 \begin{align*}
    &=\int_{a\in L^{\ast}}\int_{a''\in L^{\ast}} \psi(\langle a''d-a'', a\rangle) \psi(\tfrac{1}{2}\langle a''d-a'', w\rangle) \psi(\tfrac{1}{2}\langle -a'', a''d\rangle) f(a''d-a''+w) da'' da\\
     &=\mu(L^{\ast})\int_{a''\in L^{\ast}, a''d-a'' \in L} \psi(\tfrac{1}{2}\langle a''d-a'', w\rangle) \psi(\tfrac{1}{2}\langle -a'', a''d\rangle) f(a''d-a''+w) da'' \\
        &=\mu(L^{\ast})\int_{a''\in L^{\ast}, a''d-a'' \in L} \psi(\tfrac{1}{2}\langle -a'', a''d\rangle) \sigma(a''d-a'')  f(w) da'' \\
         &\stackrel{\textrm{ Lemma }\ref{Bll}}{=}\mu(L^{\ast})\int_{a''\in L^{\ast}, a''d-a'' \in L} \psi(\tfrac{1}{2}\langle -a'', a''d\rangle) \psi(-\tfrac{1}{2}B(a''d-a'',a''d-a''))  f(w) da'' \\
        &=\mu(L^{\ast})\int_{a''\in L^{\ast}, a''d-a'' \in L} \psi(\tfrac{1}{2}B(a''d, a''d-a'')) \psi(\tfrac{1}{2}B(a''d-a'',a''))  f(w) da'' \\
     &=\mu(L^{\ast})\int_{a''\in L^{\ast}, a''d-a'' \in L} \psi(B(a''d-a'', a''))   f(w) da'' \\
  &=\mu(L^{\ast})\mu(\{a''\in L^{\ast}, a''d-a'' \in L\})f(w).
  \end{align*}
\end{proof}
\begin{corollary}
$\mathfrak{i}_{L,L''}$ is  bijective.
\end{corollary}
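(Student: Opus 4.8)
The plan is to read off the corollary from Lemma~\ref{LL''} together with the observation that the constant $c$ appearing there is strictly positive. Indeed, Lemma~\ref{LL''} gives $\mathfrak{i}_{L'',L}\circ\mathfrak{i}_{L,L''}=c\,\mathrm{id}_{\mathcal{V}_{L,\psi}}$ with $c=\mu(L^{\ast})\,\mu(\{a''\in L^{\ast}\mid a''d-a''\in L\})$, so as soon as $c\neq 0$ the operator $c^{-1}\mathfrak{i}_{L'',L}$ is a left inverse of $\mathfrak{i}_{L,L''}$; in particular $\mathfrak{i}_{L,L''}$ is injective.

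To see that $c\neq 0$: the factor $\mu(L^{\ast})$ is positive because $L^{\ast}$ is a nonempty compact open subgroup of $W$. For the other factor, note that $a\mapsto a(d-1)=ad-a$ is a continuous endomorphism of $W$ and $L$ is open in $W$, so $\{a''\in L^{\ast}\mid a''d-a''\in L\}$ is an open subgroup of the compact group $L^{\ast}$; hence it has finite index and positive Haar measure. Therefore $c>0$.

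It remains to obtain surjectivity, and here I see two routes. First, one may run the computation of Lemma~\ref{LL''} with $L$ and $L''$ interchanged --- equivalently, replace $d$ by $d^{-1}$, so that $L=L''d^{-1}$ plays for $L''$ the role $L''$ plays for $L$ --- to get $\mathfrak{i}_{L,L''}\circ\mathfrak{i}_{L'',L}=c'\,\mathrm{id}_{\mathcal{V}_{L'',\psi}}$ with $c'=\mu(L^{''\ast})\,\mu(\{a\in L^{''\ast}\mid ad^{-1}-a\in L''\})$, and the same argument shows $c'>0$; then $c'^{-1}\mathfrak{i}_{L'',L}$ is a right inverse of $\mathfrak{i}_{L,L''}$, so $\mathfrak{i}_{L,L''}$ is bijective (with two-sided inverse $c^{-1}\mathfrak{i}_{L'',L}=c'^{-1}\mathfrak{i}_{L'',L}$, which in passing forces $c=c'$). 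Alternatively, in the spirit of the proof of Lemma~\ref{nonzeroL1L2}, one argues by Schur's lemma: by Lemma~\ref{coomLL''} with $g=1$ the injective map $\mathfrak{i}_{L,L''}$ is $\Ha(W)$-equivariant, and by Lemma~\ref{chiLL''} it carries each irreducible summand $\mathcal{V}_{L_1,\psi_{\chi\circ\mathfrak{i}}}$ into the irreducible summand $\mathcal{V}_{L_1'',\psi_\chi}$, nonzero on each by injectivity, hence an isomorphism on each, hence bijective overall. In either route the only genuinely non-formal input is the positivity of $c$, which is the elementary measure-theoretic point above, so there is no real obstacle here.
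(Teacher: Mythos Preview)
Your proof is correct and essentially matches the paper's (implicit) reasoning: the paper states the corollary without proof, evidently regarding it as immediate from Lemma~\ref{LL''}, and you have simply spelled out why $c>0$ and supplied the symmetric (or Schur) argument needed to pass from a one-sided inverse to bijectivity. Both of your routes to surjectivity are valid and in keeping with the paper's methods elsewhere (compare the use of irreducibility in Lemma~\ref{nonzeroL1L2}).
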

\section{The non-archimedean local field case II}\label{pn2}
Keep the above notations. We consider a special sublattice $L$ of $L_1$.  Under the basis $\{e_1, \cdots, e_m, e_1^{\ast}, \cdots, e_m^{\ast}\}$, we identity $\GSp(W)$ with $\GSp_{2m}(F)$.  Let $d_L=\begin{bmatrix}
g_1& 0\\
0& g_2\end{bmatrix}\in \GSp_{2m}(F)$, such that $(L_1\cap X)g_1 \subseteq L_1\cap X$, $(L_1\cap X^{\ast})g_2 \subseteq L_1\cap X^{\ast}$. Let $$L=L_1d_L.$$ Then $L \subseteq L_1$ and  $L=L\cap X\oplus L\cap X^{\ast}$.
 Let us define:
\begin{itemize}
\item $L'=\left\{ \begin{array}{cl}
2 L\cap X\oplus \tfrac{1}{2} L\cap X^{\ast} & \textrm{ if } 2\mid e,\\
\tfrac{1}{2} L\cap X\oplus 2 L\cap X^{\ast} & \textrm{ if } 2\nmid e.\end{array}\right.$ 
\item $L^{0}= L\cap L' $.
 \end{itemize}
 Then  $Ld_L^{-1}=L_1$, $L' d_L^{-1}= L_1'$, and $L^{0} d_L^{-1} =L_1^0$, $L^{0}\subseteq L_1^0$.
Let us  write:
\begin{itemize}
\item  $L'=L d'$.
\item $t=\lambda_{d_L}$, the similitude factor  of $d_L$.
 \end{itemize}
 \begin{lemma}
 $L^{\ast}=L_1 d_L t^{-1}$ and $L^{'\ast}=L'_1 d_L t^{-1}$.
 \end{lemma}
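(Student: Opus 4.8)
The plan is to reduce both identities to a single general fact about how the dual-lattice operation interacts with a symplectic similitude, so that the lemma becomes essentially a one-line consequence of the self-duality (with respect to $\psi$) of the two ambient lattices $L_1$ and $L_1'$.

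First I would record two elementary observations. For $t\in F^{\times}$ write $\psi_t$ for the character $x\mapsto\psi(tx)$; bilinearity of $\langle\,,\rangle$ gives $\psi(t\langle w,l\rangle)=\psi(\langle tw,l\rangle)$, so for any lattice $N$ one has $N^{\ast,\psi_t}=t^{-1}N^{\ast,\psi}$. Secondly, if $g\in\GSp(W)$ has similitude factor $\lambda(g)$, then $\langle wg,w'g\rangle=\lambda(g)\langle w,w'\rangle$, whence $\langle w,lg\rangle=\lambda(g)\langle wg^{-1},l\rangle$ and therefore
\[
(Ng)^{\ast,\psi}=\{\,w\in W:\psi(\lambda(g)\langle wg^{-1},l\rangle)=1\ \text{for all }l\in N\,\}=\bigl(N^{\ast,\psi_{\lambda(g)}}\bigr)g=\bigl(\lambda(g)^{-1}N^{\ast,\psi}\bigr)g.
\]
In particular, if $N$ is self-dual with respect to $\psi$, then $(Ng)^{\ast}=\lambda(g)^{-1}Ng=Ng\,\lambda(g)^{-1}$, the scalar passing freely through $g$.

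It then remains to apply this with $g=d_L$, so that $\lambda(d_L)=t$. Since $L_1$ is self-dual with respect to $\psi$ by hypothesis, $L^{\ast}=(L_1d_L)^{\ast}=t^{-1}L_1d_L=L_1d_Lt^{-1}$. For the second identity I would first note that $L'=L_1'd_L$, which is exactly the relation $L'd_L^{-1}=L_1'$ already recorded, and then verify that $L_1'$ is self-dual with respect to $\psi$. When $2\mid e$ one has, using that $2$ has valuation $1$ in an unramified extension of $\Q_2$,
\[
L_1'=\mathcal{P}^{[\tfrac{e+1}{2}]+1}e_1\oplus\cdots\oplus\mathcal{P}^{[\tfrac{e+1}{2}]+1}e_m\oplus\mathcal{P}^{[\tfrac{e}{2}]-1}e_1^{\ast}\oplus\cdots\oplus\mathcal{P}^{[\tfrac{e}{2}]-1}e_m^{\ast},
\]
and the $\psi$-dual of a lattice $\mathcal{P}^{a}e_1\oplus\cdots\oplus\mathcal{P}^{a}e_m\oplus\mathcal{P}^{b}e_1^{\ast}\oplus\cdots\oplus\mathcal{P}^{b}e_m^{\ast}$ (with $\psi$ of order $e$) is the analogous lattice with exponents $e-b$ on the $e_i$ and $e-a$ on the $e_i^{\ast}$; since $([\tfrac{e+1}{2}]+1)+([\tfrac{e}{2}]-1)=[\tfrac{e+1}{2}]+[\tfrac{e}{2}]=e$, this lattice is indeed self-dual. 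The case $2\nmid e$ is identical with the roles of $X$ and $X^{\ast}$ interchanged. Hence $L^{'\ast}=(L_1'd_L)^{\ast}=t^{-1}L_1'd_L=L_1'd_Lt^{-1}$.

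The only genuinely computational point is the self-duality of $L_1'$ with respect to $\psi$; everything else is bookkeeping with the similitude factor and with the central scalar $t^{-1}$. I would also double-check that the left/right action conventions for $\GSp(W)$ are applied consistently with the rest of the paper, but this affects only the side on which $d_L$ and $t^{-1}$ are written, not the content of the statement.
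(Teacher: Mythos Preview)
Your argument is correct and is essentially the same as the paper's: both reduce to the similitude identity $\langle w,\,l\,d_L\rangle = t\,\langle w d_L^{-1},\,l\rangle$ together with the self-duality of $L_1$ (respectively $L_1'$) with respect to $\psi$. The paper carries out the element-chase directly and leaves the second part as ``similar'', whereas you package the step as the general formula $(Ng)^{\ast}=\lambda(g)^{-1}N^{\ast}g$ and make the self-duality of $L_1'$ explicit; the underlying computation is identical.
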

 \begin{proof}
 $a^{\ast} \in L^{\ast}$ iff $\psi(\langle a^{\ast}, a\rangle ) =1$ for all $a=a' d_L \in L$ iff $\psi(\langle ta^{\ast}d^{-1}_L, a'\rangle ) =1$ for all $a' \in L_1$ iff $ta^{\ast}d^{-1}_L \in L_1$ iff $a^{\ast} \in L_1 d_L t^{-1}$. The proof of Part (2) is  similar. 
 \end{proof}
 \subsection{On the Iwahori group $d_L^{-1}Id_L$}
 
We take the foregoing $L''$ to be $L'$.  Write  $g=d_L^{-1}g_0d_L\in d_L^{-1}I d_L$. Let  $\mathcal{A}_g=\{ a\in L^{\ast} \mid  ad'g-a\in L\}$.
\begin{lemma}\label{trial1}
For $a\in  \mathcal{A}_g$, $\psi(\tfrac{1}{2}\langle -a, ad'g\rangle-\tfrac{1}{2}B(ad'g-a,ad'g-a))=1$.
\end{lemma}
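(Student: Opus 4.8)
The plan is to split the quantity to be computed into two $\psi$-values, each trivialised separately; only the last of these is genuinely $2$-adic and it amounts to an estimate of $\mathcal{P}$-adic valuations.

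First I would record the reduction. Put $b:=ad'g-a$, so that $b\in L$ by the hypothesis $a\in\mathcal{A}_g$, and note $ad'g=a+b$. Since $\langle a,a\rangle=0$ one has $\langle -a,ad'g\rangle=\langle b,a\rangle=B(b,a)-B(a,b)$. Writing $q_{d'g}$ for the quadratic function attached to $d'g$ by \eqref{B} (so that, being quadratic and satisfying \eqref{equiv}, $q_{d'g}(a)=\tfrac12 B(ad'g,ad'g)-\tfrac12 B(a,a)=\tfrac12[B(a,b)+B(b,a)+B(b,b)]$ because $ad'g=a+b$), a short manipulation gives
$$\tfrac12\langle -a,ad'g\rangle-\tfrac12 B(ad'g-a,ad'g-a)=B(b,a)-q_{d'g}(a).$$
So it suffices to prove $\psi(B(b,a))=1$ and $\psi(q_{d'g}(a))=1$.

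For the first equality: $b\in L\cap X\oplus L\cap X^{\ast}$ and $a\in L^{\ast}\cap X\oplus L^{\ast}\cap X^{\ast}$, so $B(b,a)$ is the pairing of an element of $L\cap X=(L_1\cap X)g_1$ against an element of $L^{\ast}\cap X^{\ast}=t^{-1}(L_1\cap X^{\ast})g_2$ (using the preceding lemma, with $t=\lambda_{d_L}$). Since $d_L$ is a symplectic similitude with factor $t$, this pairing lies in $t^{-1}\cdot t\,\langle L_1\cap X,L_1\cap X^{\ast}\rangle=\mathcal{P}^{e}$ because $L_1$ is self-dual with respect to $\psi$; hence $\psi(B(b,a))=1$.

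For the second equality I would transport everything to $L_1$. By the cocycle rule for the $q_\bullet$'s together with the fact, immediate from \eqref{B}, that $q$ vanishes on every block-diagonal matrix (in particular on $d_L$, $d_L^{-1}$ and $d'$), one gets $q_{d'g}(a)=q_g(ad')$, and since $g=d_L^{-1}g_0d_L$ one gets $q_g(w)=q_{g_0}(wd_L^{-1})$. As $d'$ is scalar on $X$ and on $X^{\ast}$ it commutes with $d_L$; writing $a=t^{-1}a_1d_L$ with $a_1\in L_1$ (legitimate since $L^{\ast}=L_1d_Lt^{-1}$) one finds $ad'd_L^{-1}=t^{-1}a_1d'$, so $q_{d'g}(a)=t^{-2}q_{g_0}(a_1d')$ by quadraticity. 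Transporting the hypothesis the same way, $a\in\mathcal{A}_g$ becomes $a_1d'g_0-a_1\in tL_1$; in particular $a_1d'g_0\in L_1$ (as $t\in\mathcal{O}$, since $L=L_1d_L\subseteq L_1$), hence $a_1d'\in L_1g_0^{-1}=L_1$ by Lemma \ref{trans}(1), and since $a_1d'\in L_1d'=L_1'$ automatically we get $a_1d'\in L_1^{0}$ and then $a_1d'g_0\in L_1^{0}$, again by Lemma \ref{trans}(1). It then remains to see $\psi(t^{-2}q_{g_0}(a_1d'))=1$: one feeds $a_1d',a_1d'g_0\in L_1^{0}$ and the sharper membership forced by $a_1d'g_0-a_1\in tL_1$ into the identity $2q_{g_0}(a_1d')=B(a_1d'g_0,a_1d'g_0)-B(a_1d',a_1d')$, and uses the explicit description of $L_1^{0}=L_1\cap L_1'$ — which, because $2$ is a uniformizer in the unramified $2$-adic case and because $L'$ and $d'$ are built from the precise $2$-- and $\tfrac12$--scalings, carries an extra factor of $\mathcal{P}$ in one of its $X$- or $X^{\ast}$-slots — to see that the right-hand side is $\mathcal{P}$-divisible enough to absorb $t^{-2}$ and still land in $\mathcal{P}^{e}$.

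The main obstacle is precisely this last step: since $2$ is not a unit one cannot pass from $x\in\mathcal{P}^{e}$ to $\psi(\tfrac12 x)=1$, so the halvings present both in $q$ and in the statement must be compensated. Concretely one must track $B(a_1d',a_1d')$ and $B(a_1d'g_0,a_1d'g_0)$ modulo powers of $\mathcal{P}$ finer than $\mathcal{P}^{e}$ and verify that the condition $a_1d'g_0-a_1\in tL_1$ pushes the $X$-component of $a_1d'$ far enough into $\mathcal{P}\cdot(L_1\cap X)$ that $q_{g_0}(a_1d')\in t^{2}\mathcal{P}^{e}$. Everything preceding this (the identity for $\tfrac12\langle -a,ad'g\rangle-\tfrac12 B(b,b)$, the cocycle relation, the commutation of $d'$ with $d_L$, the self-duality computation) is routine; the content lies in making the valuations balance.
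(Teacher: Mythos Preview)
Your decomposition
\[
\tfrac12\langle -a,ad'g\rangle-\tfrac12 B(b,b)=B(b,a)-q_{d'g}(a),\qquad b:=ad'g-a,
\]
is correct, and the treatment of $\psi(B(b,a))=1$ is fine. The problem is in the transport of $q_{d'g}(a)$ to $L_1$.

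You write $q_g(w)=q_{g_0}(wd_L^{-1})$ by invoking the cocycle rule for the $q_\bullet$ together with $q_{d_L}=q_{d_L^{-1}}=0$. But $d_L\in\GSp(W)\setminus\Sp(W)$ when the similitude factor $t=\lambda_{d_L}$ is not a unit, and the cocycle rule $q_{g_1g_2}(w)=q_{g_1}(w)+q_{g_2}(wg_1)$ is only established for $g_1,g_2\in\Sp(W)$. A direct check from \eqref{B} shows that for a block-diagonal similitude $h$ with factor $t$ one has $q_{hg}(w)=q_g(wh)$ but $q_{gh}(w)=t\,q_g(w)$, \emph{not} $q_g(w)$. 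Hence the correct transport is $q_g(w)=t\,q_{g_0}(wd_L^{-1})$, and so $q_{d'g}(a)=t^{-1}q_{g_0}(a_1d')$, not $t^{-2}q_{g_0}(a_1d')$. (One can see the discrepancy already in the case $m=1$, $e=0$, $d_L=\operatorname{diag}(2,1)$, $g_0=\begin{pmatrix}1&1\\0&1\end{pmatrix}$, $a=2e_1$: then $q_{d'g}(a)=4$ while $t^{-2}q_{g_0}(a_1d')=2$.)

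Even with the corrected factor $t^{-1}$, the last step is a genuine gap. You need $q_{g_0}(a_1d')\in t\mathcal{P}^{e}$, but the inputs $a_1d',\,a_1d'g_0\in L_1^{0}$ alone only yield $B(a_1d',a_1d'),\,B(a_1d'g_0,a_1d'g_0)\in\mathcal{P}^{e+1}$, hence $q_{g_0}(a_1d')\in\mathcal{P}^{e}$, which is off by the factor $t$. Writing $c=a_1d'=u+v$ with $u\in X$, $v\in X^*$ and using $cg_0-cd'^{-1}\in tL_1$ gives $2q_{g_0}(c)=t\big[\tfrac12\langle u,\ell_{X^*}\rangle+2\langle\ell_X,v\rangle+t\langle\ell_X,\ell_{X^*}\rangle\big]$ for suitable $\ell_X,\ell_{X^*}\in L_1$; the bracket lies in $\mathcal{P}^{e}$, so this gives only $q_{g_0}(c)\in\tfrac{t}{2}\mathcal{P}^{e}$, still one power of $2$ short. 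Your sketch does not explain how the missing factor is recovered.

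By contrast, the paper stays at the level of $L,L',L^{\ast},L^{'\ast}$ and avoids transporting through $d_L$ altogether. It first shows $ad'g\in L^{'\ast}\cap L^{\ast}$ and $a\in L^{\ast}\cap L^{\ast}d'^{-1}$ (the analogues of your $a_1d'\in L_1^{0}$), then rewrites the expression --- case-splitting on the parity of $e$ --- as a sum of two \emph{halved} $B$-terms, each of which lands in $\mathcal{P}^{e}$ precisely because of this sharpened lattice membership (one slot carries the extra factor $2$). That rewriting is the device that absorbs the problematic $\tfrac12$; your route via $q_{g_0}$ does not exploit it and therefore leaves the key $2$-adic estimate unfinished.
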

\begin{proof} 
 If $a\in \mathcal{A}_g$, then $a=a'd_L t^{-1}$, for some $a'\in L_1$. Hence  
$$ad'g-a\in L \Longleftrightarrow a' d_L t^{-1} d' d_L^{-1}g_0d_L -a'd_L t^{-1} \in  L_1d_L \Longleftrightarrow a' d' g_0-a' \in L_1 t .$$
Since $ L_1 t \subseteq L_1$, $a' d' g_0\in L_1$. So $a'd'\in L_1\cap L_1'=L_1^0$, $ad'\in L^{'\ast} \cap L^{\ast}$. Similarly, $a\in L^{\ast} \cap L^{ \ast} d'^{-1}$.  
\begin{itemize}
\item[(1)] If $2\mid e$, $$ad' g \subseteq  L^{'\ast} \cap L^{\ast}, \quad \psi(-\tfrac{1}{2}B(ad' g,  ad'g-a))=1,  \quad  \psi(\tfrac{1}{2}B(ad'g-a, a ))=1.$$ 
 \begin{align*}
& \tfrac{1}{2}\langle -a, ad'g\rangle-\tfrac{1}{2}B(ad'g-a,ad'g-a)\\
&=-\tfrac{1}{2}\langle a, ad'g-a\rangle-\tfrac{1}{2}B(ad'g-a,ad'g-a)\\
&=-\tfrac{1}{2}B( a, ad'g-a)+ \tfrac{1}{2}B( ad'g-a,a) -\tfrac{1}{2}B(ad'g-a,ad'g-a)\\
&=\tfrac{1}{2}B( ad'g-a,a)-\tfrac{1}{2}B(ad'g,ad'g-a).
\end{align*}
\item[(2)] If $2\nmid e$, $$ad' g \subseteq  L^{'\ast} \cap L^{\ast}, \quad \psi(-\tfrac{1}{2}B(a,  ad'g-a))=1,  \quad  \psi(-\tfrac{1}{2}B(ad'g-a, ad'g ))=1.$$
\end{itemize}
 \begin{align*}
& \tfrac{1}{2}\langle -a, ad'g\rangle-\tfrac{1}{2}B(ad'g-a,ad'g-a)\\
& =\tfrac{1}{2}\langle  ad'g, a\rangle-\tfrac{1}{2}B(ad'g-a,ad'g-a)\\
&=\tfrac{1}{2}\langle ad'g- a, a\rangle-\tfrac{1}{2}B(ad'g-a,ad'g-a)\\
&=\tfrac{1}{2}B( ad'g- a, a)-\tfrac{1}{2}B(  a, ad'g- a) -\tfrac{1}{2}B(ad'g-a,ad'g-a)\\
&=B( ad'g-a,a)- \tfrac{1}{2}B(ad'g-a,ad'g) -\tfrac{1}{2}B(  a, ad'g- a).
\end{align*}
\end{proof}
  Let $c_g=\mu(L^{\ast})\mu(\mathcal{A}_g)$. 
\begin{lemma}\label{LL'g}
 For $g\in d_L^{-1}Id_L$,  $\mathfrak{i}^{g^{-1}}_{L',L}\circ \mathfrak{i}^{g}_{L,L'}=c_g\id$.
\end{lemma}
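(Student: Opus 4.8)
The plan is to evaluate the composite $\mathfrak{i}^{g^{-1}}_{L',L}\circ\mathfrak{i}^g_{L,L'}$ on $\mathcal{V}_{L,\psi}$ by a direct double-integral computation, modelled verbatim on the proof of Lemma \ref{LL''} (which is exactly the present statement in the degenerate case $g=1$, $L''=L'$), the only new input being the bookkeeping needed to push the symplectic element $g$ through. Fix $f\in\mathcal{V}_{L,\psi}$ and $w\in W$. Unwinding $\mathfrak{i}^{g^{-1}}_{L',L}$ and then $\mathfrak{i}^{g}_{L,L'}$, and using $(a'+(a+w)g^{-1})g = a'g+a+w$ together with the compatibility $\mathcal{D}^{-1}\bigl[\sigma'(-a')\,\mathcal{D}[\phi]\bigr]=\sigma(-a'(d')^{-1})\phi$ from \ref{DDD}(2), one gets
\begin{align*}
\mathfrak{i}^{g^{-1}}_{L',L}\circ\mathfrak{i}^g_{L,L'}(f)(w)=\int_{a\in L^{\ast}}\int_{a'\in L^{'\ast}}\psi\bigl(\tfrac12\langle a,w\rangle+\tfrac12\langle a',(a+w)g^{-1}\rangle\bigr)\,\sigma(-a)\sigma(-a'(d')^{-1})\,f(a'g+a+w)\,da'\,da.
\end{align*}

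Next I would substitute $a''=a'(d')^{-1}$; since $d'\in\Sp(W)$ this is measure preserving and sends $L^{'\ast}=L^{\ast}d'$ onto $L^{\ast}$, and since $g\in\Sp(W)$ one has $\langle a',(a+w)g^{-1}\rangle=\langle a''d'g,a+w\rangle$. The resulting double integral is then \emph{literally} the one appearing in the proof of Lemma \ref{LL''}, with $d$ there replaced by $d'g$ here. Reusing that computation step by step — commuting $\sigma(-a)$ past $\sigma(-a'')$ (which contributes $\psi(\langle a,a''\rangle)$), applying the transformation law of $\mathcal{V}_{L,\psi}$ twice to extract the $L^{\ast}$-translations $a$ and $a''$ from the argument of $f$, and then carrying out the $a$-integration — the integral over $a\in L^{\ast}$ collapses to $\mu(L^{\ast})$ times the indicator of the condition $a''d'g-a''\in(L^{\ast})^{\ast}=L$, that is, of $a''\in\mathcal{A}_g$. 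What survives is
\begin{align*}
\mu(L^{\ast})\int_{a''\in\mathcal{A}_g}\psi\bigl(\tfrac12\langle -a'',a''d'g\rangle\bigr)\,\sigma(a''d'g-a'')\,f(w)\,da''.
\end{align*}

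To finish, since $a''d'g-a''\in L$ I would apply Lemma \ref{Bll} to turn $\sigma(a''d'g-a'')$ acting on $f(w)$ into the scalar $\psi\bigl(-\tfrac12 B(a''d'g-a'',a''d'g-a'')\bigr)$, and then Lemma \ref{trial1} is precisely the statement that $\psi\bigl(\tfrac12\langle -a'',a''d'g\rangle-\tfrac12 B(a''d'g-a'',a''d'g-a'')\bigr)=1$ for $a''\in\mathcal{A}_g$; hence the phase disappears and the integral equals $\mu(L^{\ast})\mu(\mathcal{A}_g)\,f(w)=c_g f(w)$. I expect the main obstacle to be nothing more than the phase accounting: correctly tracking the several $\tfrac12\langle\cdot,\cdot\rangle$ contributions through the two commutations and the two uses of the defining relation of $\mathcal{V}_{L,\psi}$, and verifying that the substitution $a''=a'(d')^{-1}$ together with $\langle a',(a+w)g^{-1}\rangle=\langle a''d'g,a+w\rangle$ genuinely reduces everything to the $g=1$ computation with $d\leadsto d'g$. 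Once that reduction is made explicit, Lemmas \ref{Bll} and \ref{trial1} close the argument with no extra effort.
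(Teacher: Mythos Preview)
Your proposal is correct and follows essentially the same approach as the paper's proof: unwind the two integrals, substitute $a''=a'(d')^{-1}$ to pass from $L^{'\ast}$ to $L^{\ast}$, commute the two $\sigma$'s, use the defining relation of $\mathcal{V}_{L,\psi}$ twice to strip off the $L^{\ast}$-translations, integrate out the outer variable to force $a''d'g-a''\in L$, and then invoke Lemmas \ref{Bll} and \ref{trial1} to kill the residual phase. Up to the variable names (the paper's outer variable is your $a$ and its inner variable after substitution is your $a''$), the computations are line-for-line the same.
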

\begin{proof}
Let $f\in \mathcal{V}_{L,\psi}$. 
\begin{align*}
 & \mathfrak{i}^{g^{-1}}_{L',L}\circ \mathfrak{i}^{g}_{L,L'}(f)(w)\\
 &=\int_{a''\in L^{\ast}}\psi(\tfrac{1}{2}\langle a'', w\rangle) \sigma(-a'')\mathcal{D}^{-1}[\mathfrak{i}^g_{L,L'}(f)( (a''+w)g^{-1})] da''\\
 &=\int_{a''\in L^{\ast}}\psi(\tfrac{1}{2}\langle a'', w\rangle) \sigma(-a'')\mathcal{D}^{-1}\Big[\int_{a'\in L^{'\ast}}\psi(\tfrac{1}{2}\langle a', (a''+w)g^{-1}\rangle) \sigma'(-a')\mathcal{D}[f(a'g+ a''+w)] da'\Big] da''\\
 &=\int_{a''\in L^{\ast}}\int_{a'\in L^{'\ast}} \psi(\tfrac{1}{2}\langle a'', w\rangle) \psi(\tfrac{1}{2}\langle a', (a''+w)g^{-1}\rangle)\sigma(-a'')\sigma(-a' d^{'-1}) f(a'g+ a''+w)] da' da''
   \end{align*}
  \begin{align*} 
 &\stackrel{a=a'd^{'-1}}{=}\int_{a''\in L^{\ast}}\int_{a\in L^{\ast}} \psi(\tfrac{1}{2}\langle a'', w\rangle) \psi(\tfrac{1}{2}\langle ad', (a''+w)g^{-1}\rangle)\sigma(-a'')\sigma(-a) f(ad'g+ a''+w) da'' da\\
 &=\int_{a''\in L^{\ast}}\int_{a\in L^{\ast}} \psi(\tfrac{1}{2}\langle a'', w\rangle) \psi(\tfrac{1}{2}\langle ad'g, a''+w\rangle) \psi(\langle a'',a\rangle)\sigma(-a) \sigma(-a'')f(ad'g+ a''+w) da'' da\\
  &=\int_{a''\in L^{\ast}}\int_{a\in L^{\ast}} \psi(\langle ad'g-a, a''\rangle)\psi(\tfrac{1}{2}\langle ad'g, w\rangle) \sigma(-a) f(ad'g+w) da'' da\\
    &=\int_{a''\in L^{\ast}}\int_{a\in L^{\ast}} \psi(\langle ad'g-a, a''\rangle) \psi(\tfrac{1}{2}\langle ad'g-a, w\rangle) \psi(\tfrac{1}{2}\langle -a, ad'g\rangle) f(ad'g-a+w) da'' da\\
     &=\mu(L^{\ast})\int_{a\in L^{\ast}, ad'g-a \in L} \psi(\tfrac{1}{2}\langle ad'g-a, w\rangle) \psi(\tfrac{1}{2}\langle -a, ad'g\rangle) f(ad'g-a+w) da\\
       &=\mu(L^{\ast})\int_{a\in L^{\ast}, ad'g-a \in L} \psi(\tfrac{1}{2}\langle -a, ad'g\rangle) \sigma(ad'g-a)f(w) da\\
     &\stackrel{ \textrm{ Lemma }\ref{Bll}}{=}\mu(L^{\ast})\int_{a\in L^{\ast}, ad'g-a \in L} \psi(\tfrac{1}{2}\langle -a, ad'g\rangle) \psi(-\tfrac{1}{2}B(ad'g-a,ad'g-a))  da  f(w)\\
     &\stackrel{ \textrm{ Lemma }\ref{trial1}}{=} c_gf(w).
  \end{align*}
\end{proof}
\begin{corollary}\label{gLL'}
$\mathfrak{i}^{g}_{L,L'}$ is bijective.
\end{corollary}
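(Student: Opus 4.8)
\emph{Proposal.} The plan is to deduce this corollary directly from Lemma \ref{LL'g}, using that the Heisenberg modules involved are finite direct sums of copies of a single irreducible representation.

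First I would check that the constant $c_g=\mu(L^{\ast})\mu(\mathcal{A}_g)$ appearing in Lemma \ref{LL'g} is nonzero. The set $\mathcal{A}_g=\{a\in L^{\ast}\mid ad'g-a\in L\}$ is the preimage of the open subgroup $L\subseteq W$ under the continuous additive map $L^{\ast}\to W$, $a\mapsto a(d'g-1)$; hence $\mathcal{A}_g$ is an open subgroup of the compact group $L^{\ast}$, so $\mu(\mathcal{A}_g)>0$ and $c_g\neq 0$. By Lemma \ref{LL'g} the operator $c_g^{-1}\mathfrak{i}^{g^{-1}}_{L',L}$ is then a left inverse of $\mathfrak{i}^{g}_{L,L'}$, so $\mathfrak{i}^{g}_{L,L'}$ is injective, and in particular nonzero.

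For surjectivity I would invoke the decompositions $\mathcal{V}_{L,\psi}\simeq\bigoplus_{\chi\in\Irr(L_1/L)}\mathcal{V}_{L_1,\psi_{\chi}}$ and $\mathcal{V}_{L',\psi}\simeq\bigoplus_{\chi\in\Irr(L_1'/L')}\mathcal{V}_{L_1',\psi_{\chi}}$ together with Lemmas \ref{coomLL''} and \ref{chiLL''}. By Lemma \ref{chiLL''}, $\mathfrak{i}^{g}_{L,L'}$ carries $\mathcal{V}_{L_1,\psi_{\chi\circ \mathfrak{i}}}$ into $\mathcal{V}_{L_1',\psi_{\chi}}$; by Lemma \ref{coomLL''}, after transporting the $\Ha(W)$-action on the target along the automorphism $h\mapsto h^{g^{-1}}$ of $\Ha(W)$, which fixes the center pointwise, this restriction becomes a morphism of irreducible $\Ha(W)$-representations with central character $\psi$. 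Being nonzero by the injectivity just established, it is an isomorphism by Schur's lemma. Since $\mathfrak{i}$ identifies $L_1/L$ with $L_1'/L'$, the characters $\chi\circ\mathfrak{i}$ run over all of $\Irr(L_1/L)$ as $\chi$ runs over $\Irr(L_1'/L')$, so $\mathfrak{i}^{g}_{L,L'}$ maps the summands of $\mathcal{V}_{L,\psi}$ isomorphically onto the distinct summands of $\mathcal{V}_{L',\psi}$ and is therefore bijective.

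The argument is essentially formal once Lemma \ref{LL'g} is available; the only step I expect to need genuine care is the non-vanishing $c_g\neq 0$, i.e.\ the observation that $\mathcal{A}_g$ is open in $L^{\ast}$ rather than merely nonempty, since this is exactly what converts the ``inverse up to scalar'' statement into injectivity.
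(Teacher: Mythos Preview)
Your argument is correct and supplies exactly the details the paper leaves implicit: the paper states this corollary without proof, taking it as an immediate consequence of Lemma \ref{LL'g}. Your route---nonvanishing of $c_g$ to obtain injectivity, then Lemmas \ref{coomLL''} and \ref{chiLL''} plus Schur on each irreducible $\chi$-summand to upgrade to bijectivity---is the natural way to make this precise, and nothing more is needed.
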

\subsubsection{} For $g\in d_L^{-1}Id_L$, let us define $M[g]\in \End(\mathcal{V}_{L,\psi})$ as follows:
\begin{align}\label{gactionsigma31}
M[g]=  \mathfrak{i}_{L',L} \circ\mathfrak{i}^g_{L,L'} .
\end{align}
\begin{lemma}
$M[g]$ is bijective. 
\end{lemma}
\begin{proof}
It follows from the above lemma.
\end{proof}
\subsection{On the Iwahori-Weyl  group $d_L^{-1}\mathfrak{W}^{\textrm{eaff}}d_L$ I} For $g=d_L^{-1} g' d_L\in d_L^{-1}\mathfrak{W}^{\textrm{eaff}}d_L$, let  $\mathcal{B}_g=\{ a\in L^{\ast} \mid  ag-a\in L\}$.
By Section \ref{decom}, for any $g'$, let us write 
$$g'=\omega_s \begin{bmatrix}
d& 0 \\0 &  d^{-1} \end{bmatrix}  \omega_S, \quad g_0=\begin{bmatrix}
d& 0 \\0 &  d^{-1} \end{bmatrix}  \omega_S,$$
for some  $S \subseteq \{1, \cdots, m\}$.  

\begin{itemize}
\item If  $a\in \mathcal{B}_g$, then $a=a'd_L t^{-1}$, for some $a'\in L_1$. Hence  
$$ag-a\in L \Longleftrightarrow a' d_L t^{-1}  d_L^{-1}g'd_L -a'd_L t^{-1} \in  L_1d_L \Longleftrightarrow a'  g'-a' \in L_1 t .$$
\end{itemize}
\begin{lemma}\label{trial1}
If $t=\lambda_{d_L}\in \mathcal{O}\setminus \mathcal{P}$ or $ \omega_S=1$,  then $\psi(\tfrac{1}{2}\langle -a, ag\rangle-\tfrac{1}{2}B(ag-a,ag-a))=1$, for $a\in \mathcal{B}_g$.
\end{lemma}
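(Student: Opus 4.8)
The plan is to kill, with $\psi$, the exponent
$E(a):=\tfrac12\langle -a, ag\rangle-\tfrac12 B(ag-a,ag-a)$. First I would run the same algebraic reduction as in the previous lemma: expanding $B(ag-a,ag-a)$ by bilinearity and using $\langle u,v\rangle=B(u,v)-B(v,u)$ together with $\langle a,a\rangle=0$, one gets $E(a)=B(ag,a)-\tfrac12 B(ag,ag)-\tfrac12 B(a,a)$. Since $2$ is a uniformizer, $\psi(\tfrac12 x)=1$ is \emph{not} automatic for $x\in\ker\psi$, so (in contrast with the previous lemma, where a term-by-term estimate sufficed because of the factor $d'$) the real content here is that the two halves must recombine into an honestly integral quantity.

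To see this, the main step is an exact evaluation of $B(ag,ag)-B(a,a)$. Writing $g=d_L^{-1}g'd_L$ with $g'=\omega_s\diag(d,d^{-1})\omega_S$, I would use that $d_L=\diag(g_1,g_2)$ preserves $X$ and $X^{\ast}$ and has similitude factor $t$, so $B(wd_L^{\pm1},wd_L^{\pm1})=t^{\pm1}B(w,w)$; that $\omega_s$ and $\diag(d,d^{-1})$ leave $w\mapsto B(w,w)$ invariant; and that $\omega_S$ sends $B(w,w)$ to $B(w,w)-2\sum_{i\in S}p_ip_i^{\ast}$ when $w=\sum_ip_ie_i+\sum_ip_i^{\ast}e_i^{\ast}$ (an elementary check from the definition of $\omega_S$). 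Chaining these through $g'$ yields $B(ag,ag)=B(a,a)-2t\sum_{i\in S}(v_2)_i(v_2)_i^{\ast}$ with $v_2:=ag\,d_L^{-1}\omega_S^{-1}$; substituting back, the halves cancel and $E(a)=B(ag-a,a)+t\sum_{i\in S}(v_2)_i(v_2)_i^{\ast}$.

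Then I would place each summand in $\ker\psi$. For the first: $ag-a\in L$ and $a\in L^{\ast}$ by definition of $\mathcal{B}_g$, so $B(ag-a,a)\in\langle L\cap X,\,L^{\ast}\cap X^{\ast}\rangle$; using $L^{\ast}=L_1d_Lt^{-1}$ and the similitude relation this collapses to $\langle L_1\cap X,\,L_1\cap X^{\ast}\rangle$, which $\psi$ kills because $L_1$ is self-dual. The hypothesis is used only for the second summand: if $\omega_S=1$ it is zero; and if $t\in\mathcal{O}\setminus\mathcal{P}$, then $g_1,g_2\in\M_m(\mathcal{O})$ with $\det g_1\det g_2=t^m\in\mathcal{O}^{\times}$ force $g_1\in\GL_m(\mathcal{O})$, so $L=L^{\ast}=L_1$, the matrices $d_L^{\pm1}$ fix $L_1$, $ag=a+(ag-a)\in L_1$, hence $v_2\in L_1\omega_S^{-1}$; since $\omega_S^{-1}$ interchanges the $e_i$- and $e_i^{\ast}$-lines for $i\in S$, the coordinates $(v_2)_i$ and $(v_2)_i^{\ast}$ have valuations $\geq[\tfrac{e}{2}]$ and $\geq[\tfrac{e+1}{2}]$, so $(v_2)_i(v_2)_i^{\ast}\in\mathcal{P}^e$ and $t\sum_{i\in S}(v_2)_i(v_2)_i^{\ast}\in\ker\psi$. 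In both cases $\psi(E(a))=1$.

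The hard part will be the exact computation of $B(ag,ag)-B(a,a)$: it is essential that the two $\tfrac12$'s disappear rather than being estimated separately, and it is precisely the residual contribution $t\sum_{i\in S}(v_2)_i(v_2)_i^{\ast}$ that a non-trivial $\omega_S$ fails to control once $L\subsetneq L_1$ and $e$ is odd — which is why the hypothesis dichotomizes into ``$t$ a unit'' versus ``$\omega_S=1$'', echoing the parity split in the previous lemma.
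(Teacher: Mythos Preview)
Your argument is correct and follows essentially the same route as the paper. Both proofs rewrite
\[
E(a)=\tfrac12\langle -a,ag\rangle-\tfrac12 B(ag-a,ag-a)
\]
as an ``integral'' bilinear term paired between $L$ and $L^{\ast}$, plus the quantity $\tfrac12\bigl[B(ag,ag)-B(a,a)\bigr]$. The paper recognizes this latter quantity as the pseudosymplectic function $q_g(a)$ from Section~\ref{pn21/2} and then dispatches it: when $\omega_S=1$ one has $q_{g'}=0$ on the nose, and when $t\in\mathcal{O}^{\times}$ one has $L^{\ast}=L$ (hence $L=L_1$) and invokes Lemma~\ref{affweylgroup}. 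You instead compute $q_g(a)$ explicitly as $-t\sum_{i\in S}(v_2)_i(v_2)_i^{\ast}$ by tracking how $w\mapsto B(w,w)$ changes under each factor of $g$, and then bound the coordinates directly. Your version is more self-contained and makes transparent \emph{why} the two halves recombine integrally (the point you flag about $\psi(\tfrac12 x)$ not being automatic); the paper's version is terser because the $q_g$ machinery has already been set up. Both split $E(a)$ slightly differently---$B(ag,a-ag)+q_g(a)$ versus $B(ag-a,a)-q_g(a)$---but these are trivially equivalent rearrangements.
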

\begin{proof} 
 \begin{align*}
& \tfrac{1}{2}\langle -a, ag\rangle-\tfrac{1}{2}B(ag-a,ag-a)\\
&=-\tfrac{1}{2}B(a, ag)+\tfrac{1}{2}B(ag, a)-\tfrac{1}{2}B(ag,ag) -\tfrac{1}{2}B(a,a)+\tfrac{1}{2}B(ag,a) +\tfrac{1}{2}B(a,ag)\\
&=B(ag, a-ag)+\tfrac{1}{2}B(ag,ag) -\tfrac{1}{2}B(a,a)\\
&=B(ag, a-ag)+q_g(a).
\end{align*}
 1) In this first case,  $L^{\ast}=L$, so $\psi(q_g(a))=1$. \\
2) In the second case,  $\psi(q_g(a))=\psi(t^{-1}q_{g'}(a'))=1$. Therefore, 
 $$\psi(\tfrac{1}{2}\langle -a, ag\rangle-\tfrac{1}{2}B(ag-a,ag-a))=\psi(B(ag, a-ag)) \psi(q_g(a))=1.$$
\end{proof}
\begin{remark}
Let $g_1=d_L^{-1} \omega_s d_L$, $g_2=d_L^{-1} g_0 d_L$. Then $q_{g_1}(w)=0$ and $\psi(q_g(w))=\psi(q_{g_2}(wg_1))$. So it reduces to discussing $g_2$.
\end{remark}
In the following, we will consider the remaining cases that $\omega_S\neq 1$ and $t=\lambda_{d_L}\in \mathcal{P}$. 
\subsubsection{The case $\dim W=2$} Assume that $\dim W=2$. Write $g_0=  \begin{bmatrix}
d& 0 \\0 &  d^{-1} \end{bmatrix}\begin{bmatrix}0& -1 \\1 &  0 \end{bmatrix}  $.
For $a'=xe_1+ ye_1^{\ast}$, $a' g_0= d^{-1} ye_1- dxe_1^{\ast}$, $a' g_0-a'=(d^{-1} y-x)e_1+(- dx-y)e_1^{\ast}\in L_1t$. Then
$$d^{-1} y-x=tl_1, \quad \quad - dx-y=tl_2,$$
for some $l_1e_1+l_2e_1^{\ast}\in L_1$. Hence:
$$y=-\tfrac{1}{2} tl_2+ \tfrac{1}{2} tdl_1, \quad x=-\tfrac{1}{2d} tl_2-\tfrac{1}{2} tl_1.$$
 \begin{align}
&\psi(\tfrac{1}{2}\langle -a, ag\rangle-\tfrac{1}{2}B(ag-a,ag-a))\\
&=\psi(t^{-1}B(a'g_0, a'-a'g_0)+t^{-1}q_{g_0}(a')).
\end{align}
Note that $a'g_0-a'\in L_1t$, so $a'g_0\in L_1$. Hence $\psi(t^{-1}B(a'g_0, a'-a'g_0))=1$. By the equality (\ref{B}), 
\begin{align}\label{g0adt}
&\psi(\tfrac{1}{t}q_{g_0}(a'))=\psi(-\tfrac{1}{t}xy)\\
&=\psi([\tfrac{1}{2}(-l_2+ dl_1)] [\tfrac{t}{2d} (l_2+dl_1)])\\
&=\psi(-\tfrac{t}{4d}[l_2^2-(dl_1)^2])\label{qcal41}\\
&=\psi(-\tfrac{1}{4dt}[(tdl_1+2dx)^2-(tdl_1)^2])\\
&=\psi(-dl_1x -\tfrac{d}{t}x^2)\label{qcal51}\\
&=\psi(-\tfrac{1}{4dt}[(l_2t)^2-(2y+2tl_2)^2])\\
&=\psi(\tfrac{y^2}{dt}+\tfrac{1}{d}yl_2).\label{qcal61}
\end{align}
Let $\nu_2$ denote the $2$-adic valuation on $F$. Let $n_t=\nu_2(t)\geq 1$, $n_d=\nu_2(d)$. 
\begin{lemma}\label{2mide}
Assume $2\mid e$.
\begin{itemize}
\item[(i)]  If $n_d-n_t\geq 0$ or $n_d+n_t\leq 0$,  then $\psi(\tfrac{1}{2}\langle -a, ag\rangle-\tfrac{1}{2}B(ag-a,ag-a))=1$, for $a\in \mathcal{B}_g$.
\item[(ii)] If $n_d-n_t\leq -2$ and $n_d+n_t\geq 2$, then $\psi(\tfrac{1}{2}\langle -a, ag\rangle-\tfrac{1}{2}B(ag-a,ag-a))=1$, for $a\in \mathcal{B}_g$.
\end{itemize} 
\end{lemma}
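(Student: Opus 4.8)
The plan is to feed into valuation estimates the three equivalent expressions \eqref{qcal41}, \eqref{qcal51}, \eqref{qcal61} that the computation preceding the lemma already produces for $\psi(\tfrac12\langle -a,ag\rangle-\tfrac12B(ag-a,ag-a))$ when $a\in\mathcal{B}_g$; in each of the prescribed regimes I will pick whichever of the three makes \emph{both} of its two summands individually lie in $\mathcal P^{e}$, so that the whole argument lies in $\mathcal P^{e}$ and, $\psi$ being trivial there, the value is $1$. The free input I would record first: since $2\mid e$ one has $L_1\cap X=\mathcal P^{e/2}e_1$ and $L_1\cap X^{\ast}=\mathcal P^{e/2}e_1^{\ast}$, so $a'=xe_1+ye_1^{\ast}\in L_1$ forces $\nu_2(x),\nu_2(y)\ge e/2$, while $l_1e_1+l_2e_1^{\ast}\in L_1$ forces $\nu_2(l_1),\nu_2(l_2)\ge e/2$; recall also $n_t\ge1$.

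For part (ii), where $n_d+n_t\ge2$ and $n_d-n_t\le-2$ (equivalently $|n_d|\le n_t-2$), I would use \eqref{qcal41}, whose argument can be written $\tfrac{t}{4d}\bigl((dl_1)^2-l_2^{2}\bigr)=\tfrac{tdl_1^{2}}{4}-\tfrac{tl_2^{2}}{4d}$. Then $\nu_2\bigl(\tfrac{tdl_1^{2}}{4}\bigr)\ge n_t+n_d+e-2\ge e$ (using $n_d+n_t\ge2$) and $\nu_2\bigl(\tfrac{tl_2^{2}}{4d}\bigr)\ge n_t-n_d+e-2\ge e$ (using $n_d-n_t\le-2$), so the argument lies in $\mathcal P^{e}$.

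For part (i) I split according to the two alternatives. If $n_d-n_t\ge0$, hence $n_d\ge n_t\ge1$, I would use \eqref{qcal51}: $\nu_2(dl_1x)\ge n_d+e\ge e$ because $n_d\ge0$, and $\nu_2\bigl(\tfrac{d}{t}x^{2}\bigr)\ge n_d-n_t+e\ge e$, so $-dl_1x-\tfrac{d}{t}x^{2}\in\mathcal P^{e}$. If instead $n_d+n_t\le0$, hence $n_d\le-1$, I would use \eqref{qcal61}: $\nu_2\bigl(\tfrac{y^{2}}{dt}\bigr)=2\nu_2(y)-n_d-n_t\ge e-(n_d+n_t)\ge e$ and $\nu_2\bigl(\tfrac{yl_2}{d}\bigr)\ge e-n_d\ge e$, so $\tfrac{y^{2}}{dt}+\tfrac1d yl_2\in\mathcal P^{e}$.

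There is no genuine obstacle here; the only point requiring care is that none of the three identities suffices on its own: \eqref{qcal41} covers exactly the symmetric band $|n_d|\le n_t-2$ of part (ii), and one really needs \eqref{qcal51} on the side $n_d\ge n_t$ and \eqref{qcal61} on the side $n_d\le -n_t$, each time splitting the argument into two terms divisible by $\mathcal P^{e}$ individually. The borderline values $n_d-n_t=-1$ and $n_d+n_t=1$ are deliberately left out of this lemma and are handled in the lemmas that follow.
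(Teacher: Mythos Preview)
Your proof is correct and follows essentially the same approach as the paper: in each regime you select the same one of the three equivalent expressions \eqref{qcal41}, \eqref{qcal51}, \eqref{qcal61} that the paper uses, and verify that each summand lies in $\mathcal P^{e}$. The only difference is cosmetic: the paper phrases the estimates in terms of membership in $\mathcal O$ or $\mathcal P$ (e.g., ``$\tfrac{d}{t}\in\mathcal O$'' and ``$d\in t\mathcal O\subseteq\mathcal P$''), while you write out the $2$-adic valuations explicitly.
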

\begin{proof}
ia) If $n_d-n_t\geq 0$, then $\tfrac{d}{t}\in \mathcal{O}$,  and  $ d\in t\mathcal{O} \subseteq \mathcal{P}$.   By (\ref{qcal51}), $\psi(\tfrac{1}{t}q_{g_0}(a'))=\psi(-dl_1x -\tfrac{d}{t}x^2)=1$.\\
ib) If $n_d+n_t\leq 0$, then $\tfrac{1}{dt}\in   \mathcal{O}$, and $\tfrac{1}{d}\in   t\mathcal{O} \subseteq \mathcal{P}$. By (\ref{qcal61}), $\psi(\tfrac{1}{t}q_{g_0}(a'))=\psi(\tfrac{y^2}{dt}+\tfrac{1}{d}yl_2)=1$.\\
ii) In this case, $\tfrac{t}{4d}\in \mathcal{O}$ and $\tfrac{td}{4}\in \mathcal{O}$.   By (\ref{qcal41}), $\psi(\tfrac{1}{t}q_{g_0}(a'))=\psi(-\tfrac{t}{4d}l_2^2) \psi(\tfrac{dt}{4}l_1^2)=1$.
\end{proof}
\begin{lemma}\label{2nmide}
Assume $2\nmid e$.
\begin{itemize}
\item[(i)]  If $n_d-n_t\geq -1$ or $n_d+n_t\leq -1$,  then $\psi(\tfrac{1}{2}\langle -a, ag\rangle-\tfrac{1}{2}B(ag-a,ag-a))=1$, for $a\in \mathcal{B}_g$.
\item[(ii)] If $n_d-n_t\leq -3$ and $n_d+n_t\geq 1$, then $\psi(\tfrac{1}{2}\langle -a, ag\rangle-\tfrac{1}{2}B(ag-a,ag-a))=1$, for $a\in \mathcal{B}_g$.
\end{itemize} 
\end{lemma}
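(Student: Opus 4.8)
The plan is to follow the proof of Lemma~\ref{2mide} closely, essentially line by line, changing only the $2$-adic bookkeeping to reflect that $L_1$ no longer splits symmetrically when $e$ is odd: in the present situation $L_1\cap X=\mathcal{P}^{(e+1)/2}e_1$ and $L_1\cap X^{\ast}=\mathcal{P}^{(e-1)/2}e_1^{\ast}$. First I would record the valuations forced by $a\in\mathcal{B}_g$: writing $a=a'd_Lt^{-1}$ with $a'=xe_1+ye_1^{\ast}\in L_1$, the membership $a'g_0-a'\in L_1t$ --- in the form $d^{-1}y-x=tl_1$, $-dx-y=tl_2$ with $l_1e_1+l_2e_1^{\ast}\in L_1$ --- together with $a'\in L_1$ gives $\nu_2(x),\nu_2(l_1)\geq\frac{e+1}{2}$ and $\nu_2(y),\nu_2(l_2)\geq\frac{e-1}{2}$. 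As in the computation preceding Lemma~\ref{2mide}, the membership $a'g_0\in L_1$ already yields $\psi(t^{-1}B(a'g_0,a'-a'g_0))=1$, so in every subcase it suffices to show $\psi(\tfrac{1}{t}q_{g_0}(a'))=1$; for this I would use whichever of the expressions $(\ref{qcal41})$, $(\ref{qcal51})$, $(\ref{qcal61})$ for $\psi(\tfrac{1}{t}q_{g_0}(a'))$ has its denominators best absorbed by the lattice valuations, recalling that $n_t\geq 1$, that $F/\Q_2$ is unramified so $\mathcal{P}^{k}=2^{k}\mathcal{O}$, and that $\psi$ is trivial on $\mathcal{P}^{e}$.

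For part~(i): when $n_d-n_t\geq -1$ I would read $\psi(\tfrac{1}{t}q_{g_0}(a'))$ off $(\ref{qcal51})$ as $\psi(-dl_1x-\tfrac{d}{t}x^2)$ and estimate $\nu_2(\tfrac{d}{t}x^2)\geq (n_d-n_t)+(e+1)\geq e$ and, since then $n_d\geq n_t-1\geq 0$, $\nu_2(dl_1x)\geq n_d+(e+1)\geq e+1$. When $n_d+n_t\leq -1$ I would instead use $(\ref{qcal61})$, which gives $\psi(\tfrac{1}{t}q_{g_0}(a'))=\psi(\tfrac{y^2}{dt}+\tfrac{1}{d}yl_2)$, with $\nu_2(\tfrac{y^2}{dt})\geq (e-1)-(n_d+n_t)\geq e$ and, since then $n_d\leq -1-n_t\leq -2$, $\nu_2(\tfrac{1}{d}yl_2)\geq -n_d+(e-1)\geq e+1$. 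For part~(ii), when $n_d-n_t\leq -3$ and $n_d+n_t\geq 1$, $(\ref{qcal41})$ gives $\psi(\tfrac{1}{t}q_{g_0}(a'))=\psi(-\tfrac{t}{4d}l_2^2)\,\psi(\tfrac{td}{4}l_1^2)$, with $\nu_2(\tfrac{t}{4d}l_2^2)\geq\bigl(-(n_d-n_t)-2\bigr)+(e-1)\geq e$ and $\nu_2(\tfrac{td}{4}l_1^2)\geq (n_d+n_t-2)+(e+1)\geq e$. In each case both summands lie in $\mathcal{P}^{e}$, so $\psi$ annihilates them and the lemma follows.

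The computation is routine valuation arithmetic, so the only real point of care --- and what I would single out as the main obstacle --- is keeping the parity-shifted exponents $\frac{e+1}{2}$ (for the $X$-components $x,l_1$) and $\frac{e-1}{2}$ (for the $X^{\ast}$-components $y,l_2$) straight, so that one cannot simply quote Lemma~\ref{2mide} but must re-run its estimates against the shifted thresholds. I would also flag that the sub-regions of $(n_d,n_t)$-space covered by parts~(i) and~(ii) together omit precisely the boundary loci $n_d-n_t=-2$ (with $n_d+n_t\geq 0$) and $n_d+n_t=0$ (with $n_d-n_t\leq -2$); these are exactly where $\psi(\tfrac{1}{t}q_{g_0}(a'))$ need not be trivial, and they must be treated by a separate computation in the material that follows.
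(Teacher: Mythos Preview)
Your proposal is correct and follows essentially the same approach as the paper: you use the same three expressions $(\ref{qcal41})$, $(\ref{qcal51})$, $(\ref{qcal61})$ in the same subcases and verify that each term has $2$-adic valuation at least $e$, the only change from Lemma~\ref{2mide} being the shifted lattice exponents $\tfrac{e+1}{2}$ and $\tfrac{e-1}{2}$. Your identification of the residual boundary cases $n_d-n_t=-2$ and $n_d+n_t=0$ also matches what the paper treats next in Lemma~\ref{1112}.
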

\begin{proof}
In this case,  $L_1\cap X=\mathcal{P}^{\tfrac{e+1}{2}}e_1 $, $L_1\cap X^{\ast}=\mathcal{P}^{\tfrac{e-1}{2}}e_1^{\ast}$. Note that $x,l_1\in \mathcal{P}^{\tfrac{e+1}{2}}$ and $y,l_2\in \mathcal{P}^{\tfrac{e-1}{2}}$.\\
ia) If $n_d-n_t\geq -1$, then $\tfrac{d}{t}\in \mathcal{P}^{-1}$,  and  $ d\in t\mathcal{P}^{-1} \subseteq \mathcal{O}$.   By (\ref{qcal51}), $\psi(\tfrac{1}{t}q_{g_0}(a'))=\psi(-dl_1x -\tfrac{d}{t}x^2)=1$.\\
ib) If $n_d+n_t\leq -1$, then $\tfrac{1}{dt}\in   \mathcal{P}$, and $\tfrac{1}{d}\in   t\mathcal{P} \subseteq \mathcal{P}^2$. By (\ref{qcal61}), $\psi(\tfrac{1}{t}q_{g_0}(a'))=\psi(\tfrac{y^2}{dt}+\tfrac{1}{d}yl_2)=1$.\\
ii) In this case, $\tfrac{t}{4d}\in \mathcal{P}$ and $\tfrac{td}{4}\in \mathcal{P}^{-1}$.   By (\ref{qcal41}), $\psi(\tfrac{1}{t}q_{g_0}(a'))=\psi(-\tfrac{t}{4d}l_2^2) \psi(\tfrac{dt}{4}l_1^2)=1$.
\end{proof}
\subsubsection{The case $\dim W=2$ II} We consider the exceptional cases mentioned in Lemmas \ref{2mide} and \ref{2nmide}.  
Note that $n_t\geq 1$.  
\begin{itemize}
\item If $2\mid e$,  the exceptional case is that $\{ n_d-n_t=-1$ or $n_d+n_t=1\}$. 
\item If $2\nmid e$,  the exceptional case is that $\{ n_d-n_t=-2$ or $n_d+n_t=0\}$.  
\end{itemize}
Let:   $$\mathbbm{2}\footnote{This notation was originally dedicated to  Professor  Guy Henniart on his  birthday.}=\begin{bmatrix} 2&0\\ 0&  \tfrac{1}{2}\end{bmatrix},  $$ 
$$d''=\left\{\begin{array}{cl} 
\mathbbm{2} & \textrm{ if } 2\mid e, n_d-n_t=-1,\\
 \mathbbm{2}^{-1} &\textrm{ if } 2\mid e, n_d+n_t=1,\\
 \mathbbm{2} & \textrm{ if } 2\nmid e,  n_d-n_t=-2, \\
 \mathbbm{2}^{-1} & \textrm{ if } 2\nmid e, n_d+n_t=0.\end{array}\right.$$
$$\mathcal{C}_g=\{ a\in L^{\ast} \mid  ad''g-a\in L\}.$$
\begin{lemma}\label{1112}
\begin{itemize}
\item[(i)] Assume $2\mid e$.  If  $n_d-n_t=-1$ or $n_d+n_t=1$,  then $\psi(\tfrac{1}{2}\langle -a, ad''g\rangle-\tfrac{1}{2}B(ad''g-a,ad''g-a))=1$, for $a\in \mathcal{C}_g$.
\item[(ii)] Assume $2\nmid e$. If  $ n_d-n_t=-2$ or $n_d+n_t=0$,  then $\psi(\tfrac{1}{2}\langle -a, ad''g\rangle-\tfrac{1}{2}B(ad''g-a,ad''g-a))=1$, for $a\in \mathcal{C}_g$.
\end{itemize} 
\end{lemma}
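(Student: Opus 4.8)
The plan is to absorb the auxiliary diagonal matrix $d''$ into $g$ and then quote Lemmas~\ref{2mide} and~\ref{2nmide} verbatim. Set $\tilde g:=d''g$. Since $\dim W=2$, the permutation part is trivial ($S_1=\{1\}$, so $\omega_s=1$ and $g'=g_0=\begin{bmatrix}d&0\\0&d^{-1}\end{bmatrix}\omega_S$), and $d''=\mathbbm{2}^{\pm1}$, $d_L$ and $\begin{bmatrix}d&0\\0&d^{-1}\end{bmatrix}$ are all diagonal $2\times2$ matrices, hence commute. Therefore
\[
\tilde g=d_L^{-1}\tilde g'd_L,\qquad \tilde g'=\begin{bmatrix}\tilde d&0\\0&\tilde d^{-1}\end{bmatrix}\omega_S,
\]
where $\tilde d=2d$ if $d''=\mathbbm{2}$ and $\tilde d=\tfrac12 d$ if $d''=\mathbbm{2}^{-1}$; in particular $\tilde d$ is still a power of $2$, so $\tilde g'\in\mathfrak{W}^{\textrm{aff}}$ and $\tilde g$ is again of the exact shape treated in Lemmas~\ref{2mide} and~\ref{2nmide}, now with $2$-adic valuation parameter $n_{\tilde d}=n_d\pm1$ in place of $n_d$, and with $n_t$ together with the standing hypotheses ($\omega_S\neq1$, $t=\lambda_{d_L}\in\mathcal P$, whence $L_1t\subseteq L_1$) unchanged. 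Moreover $\mathcal C_g=\{a\in L^{\ast}\mid a\tilde g-a\in L\}$ is precisely the set called $\mathcal B_{\tilde g}$ in those lemmas, and the quantity to be trivialized is literally $\psi\bigl(\tfrac12\langle -a,a\tilde g\rangle-\tfrac12 B(a\tilde g-a,a\tilde g-a)\bigr)$; so the statement follows once one checks that $(n_{\tilde d},n_t)$ lands in the hypothesis range of part~(i) of the appropriate lemma.

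That last check is a one-line computation in each of the four exceptional cases. If $2\mid e$ and $n_d-n_t=-1$, then $d''=\mathbbm{2}$ and $n_{\tilde d}-n_t=0\ge0$; if $2\mid e$ and $n_d+n_t=1$, then $d''=\mathbbm{2}^{-1}$ and $n_{\tilde d}+n_t=0\le0$; in both cases Lemma~\ref{2mide}(i) applies to $\tilde g$. If $2\nmid e$ and $n_d-n_t=-2$, then $d''=\mathbbm{2}$ and $n_{\tilde d}-n_t=-1\ge-1$; if $2\nmid e$ and $n_d+n_t=0$, then $d''=\mathbbm{2}^{-1}$ and $n_{\tilde d}+n_t=-1\le-1$; in both cases Lemma~\ref{2nmide}(i) applies to $\tilde g$. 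In each case the conclusion of that lemma, applied to $\tilde g$ and to $a\in\mathcal B_{\tilde g}=\mathcal C_g$, is exactly the asserted identity $\psi\bigl(\tfrac12\langle -a,ad''g\rangle-\tfrac12 B(ad''g-a,ad''g-a)\bigr)=1$.

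The one step that needs genuine care is the normalization in the first paragraph: I must make sure that left multiplication by $d''$ really turns $g$ into an element of the precise form $d_L^{-1}\bigl(\text{diagonal}\bigr)\omega_S d_L$, so that Lemmas~\ref{2mide} and~\ref{2nmide} can be invoked rather than re-derived. This uses the rank-one features — all relevant $2\times2$ matrices being diagonal and commuting, and $S_1$ being trivial — and the fact that $\lambda_{d_L}$, and hence $n_t$ and the relation $a\tilde g-a\in L\iff a'\tilde g'-a'\in L_1t$, are unaffected by passing from $g$ to $\tilde g$. Granting this, no fresh computation with $q_{g_0}$ or with formulas~(\ref{qcal41})--(\ref{qcal61}) is required, since the needed arithmetic of $2$-adic valuations has already been carried out in those two lemmas.
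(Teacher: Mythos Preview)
Your proof is correct, and it takes a cleaner route than the paper's own argument. The paper proceeds by direct recomputation: it first rewrites the expression as $B(ad''g,a-ad''g)+q_g(ad'')$, observes that the first term lies in the kernel of $\psi$ because $ad''g\in L^{\ast}$, and then for each of the four cases expands $t^{-1}q_{g_0}(a'd'')$ explicitly (the displayed formulas leading to (\ref{qcal71})--(\ref{qcal91}) and (\ref{qcal101})--(\ref{qcal121})), checking the valuation bounds by hand. This is essentially a rerun of the calculation behind Lemmas~\ref{2mide} and~\ref{2nmide}, but with $d$ replaced by $2d$ or $d/2$.

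Your observation that $\tilde g=d''g$ is again of the form $d_L^{-1}\begin{bmatrix}\tilde d&0\\0&\tilde d^{-1}\end{bmatrix}\omega_S\,d_L$ with $\tilde d=2^{\pm1}d$ (using that in rank one $d_L$, $d''$, and the diagonal part of $g'$ are all diagonal $2\times 2$ matrices and hence commute) lets you invoke Lemmas~\ref{2mide}(i) and~\ref{2nmide}(i) directly, with $n_d$ shifted by $\pm1$. This avoids the repeated explicit computation entirely. The paper's approach has the minor virtue of being self-contained at the level of that lemma, but yours makes transparent that Lemma~\ref{1112} is not a new phenomenon: the choice of $d''$ is engineered precisely to push the exceptional valuation pair $(n_d,n_t)$ back into the range already handled.
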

\begin{proof}
Similarly, we have:
\begin{align*}
& \tfrac{1}{2}\langle -a, ad''g\rangle-\tfrac{1}{2}B(ad''g-a,ad''g-a)\\
&=B(ad''g, a-ad''g)+q_g(ad').
\end{align*}
Note that $ad''g-a\in L\subseteq L^{\ast}$. Hence $ad''g\in L^{\ast}$,  $\psi(B(ad''g, a-ad''g))=1$. Write  $a=a'd_L t^{-1}$, for some $a'\in L_1$. Then: 
$$ad''g-a\in L \Longleftrightarrow a' d''d_L t^{-1}  d_L^{-1}g_0d_L -a'd_L t^{-1} \in  L_1d_L \Longleftrightarrow a'd''  g_0-a' \in L_1 t .$$
$$\psi(q_g(ad'))= \psi(t^{-1}q_{g_0}(a'd'')).$$
Recall $g_0=  \begin{bmatrix}
d& 0 \\0 &  d^{-1} \end{bmatrix}\begin{bmatrix}0& -1 \\1 &  0 \end{bmatrix}  $. 
\begin{itemize}
\item Assume that $2\mid e, n_d-n_t=-1$ or $2\nmid e, n_d+n_t=0$. \\
  For $a'=xe_1+ ye_1^{\ast}$, $a' d'' g_0= (2d)^{-1} ye_1- 2dxe_1^{\ast}$, $a' g_0-a'=[(2d)^{-1} y-x]e_1+(- 2dx-y)e_1^{\ast}\in L_1t$. Then
$$(2d)^{-1} y-x=tl_1, \quad \quad - 2dx-y=tl_2,$$
for some $l_1e_1+l_2e_1^{\ast}\in L_1$. Hence:
$$y=-\tfrac{1}{2} tl_2+ tdl_1, \quad x=-\tfrac{1}{4d} tl_2-\tfrac{1}{2} tl_1.$$
\begin{align}
&\psi(\tfrac{1}{t}q_{g_0}(a'd''))=\psi(-\tfrac{1}{t}xy)\\
&=\psi([\tfrac{1}{2}(l_2- 2dl_1)] [-\tfrac{t}{4d} (l_2+2dl_1)])\\
&=\psi(-\tfrac{t}{8d}[l_2^2-4(dl_1)^2])\label{qcal71}\\
&=\psi(-\tfrac{1}{8dt}[(2tdl_1+4dx)^2-4(tdl_1)^2])\\
&=\psi(-2dl_1x -\tfrac{2d}{t}x^2)\label{qcal81}\\
&=\psi(-\tfrac{1}{8dt}[(l_2t)^2-(2y+tl_2)^2])\\
&=\psi(\tfrac{y^2}{2dt}+\tfrac{1}{2d}yl_2).\label{qcal91}
\end{align}
ia) If $2\mid e$ and  $n_d-n_t+1=0$, then $\tfrac{2d}{t}\in \mathcal{O}$ and $d\in \tfrac{t}{2} \mathcal{O} \subseteq  \mathcal{O}$.  By (\ref{qcal81}), $\psi(\tfrac{1}{t}q_{g_0}(a'))=\psi(-2dl_1x -\tfrac{2d}{t}x^2)=1$.\\
iia) If $2\nmid e$ and  $n_d-n_t=-2$, $\tfrac{2d}{t}\in \mathcal{P}^{-1}$ and $2d\in t \mathcal{P}^{-1} \subseteq  \mathcal{O}$.  By (\ref{qcal81}), $\psi(\tfrac{1}{t}q_{g_0}(a'))=\psi(-2dl_1x -\tfrac{2d}{t}x^2)=1$.\\
\item  For $a'=xe_1+ ye_1^{\ast}$, $a' d'' g_0= 2d^{-1} ye_1- \tfrac{d}{2}xe_1^{\ast}$, $a' g_0-a'=[2d^{-1} -x]e_1+(- \tfrac{d}{2}x-y)e_1^{\ast}\in L_1t$. Then
$$2d^{-1} y-x=tl_1, \quad \quad - \tfrac{d}{2}x-y=tl_2,$$
for some $l_1e_1+l_2e_1^{\ast}\in L_1$. Hence:
$$y=-\tfrac{1}{2} tl_2+ \tfrac{1}{4}tdl_1, \quad x=-\tfrac{1}{d} tl_2-\tfrac{1}{2} tl_1.$$
\begin{align}
&\psi(\tfrac{1}{t}q_{g_0}(a'd''))=\psi(-\tfrac{1}{t}xy)\\
&=\psi([\tfrac{1}{2}(l_2- \tfrac{1}{2}dl_1)] [-\tfrac{t}{d} (l_2+\tfrac{1}{2}dl_1)])\\
&=\psi(-\tfrac{t}{2d}[l_2^2-\tfrac{1}{4}(dl_1)^2])\label{qcal101}\\
&=\psi(-\tfrac{1}{2dt}[(\tfrac{1}{2}tdl_1+dx)^2-\tfrac{1}{4}(tdl_1)^2])\\
&=\psi(-\tfrac{1}{2}dl_1x -\tfrac{d}{2t}x^2)\label{qcal111}\\
&=\psi(-\tfrac{1}{2dt}[(l_2t)^2-(2y+tl_2)^2])\\
&=\psi(\tfrac{2y^2}{dt}+\tfrac{2}{d}yl_2).\label{qcal121}
\end{align}
ib) If  $2\mid e$ and $n_d+n_t=1$, then $\tfrac{2}{dt}\in \mathcal{O}$ and $\tfrac{1}{d}\in \tfrac{t}{2} \mathcal{O} \subseteq  \mathcal{O}$. By (\ref{qcal91}), 
$\psi(\tfrac{1}{t}q_{g_0}(a'))=\psi(\tfrac{2y^2}{dt}+\tfrac{2}{d}yl_2)=1$.\\
iib) If  $2\nmid e$ and $n_d+n_t=0$, then $\tfrac{1}{dt}\in \mathcal{O}$ and $\tfrac{1}{d}\in t \mathcal{O} \subseteq  \mathcal{P}$. By (\ref{qcal91}), 
$\psi(\tfrac{1}{t}q_{g_0}(a'))=\psi(\tfrac{2y^2}{dt}+\tfrac{2}{d}yl_2)=1$.
\end{itemize}
\end{proof}
\subsubsection{The general case}\label{thegeneralcase}
 Let $W_i=Fe_i+Fe_i^{\ast}$. Then there exists a group homomorphism:
$$\iota: \SL_2(W_1) \times  \cdots \times \SL_2(W_m) \longrightarrow \Sp(W).$$
Then its image contains $\mathfrak{W}_0^{\textrm{aff}}$. Recall that $g=d_L^{-1} g'  d_L$ and  $g'=\omega_s g_0$.  Assume that $\iota([g_0^{(1)}, \cdots, g_0^{(m)}])=g_0$.  Correspondingly, let $L_i=\mathcal{P}^{[\tfrac{e+1}{2}]}e_i\oplus \mathcal{P}^{[\tfrac{e}{2}]}e_i^{\ast}$. Let $$\mathcal{I}_g=\{ i\mid  g_0^{(\omega_s(i))} \textrm{ satisfies the condition of Lemma \ref{1112} }\}.$$ 
If $i\in \mathcal{I}_g$, we let 
$$L''_i=L_i d''=\left\{ \begin{array}{cl}
2\mathcal{P}^{[\tfrac{e+1}{2}]}e_i\oplus \tfrac{1}{2}\mathcal{P}^{[\tfrac{e}{2}]}e_1^{\ast} & \textrm{ if }  2\mid e, n_d-n_t=-1,\\
\tfrac{1}{2} \mathcal{P}^{[\tfrac{e+1}{2}]}e_i\oplus 2\mathcal{P}^{[\tfrac{e}{2}]}e_1^{\ast}  & \textrm{ if } 2\mid e,  n_d+n_t=1,\\
2\mathcal{P}^{[\tfrac{e+1}{2}]}e_i\oplus \tfrac{1}{2}\mathcal{P}^{[\tfrac{e}{2}]}e_1^{\ast} & \textrm{ if }  2\nmid e, n_d-n_t=-2,\\
\tfrac{1}{2} \mathcal{P}^{[\tfrac{e+1}{2}]}e_i\oplus 2\mathcal{P}^{[\tfrac{e}{2}]}e_1^{\ast}  & \textrm{ if } 2\nmid e, n_d+n_t=0.\end{array}\right.$$ 
Let $$L_g''=\sum_{i\in \mathcal{I}_g} L'_i + \sum_{i\notin \mathcal{I}_g}L_i.$$
Write $$L_g''=Ld_g'.$$
\subsubsection{Assume $\mathcal{I}_g=\emptyset$}
Let $g\in d_L^{-1}\mathfrak{W}^{\textrm{eaff}}d_L$.  
\begin{lemma}\label{LLIgin}
 If $\mathcal{I}_g=\emptyset$, $\mathfrak{i}^{g^{-1}}_{L,L}\circ \mathfrak{i}^{g}_{L,L}=c_g\id$, for $c_g=\mu(L^{\ast})\mu(\mathcal{B}_g)$.
\end{lemma}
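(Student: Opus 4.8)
The plan is to establish this by the same Fubini-style computation that proves Lemma \ref{LL''} and Lemma \ref{LL'g}, so the real content is bookkeeping of Gauss-type phases plus one input on quadratic characters. Since $\mathcal{I}_g=\emptyset$ we have $L''_g=\sum_i L_i=L$, hence $d'_g=1$, the map $\mathcal{D}$ is the identity, and both $\mathfrak{i}^g_{L,L}$ and $\mathfrak{i}^{g^{-1}}_{L,L}$ are endomorphisms of $\mathcal{V}_{L,\psi}$, well-defined by Lemma \ref{welldL1L2}. First I would substitute the definition (\ref{interLL''}) twice, use that $g$ is symplectic, and exchange the order of integration to write
\[
\mathfrak{i}^{g^{-1}}_{L,L}\circ\mathfrak{i}^g_{L,L}(f)(w)=\int_{a''\in L^{\ast}}\int_{a\in L^{\ast}}\psi(\tfrac12\langle a'',w\rangle)\psi(\tfrac12\langle ag,a''+w\rangle)\sigma(-a'')\sigma(-a)f(ag+a''+w)\,da\,da''.
\]

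Next I would collapse the double integral. Commuting $\sigma(-a'')$ past $\sigma(-a)$ by the Heisenberg relation and then absorbing the $a''$-translation inside $f$ via the quasi-invariance $f(l^{\ast}_1+w_0)=\psi(-\tfrac12\langle l^{\ast}_1,w_0\rangle)\sigma(l^{\ast}_1)f(w_0)$ with $l^{\ast}_1=a''\in L^{\ast}$, the operator $\sigma(-a'')$ cancels and the dependence on $a''$ reduces to the single additive character $\psi(\langle ag-a,a''\rangle)$. By biduality $(L^{\ast})^{\ast}=L$, integrating over $a''$ yields $\mu(L^{\ast})$ on the locus $ag-a\in L$, i.e. $a\in\mathcal{B}_g$, and $0$ elsewhere, leaving $\mu(L^{\ast})\int_{a\in\mathcal{B}_g}\psi(\tfrac12\langle ag,w\rangle)\sigma(-a)f(ag+w)\,da$. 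Applying quasi-invariance twice more (once with $l^{\ast}_1=a$, once with $l^{\ast}_1=ag-a\in L\subseteq L^{\ast}$) and then Lemma \ref{Bll} to rewrite $\sigma(ag-a)f(w)=\psi(-\tfrac12B(ag-a,ag-a))f(w)$, the $w$-dependent phases cancel exactly as in Lemma \ref{LL'g}, and what is left is
\[
\mathfrak{i}^{g^{-1}}_{L,L}\circ\mathfrak{i}^g_{L,L}(f)(w)=\mu(L^{\ast})\Big(\int_{a\in\mathcal{B}_g}\psi\big(\tfrac12\langle -a,ag\rangle-\tfrac12B(ag-a,ag-a)\big)\,da\Big)f(w).
\]

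It remains to show the integrand is identically $1$ on $\mathcal{B}_g$, and this is the only step where the hypothesis $\mathcal{I}_g=\emptyset$ is used. Writing $g=d_L^{-1}g'd_L$ with $g'=\omega_s g_0$, the Remark after Lemma \ref{trial1} reduces $\psi(q_g(\,\cdot\,))$ to $\psi(q_{g_2}(\,\cdot\,g_1))$ with $g_1=d_L^{-1}\omega_s d_L$ and $g_2=d_L^{-1}g_0d_L$, and the homomorphism $\iota$ factors $g_0$, hence this phase, into a product over the rank-one blocks $W_i$. For each $i$, since $i\notin\mathcal{I}_g$ the block $g_0^{(\omega_s(i))}$ avoids the exceptional range of Lemma \ref{1112}, so its contribution is $1$ by Lemma \ref{trial1} (the $\omega_S=1$ alternative on that block), Lemma \ref{2mide} (if $2\mid e$), or Lemma \ref{2nmide} (if $2\nmid e$), according to the valuations $n_d,n_t$ on that block. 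Multiplying, the integrand is $1$, the integral equals $\mu(\mathcal{B}_g)$, and therefore $\mathfrak{i}^{g^{-1}}_{L,L}\circ\mathfrak{i}^g_{L,L}=\mu(L^{\ast})\mu(\mathcal{B}_g)\,\id=c_g\,\id$. The main obstacle is precisely this last paragraph: one must decompose carefully into the rank-one pieces and, block by block, invoke the correct sub-lemma according to $n_d\pm n_t$; the preceding phase manipulations are routine and already appear verbatim in the proof of Lemma \ref{LL'g}.
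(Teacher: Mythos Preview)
Your proposal is correct and follows essentially the same computation as the paper's own proof: substitute the definition of $\mathfrak{i}^g_{L,L}$ twice, commute the $\sigma$-operators via the Heisenberg relation, absorb the $a''$-shift by quasi-invariance of $f$, integrate out $a''$ using biduality $(L^{\ast})^{\ast}=L$ to localize onto $\mathcal{B}_g$, apply Lemma~\ref{Bll}, and then evaluate the remaining phase. In fact your final paragraph is more precise than the paper, which cites only ``Lemma~\ref{trial1}'' for the phase vanishing; you correctly observe that when $\mathcal{I}_g=\emptyset$ one must pass to the rank-one blocks via $\iota$ and, for each block, invoke whichever of the second Lemma~\ref{trial1}, Lemma~\ref{2mide}, or Lemma~\ref{2nmide} applies according to the sign of $n_d\pm n_t$ and the parity of $e$.
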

\begin{proof}
Let $f\in \mathcal{V}_{L,\psi}$. 
\begin{align*}
 & \mathfrak{i}^{g^{-1}}_{L,L}\circ \mathfrak{i}^{g}_{L,L}(f)(w)\\
 &=\int_{a'\in L^{\ast}}\psi(\tfrac{1}{2}\langle a', w\rangle) \sigma(-a')\mathcal{D}^{-1}[\mathfrak{i}^g_{L,L}(f)( (a'+w)g^{-1})] da'\\
 &=\int_{a'\in L^{\ast}}\psi(\tfrac{1}{2}\langle a', w\rangle) \sigma(-a')\mathcal{D}^{-1}\Big[\int_{a\in L^{\ast}}\psi(\tfrac{1}{2}\langle a, (a'+w)g^{-1}\rangle) \sigma'(-a)\mathcal{D}[f(ag+ a'+w)] da\Big] da'\\
 &=\int_{a'\in L^{\ast}}\int_{a\in L^{\ast}} \psi(\tfrac{1}{2}\langle a', w\rangle) \psi(\tfrac{1}{2}\langle a, (a'+w)g^{-1}\rangle)\sigma(-a')\sigma(-a ) f(ag+ a'+w) da da'
   \end{align*}
  \begin{align*} 
 &=\int_{a'\in L^{\ast}}\int_{a\in L^{\ast}} \psi(\tfrac{1}{2}\langle a', w\rangle) \psi(\tfrac{1}{2}\langle ag, a'+w\rangle) \psi(\langle a',a\rangle)\sigma(-a) \sigma(-a')f(ag+ a'+w) da' da\\
  &=\int_{a'\in L^{\ast}}\int_{a\in L^{\ast}} \psi(\langle ag-a, a'\rangle)\psi(\tfrac{1}{2}\langle ag, w\rangle) \sigma(-a) f(ag+w) da' da\\
    &=\int_{a'\in L^{\ast}}\int_{a\in L^{\ast}} \psi(\langle ag-a, a'\rangle) \psi(\tfrac{1}{2}\langle ag-a, w\rangle) \psi(\tfrac{1}{2}\langle -a, ag\rangle) f(ag-a+w) da' da\\
     &=\mu(L^{\ast})\int_{a\in L^{\ast}, ag-a \in L} \psi(\tfrac{1}{2}\langle ag-a, w\rangle) \psi(\tfrac{1}{2}\langle -a, ag\rangle) f(ag-a+w) da \\
     &=\mu(L^{\ast})\Big[\int_{a\in L^{\ast}, ag-a \in L} \psi(\tfrac{1}{2}\langle -a, ag\rangle) \psi(-\tfrac{1}{2}B(ag-a,ag-a))  da \Big] f(w)\\
     &\stackrel{ \textrm{ Lemma }\ref{trial1}}{=} c_gf(w).
  \end{align*}
\end{proof}

\begin{corollary}\label{gLL2}
$\mathfrak{i}^{g}_{L,L}$ is bijective.
\end{corollary}
 For such $g\in d_L^{-1}\mathfrak{W}^{\textrm{eaff}}d_L$, let us define $M[g]\in \End(\mathcal{V}_{L,\psi})$ as follows:
\begin{align}\label{gactionsigmaaff}
M[g]= \mathfrak{i}^g_{L,L} .
\end{align}
\subsubsection{Assume $\mathcal{I}_g\neq \emptyset$} 
 Let $g\in d_L^{-1}\mathfrak{W}^{\textrm{eaff}}d_L$.  Let $\mathcal{C}_g=\{ a\in L^{\ast}\mid a d_g'' g-a\in L\}.$  Let $c_g=\mu(L^{\ast})\mu(\mathcal{C}_g)$. 
\begin{lemma} 
 If $\mathcal{I}_g\neq \emptyset$, $\mathfrak{i}^{g^{-1}}_{L''_g,L}\circ \mathfrak{i}^{g}_{L,L''_g}=c_g\id$.
\end{lemma}
\begin{proof}
See the proof of Lemma \ref{LL'g}
\end{proof}
As a consequence, we obtain:
\begin{corollary}\label{gLL'}
$\mathfrak{i}^{g}_{L,L''_g}$ is bijective.
\end{corollary}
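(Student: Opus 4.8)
The plan is to read off bijectivity of $\mathfrak{i}^{g}_{L,L''_g}$ from the factorization $\mathfrak{i}^{g^{-1}}_{L''_g,L}\circ \mathfrak{i}^{g}_{L,L''_g}=c_g\,\id$ just established, in the same way Corollary \ref{gLL2} was deduced from Lemma \ref{LLIgin}. First I would check that the scalar $c_g=\mu(L^{\ast})\mu(\mathcal{C}_g)$ is nonzero. The lattice $L^{\ast}$ is an open compact subgroup of $W$, so $\mu(L^{\ast})>0$; and $\mathcal{C}_g=\{a\in L^{\ast}\mid a d_g'' g-a\in L\}$ is the intersection of the open compact set $L^{\ast}$ with the preimage of the open set $L$ under the continuous linear map $a\mapsto a(d_g''g-1)$, hence is open and contains $0$, so it contains an open compact subgroup of $W$ and $\mu(\mathcal{C}_g)>0$. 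Therefore $c_g\neq 0$, so $c_g^{-1}\,\mathfrak{i}^{g^{-1}}_{L''_g,L}$ is a left inverse for $\mathfrak{i}^{g}_{L,L''_g}$; in particular $\mathfrak{i}^{g}_{L,L''_g}$ is injective.

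For surjectivity I would pass to the $\Ha(W)$-module structure. By Lemma \ref{coomLL''} the map $\mathfrak{i}^{g}_{L,L''_g}$ intertwines $\pi_{L,\psi}$ with the $g$-twisted representation $h\mapsto \pi_{L''_g,\psi}(h^{g^{-1}})$; since conjugation by $g\in\Sp(W)$ fixes the centre $F$ of $\Ha(W)$, this twist is again a Heisenberg representation of central character $\psi$, and $\mathcal{V}_{L''_g,\psi}$ splits as a finite direct sum of irreducible $\Ha(W)$-submodules indexed by characters, each isomorphic to $\pi_{\psi}$. By Lemma \ref{chiLL''} (applied with $L''=L''_g$ and $\mathfrak{i}$ the associated dilation) the operator $\mathfrak{i}^{g}_{L,L''_g}$ carries the irreducible summand $\mathcal{V}_{L_1,\psi_{\chi\circ\mathfrak{i}}}$ of $\mathcal{V}_{L,\psi}$ into the summand of $\mathcal{V}_{L''_g,\psi}$ indexed by $\chi$, and the corresponding statement holds for $\mathfrak{i}^{g^{-1}}_{L''_g,L}$ in the reverse direction. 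Consequently the identity $\mathfrak{i}^{g^{-1}}_{L''_g,L}\circ \mathfrak{i}^{g}_{L,L''_g}=c_g\,\id$ restricts to each such summand, so the restriction of $\mathfrak{i}^{g}_{L,L''_g}$ there is a nonzero $\Ha(W)$-homomorphism between two irreducible modules with the same central character; by the Stone--von Neumann theorem together with Schur's lemma it is an isomorphism. Summing over $\chi\in\Irr(L_1/L)$ then yields that $\mathfrak{i}^{g}_{L,L''_g}$ is bijective.

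The step I expect to be fussiest is the bookkeeping of the two isotypic decompositions: one must be sure that the character $\chi$ labelling the target summand produced by $\mathfrak{i}^{g}_{L,L''_g}$ is carried, by $\mathfrak{i}^{g^{-1}}_{L''_g,L}$, back to the summand one started from, so that the composite is genuinely block-diagonal and Schur's lemma can be applied summand by summand. An alternative route that bypasses the decomposition altogether is to manufacture a two-sided inverse directly: rerunning the computation of the preceding lemma with the roles of $L$ and $L''_g$ (equivalently, $g$ and $g^{-1}$) interchanged should give $\mathfrak{i}^{g}_{L,L''_g}\circ\mathfrak{i}^{g^{-1}}_{L''_g,L}=c_g'\,\id$ for another nonzero constant $c_g'$, and a linear map possessing both a left and a right inverse is bijective.
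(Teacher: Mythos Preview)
Your proposal is correct and fills in the details the paper leaves implicit: the paper simply records the corollary ``as a consequence'' of the preceding lemma, and the Schur-lemma-on-summands argument you give is exactly the mechanism used elsewhere in the paper (Lemma~\ref{nonzeroL1L2}(2)) to pass from a nonzero intertwiner to a bijection. Your worry about tracking the block structure of both $\mathfrak{i}^g_{L,L''_g}$ and $\mathfrak{i}^{g^{-1}}_{L''_g,L}$ is unnecessary: once you know $\mathfrak{i}^g_{L,L''_g}$ is injective (from $c_g\neq 0$) and, by Lemma~\ref{chiLL''}, carries each irreducible summand into an irreducible summand, Schur's lemma applied summand by summand already gives surjectivity without reference to the reverse map.
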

For such $g$, let us define $M[g]\in \End(\mathcal{V}_{L,\psi})$ as follows:
\begin{align}\label{gactionsigma313}
M[g]=  \mathfrak{i}_{L''_g,L} \circ\mathfrak{i}^g_{L,L''_g} .
\end{align}
\begin{lemma}
$M[g]$ is bijective. 
\end{lemma}
\begin{proof}
It follows from Lemma \ref{LL''} and Corollary \ref{gLL'}.
\end{proof}
\subsection{On the whole group $\Sp(W)$}
Recall the  Iwahori decomposition: $$\Sp(W)=[d_L^{-1}Id_L]\cdot [d_L^{-1}\mathfrak{W}^{\textrm{eaff}}d_L] \cdot [d_L^{-1}Id_L].$$ For an element $g\in \Sp(W)$, let us \emph{fix } a decomposition:
$$g=i_1\omega_g i_2$$
for $i_1, i_2\in [d_L^{-1}Id_L]$ and $\omega_g\in [d_L^{-1}\mathfrak{W}^{\textrm{eaff}}d_L]$.  
For   $g=1\in \Sp(W)$,  we require that $i_1=1=i_2$. We also  choose the Haar measure $\mu$ such that $M[1]=\id$. Let us define: 
$$M[g]=M[i_1]M[\omega_g]M[i_2].$$
Then $M[g]\in \End(\mathcal{V}_{L,\psi})$ and $M[g]$ is bijective. 
\begin{lemma}\label{wholegroup23}
Let $h=(w',t)\in \Ha(W)$, $g\in \Sp(W)$. Then the following diagrams are commutative:
\begin{equation}\label{eq2}
\begin{CD}
\mathcal{V}_{L,\psi}@>M[g]>> \mathcal{V}_{L,\psi} \\
      @V\pi_{L,\psi}(h) VV @VV\pi_{L,\psi}(h^{g^{-1}}) V  \\
\mathcal{V}_{L,\psi}@>M[g]>> \mathcal{V}_{L,\psi}
\end{CD}
\qquad\qquad \qquad\qquad  \begin{CD}
\mathcal{V}_{L_1,\psi_{\chi}}@>M[g]>> \mathcal{V}_{L_1,\psi_{\chi}} \\
      @V\pi_{L_1,\psi_{\chi}}(h) VV @VV\pi_{L_1,\psi_{\chi}}(h^{g^{-1}}) V  \\
\mathcal{V}_{L_1,\psi_{\chi}}@>M[g]>> \mathcal{V}_{L_1,\psi_{\chi}}
\end{CD}
\end{equation} 
\end{lemma}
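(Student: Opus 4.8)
The plan is to bootstrap everything from Lemma \ref{coomLL''}, which already records how each elementary intertwining operator $\mathfrak{i}^g_{L,L''}$ interacts with the Heisenberg action, and to propagate this compatibility through the layered definition of $M[g]$.

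First I would verify the left-hand square when $g$ lies in one of the cells used to define $M[g]$ directly. If $g\in d_L^{-1}Id_L$, then $M[g]=\mathfrak{i}_{L',L}\circ\mathfrak{i}^g_{L,L'}$; Lemma \ref{coomLL''} applied to $\mathfrak{i}^g_{L,L'}$ gives $\mathfrak{i}^g_{L,L'}\circ\pi_{L,\psi}(h)=\pi_{L',\psi}(h^{g^{-1}})\circ\mathfrak{i}^g_{L,L'}$, and the same lemma applied to $\mathfrak{i}_{L',L}$ (with the roles of the two lattices interchanged and $g$ replaced by $1$) gives $\mathfrak{i}_{L',L}\circ\pi_{L',\psi}(k)=\pi_{L,\psi}(k)\circ\mathfrak{i}_{L',L}$ for every $k\in\Ha(W)$; taking $k=h^{g^{-1}}$ and composing the two identities produces $M[g]\circ\pi_{L,\psi}(h)=\pi_{L,\psi}(h^{g^{-1}})\circ M[g]$. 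The same two-step argument handles $g\in d_L^{-1}\mathfrak{W}^{\textrm{aff}}d_L$ verbatim: when $\mathcal{I}_g=\emptyset$ one has $M[g]=\mathfrak{i}^g_{L,L}$ and the square is a single instance of Lemma \ref{coomLL''}, and when $\mathcal{I}_g\neq\emptyset$ one has $M[g]=\mathfrak{i}_{L''_g,L}\circ\mathfrak{i}^g_{L,L''_g}$ and one composes exactly as above. The point to keep in mind is that only the factor carrying the genuine element $g$ twists $h$ into $h^{g^{-1}}$; the trivial factor leaves $h$ unchanged.

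Next I would assemble the general case from the fixed decomposition $g=i_1\omega_g i_2$ with $i_1,i_2\in d_L^{-1}Id_L$, $\omega_g\in d_L^{-1}\mathfrak{W}^{\textrm{aff}}d_L$, and $M[g]=M[i_1]M[\omega_g]M[i_2]$: pushing $\pi_{L,\psi}(h)$ successively past $M[i_2]$, then $M[\omega_g]$, then $M[i_1]$ and invoking the block results, the accumulated twist is
\[
h\;\longmapsto\;\bigl((h^{i_2^{-1}})^{\omega_g^{-1}}\bigr)^{i_1^{-1}}\;=\;h^{\,i_2^{-1}\omega_g^{-1}i_1^{-1}}\;=\;h^{(i_1\omega_g i_2)^{-1}}\;=\;h^{g^{-1}},
\]
the first equality using that $\Sp(W)$ acts on $W$ on the right, so conjugation is multiplicative in the exponent. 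This is the left-hand square for arbitrary $g$. For the right-hand square it then suffices to check that $M[g]$ respects the decomposition $\mathcal{V}_{L,\psi}\simeq\bigoplus_{\chi}\mathcal{V}_{L_1,\psi_\chi}$: by Lemma \ref{chiLL''} (together with \ref{DDD}(3)) each elementary operator $\mathfrak{i}^g_{L,L''}$ carries $\mathcal{V}_{L_1,\psi_{\chi\circ\mathfrak{i}}}$ onto $\mathcal{V}_{L_1'',\psi_\chi}$, and in each of the three definitions of $M[g]$ on a cell the net relabelling of the index $\chi$ is trivial --- it is the identity outright in the case $M[g]=\mathfrak{i}^g_{L,L}$, and the composite of a relabelling induced by a scaling with its inverse in the cases $M[g]=\mathfrak{i}_{L',L}\circ\mathfrak{i}^g_{L,L'}$ and $M[g]=\mathfrak{i}_{L''_g,L}\circ\mathfrak{i}^g_{L,L''_g}$. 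Hence each block operator restricts to an endomorphism of every $\mathcal{V}_{L_1,\psi_\chi}$, and so does $M[g]$; the right-hand square is then the restriction of the left-hand one to that subspace.

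The step I expect to be the main obstacle is precisely this last bookkeeping: one must be scrupulous that the $\chi$-relabellings genuinely cancel when the constituent maps of $M[g]$ are composed, and that the iterated conjugation $h\mapsto h^{g^{-1}}$ really does factor multiplicatively through the chosen Iwahori decomposition. Once these two points are pinned down, everything else is a mechanical transcription of Lemma \ref{coomLL''}.
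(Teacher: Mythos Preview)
Your proposal is correct and follows essentially the same approach as the paper: the paper's proof simply cites Lemmas \ref{coomLL''} and \ref{chiLL''} to handle $g\in d_L^{-1}Id_L$ and $g\in d_L^{-1}\mathfrak{W}^{\textrm{aff}}d_L$, then concludes for general $g$ by the Iwahori decomposition. You have spelled out the composition mechanics and the $\chi$-relabelling cancellation in more detail than the paper does, but the underlying strategy is identical.
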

\begin{proof}
It follows from Lemmas \ref{coomLL''}, \ref{chiLL''},  for $g\in d_L^{-1}Id_L$ or $d_L^{-1}\mathfrak{W}^{\textrm{eaff}}d_L$.  So it holds for $g\in \Sp(W)$.        
\end{proof}
\begin{lemma}\label{wholegroup23}
Let $g\in \Sp(W)$. Then the following diagram is commutative:
\begin{equation}\label{eq1}
\begin{CD}
 \mathcal{V}_{L_1,\psi_{\chi_1}}@>M[g]>> \mathcal{V}_{L_1,\psi_{\chi_1}}\\
       @V\mathcal{A}_{\chi_1,\chi_2}VV @VV\mathcal{A}_{\chi_1, \chi_2} V  \\
\mathcal{V}_{L_1,\psi_{\chi_2}}@>M[g]>> \mathcal{V}_{L_1,\psi_{\chi_2}} 
\end{CD}
\end{equation} 
\end{lemma}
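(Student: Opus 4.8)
The plan is to reduce everything to the two kinds of generators that occur in the Iwahori decomposition, and then to quote Lemma \ref{Achi}. Fix the chosen decomposition $g=i_1\omega_g i_2$ with $i_1,i_2\in[d_L^{-1}Id_L]$ and $\omega_g\in[d_L^{-1}\mathfrak{W}^{\textrm{aff}}d_L]$, so that $M[g]=M[i_1]M[\omega_g]M[i_2]$. By the preceding lemma each of $M[i_1],M[\omega_g],M[i_2]$ is an automorphism of $\mathcal{V}_{L,\psi}$ preserving every summand $\mathcal{V}_{L_1,\psi_{\chi}}$; since a composite of maps each commuting with the operators $\mathcal{A}_{\chi_1,\chi_2}\colon\mathcal{V}_{L_1,\psi_{\chi_1}}\to\mathcal{V}_{L_1,\psi_{\chi_2}}$ again commutes with them, it suffices to prove the square for $g\in d_L^{-1}Id_L$ and for $g\in d_L^{-1}\mathfrak{W}^{\textrm{aff}}d_L$.

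For such a $g$, I would unwind $M[g]$ into its defining composite of intertwining-type maps: $M[g]=\mathfrak{i}_{L',L}\circ\mathfrak{i}^g_{L,L'}$ when $g\in d_L^{-1}Id_L$; $M[g]=\mathfrak{i}^g_{L,L}$ when $g\in d_L^{-1}\mathfrak{W}^{\textrm{aff}}d_L$ with $\mathcal{I}_g=\emptyset$; and $M[g]=\mathfrak{i}_{L''_g,L}\circ\mathfrak{i}^g_{L,L''_g}$ when $g\in d_L^{-1}\mathfrak{W}^{\textrm{aff}}d_L$ with $\mathcal{I}_g\neq\emptyset$. In every case $M[g]$ is thus a composite of maps of the form $\mathfrak{i}^g_{L,L''}$ and $\mathfrak{i}_{L'',L}$ with $L''\in\{L',L''_g,L\}$. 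Lemma \ref{Achi} is precisely the assertion that $\mathfrak{i}^g_{L,L''}$ carries $\mathcal{A}_{\chi_1\circ\mathfrak{i},\chi_2\circ\mathfrak{i}}$ to $\mathcal{A}_{\chi_1,\chi_2}$, the intermediate space $\mathcal{V}_{L_1,\psi_{\chi\circ\mathfrak{i}}}$ being sent onto $\mathcal{V}_{L_1'',\psi_{\chi}}$ by Lemma \ref{chiLL''}; the mirror statement for $\mathfrak{i}_{L'',L}$, obtained by running the computation of Lemma \ref{Achi} with the roles of $L$ and $L''$ exchanged and $d$ replaced by $d^{-1}$, carries $\mathcal{A}_{\chi_1,\chi_2}$ back to $\mathcal{A}_{\chi_1\circ\mathfrak{i},\chi_2\circ\mathfrak{i}}$. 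Stacking these two commuting squares (once for each factor of $M[g]$) produces the asserted square: writing an arbitrary $\eta_j\in\Irr(L_1/L)$ as $\eta_j=\chi_j\circ\mathfrak{i}$, the interior characters cancel and one is left with $M[g]\circ\mathcal{A}_{\eta_1,\eta_2}=\mathcal{A}_{\eta_1,\eta_2}\circ M[g]$ on $\mathcal{V}_{L_1,\psi_{\eta_1}}$.

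The only point requiring care — and the step I expect to be the main obstacle — is the bookkeeping of these identifications of character groups through the intermediate lattices $L'$ and $L''_g$: one must verify that the chains $\chi\leftrightarrow\chi\circ\mathfrak{i}$ compose consistently, which works because $\mathfrak{i}$ is a group isomorphism sending $L_1/L$ onto $L_1''/L''$ and satisfies $\mathfrak{i}(y^{\ast}_{\chi})=y^{\ast}_{\chi\circ\mathfrak{i}}$, so that the elements $y^{\ast}_{\chi}$ entering the definition of $\mathcal{A}$ transform correctly. Beyond this purely notational matching there is no new analytic content: everything rests on Lemma \ref{Achi}, the additive identities already verified in its proof, and the reduction to generators via the Iwahori decomposition.
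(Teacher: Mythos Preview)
Your proposal is correct and follows essentially the same approach as the paper: reduce to generators via the Iwahori decomposition, then invoke Lemma~\ref{Achi} for each factor $\mathfrak{i}^g_{L,L''}$ and its mirror $\mathfrak{i}_{L'',L}$, with the intermediate character identifications $\chi\leftrightarrow\chi\circ\mathfrak{i}$ cancelling upon composition. The paper's own proof records only the single sentence ``It follows from Lemma~\ref{Achi}, for $g\in d_L^{-1}Id_L$ or $d_L^{-1}\mathfrak{W}^{\textrm{aff}}d_L$. Then it holds for $g\in \Sp(W)$''; your version simply spells out the bookkeeping that this sentence suppresses.
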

\begin{proof}
It follows from Lemma \ref{Achi}, for  $g\in d_L^{-1}Id_L$ or $d_L^{-1}\mathfrak{W}^{\textrm{eaff}}d_L$. Then it holds for $g\in \Sp(W)$.
\end{proof} 
Let us define                               
$$\pi_{L,\psi}([g,h])=M[g]\pi_{L,\psi}(h)$$
for $g\in \Sp(W)$, $h\in \Ha(W)$.  By Lemma \ref{wholegroup23}, it is well defined.
Moreover,  $M[g]$ sends  $\mathcal{V}_{L_1,\psi_{\chi}}$ onto itself. Hence  there exists a non-trivial $2$-cocycle $c_{\chi}$ of order $2$ in $\Ha^2(\Sp(W), T)$, such that 
\begin{align}
\pi_{L,\psi}([g,h])\pi_{L,\psi}([g',h'])f =c_{\chi}(g, g')\pi_{L,\psi}([g,h][g',h'])f.
\end{align}
for $[g,h]$,  $[g',h']\in \Sp(W)\ltimes \Ha(W)$, $f\in \mathcal{V}_{L_1,\psi_{\chi}}$. 
By Lemma \ref{wholegroup23}, we can see that all  cocycles $c_{\chi}$ are the same. Let 
$$1\longrightarrow T \longrightarrow \Mp(W) \longrightarrow \Sp(W) \longrightarrow 1$$
be the central extension of $\Sp(W)$ by  $T$  associated to $c(-,-)$.   Finally, we  conclude:
\begin{theorem}\label{theoremnon2}
 For $[g,t]\in \Mp(W)$, $h\in \Ha(W)$, $f\in \mathcal{V}_{L, \psi}$, let 
$$\pi_{L,\psi}([(g,t),h])f=tM[g]\pi_{L,\psi}(h) f.$$
Then $\pi_{L,\psi}$ defines a representation of $\Mp(W)\ltimes \Ha(W)$.  The restriction of $\pi_{L,\psi}$  to $\Ha(W)$ contains $\sqrt{|L^{\ast}/L|}$-number of irreducible components and every  component is a Heisenberg representation associated to $\psi$. 
\end{theorem}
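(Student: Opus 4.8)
The plan is to build Theorem~\ref{theoremnon2} out of the cocycle identity and the intertwining relations already established; what then remains is only the bookkeeping of the semidirect product $\Mp(W)\ltimes\Ha(W)$. Recall that a typical element of $\Mp(W)\ltimes\Ha(W)$ is $[(g,t),h]$ with $(g,t)\in\Mp(W)$ and $h=(w,s)\in\Ha(W)$, and that the group law is $[(g_1,t_1),h_1][(g_2,t_2),h_2]=[(g_1g_2,\,t_1t_2c(g_1,g_2)),\,h_1^{g_2}h_2]$, where $h^{g}=(wg,s)$ is the right action of $\Sp(W)$ on $\Ha(W)$ used throughout (so $h^{g^{-1}}$ is the ``$hg^{-1}$'' of the earlier sections). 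Since $M[1]=\id$ by the normalization of the Haar measure fixed in Section~\ref{pn2} and $\pi_{L,\psi}(0)=\id$, the identity of the group is sent to $\id$; and since each $M[g]$ and each $\pi_{L,\psi}(h)$ is bijective while $T$ acts by scalars, the map takes values in $\GL(\mathcal{V}_{L,\psi})$. Hence it suffices to verify multiplicativity.

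For that I would compute
\[
\pi_{L,\psi}([(g_1,t_1),h_1])\,\pi_{L,\psi}([(g_2,t_2),h_2])\,f = t_1t_2\,M[g_1]\,\pi_{L,\psi}(h_1)\,M[g_2]\,\pi_{L,\psi}(h_2)\,f,
\]
then move $M[g_2]$ to the left using the first commutative square of Lemma~\ref{wholegroup23} in the form $\pi_{L,\psi}(h_1)M[g_2]=M[g_2]\pi_{L,\psi}(h_1^{g_2})$, apply the cocycle relation $M[g_1]M[g_2]=c(g_1,g_2)M[g_1g_2]$, and collapse $\pi_{L,\psi}(h_1^{g_2})\pi_{L,\psi}(h_2)=\pi_{L,\psi}(h_1^{g_2}h_2)$ (valid since $\pi_{L,\psi}$ restricted to $\Ha(W)$ is the genuine representation $\cInd_{\Ha(L)}^{\Ha(W)}\psi_L$). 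The result is $t_1t_2c(g_1,g_2)\,M[g_1g_2]\,\pi_{L,\psi}(h_1^{g_2}h_2)\,f$, which is precisely $\pi_{L,\psi}$ applied to the product $[(g_1g_2,t_1t_2c(g_1,g_2)),h_1^{g_2}h_2]$. One key point to spell out here is that $M[g_1]M[g_2]=c(g_1,g_2)M[g_1g_2]$ holds on all of $\mathcal{V}_{L,\psi}$: it holds on each summand $\mathcal{V}_{L_1,\psi_\chi}$ with some cocycle $c_\chi$, and the second commutative square of Lemma~\ref{wholegroup23} (compatibility of $M[g]$ with the operators $\mathcal{A}_{\chi_1,\chi_2}$) forces all the $c_\chi$ to equal one and the same $c$, which is the cocycle defining $\Mp(W)$.

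For the final assertion, restricting $\pi_{L,\psi}$ to $\Ha(W)=\{[(1,1),h]\}$ gives the action on $\mathcal{V}_{L,\psi}\simeq\cInd_{\Ha(L)}^{\Ha(W)}\psi_L$, and by the decomposition already proved, $\mathcal{V}_{L,\psi}\simeq\oplus_{\chi\in\Irr(L_1/L)}\mathcal{V}_{L_1,\psi_\chi}$, where each $\mathcal{V}_{L_1,\psi_\chi}$ is an irreducible $\Ha(W)$-module with central character $\psi$, hence a Heisenberg representation associated to $\psi$ by Stone--von Neumann (and $\pi_{L_1,\psi_\chi}\simeq\pi_{L_1,\psi}$ for every $\chi$). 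The number of components is $|\Irr(L_1/L)|=|L_1/L|$; since $L\subseteq L_1=L_1^{\ast}\subseteq L^{\ast}$ one has $|L^{\ast}/L|=|L^{\ast}/L_1|\cdot|L_1/L|$, and the non-degenerate pairing $\psi\colon L_1/L\times L^{\ast}/L_1\to T$ gives $|L^{\ast}/L_1|=|L_1/L|$, whence $|L_1/L|=\sqrt{|L^{\ast}/L|}$, the asserted count.

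I expect the real work to lie entirely upstream of this final assembly: in proving that $M[g]$ is well-defined and bijective for every $g\in\Sp(W)$ — which rests on the Iwahori/affine-Weyl decomposition, the pairing lemmas for the operators $\mathfrak{i}^g_{L,L''}$, and in particular the $2$-adic case analysis (Lemmas~\ref{2mide}, \ref{2nmide} and \ref{1112}) that guarantees the relevant values of $\psi$ are trivial — and in the equality of the cocycles $c_\chi$. Granting those inputs, the only thing demanding care in the proof of the theorem proper is keeping the semidirect-product conventions (the direction of the $\Sp(W)$-action on $\Ha(W)$, and where $c(g_1,g_2)$ sits inside $\Mp(W)$) exactly consistent with the normalizations already fixed.
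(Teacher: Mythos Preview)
Your proposal is correct and follows essentially the same route as the paper: the theorem is stated as the conclusion of the preceding buildup, and your write-up simply makes explicit the final bookkeeping (the semidirect-product computation using Lemma~\ref{wholegroup23}, the cocycle identity, and the $\mathcal{A}_{\chi_1,\chi_2}$-compatibility to identify all the $c_\chi$), together with the count $|L_1/L|=\sqrt{|L^{\ast}/L|}$ via the nondegenerate pairing $L_1/L\times L^{\ast}/L_1\to T$. Nothing in your assembly departs from the paper's intended argument.
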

\section{The real field case}
Let $F=\R$.  We will let  $\psi_0$ denote the fixed character of $F$ defined as: $ t \longmapsto e^{2\pi it}$, for $t\in F$. $\kappa=1$ or $-1$.  Let $\psi=\psi_0^{\kappa}$ be another character of $F$ defined as: $t\to \psi_0(\kappa t)$,  for $t\in F$.
\subsection{Self-dual lattice}\label{selfduall}
Let $X=\R e_1 \oplus \cdots \oplus\R e_m$, $X^{\ast}=\R e_1^{\ast} \oplus \cdots \oplus \R e_m^{\ast}$. Let $L_1=\Za  e_1 \oplus  \cdots \oplus \Za  e_m \oplus \Za  e_1^{\ast} \oplus \cdots \oplus \Za e_m^{\ast}$. Then  $L_1$ is  a self-dual lattice of $W$ with respect to $\psi$. Moreover, $L_1=L_1\cap X \oplus L_1\cap X^{\ast}$. 
For any $l\in L_1$, write 
$$l=x_l+x^{\ast}_l$$
for $x_l \in L_1\cap X$, $x_l^{\ast}\in L_1\cap X^{\ast}$.   For $w=x+x^{\ast}$, $w'=y+y^{\ast}\in W $,  with $x, y\in X$, $x^{\ast}, y^{\ast}\in X^{\ast}$,  let us define
   $$B(w, w')=\langle x, y^{\ast}\rangle.$$ 
Let $\chi\in \Irr(L_1)$. Let $\psi_{L_1, \chi}$ denote the extended  character of $\Ha(L_1)$  from $\psi$ of  $F$ defined as:
$$\psi_{L_1, \chi}: \Ha(L_1) \longrightarrow \C^{\times}; (l,t) \longmapsto \psi(t-\tfrac{1}{2}B(l,l))\chi(l),$$ 
for $l\in L_1$. Then $\psi_{L_1, \chi}$ is really a character. 
  Let us define the representation:
 $$(\pi_{L_1,\psi_{\chi}}=\Ind_{\Ha(L_1)}^{\Ha(W)} \psi_{L_1,\chi}, \quad  \mathcal{S}_{L_1,\psi_{\chi}}=\Ind_{\Ha(L_1)}^{\Ha(W)} \C).$$
\begin{lemma}
$\pi_{L_1,\psi_{\chi}}$ defines a Heisenberg representation of $\Ha(W)$ associated to  the center character $\psi$.
\end{lemma} 
\begin{proof}
See \cite{LiVe}.
\end{proof}
The Hilbert space $\mathcal{S}_{L_1,\psi_{\chi}}$ consists  of measurable functions $f: W \longrightarrow \C$ such that
\begin{itemize}
\item[(i)] $f(l+w)=\psi(-\tfrac{B(l,l) }{2}-\tfrac{\langle l, w\rangle}{2}) \chi(l) f(w)$, for  all $l\in L_1=(L_1\cap X)\oplus (L_1\cap X^{\ast})$, almost all $w\in W$;
\item[(ii)] $\int_{L_1\setminus W} ||f(w)||^2 dw<+\infty$.
\end{itemize}
Here, we choose a $W$-right invariant measure on $L_1\setminus W$. Then $\pi_{L_1,\psi_{\chi}}$  can be realized on $\mathcal{S}_{L_1,\psi_{\chi}}$ by the following formula:
$$\pi_{L_1,\psi_{\chi}}([w',t])f(w)=\psi(t+\tfrac{\langle w, w'\rangle}{2})f(w+w')$$
for $w,w'\in W$, $t\in F$.
\begin{remark}
If $\chi$ is the trivial character, we will  write $\pi_{L_1,\psi}$.  Then $\pi_{L_1,\psi_{\chi}}\simeq \pi_{L_1,\psi}$, for any $\chi$.
\end{remark}
\subsection{Non-self-dual lattice} Let $L$ be a sublattice of the above self-dual lattice $L_1$ with finite index  $[L_1:L]$. 
Note that $$\psi: L_1/L \times L^{\ast}/L_1 \longrightarrow T; (x,y^{\ast}) \longrightarrow \psi(\langle x, y^{\ast}\rangle ),$$
 defines a non-degenerate bilinear map. Hence $[L_1:L]=\sqrt{[L^{\ast}: L]}$. For any $\chi \in \Irr(L_1/L)$, there exists an element $y^{\ast}_{\chi} \in L^{\ast}/L_1 $ such that 
 $$\psi(\langle  x, y_{\chi}^{\ast}\rangle )=\chi(x), \quad\quad \quad x\in L_1/L.$$
Let $\Ha(L)=L\times F$ denote the corresponding subgroup of $\Ha(W)$.  Let $\psi_{L}$ denote the extended  character of $\Ha(L)$  from $\psi$ of  $F$ defined as:
$$\psi_{L}: \Ha(L) \longrightarrow \C^{\times}; (l,t) \longmapsto \psi(t-\tfrac{1}{2}B(l,l)),$$ 
for $l=x_l+x_l^{\ast}\in L$.   Let us define the representation:
 $$(\pi_{L,\psi}=\Ind_{\Ha(L)}^{\Ha(W)} \psi_{L}, \quad  \mathcal{S}_{L,\psi}=\Ind_{\Ha(L)}^{\Ha(W)} \C).$$
The Hilbert space $\mathcal{S}_{L,\psi}$ consists  of  measurable functions $f: W \longrightarrow \C$ such that
\begin{itemize}
\item[(i)] $f(l+w)=\psi(-\tfrac{B(l,l) }{2}-\tfrac{\langle l, w\rangle}{2}) f(w)$, for all  $l\in L$, almost all $w\in W$;
\item[(ii)] $\int_{L\setminus W} ||f(w)||^2 dw<+\infty$.
\end{itemize}
Here, we choose a $W$-right invariant measure on $L\setminus W$. Then $\pi_{L,\psi}$  can be realized on $\mathcal{S}_{L,\psi}$ by the following formulas:
$$\pi_{L,\psi}([w',t])f(w)=\psi(t+\tfrac{\langle w, w'\rangle}{2})f(w+w')$$
for $w,w'\in W$, $t\in F$.

 Note that $\Ha(L) \subseteq \Ha(L_1)$.  Let $\chi$ be a character of $L_1/L$. Then $\psi_{L_1, \chi}|_{\Ha(L)}= \psi_{L}$. 
By Clifford theory, $\Ind_{\Ha(L) }^{\Ha(L_1) } \psi_{L}\simeq  \oplus_{\chi \in \Irr(L_1/L)} \psi_{L_1, \chi}$. As a consequence, we obtain:
\begin{lemma}
$\pi_{L, \psi}\simeq \sqrt{[L^{\ast}: L]} \pi_{L_1, \psi}.$
\end{lemma}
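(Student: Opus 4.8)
The plan is to mimic, essentially verbatim, the proof of the corresponding lemma given in the two non-archimedean sections; the only genuinely new point is that one now works in the category of unitary representations on Hilbert spaces rather than smooth representations, and since the extra group that appears, $L_1/L$, is finite, this causes no real difficulty. First I would record the two elementary facts that are already noted above: $\Ha(L)$ is a normal subgroup of finite index in $\Ha(L_1)$, with quotient canonically $L_1/L$ (it contains the commutator subgroup of $\Ha(L_1)$, hence is normal), and $\psi_{L_1,\chi}|_{\Ha(L)}=\psi_L$ for every $\chi\in\Irr(L_1/L)$. Clifford theory — legitimate here because the index is finite, so the induced module is a \emph{finite} Hilbert-space direct sum — then gives
$$\Ind_{\Ha(L)}^{\Ha(L_1)}\psi_L\;\simeq\;\bigoplus_{\chi\in\Irr(L_1/L)}\psi_{L_1,\chi}.$$

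Next I would invoke transitivity of unitary induction (see \cite{KaTa}) along the chain $\Ha(L)\subseteq\Ha(L_1)\subseteq\Ha(W)$ and combine it with the previous display:
$$\pi_{L,\psi}=\Ind_{\Ha(L)}^{\Ha(W)}\psi_L\;\simeq\;\Ind_{\Ha(L_1)}^{\Ha(W)}\Bigl(\Ind_{\Ha(L)}^{\Ha(L_1)}\psi_L\Bigr)\;\simeq\;\bigoplus_{\chi\in\Irr(L_1/L)}\Ind_{\Ha(L_1)}^{\Ha(W)}\psi_{L_1,\chi}\;=\;\bigoplus_{\chi\in\Irr(L_1/L)}\pi_{L_1,\psi_\chi}.$$
By the Remark above, $\pi_{L_1,\psi_\chi}\simeq\pi_{L_1,\psi}$ for every $\chi$, and $\#\Irr(L_1/L)=|L_1/L|=[L_1:L]$, so this already yields $\pi_{L,\psi}\simeq[L_1:L]\,\pi_{L_1,\psi}$.

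It then remains to identify $[L_1:L]$ with $\sqrt{[L^\ast:L]}$. The pairing $L_1/L\times L^\ast/L_1\to T$, $(x,y^\ast)\mapsto\psi(\langle x,y^\ast\rangle)$, is non-degenerate (this is exactly what is used to produce the $y^\ast_\chi$ in the paragraph above), hence $|L_1/L|=|L^\ast/L_1|$; together with $[L^\ast:L]=[L^\ast:L_1]\cdot[L_1:L]$ this gives $[L^\ast:L]=[L_1:L]^2$, i.e. $[L_1:L]=\sqrt{[L^\ast:L]}$, and the lemma follows.

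The hard part — really the only step that needs any care — is making sure the isomorphisms above are genuinely \emph{unitary} and not merely bounded invertible: one must choose the $W$-right invariant measures on $L\setminus W$ and on $L_1\setminus W$ compatibly (rescaling by the finite factor $[L_1:L]$ if necessary) so that the induction-in-stages map is an isometry, and one should check that the Clifford decomposition above is an orthogonal one. Both are routine for a finite quotient and neither affects the isomorphism class; everything else is the same formal bookkeeping already carried out in the non-archimedean sections.
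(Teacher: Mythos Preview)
Your proposal is correct and follows essentially the same argument as the paper: transitivity of unitary induction, the Clifford decomposition $\Ind_{\Ha(L)}^{\Ha(L_1)}\psi_L\simeq\bigoplus_\chi\psi_{L_1,\chi}$, the isomorphism $\pi_{L_1,\psi_\chi}\simeq\pi_{L_1,\psi}$, and the identification $[L_1:L]=\sqrt{[L^\ast:L]}$. Your extra remarks on unitarity and measure compatibility are a bit more careful than the paper (which simply quotes the chain of isomorphisms), but the route is the same.
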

\begin{proof}
$\pi_{L,\psi}=\Ind_{\Ha(L)}^{\Ha(W)} \psi_{L}\simeq \Ind_{\Ha(L_1)}^{\Ha(W)}  (\Ind_{\Ha(L) }^{\Ha(L_1) } \psi_{L})$
$\simeq \Ind_{\Ha(L_1)}^{\Ha(W)}( \oplus_{\chi \in \Irr(L_1/L)} \psi_{L_1, \chi}) \simeq \oplus_{\chi \in \Irr(L_1/L)}  \pi_{L_1, \psi_{\chi}}$
$\simeq \sqrt{[L^{\ast}: L]}\pi_{L_1, \psi}$.
\end{proof}
Let $\mathcal{S}'_{L_1,\psi_{\chi}}$  denote the subspace of  elements $f$ in $\mathcal{S}_{L,\psi}$ such that 
$$f(l+w)=\psi(-\tfrac{1}{2}B(l,l)-\tfrac{1}{2}\langle l, w\rangle)\chi(l)f(w),$$
for all $l\in L_1$, almost all $w\in W$. Since $L_1/L$ is a finite group, 
$$\int_{L\setminus W} ||f(w)||^2 dw<+\infty \Longleftrightarrow \int_{L_1\setminus W} ||f(w)||^2 dw<+\infty $$
for $f\in \mathcal{S}'_{L_1,\psi_{\chi}}$. So we can identity $\mathcal{S}'_{L_1,\psi_{\chi}}$ with the vector space $\mathcal{S}_{L_1,\psi_{\chi}}$, as described in Section \ref{selfduall}.  As a consequence, we can see that $\mathcal{S}'_{L_1,\psi_{\chi}}$ is an irreducible $\Ha(W)$-module associated to $\psi$. From now on we will use $\mathcal{S}_{L_1,\psi_{\chi}}$ instead of $\mathcal{S}'_{L_1,\psi_{\chi}}$.  
For two different $\chi_1$ and $\chi_2$, let $f_i \in \mathcal{S}_{L_1,\psi_{\chi_i}}$. Then:
\begin{align}
\langle f_1, f_2\rangle &=\int_{L\setminus W}  f_1(w)\overline{f_2(w)}dw\\
&=\int_{L_1\setminus W} \sum_{l\in L_1/L} f_1(w+l)\overline{f_2(w+l)}dw\\
&=\int_{L_1\setminus W}  f_1(w)\overline{f_2(w)} \sum_{l\in L_1/L} \chi_1(l)\overline{\chi_2(l)}dw\\
&=0.
\end{align}
Hence $\mathcal{S}_{L_1,\psi_{\chi_1}}$ is orthogonal to $\mathcal{S}_{L_1,\psi_{\chi_2}}$ in $\mathcal{S}_{L,\psi}$. Taking into account the number  of irreducible components, we deduce that
$$\mathcal{S}_{L,\psi}=\oplus_{\chi\in \Irr(L_1/L)} \mathcal{S}_{L_1,\psi_{\chi}}.$$
\subsubsection{}  For any $\chi \in \Irr(L_1/L)$,  $ (\pi_{L_1,\psi_{\chi}}, \mathcal{S}_{L_1,\psi_{\chi}}) $ is an irreducible representation of $\Ha(W)$. 
 Hence  
 $$(\sigma_{\chi}=\Ind_{\Ha(L_1)}^{\Ha(L^{\ast})} \psi_{L_1,\chi}, \mathcal{W}_{\chi}=\Ind_{\Ha(L_1)}^{\Ha(L^{\ast})} \C)$$ is an irreducible representation of $\Ha(L^{\ast})$.  The vector space $\mathcal{W}_{\chi}$ consists of the  functions $f$ on $L^{\ast}$ such that 
 $$f(l+l^{\ast})= \chi(l) \psi(-\tfrac{1}{2}B(l,l)-\tfrac{1}{2} \langle l, l^{\ast}\rangle)f(l^{\ast}).$$
 Note that for any $\chi \in \Irr(L_1/L)$, there exists an element $y^{\ast}_{\chi} \in L^{\ast}/L_1 $ such that 
 $$\psi(\langle  x, y_{\chi}^{\ast}\rangle )=\chi(x), \quad\quad \quad x\in L_1/L.$$
 Let us define a function:
  $$\mathcal{A}_{\chi_1, \chi_2}: \mathcal{W}_{\chi_1} \longrightarrow \mathcal{W}_{\chi_2},$$ 
  given by 
 \begin{align}
 \mathcal{A}_{\chi_1, \chi_2}(f)(l^{\ast})=f(l^{\ast}+y^{\ast}_{\chi_1}-y^{\ast}_{\chi_2}) \psi(-\tfrac{1}{2}\langle l^{\ast}, y^{\ast}_{\chi_1}-y^{\ast}_{\chi_2}\rangle).
 \end{align}
  For $f\in  \mathcal{W}_{\chi_1}$, $l\in L_1$, we have:
 \begin{align}
 &\mathcal{A}_{\chi_1, \chi_2}(f)(l+l^{\ast})\\
 &=f(l+l^{\ast}+y^{\ast}_{\chi_1}-y^{\ast}_{\chi_2})\psi(-\tfrac{1}{2}\langle l+l^{\ast}, y^{\ast}_{\chi_1}-y^{\ast}_{\chi_2}\rangle)\\
 &=\chi_1(l) \psi(-\tfrac{1}{2}B(l,l)-\tfrac{1}{2} \langle l, l^{\ast}+y^{\ast}_{\chi_1}-y^{\ast}_{\chi_2}\rangle)f(l^{\ast}+y^{\ast}_{\chi_1}-y^{\ast}_{\chi_2})\psi(-\tfrac{1}{2}\langle l+l^{\ast}, y^{\ast}_{\chi_1}-y^{\ast}_{\chi_2}\rangle)\\
 &=\chi_2(l) \psi(-\tfrac{1}{2}B(l,l)-\tfrac{1}{2} \langle l, l^{\ast}\rangle)\mathcal{A}_{\chi_1, \chi_2}(f)(l^{\ast}).
 \end{align}
 Hence $ \mathcal{A}_{\chi_1, \chi_2}(f) \in\mathcal{W}_{\chi_2}$.  So $\mathcal{A}_{\chi_1, \chi_2}$ is well-defined.
 \begin{lemma}
 $\mathcal{A}_{\chi_1, \chi_2}$ defines an intertwining operator from $\sigma_{\chi_1}$ to $\sigma_{\chi_2}$.
 \end{lemma}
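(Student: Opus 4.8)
The plan is to reduce the statement to a direct pointwise computation, exactly parallel to the one carried out for the corresponding operator in the non-archimedean cases treated above. First I would record the explicit action of $\sigma_{\chi}$ on $\mathcal{W}_{\chi}$. Unwinding the definition of $\Ind_{\Ha(L_1)}^{\Ha(L^{\ast})}\psi_{L_1,\chi}$ and parametrizing its vectors by functions on $L^{\ast}$ via restriction to $\{[l^{\ast},0]\}$, one obtains, for $h=[l_1^{\ast},t]\in\Ha(L^{\ast})$,
$$[\sigma_{\chi}(h)f](l^{\ast})=\psi\bigl(t+\tfrac{1}{2}\langle l^{\ast},l_1^{\ast}\rangle\bigr)f(l^{\ast}+l_1^{\ast}),$$
which is the real-field analogue of the formulas already used for the $p$-adic fields; the relation defining $\mathcal{W}_{\chi}$ recorded before the lemma is exactly the compatibility of this parametrization with the Heisenberg multiplication law. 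Since $L^{\ast}/L_1$ is finite, $\mathcal{W}_{\chi_1}$ and $\mathcal{W}_{\chi_2}$ are finite-dimensional, and it suffices to check the identity $\mathcal{A}_{\chi_1,\chi_2}\bigl(\sigma_{\chi_1}(h)f\bigr)=\sigma_{\chi_2}(h)\bigl(\mathcal{A}_{\chi_1,\chi_2}f\bigr)$ at an arbitrary point $l^{\ast}\in L^{\ast}$, for every $h\in\Ha(L^{\ast})$ and $f\in\mathcal{W}_{\chi_1}$; the fact that $\mathcal{A}_{\chi_1,\chi_2}f$ actually lies in $\mathcal{W}_{\chi_2}$ has been verified in the computation immediately preceding the statement.

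Then I would expand both sides. Writing $y=y^{\ast}_{\chi_1}-y^{\ast}_{\chi_2}\in L^{\ast}$ for brevity, the left-hand side equals
$$\psi\bigl(t+\tfrac{1}{2}\langle l^{\ast}+y,l_1^{\ast}\rangle-\tfrac{1}{2}\langle l^{\ast},y\rangle\bigr)f(l^{\ast}+y+l_1^{\ast}),$$
and the right-hand side equals
$$\psi\bigl(t+\tfrac{1}{2}\langle l^{\ast},l_1^{\ast}\rangle-\tfrac{1}{2}\langle l^{\ast}+l_1^{\ast},y\rangle\bigr)f(l^{\ast}+l_1^{\ast}+y).$$
The two function values coincide, and subtracting the arguments of $\psi$ leaves $\tfrac{1}{2}\langle y,l_1^{\ast}\rangle+\tfrac{1}{2}\langle l_1^{\ast},y\rangle$, which is $0$ because $\langle\,,\,\rangle$ is alternating. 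Hence the two sides agree for all $l^{\ast}$, $h$ and $f$, which is precisely the assertion that $\mathcal{A}_{\chi_1,\chi_2}$ intertwines $\sigma_{\chi_1}$ and $\sigma_{\chi_2}$.

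I do not anticipate a genuine obstacle: the content is bookkeeping of the cocycle phases produced by the Heisenberg law, and the single algebraic input is antisymmetry of the symplectic form. The only point needing a small amount of care is that $y^{\ast}_{\chi_1}$ and $y^{\ast}_{\chi_2}$ are defined only as cosets in $L^{\ast}/L_1$; I would fix representatives in $L^{\ast}$ once and for all, note that then $y\in L^{\ast}$ so that every function value written above makes sense, and remark (as in the paragraph before the lemma) that a different choice of representatives changes $\mathcal{A}_{\chi_1,\chi_2}$ only by a nonzero scalar, which does not affect the intertwining property.
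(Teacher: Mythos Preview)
Your proposal is correct and follows essentially the same approach as the paper: the paper's proof (or the referenced Lemma~\ref{iter} in the earlier sections) carries out exactly this pointwise expansion of $\mathcal{A}_{\chi_1,\chi_2}[\sigma_{\chi_1}(h)f](l^{\ast})$ and $\sigma_{\chi_2}(h)[\mathcal{A}_{\chi_1,\chi_2}f](l^{\ast})$ and matches the resulting phase factors. Your use of the antisymmetry of $\langle\,,\,\rangle$ to cancel $\tfrac{1}{2}\langle y,l_1^{\ast}\rangle+\tfrac{1}{2}\langle l_1^{\ast},y\rangle$ is precisely the step that makes the two displayed expressions in the paper's computation agree.
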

  \begin{proof}
See the proof of Lemma \ref{iter}.
 \end{proof}
 \subsubsection{}
Let $$\sigma=\Ind_{\Ha(L)}^{\Ha(L^{\ast})}\psi_{L},  \mathcal{W}=\Ind_{\Ha(L)}^{\Ha(L^{\ast})}\C.$$
The vector space $\mathcal{W}$ consists of the functions $f:L^{\ast} \longrightarrow \C$ such that 
$$f(l+l^{\ast})=\psi(-\tfrac{1}{2}B(l,l)-\tfrac{1}{2}\langle l, l^{\ast}\rangle) f(l^{\ast}),$$ 
for $l\in L$, $l^{\ast}\in L^{\ast}$. The action is given as follows:
$$[\sigma(l^{\ast}) f](l_1^{\ast})= \psi(\tfrac{1}{2} \langle l_1^{\ast}, l^{\ast}\rangle) f(l^{\ast}+ l_1^{\ast}).$$
 Let us denote  $\mathcal{V}_{L,\psi}= \Ind_{\Ha(L^{\ast})}^{\Ha(W)} \mathcal{W}$. Then $\pi_{L,\psi}$ can be realized on the Hilbert space $\mathcal{V}_{L,\psi}$. The vector space  $\mathcal{V}_{L,\psi}$ consists  of measurable  functions $f: W \longrightarrow \mathcal{W}$  such that
 \begin{itemize}
 \item[(i)] $f(l^{\ast}+w)=\psi(\tfrac{1}{2}\langle w, l^{\ast}\rangle)\sigma(l_1^{\ast})f(w)$, for all $l^{\ast}\in L^{\ast}$, almost all $w\in W$,
\item[(ii)] $\int_{L\setminus W} ||f(w)||^2 dw<+\infty$.
\end{itemize}
   For $h=(w',t)\in \Ha(W)$,
\begin{equation}\label{BB'2}
\pi_{L_1,\psi}(h) f(w)=\psi( t+\tfrac{1}{2}\langle w, w'\rangle) f(w+w').
\end{equation}
Let us denote  $\mathcal{V}_{L_1 ,\psi_{\chi}}= \Ind_{\Ha(L^{\ast} )}^{\Ha(W)} \mathcal{W}_{\chi}$. This vector space  consists  of measurable  functions $f: W \longrightarrow \mathcal{W}_{\chi}$  such that
\begin{itemize}
 \item[(i)] $f(l^{\ast}+w)=\psi(\tfrac{1}{2}\langle w, l^{\ast}\rangle)\sigma_{\chi}(l^{\ast})f(w)$,
  for all  $l^{\ast}\in L^{\ast}$, almost all  $w\in W$;
\item[(ii)] $\int_{L\setminus W} ||f(w)||^2 dw<+\infty$.
\end{itemize}
 Moreover, $\pi_{L_1,\psi_{\chi}}$ can be realized on $\mathcal{V}_{L_1 ,\psi_{\chi}}$.  Consequently, 
  $$\mathcal{V}_{L ,\psi} \simeq \oplus_{\chi \in \Irr(L_1/L )} \mathcal{V}_{L_1 ,\psi_{\chi}}.$$
\subsection{Non-self-dual lattice II}
In analogy with Section \ref{nonselfdualtwo},  let $d=\diag(d_1, \cdots, d_m; \tfrac{1}{d_1}, \cdots, \tfrac{1}{d_m})\in \Sp_{2m}(\Q)$. Let:
\begin{itemize}
\item $L''=Ld$,
\item $L''_1=L_1d$,
\item $L^{''\ast}=L^{ \ast}d$: the dual of $L''$.
 \end{itemize}
We proceed by replacing the original $L$ by $L''$ and proceed to define the associated representations:
$$(\sigma''=\Ind_{\Ha(L'')}^{\Ha(L^{''\ast})}\psi_{L''},\quad  \mathcal{W}''=\Ind_{\Ha(L'')}^{\Ha(L^{''\ast})}\C)$$
 $$(\sigma''_{\chi}=\Ind_{\Ha(L_1'')}^{\Ha(L^{''\ast})} \psi_{L_1'',\chi}, \quad \mathcal{W}''_{\chi}=\Ind_{\Ha(L_1'')}^{\Ha(L^{''\ast})} \C)$$ 
$$(\pi_{L'' ,\psi}=\Ind_{\Ha(L'' )}^{\Ha(W)} \psi_{L''}, \quad  \mathcal{S}_{L'' ,\psi}=\Ind_{\Ha(L'' )}^{\Ha(W)} \C)$$
$$(\pi_{L'' ,\psi}=\Ind_{\Ha(L^{''\ast})}^{\Ha(W)} \sigma'', \quad \mathcal{V}_{L'',\psi}= \Ind_{\Ha(L^{''\ast})}^{\Ha(W)} \mathcal{W}'')$$
$$(\pi_{L'' ,\psi_{\chi}}=\Ind_{\Ha(L^{''\ast})}^{\Ha(W)} \sigma''_{\chi}, \quad \mathcal{V}_{L'',\psi_{\chi}}= \Ind_{\Ha(L^{''\ast})}^{\Ha(W)} \mathcal{W}''_{\chi}).$$
Clearly, there exists a group isomorphism: 
$$\mathfrak{i}: \Ha(W) \to \Ha(W); [w,t] \longmapsto [wd, t].$$
which  sends $\Ha(L^{\ast})$ to $\Ha(L^{''\ast})$, $\Ha(L_1)$ to $\Ha(L_1^{''})$,  $\Ha(L)$ to $\Ha(L'')$,  $L_1/L$ to $L_1''/L''$, and  $L^{\ast}/L_1$ to $L^{''\ast}/L_1^{''}$.
 Similar to (\ref{LL_1}), 
\begin{align}\label{L''L_1''}
 \psi: L_1''/L'' \times L^{''\ast}/L_1'' \longrightarrow T; (x,y^{\ast}) \longrightarrow \psi(\langle x, y^{\ast}\rangle ),
 \end{align}
 defines a non-degenerate bilinear map.  So for any $\chi \in \Irr(L_1''/L'')$, there exists an element $y^{\ast}_{\chi} \in L^{''\ast}/L_1'' $ such that 
 $$
 \psi(\langle  x, y_{\chi}^{\ast}\rangle )=\chi(x), \quad\quad \quad x\in L_1''/L''.
 $$
 For any $\chi\in \Irr(L_1''/L'')$, $\chi \circ \mathfrak{i}\in \Irr( L_1/L)$. Then $\mathfrak{i}(y_{\chi}^{\ast}) = y_{\chi \circ \mathfrak{i}}^{\ast}$. Let us extend $\mathfrak{i}$ to the representations.  Define $\mathcal{D}:  \mathcal{W}\to  \mathcal{W}''; f \longmapsto \mathcal{D}(f)$, where $\mathcal{D}(f)(l^{''\ast})=f(l^{''\ast} d^{-1})$. Then:
 \begin{itemize}
 \item[(1)]  $\sigma''(\mathfrak{i}([l^{\ast},t])) \circ \mathcal{D}=\mathcal{D} \circ \sigma([l^{\ast},t]) $, for $[l^{\ast},t] \in \Ha(L^{\ast})$. 
      \item[(2)]  $\mathcal{D}$ sends $\mathcal{W}_{\chi \circ \mathfrak{i}}$ onto $\mathcal{W}''_{\chi}$.
       \item[(3)] $\mathcal{A}_{\chi_1, \chi_2}\circ \mathcal{D}=\mathcal{D}\circ \mathcal{A}_{\chi_1 \circ \mathfrak{i}, \chi_2 \circ \mathfrak{i}} $.
\end{itemize}
\subsection{}\label{inte} For  $g\in \Sp_{2m}(\Q)$, let us define a $\C$-linear map  $ \mathfrak{i}^g_{L,L''}$ from $\mathcal{V}_{L,\psi}$ to $ \mathcal{V}_{L'',\psi}$ as follows:
\begin{align}\label{interLL''}
\mathfrak{i}^g_{L,L''}(f)(w)&=\int_{a\in L^{''\ast}}\psi(\tfrac{1}{2}\langle a, w\rangle) \sigma''(-a)\mathcal{D}[f((a+w)g)] da.
\end{align}
Similar to the non-archimeadean field case, we also have:
\begin{itemize}
\item[(1)] $\mathfrak{i}^g_{L,L''}$ is well-defined; 
\item[(2)] For $h=(w',t)\in \Ha(W)$,  $hg^{-1}=(w'g^{-1},t)$. Then $\pi_{L'',\psi}(h^{g^{-1}}) \mathfrak{i}^g_{L,L''}= \mathfrak{i}^g_{L,L''}\pi_{L,\psi}(h) $;
\item[(3)] $\mathfrak{i}^g_{L,L''}$ sends $\mathcal{V}_{L_1,\psi_{\chi\circ \mathfrak{i}}}$ onto $\mathcal{V}_{L_1'',\psi_{\chi}}$;
\item[(4)] If $g=1$, write $\mathfrak{i}_{L,L''}$ for $\mathfrak{i}^g_{L,L''}$. Then $\mathfrak{i}_{L,L''}$ is bijective.
\end{itemize}  
\subsection{Non-self-dual lattice III} Similar to the non-archimedean case, we also  consider a special sublattice $L$ of $L_1$.   Let $d_L=\begin{bmatrix}
g_1& 0\\
0& g_2\end{bmatrix}\in \GSp_{2m}(\Q)$, such that $(L_1\cap X)g_1 \subseteq L_1\cap X$, $(L_1\cap X^{\ast})g_2 \subseteq L_1\cap X^{\ast}$. Let $$L=L_1d_L.$$ Then $L \subseteq L_1$ and  $L=L\cap X\oplus L\cap X^{\ast}$.
 Let us define:
 $$L'=2 L\cap X\oplus \tfrac{1}{2} L\cap X^{\ast}.$$
Then  $Ld_L^{-1}=L_1$, $L' d_L^{-1}= L_1'$, and $L^{0} d_L^{-1} =L_1^0$, $L^{0}\subseteq L_1^0$.
Let us  write:
\begin{itemize}
\item  $L'=L d'$.
\item $t=\lambda_{d_L}\in \Z$, the similitude factor  of $d_L$.
 \end{itemize}
 \subsection{On the Iwahori group $d_L^{-1}Id_L$}
We take the foregoing $L''$ to be $L'$.  Write  $g=d_L^{-1}g_0d_L\in d_L^{-1}I d_L$. Let  $\mathcal{A}_g=\{ a\in L^{\ast} \mid  ad'g-a\in L\}$.
\begin{lemma}\label{trial123}
For $a\in  \mathcal{A}_g$, $\psi(\tfrac{1}{2}\langle -a, ad'g\rangle-\tfrac{1}{2}B(ad'g-a,ad'g-a))=1$.
\end{lemma}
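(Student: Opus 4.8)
The plan is to transcribe, essentially verbatim, the $2\mid e$ case of the proof of Lemma \ref{trial1} in Section \ref{pn2}: in the real setting the lattice $L'=2L\cap X\oplus\tfrac12 L\cap X^{\ast}$ is built exactly like the even‑conductor lattice there, so the same bookkeeping applies with $L_1=\Za^{2m}$ and with $\psi$ trivial on $\Za$. First I would use $L^{\ast}=L_1 d_L t^{-1}$ (which holds by the same argument as in Section \ref{pn2}) to write $a=a'd_L t^{-1}$ with $a'\in L_1$. Putting $d'=\diag(2I_m;\tfrac12 I_m)$, which is scalar on $X$ and on $X^{\ast}$ and hence commutes with the block matrix $d_L$, and writing $g=d_L^{-1}g_0d_L$ with $g_0\in I$, one computes $ad'g-a=t^{-1}(a'd'g_0-a')d_L$, so $a\in\mathcal{A}_g$ is equivalent to $a'd'g_0-a'\in L_1 t$. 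Since $t=\lambda_{d_L}$ is an integer we have $L_1 t\subseteq L_1$, whence $a'd'g_0\in L_1$; since $g_0\in I$ preserves $L_1$ and $L_1^{0}=L_1\cap L_1'$ (the analogue of Lemma \ref{trans}(1)), this forces $a'd'\in L_1^{0}$. Transporting back through $L^{\ast}=L_1d_L t^{-1}$, $L^{'\ast}=L_1'd_Lt^{-1}$ and $L^{0}d_L^{-1}=L_1^{0}$ yields the containments
$$ad'g\in L^{'\ast}\cap L^{\ast},\qquad a\in L^{\ast}\cap L^{\ast}d'^{-1},\qquad ad'g-a\in L.$$

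From these I would derive $\psi\bigl(-\tfrac12 B(ad'g,\,ad'g-a)\bigr)=1$ and $\psi\bigl(\tfrac12 B(ad'g-a,\,a)\bigr)=1$. Indeed, $d_L$ has similitude factor $t$, so $\langle w d_L,w'd_L\rangle=t\langle w,w'\rangle$; hence for $v\in L_1^{0}d_L t^{-1}$ (the shape of $ad'g$) and $u\in L_1 d_L$ (the shape of $ad'g-a$) the pairing $B(v,u)$ untwists to $\langle(\,\cdot\,)_X,(\,\cdot\,)_{X^{\ast}}\rangle$ of an element of $2\Za^m$ against an element of $\Za^m$, hence lies in $2\Za$; likewise $B(ad'g-a,a)$ untwists to a pairing of $\Za^m$ against $2\Za^m$, the latter because $a'd'\in L_1^{0}$ forces $a'_{X^{\ast}}\in 2\Za^m$. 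Since $\psi$ is trivial on $\Za$, both $\psi$‑values are $1$. Finally, expanding $\langle w,w'\rangle=B(w,w')-B(w',w)$ and using $\langle a,a\rangle=0$ gives the purely formal identity
$$\tfrac12\langle -a,ad'g\rangle-\tfrac12 B(ad'g-a,ad'g-a)=\tfrac12 B(ad'g-a,a)-\tfrac12 B(ad'g,ad'g-a),$$
so applying $\psi$ to both sides finishes the proof.

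The only delicate point is the middle step — keeping track of which coordinates of $a'$ and of $a'd'$ lie in $2\Za^m$ versus $\Za^m$, and carrying this through the conjugation by $d_L$ — but this is entirely mechanical once $a'\in L_1$ and $a'd'\in L_1^{0}$ are in hand, and it introduces no idea beyond the non‑archimedean case; in particular the subtleties of the odd‑conductor case in Section \ref{pn2} do not arise here, since for $F=\R$ only the lattice $L'=2L\cap X\oplus\tfrac12 L\cap X^{\ast}$ occurs.
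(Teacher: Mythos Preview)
Your proposal is correct and follows exactly the approach the paper intends: its proof of Lemma~\ref{trial123} reads ``See the proof of Lemma~\ref{trial1},'' and you have faithfully carried over the $2\mid e$ case of that argument, which is the only one relevant since in the real setting $L'=2L\cap X\oplus\tfrac12 L\cap X^{\ast}$. The bookkeeping you lay out---reducing $a\in\mathcal{A}_g$ to $a'd'g_0-a'\in L_1 t$, deducing $a'd'\in L_1^{0}$ via the stability of $L_1$ and $L_1^{0}$ under $I$, and then verifying that both $B(ad'g,ad'g-a)$ and $B(ad'g-a,a)$ land in $2\Za$ so that $\psi$ kills their halves---is precisely what the paper's reference unpacks to.
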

\begin{proof} 
See the proof of Lemma \ref{trial1}.
\end{proof}
  Let $c_g=\mu(L^{\ast})\mu(\mathcal{A}_g)$. 
\begin{lemma}\label{LL'g3}
 For $g\in d_L^{-1}Id_L$,  $\mathfrak{i}^{g^{-1}}_{L',L}\circ \mathfrak{i}^{g}_{L,L'}=c_g\id$.
\end{lemma}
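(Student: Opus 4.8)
The plan is to reproduce, over $\Q\subseteq\R$, exactly the computation carried out for the $2$-adic analogue in Lemma \ref{LL'g}. Fix $f\in\mathcal{V}_{L,\psi}$ and a point $w\in W$. First I would unfold the composition $\mathfrak{i}^{g^{-1}}_{L',L}\circ\mathfrak{i}^{g}_{L,L'}(f)(w)$ by applying the defining formula (\ref{interLL''}) twice: the outer integral runs over $a''\in L^{\ast}$ (contributing the factors $\psi(\tfrac12\langle a'',w\rangle)$, $\sigma(-a'')$, $\mathcal{D}^{-1}$, and argument $(a''+w)g^{-1}$) and the inner one over $a'\in L^{'\ast}$ (contributing $\psi(\tfrac12\langle a',(a''+w)g^{-1}\rangle)$, $\sigma'(-a')$, $\mathcal{D}$, and argument $a'g+a''+w$). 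Since $\mathcal{W}$ is finite-dimensional and, by the $L$-covariance of $f$ together with Lemma \ref{Bll}, the integrands descend to finite quotients of $L^{\ast}$ and $L^{'\ast}$, these ``integrals'' are genuinely finite sums, so Fubini applies.

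Next I would substitute $a=a'd'^{-1}$, using $L^{'\ast}=L^{\ast}d'^{-1}$ and the compatibility $\mathcal{D}^{-1}\circ\sigma'(-a')\circ\mathcal{D}=\sigma(-a'd'^{-1})$ between $\mathcal{D}$ and the Heisenberg actions, to rewrite the inner integral over $a\in L^{\ast}$. Commuting $\sigma(-a'')$ past $\sigma(-a)$ inside $\Ha(L^{\ast})$ produces a factor $\psi(\langle a'',a\rangle)$, after which the $a''$-integration is pure Fourier orthogonality for the non-degenerate pairing $L^{\ast}\times L\to T$: $\int_{a''\in L^{\ast}}\psi(\langle ad'g-a,a''\rangle)\,da''$ equals $\mu(L^{\ast})$ when $ad'g-a\in L$ and vanishes otherwise. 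Hence only $a\in\mathcal{A}_g$ survive, and the residual $a$-integral is $\mu(L^{\ast})\int_{a\in\mathcal{A}_g}\psi(\tfrac12\langle -a,ad'g\rangle)\,f(ad'g-a+w)\,da$. Applying the covariance $f(ad'g-a+w)=\psi(\tfrac12\langle w,ad'g-a\rangle)\,\sigma(ad'g-a)f(w)$ together with Lemma \ref{Bll}, which gives $\sigma(ad'g-a)f=\psi(-\tfrac12 B(ad'g-a,ad'g-a))f$, and collecting the $\langle\,,\rangle$-terms, the total scalar in front of $f(w)$ is precisely
$$\psi\bigl(\tfrac12\langle -a,ad'g\rangle-\tfrac12 B(ad'g-a,ad'g-a)\bigr),$$
which equals $1$ for $a\in\mathcal{A}_g$ by Lemma \ref{trial123}. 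What remains is $\mu(L^{\ast})\,\mu(\mathcal{A}_g)\,f(w)=c_g f(w)$, as claimed.

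The structural hypothesis $g\in d_L^{-1}Id_L$ (so $g=d_L^{-1}g_0 d_L$ with $g_0$ in the Iwahori subgroup $I$) enters the argument only through Lemma \ref{trial123}, whose proof in turn rests on $L^{\ast}=L_1 d_L t^{-1}$, $L^{'\ast}=L_1' d_L t^{-1}$ and the Iwahori stability $L_1^{0}g_0=L_1^{0}$, $L_1g_0=L_1$ from Lemma \ref{trans}; granting those, the rest is routine bookkeeping of bilinear and $B$-quadratic phases. I expect the delicate point to be exactly this phase-chasing — keeping track of which of the $\psi(\langle\,,\rangle)$ and $\psi(B(\,,\,))$ factors cancel at each step — and, on the analytic side, checking that each ``integral'' really is a finite sum over a finite quotient of the relevant lattice, so that the interchange of summation and the change of variables $a=a'd'^{-1}$ are legitimate; once that is observed, the real-field case is formally identical to the $2$-adic one treated in Lemma \ref{LL'g}.
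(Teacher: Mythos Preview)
Your proposal is correct and follows exactly the paper's approach: the paper's own proof is simply ``See the proof of Lemma~\ref{LL'g},'' and you have faithfully reproduced that computation with the appropriate replacement of Lemma~\ref{trial1} by Lemma~\ref{trial123}. (One harmless slip: since $d'\in\Sp(W)$ one has $L^{'\ast}=L^{\ast}d'$ rather than $L^{\ast}d'^{-1}$, but your substitution $a=a'd'^{-1}$ still carries $L^{'\ast}$ onto $L^{\ast}$ as needed.)
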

\begin{proof}
See the proof of Lemma \ref{LL'g}.
\end{proof}
\begin{corollary}\label{gLL'43}
$\mathfrak{i}^{g}_{L,L'}$ is bijective.
\end{corollary}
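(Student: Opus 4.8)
The plan is to deduce the corollary from Lemma~\ref{LL'g3} together with Schur's lemma; once that lemma is in hand the statement is essentially formal, so I do not expect a genuine obstacle. For $g\in d_L^{-1}Id_L$, Lemma~\ref{LL'g3} gives $\mathfrak{i}^{g^{-1}}_{L',L}\circ\mathfrak{i}^{g}_{L,L'}=c_g\,\id_{\mathcal{V}_{L,\psi}}$ with $c_g=\mu(L^{\ast})\mu(\mathcal{A}_g)$; as in the proof of that lemma (and of Lemma~\ref{trial123}) the set $\mathcal{A}_g=\{a\in L^{\ast}\mid ad'g-a\in L\}$ contains $0$ and is cut out by a lattice condition, hence has positive measure, so $c_g\neq 0$. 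Therefore $c_g^{-1}\mathfrak{i}^{g^{-1}}_{L',L}$ is a left inverse of $\mathfrak{i}^{g}_{L,L'}$, and in particular $\mathfrak{i}^{g}_{L,L'}$ is injective.

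For surjectivity I would argue summand by summand. By the $L''=L'$ case of Section~\ref{inte}(2), $\mathfrak{i}^{g}_{L,L'}$ intertwines $h\mapsto\pi_{L,\psi}(h)$ with the twisted representation $h\mapsto\pi_{L',\psi}(h^{g^{-1}})$; since $g\in\Sp(W)$ acts on $\Ha(W)$ by an automorphism fixing the centre $F$, the latter is again a unitary representation of $\Ha(W)$ with central character $\psi$, and by Section~\ref{inte}(3) the operator $\mathfrak{i}^{g}_{L,L'}$ carries each summand $\mathcal{V}_{L_1,\psi_{\chi\circ\mathfrak{i}}}$ of $\mathcal{V}_{L,\psi}$ into the corresponding summand $\mathcal{V}_{L_1',\psi_{\chi}}$ of $\mathcal{V}_{L',\psi}$. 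Restricted to such a summand, $\mathfrak{i}^{g}_{L,L'}$ is a nonzero (by injectivity) intertwiner between two irreducible Heisenberg representations of $\Ha(W)$ associated to $\psi$ — irreducibility being preserved under $h\mapsto h^{g^{-1}}$ — hence by Schur's lemma for irreducible unitary representations it is a scalar multiple of a unitary isomorphism, in particular bijective onto $\mathcal{V}_{L_1',\psi_{\chi}}$. Summing over $\chi\in\Irr(L_1'/L')$, using the decompositions $\mathcal{V}_{L,\psi}=\oplus_{\chi}\mathcal{V}_{L_1,\psi_{\chi\circ\mathfrak{i}}}$ and $\mathcal{V}_{L',\psi}=\oplus_{\chi}\mathcal{V}_{L_1',\psi_{\chi}}$ together with $[L_1:L]=[L_1':L']=\sqrt{[L^{\ast}:L]}$ (the first equality because $L'=Ld'$, $L_1'=L_1d'$ with $d'$ invertible), one concludes that $\mathfrak{i}^{g}_{L,L'}$ is bijective.

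Equivalently, and perhaps more transparently: both $\mathcal{V}_{L,\psi}$ and $\mathcal{V}_{L',\psi}$ realize one and the same finite multiple $N\pi_{\psi}$ of the irreducible Heisenberg representation $\pi_{\psi}$, with $N=\sqrt{[L^{\ast}:L]}$, so after fixing identifications the operator $\mathfrak{i}^{g}_{L,L'}$ becomes an element of $\End_{\Ha(W)}(N\pi_{\psi})\cong M_N(\C)$ by Schur, and an injective element of the finite matrix algebra $M_N(\C)$ is invertible. The only non-formal ingredient is Lemma~\ref{LL'g3} itself (and through it Lemma~\ref{trial123}), which is already established; beyond that the only points requiring care are keeping track of the twist by $g$ and of the matching of the two direct-sum decompositions under $\mathfrak{i}$, and — as the paper does throughout the real case — one grants that the integral operators in question are bounded so that Schur's lemma applies verbatim.
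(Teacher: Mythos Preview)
Your proof is correct and fills in what the paper leaves implicit: the paper states the corollary with no proof, relying on Lemma~\ref{LL'g3} alone. Your route---injectivity from $c_g\neq 0$, then surjectivity by restricting to the irreducible summands $\mathcal{V}_{L_1,\psi_{\chi\circ\mathfrak{i}}}\to\mathcal{V}_{L_1',\psi_{\chi}}$ and invoking Schur---is exactly the mechanism the paper uses elsewhere (Lemmas~\ref{nonzeroL1L2} and~\ref{nonzeromap12}), so your argument is in the same spirit and simply makes the passage from ``left inverse on a multiplicity-$N$ Heisenberg module'' to ``bijective'' explicit. Your alternative phrasing via $\End_{\Ha(W)}(N\pi_\psi)\cong M_N(\C)$ is also fine and arguably the cleanest way to say it.
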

\subsubsection{} For $g\in d_L^{-1}Id_L$, let us define $M[g]\in \End(\mathcal{V}_{L,\psi})$ as follows:
\begin{align}\label{gactionsigma31}
M[g]=  \mathfrak{i}_{L',L} \circ\mathfrak{i}^g_{L,L'} .
\end{align}
\begin{lemma}
$M[g]$ is bijective. 
\end{lemma}
\begin{proof}
It follows from the above lemma.
\end{proof}
  \subsection{On the  Iwahori-Weyl  group $d_L^{-1}\mathfrak{W}^{\textrm{eaff}}d_L$ I} For $g=d_L^{-1} g' d_L\in d_L^{-1}\mathfrak{W}^{\textrm{eaff}}d_L$, let  $\mathcal{B}_g=\{ a\in L^{\ast} \mid  ag-a\in L\}$.
By Section \ref{APP}, for any $g'$, let us write 
$$g'=\omega_s  \begin{bmatrix}
d& 0 \\0 &  d^{-1} \end{bmatrix} \omega_S, \quad g_0=\begin{bmatrix}
d& 0 \\0 &  d^{-1} \end{bmatrix} \omega_S, $$
for some $S\subseteq \{1, \cdots, m\}$.  

\begin{itemize}
\item If  $a\in \mathcal{B}_g$, then $a=a'd_L t^{-1}$, for some $a'\in L_1$. Hence  
$$ag-a\in L \Longleftrightarrow a' d_L t^{-1}  d_L^{-1}g'd_L -a'd_L t^{-1} \in  L_1d_L \Longleftrightarrow a'  g'-a' \in L_1 t .$$
\end{itemize}
\begin{lemma}\label{trial151}
If $t=\lambda_{d_L}\in \Z\setminus 2\Z$ or $ \omega_S=1$,  then $\psi(\tfrac{1}{2}\langle -a, ag\rangle-\tfrac{1}{2}B(ag-a,ag-a))=1$, for $a\in \mathcal{B}_g$.
\end{lemma}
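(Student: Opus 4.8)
The plan is to mimic the proof of the non-archimedean analogue Lemma~\ref{trial1} step by step; the only genuinely new point is that over $F=\R$ the subgroup $\ker\psi=\Za$ is not an ideal, so a factor $t^{-1}$ can no longer be absorbed the way a unit of $\mathcal{O}$ is. First I would record the formal identity, valid for every $g\in\Sp(W)$ and $a\in W$, obtained by writing $\langle w,w'\rangle=B(w,w')-B(w',w)$ and regrouping:
\[
\tfrac{1}{2}\langle -a,ag\rangle-\tfrac{1}{2}B(ag-a,ag-a)=B(ag,a-ag)+q_g(a),
\]
where $q_g$ is the quadratic map attached to $g$ by (\ref{B}). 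This is exactly the opening computation in the proof of Lemma~\ref{trial1} and uses no arithmetic of $F$, so it carries over verbatim. Hence $\psi$ of the left-hand side factors as $\psi(B(ag,a-ag))\cdot\psi(q_g(a))$, and it suffices to show that each factor is $1$.

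For $\psi(B(ag,a-ag))$: since $a\in\mathcal{B}_g$ we have $ag-a\in L$, and because $L\subseteq L_1=L_1^{\ast}\subseteq L^{\ast}$ also $ag=a+(ag-a)\in L^{\ast}$. Using $L=(L\cap X)\oplus(L\cap X^{\ast})$ and, since $L^{\ast}=L_1d_Lt^{-1}$ with $d_L$ block diagonal, $L^{\ast}=(L^{\ast}\cap X)\oplus(L^{\ast}\cap X^{\ast})$, one gets $B(ag,a-ag)=\langle x,y^{\ast}\rangle$ with $x\in L^{\ast}\cap X\subseteq L^{\ast}$ and $y^{\ast}\in L\cap X^{\ast}\subseteq L$; therefore $\langle x,y^{\ast}\rangle\in\ker\psi$ and this factor equals $1$. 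This step is identical to the non-archimedean one.

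For $\psi(q_g(a))$ I would distinguish the two cases of the hypothesis. If $\omega_S=1$, then $g'$ — and hence $g=d_L^{-1}g'd_L$ — is block diagonal, so $q_g\equiv 0$ by (\ref{B}) and the factor is $1$. If instead $t=\lambda_{d_L}$ is odd, write $a=a'd_Lt^{-1}$ with $a'\in L_1$ (possible because $a\in L^{\ast}=L_1d_Lt^{-1}$); from the similitude relation $B(wd_L,w'd_L)=t\,B(w,w')$ one obtains, as in Lemma~\ref{trial1}, $q_g(a)=t^{-1}q_{g'}(a')$. By the argument of Lemma~\ref{affweylgroup} (which applies unchanged over $\R$) one has $q_{g'}(a')\in\ker\psi=\Za$, and the condition $a\in\mathcal{B}_g$ unwinds, exactly as displayed just before the statement, to $a'g'-a'\in L_1t$. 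The congruences packaged in $a'g'-a'\in L_1t$ then combine — using that $t$ is odd, hence prime to $2$ — to force $t\mid q_{g'}(a')$, so $t^{-1}q_{g'}(a')\in\Za=\ker\psi$ and $\psi(q_g(a))=1$.

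The step I expect to be the real obstacle is exactly this divisibility $t\mid q_{g'}(a')$. In the non-archimedean proof the analogous assertion is free, since $t$ is a unit and $\ker\psi=\mathcal{P}^e$ is an ideal, so $q_{g'}(a')\in\mathcal{P}^e$ at once gives $t^{-1}q_{g'}(a')\in\mathcal{P}^e$; over $\R$ one must genuinely exploit the extra integrality $a'g'-a'\in L_1t$. Concretely, writing $g'=\omega_s\,\diag(\delta;\delta^{-1})\,\omega_S$ one computes from (\ref{B}) that $q_{g'}(a')$ is, up to sign and a relabelling of indices, a sum over $S$ of products of the $X$- and $X^{\ast}$-components of $a'$; the two divisibility conditions contained in $a'g'-a'\in L_1t$ then yield $2\,\delta_i(a')_i\in t\Za$ for the relevant indices, whence $t\mid\delta_i(a')_i$ because $t$ is odd, and hence $t\mid q_{g'}(a')$. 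Carrying out this bookkeeping cleanly — tracking both signs of the exponents $\delta_i=2^{k_i}$, and keeping straight the effect of the permutation $\omega_s$ — is the only non-formal part of the argument; the residual situations that would escape it fall outside the present hypothesis and are absorbed later, exactly as in the non-archimedean treatment of Section~\ref{thegeneralcase}, by passing to a suitably modified lattice.
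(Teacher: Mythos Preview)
Your outline is sound: the decomposition $\tfrac{1}{2}\langle -a,ag\rangle-\tfrac{1}{2}B(ag-a,ag-a)=B(ag,a-ag)+q_g(a)$ and the treatment of the first summand match the paper exactly, as does the $\omega_S=1$ case. The divergence is only in how you close the case of odd $t$.

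You propose to prove $t\mid q_{g'}(a')$ directly, extracting from $a'g'-a'\in L_1t$ coordinate congruences of the shape $2d_ix_i\in t\Za$, then using that $t$ is odd. This can indeed be made to work (in the rank-one piece one finds $2d_ix_i=-t(l_2+d_il_1)$; after checking that $d_il_1\in\Za$ --- which itself uses $t$ odd when $d_i$ is a negative power of $2$ --- one gets $t\mid d_ix_i$, and similarly $t\mid d_i^{-1}y_i$, hence $t\mid x_iy_i=-q_{g_0}$). So the bookkeeping you flagged is real but manageable.

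The paper sidesteps all of it with one extra observation you did not use: the full expression already lies in $\tfrac{1}{2}\Za$ (since $\langle a,ag\rangle=\langle a,ag-a\rangle\in\Za$ and $B(ag-a,ag-a)\in\Za$), and since $B(ag,a-ag)\in\Za$ this gives $q_g(a)\in\tfrac{1}{2}\Za$ \emph{a priori}. Now $q_g(a)=t^{-1}q_{g'}(a')$ with $q_{g'}(a')\in\Za$ and $t$ odd gives $\nu_2(q_g(a))=\nu_2(q_{g'}(a'))\geq 0$; a half-integer with non-negative $2$-adic valuation is an integer, so $q_g(a)\in\Za$ and $\psi(q_g(a))=1$. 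In other words, the paper trades your full divisibility-by-$t$ argument for the single half-integrality observation, after which only the prime $2$ matters --- this is why the odd-$t$ case becomes essentially as short as the non-archimedean unit case. Your route reaches the same conclusion and is more self-contained (never invoking the a priori $\tfrac{1}{2}\Za$ bound), at the cost of the coordinate computation you anticipated.
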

\begin{proof} 
 Note that $\tfrac{1}{2}\langle -a, ag\rangle-\tfrac{1}{2}B(ag-a,ag-a) \in \tfrac{1}{2}\Z$.  By the proof of Lemma \ref{trial1}, we have:
  \begin{align*}
& \tfrac{1}{2}\langle -a, ag\rangle-\tfrac{1}{2}B(ag-a,ag-a)\\
&=B(ag, a-ag)+q_g(a).
\end{align*}
As $B(ag, a-ag)\in \Z$, $q_g(a)\in \tfrac{1}{2}\Z$. Note that $q_g(a)=t^{-1}q_{g'}(a')$. \\
 1) In this first case,   $a' \in L_1$,  and $q_{g'}(a')\in \Z$. Since $\nu_2(t^{-1}q_{g'}(a'))=\nu_2(q_{g'}(a'))\geq 0$, $\psi(q_g(a))=1$. \\
2) In the second case,  $\psi(q_g(a))=\psi(t^{-1}q_{g'}(a'))=1$. 
\end{proof}
\begin{remark}
Let $g_1=d_L^{-1} \omega_s d_L$, $g_2=d_L^{-1} g_0 d_L$. Then $q_{g_1}(w)=0$ and $\psi(q_g(w))=\psi(q_{g_2}(wg_1))$. So it reduces to discussing $g_2$.
\end{remark}
In the following, we will consider the remaining cases that $\omega_S\neq 1$ and $t=\lambda_{d_L}\in 2\Z$. 
\subsubsection{The case $\dim W=2$} Write $g_0=  \begin{bmatrix}
d& 0 \\0 &  d^{-1} \end{bmatrix}\begin{bmatrix}0& -1 \\1 &  0 \end{bmatrix}  $.
For $a'=xe_1+ ye_1^{\ast}$, $a' g_0= d^{-1} ye_1- dxe_1^{\ast}$, $a' g_0-a'=(d^{-1} y-x)e_1+(- dx-y)e_1^{\ast}\in L_1t$. Then
$$d^{-1} y-x=tl_1, \quad \quad - dx-y=tl_2,$$
for some $l_1e_1+l_2e_1^{\ast}\in L_1$. Hence:
$$y=-\tfrac{1}{2} tl_2+ \tfrac{1}{2} tdl_1, \quad x=-\tfrac{1}{2d} tl_2-\tfrac{1}{2} tl_1.$$
 \begin{align*}
&\tfrac{1}{2}\langle -a, ag\rangle-\tfrac{1}{2}B(ag-a,ag-a)\\
&=t^{-1}B(a'g_0, a'-a'g_0)+t^{-1}q_{g_0}(a').
\end{align*}
Note that $a'g_0-a'\in L_1t$, so $a'g_0\in L_1$. Hence $t^{-1}B(a'g_0, a'-a'g_0)\in \Z$ and $\psi(t^{-1}B(a'g_0, a'-a'g_0))=1$. As a consequence, $t^{-1}q_{g_0}(a')\in \tfrac{1}{2}\Z$. By (\ref{g0adt}), we have:
\begin{align}\label{3qcal}
\tfrac{1}{t}q_{g_0}(a')
=-\tfrac{t}{4d}[l_2^2-(dl_1)^2]=-dl_1x -\tfrac{d}{t}x^2=\tfrac{y^2}{dt}+\tfrac{1}{d}yl_2.
\end{align}
 Let $n_t=\nu_2(t)\geq 1$, $n_d=\nu_2(d)$. 
\begin{lemma}\label{3mide}
\begin{itemize}
\item[(i)]   If $n_d-n_t\geq 0$ or $n_d+n_t\leq 0$,  then $\psi(\tfrac{1}{2}\langle -a, ag\rangle-\tfrac{1}{2}B(ag-a,ag-a))=1$, for $a\in \mathcal{B}_g$.
\item[(ii)]  If $n_d-n_t\leq -2$ and $n_d+n_t\geq 2$, then $\psi(\tfrac{1}{2}\langle -a, ag\rangle-\tfrac{1}{2}B(ag-a,ag-a))=1$, for $a\in \mathcal{B}_g$.
\end{itemize} 
\end{lemma}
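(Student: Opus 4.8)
The plan is to run the proof of Lemma~\ref{2mide} essentially verbatim, with each clause ``the relevant element lies in $\mathcal O$ (resp.\ $\mathcal P$)'' replaced by ``its $2$-adic valuation is $\geq 0$ (resp.\ $\geq 1$)''. First I would recall the normalisation already carried out in the text preceding (\ref{3qcal}): using Lemma~\ref{trial151} and the Remark that reduces $g'=\omega_s g_0$ to $g_0$, one has
$$\tfrac{1}{2}\langle -a, ag\rangle-\tfrac{1}{2}B(ag-a,ag-a)=t^{-1}B(a'g_0,a'-a'g_0)+t^{-1}q_{g_0}(a'),$$
with $t^{-1}B(a'g_0,a'-a'g_0)\in\Z$, so the claim is equivalent to $\psi\big(t^{-1}q_{g_0}(a')\big)=1$, and it is already known that $t^{-1}q_{g_0}(a')\in\tfrac{1}{2}\Z$. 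Since $\psi=\psi_0^{\kappa}$ with $\psi_0(r)=e^{2\pi i r}$ and $\kappa=\pm1$, for $r\in\tfrac{1}{2}\Z$ one has $\psi(r)=1$ if and only if $\kappa r\in\Z$, i.e.\ if and only if $r\in\Z$, i.e.\ if and only if $\nu_2(r)\geq0$. Hence the whole statement reduces to the elementary inequality $\nu_2\big(t^{-1}q_{g_0}(a')\big)\geq0$, to be checked with $x,y,l_1,l_2\in\Z$ and $\nu_2(d)=n_d$, $\nu_2(t)=n_t\geq1$.

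For that inequality I would feed the three equivalent expressions of (\ref{3qcal}) into $\nu_2$, choosing in each sub-case the expression both of whose monomials are visibly $2$-adic integers. In case (i) with $n_d-n_t\geq0$ (so $n_d\geq n_t\geq1$) use $-dl_1x-\tfrac{d}{t}x^2$: the first monomial has valuation $\geq n_d\geq1$, the second $\geq n_d-n_t\geq0$. In case (i) with $n_d+n_t\leq0$ (so $-n_d\geq n_t\geq1$) use $\tfrac{y^2}{dt}+\tfrac{1}{d}yl_2$: the second monomial has valuation $\geq-n_d\geq1$, the first $\geq-(n_d+n_t)\geq0$. In case (ii), with $n_d-n_t\leq-2$ and $n_d+n_t\geq2$, use $-\tfrac{t}{4d}l_2^2+\tfrac{td}{4}l_1^2$: the first monomial has valuation $\geq(n_t-n_d)-2\geq0$, the second $\geq(n_t+n_d)-2\geq0$. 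In every case the minimum of the two valuations is $\geq0$, hence $\psi\big(t^{-1}q_{g_0}(a')\big)=1$, which is the assertion.

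I do not anticipate a genuine obstacle; the content is bookkeeping with $\nu_2$. The single point that must not be glossed over is the shape of the reduction: because $t$ may carry odd prime factors, a bare valuation estimate $\nu_2(-xy/t)\geq0$ does not by itself place $t^{-1}q_{g_0}(a')$ in $\Z$, so it is essential to combine it with the \emph{a priori} membership in $\tfrac{1}{2}\Z$ (equivalently, with $\tfrac{1}{2}\langle -a,ag\rangle-\tfrac{1}{2}B(ag-a,ag-a)\in\tfrac{1}{2}\Z$, which kills all odd denominators). A second thing worth recording, when the cases are eventually assembled into the definition of $M[g]$, is that (i) and (ii) together omit exactly the configurations $n_d-n_t=-1$ and $n_d+n_t=1$ --- the exceptional cases, to be handled as in Lemma~\ref{1112} --- so stating this complementarity explicitly guarantees that no $g\in d_L^{-1}\mathfrak{W}^{\textrm{aff}}d_L$ is left out.
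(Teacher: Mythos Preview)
Your proposal is correct and follows essentially the same approach as the paper: in each sub-case you select the same one of the three expressions in (\ref{3qcal}) and bound the $2$-adic valuation of each monomial. Your write-up is in fact slightly more careful than the paper's, since you make explicit why $\nu_2\geq 0$ suffices---namely because the quantity is already known to lie in $\tfrac{1}{2}\Z$, so odd denominators are excluded a priori---whereas the paper states ``So $\tfrac{1}{t}q_{g_0}(a')\in\Z$'' without spelling this out.
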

\begin{proof}
ia) If $n_d-n_t\geq 0$, then $n_d\geq n_t\geq 1$.   By (\ref{3qcal}), $\tfrac{1}{t}q_{g_0}(a')=-dl_1x -\tfrac{d}{t}x^2$. Hence $$\nu_2(\tfrac{1}{t}q_{g_0}(a'))=\nu_2(-dl_1x -\tfrac{d}{t}x^2) \geq \min\big\{ \nu_2(-dl_1x),  \nu_2(-\tfrac{d}{t}x^2)\big\} \geq 0.$$ So $\tfrac{1}{t}q_{g_0}(a')\in \Z$ and $\psi(\tfrac{1}{t}q_{g_0}(a'))=1$. \\
ib) If $n_d+n_t\leq 0$, then $\nu_2(\tfrac{1}{dt})\geq 0$, and $\nu_2(\tfrac{1}{d})\geq 1$. By (\ref{3mide}), $\tfrac{1}{t}q_{g_0}(a')=\tfrac{y^2}{dt}+\tfrac{1}{d}yl_2$. Hence $$\nu_2(\tfrac{1}{t}q_{g_0}(a'))=\nu_2(\tfrac{y^2}{dt}+\tfrac{1}{d}yl_2) \geq \min\big\{ \nu_2(\tfrac{y^2}{dt}),  \nu_2(\tfrac{1}{d}yl_2)\big\} \geq 0.$$ 
ii) In this case, $\nu_2(\tfrac{t}{4d})\geq 0$ and $\nu_2(\tfrac{td}{4})\geq 0$.   By (\ref{3mide}), $\tfrac{1}{t}q_{g_0}(a')=-\tfrac{t}{4d}l_2^2+\tfrac{dt}{4}l_1^2$.
$$\nu_2(\tfrac{1}{t}q_{g_0}(a'))=\nu_2(-\tfrac{t}{4d}l_2^2+\tfrac{dt}{4}l_1^2) \geq \min\big\{ \nu_2(-\tfrac{t}{4d}l_2^2),  \nu_2(\tfrac{dt}{4}l_1^2)\big\} \geq 0.$$
\end{proof}
We consider the exceptional cases mentioned in Lemmas \ref{3mide}.  The exceptional case is that $ n_d-n_t=-1$ or $n_d+n_t=1$.  
Recall that   $\mathbbm{2}=\begin{bmatrix} 2&0\\ 0&  \tfrac{1}{2}\end{bmatrix}$. Let:
$$d''=\left\{\begin{array}{cl} 
\mathbbm{2} & \textrm{ if } n_d-n_t=-1,\\
 \mathbbm{2}^{-1} &\textrm{ if }  n_d+n_t=1.\end{array}\right.$$
$$\mathcal{C}_g=\{ a\in L^{\ast} \mid  ad''g-a\in L\}.$$
\begin{lemma}\label{3mide233}
  If  $n_d-n_t=-1$ or $n_d+n_t=1$,  then $\psi(\tfrac{1}{2}\langle -a, ad''g\rangle-\tfrac{1}{2}B(ad''g-a,ad''g-a))=1$, for $a\in \mathcal{C}_g$.
\end{lemma}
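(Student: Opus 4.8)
The plan is to follow the template of the proof of Lemma~\ref{1112}, replacing the $2$-adic membership conditions ``$\in\mathcal{O}$'' and ``$\in\mathcal{P}$'' by inequalities on the dyadic valuation $\nu_2$, in the same way that Lemma~\ref{trial151} adapts Lemma~\ref{trial1} to the real setting. The key simplification is that for the real character $\psi=\psi_0^{\kappa}$ with $\kappa=\pm1$ one has $\psi(x)=1$ if and only if $x\in\Z$; since all the matrices in play have entries in $\Z[\tfrac{1}{2}]$, the whole statement reduces to showing that one explicit rational number with $2$-power denominator is in fact $2$-integral. Note also that here $L_1=\Z e_1\oplus\Z e_1^{\ast}$, so there is no parity-of-$e$ distinction and only the two cases $n_d-n_t=-1$ and $n_d+n_t=1$ arise.

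First I would record, exactly as in the proof of Lemma~\ref{1112}, the identity
\begin{align*}
&\tfrac{1}{2}\langle -a, ad''g\rangle-\tfrac{1}{2}B(ad''g-a,ad''g-a)\\
&=B(ad''g,\,a-ad''g)+q_g(ad').
\end{align*}
Since $ad''g-a\in L\subseteq L^{\ast}$ we have $ad''g\in L^{\ast}$, so $B(ad''g,a-ad''g)\in\Z$ and $\psi$ of it is $1$. Writing $a=a'd_Lt^{-1}$ with $a'\in L_1$ and $g=d_L^{-1}g_0d_L$, the condition defining $\mathcal{C}_g$ becomes $a'd''g_0-a'\in L_1t$, and $\psi(q_g(ad'))=\psi\bigl(t^{-1}q_{g_0}(a'd'')\bigr)$. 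Hence it suffices to prove $t^{-1}q_{g_0}(a'd'')\in\Z$.

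Next I specialize to $\dim W=2$, with $g_0=\diag(d,d^{-1})\begin{bmatrix}0&-1\\1&0\end{bmatrix}$ and $a'=xe_1+ye_1^{\ast}$, $x,y\in\Z$. For $d''=\mathbbm{2}$ (resp.\ $d''=\mathbbm{2}^{-1}$), solving $a'd''g_0-a'\in L_1t$ for $x,y$ yields, by the same computation that produced (\ref{qcal81}), (\ref{qcal91}) (resp.\ (\ref{qcal111}), (\ref{qcal121})), the several equivalent forms of $t^{-1}q_{g_0}(a'd'')$ in terms of $l_1,l_2,x,y,d,t$. In the case $n_d-n_t=-1$ (so $d''=\mathbbm{2}$): since $n_t\geq1$ we have $n_d=n_t-1\geq0$, whence $\nu_2(2d/t)=0$ and $\nu_2(2d)=n_t\geq1$; the form $t^{-1}q_{g_0}(a'd'')=-2dl_1x-\tfrac{2d}{t}x^2$ then has every monomial $2$-integral, so by the ultrametric inequality $t^{-1}q_{g_0}(a'd'')$ is $2$-integral, hence in $\Z$. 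In the case $n_d+n_t=1$ (so $d''=\mathbbm{2}^{-1}$): here $n_d=1-n_t\leq0$, so $\nu_2(2/(dt))=0$ and $\nu_2(2/d)=n_t\geq1$, and the form $t^{-1}q_{g_0}(a'd'')=\tfrac{2y^2}{dt}+\tfrac{2}{d}yl_2$ is again $2$-integral. In both cases $\psi\bigl(t^{-1}q_{g_0}(a'd'')\bigr)=1$, which proves the lemma.

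The main obstacle — essentially the only substantive point — is the $\dim W=2$ bookkeeping: one must carry the $g_0$-computation through so that all the equivalent expressions for $t^{-1}q_{g_0}(a'd'')$ are on hand, and then, in each of the two boundary cases, pick the one whose monomials are manifestly $2$-integral. That such an expression exists is exactly what the twist $d''=\mathbbm{2}^{\pm1}$ buys: it moves the offending dyadic valuation by $\pm1$ into the integral range, curing precisely the failure recorded in Lemma~\ref{3mide}. No idea beyond the proof of Lemma~\ref{1112} is needed — only the replacement of ``$\in\mathcal{O}$''/``$\in\mathcal{P}$'' by bounds on $\nu_2$, together with the elementary fact that a rational number lying in $\Z[\tfrac{1}{2}]$ with non-negative $2$-adic valuation is an integer.
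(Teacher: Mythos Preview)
Your proof follows the same route as the paper's: reduce via the identity $\tfrac12\langle -a,ad''g\rangle-\tfrac12 B(ad''g-a,ad''g-a)=B(ad''g,a-ad''g)+q_g(ad'')$ to showing $t^{-1}q_{g_0}(a'd'')\in\Z$, and then in each boundary case pick the expression from the $\dim W=2$ computation whose monomials have non-negative $\nu_2$.

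One correction is needed. The assertion that ``all the matrices in play have entries in $\Z[\tfrac12]$'' is not true in the real setting: here $d$ ranges over positive integers and their reciprocals and $t$ over positive integers, so for instance with $d=\tfrac13$, $t=2$ the monomial $\tfrac{2d}{t}x^2=\tfrac13 x^2$ is not in $\Z[\tfrac12]$. The correct reason that $\nu_2\geq 0$ forces $t^{-1}q_{g_0}(a'd'')$ into $\Z$ is the one already used in Lemma~\ref{trial151}: the left-hand side lies in $\tfrac12\Z$ (since $\langle a,ad''g\rangle=\langle a,ad''g-a\rangle\in\Z$ and $B(ad''g-a,ad''g-a)\in\Z$), and $B(ad''g,a-ad''g)\in\Z$, whence $q_g(ad'')=t^{-1}q_{g_0}(a'd'')\in\tfrac12\Z$; a half-integer with $\nu_2\geq 0$ is an integer. (Also, $q_g(ad')$ in your display should read $q_g(ad'')$.)
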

\begin{proof}
Similarly, we have:
\begin{align*}
& \tfrac{1}{2}\langle -a, ad''g\rangle-\tfrac{1}{2}B(ad''g-a,ad''g-a)\\
&=B(ad''g, a-ad''g)+q_g(ad'').
\end{align*}
Note that $ad''g-a\in L\subseteq L^{\ast}$. Hence $ad''g\in L^{\ast}$, $B(ad''g, a-ad''g)\in \Z$ and  $\psi(B(ad''g, a-ad''g))=1$. Write  $a=a'd_L t^{-1}$, for some $a'\in L_1$. Then: 
$$ad''g-a\in L \Longleftrightarrow a' d''d_L t^{-1}  d_L^{-1}g_0d_L -a'd_L t^{-1} \in  L_1d_L \Longleftrightarrow a'd''  g_0-a' \in L_1 t .$$
$$q_g(ad'')= t^{-1}q_{g_0}(a'd'').$$
Recall $g_0=  \begin{bmatrix}
d& 0 \\0 &  d^{-1} \end{bmatrix}\begin{bmatrix}0& -1 \\1 &  0 \end{bmatrix}  $. 
\begin{itemize}
\item Assume that $ n_d-n_t=-1$. For $a'=xe_1+ ye_1^{\ast}$, $a' d'' g_0= (2d)^{-1} ye_1- 2dxe_1^{\ast}$, $a' g_0-a'=[(2d)^{-1} y-x]e_1+(- 2dx-y)e_1^{\ast}\in L_1t$. Then
$$(2d)^{-1} y-x=tl_1, \quad \quad - 2dx-y=tl_2,$$
for some $l_1e_1+l_2e_1^{\ast}\in L_1$. Hence:
$$y=-\tfrac{1}{2} tl_2+ tdl_1, \quad x=-\tfrac{1}{4d} tl_2-\tfrac{1}{2} tl_1.$$
\begin{align}\label{344qcal}
\tfrac{1}{t}q_{g_0}(a'd'')=-\tfrac{t}{8d}[l_2^2-4(dl_1)^2]=-2dl_1x -\tfrac{2d}{t}x^2=\tfrac{y^2}{2dt}+\tfrac{1}{2d}yl_2.
\end{align}
As   $n_d+n_t=1$,  $\nu_2(\tfrac{2d}{t})\geq 0$ and $\nu_2(d)\geq 0$.  By (\ref{344qcal}), $\tfrac{1}{t}q_{g_0}(a'd'')=-2dl_1x -\tfrac{2d}{t}x^2$. Hence:
$$\nu_2(\tfrac{1}{t}q_{g_0}(a'd''))=\nu_2(-2dl_1x -\tfrac{2d}{t}x^2) \geq \min\big\{ \nu_2(-2dl_1x),  \nu_2(-\tfrac{2d}{t}x^2)\big\} \geq 0.$$
\item  Assume that $n_d+n_t=1$. For $a'=xe_1+ ye_1^{\ast}$, $a' d'' g_0= 2d^{-1} ye_1- \tfrac{d}{2}xe_1^{\ast}$, $a' g_0-a'=[2d^{-1} -x]e_1+(- \tfrac{d}{2}x-y)e_1^{\ast}\in L_1t$. Then
$$2d^{-1} y-x=tl_1, \quad \quad - \tfrac{d}{2}x-y=tl_2,$$
for some $l_1e_1+l_2e_1^{\ast}\in L_1$. Hence:
$$y=-\tfrac{1}{2} tl_2+ \tfrac{1}{4}tdl_1, \quad x=-\tfrac{1}{d} tl_2-\tfrac{1}{2} tl_1.$$
\begin{align}\label{3445qcal}
\tfrac{1}{t}q_{g_0}(a'd'')=-\tfrac{t}{2d}[l_2^2-\tfrac{1}{4}(dl_1)^2]=-\tfrac{1}{2}dl_1x -\tfrac{d}{2t}x^2=\tfrac{2y^2}{dt}+\tfrac{2}{d}yl_2.
\end{align}
As $n_d+n_t=1$, $\nu_2(\tfrac{2}{dt})\geq 0$ and $\nu_2(\tfrac{1}{d})\geq 0$. By (\ref{3445qcal}), $\tfrac{1}{t}q_{g_0}(a')=\tfrac{2y^2}{dt}+\tfrac{2}{d}yl_2$.Hence:
$$\nu_2(\tfrac{1}{t}q_{g_0}(a'd''))=\nu_2(\tfrac{2y^2}{dt}+\tfrac{2}{d}yl_2) \geq \min\big\{ \nu_2(\tfrac{2y^2}{dt}),  \nu_2(\tfrac{2}{d}yl_2)\big\} \geq 0.$$
\end{itemize}
\end{proof}
\subsubsection{The general case} Let $W_i=\R e_i+\R e_i^{\ast}$. There exist group homomorphisms:
$$\iota: \SL_2(W_1) \times  \cdots \times \SL_2(W_m) \longrightarrow \Sp(W);$$
$$   \iota_0:   \SL_2(\Q) \times  \cdots \times \SL_2(\Q) \longrightarrow \Sp_{2m}(\Q).$$
Then the  image of $ \iota_0$ contains $\mathfrak{W}_0^{\textrm{aff}}$. Assume that $\iota_0([g_0^{(1)}, \cdots, g_0^{(m)}])=g_0$.  Correspondingly, let $L_i=\Z e_i\oplus \Z e_i^{\ast}$. Let $$\mathcal{I}_g=\{ i\mid  g_0^{(\omega_s(i))} \textrm{ satisfies the condition of Lemma \ref{3mide233} }\}.$$ 
\begin{itemize}
\item If $i\in \mathcal{I}_g$, we let $L''_i=L_i d''=\left\{ \begin{array}{cl}
2\Z e_i\oplus \tfrac{1}{2}\Z e_1^{\ast} & \textrm{ if }  n_d-n_t=-1,\\
\tfrac{1}{2}\Z e_i\oplus 2\Z e_1^{\ast}  & \textrm{ if }  n_d+n_t=1.\end{array}\right.$
\item Let $L_g''=\sum_{i\in \mathcal{I}_g} L'_i + \sum_{i\notin \mathcal{I}_g}L_i$.
\item Write $L_g''=Ld_g'$.
\end{itemize}
\subsubsection{Assume $\mathcal{I}_g=\emptyset$}
Let $g\in d_L^{-1}\mathfrak{W}^{\textrm{eaff}}d_L$.  
\begin{lemma} 
 If $\mathcal{I}_g=\emptyset$, $\mathfrak{i}^{g^{-1}}_{L,L}\circ \mathfrak{i}^{g}_{L,L}=c_g\id$, for $c_g=\mu(L^{\ast})\mu(\mathcal{B}_g)$.
\end{lemma}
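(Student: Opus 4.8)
The plan is to repeat the computation carried out for Lemma~\ref{LLIgin} in the non-archimedean case, the only new point being the real-field verification that a certain phase is trivial. Since $\mathcal{I}_g=\emptyset$ one has $L''_g=L$, so $\mathcal{D}=\id$ and the formula for $\mathfrak{i}^g_{L,L}$ simplifies accordingly. First I would take $f\in\mathcal{V}_{L,\psi}$, insert the defining integral formulas for $\mathfrak{i}^{g^{-1}}_{L,L}$ and $\mathfrak{i}^{g}_{L,L}$, and apply Fubini to write $\mathfrak{i}^{g^{-1}}_{L,L}\circ\mathfrak{i}^{g}_{L,L}(f)(w)$ as a double integral over $a',a\in L^{\ast}$. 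Using $\langle a,(a'+w)g^{-1}\rangle=\langle ag,a'+w\rangle$ and the Weyl commutation $\sigma(-a')\sigma(-a)=\psi(\langle a',a\rangle)\sigma(-a)\sigma(-a')$, the inner $a'$-integral reduces to $\int_{a'\in L^{\ast}}\psi(\langle ag-a,a'\rangle)\,da'$, which equals $\mu(L^{\ast})$ if $ag-a\in L$ and vanishes otherwise. This leaves $\mu(L^{\ast})\int_{a\in\mathcal{B}_g}\psi(\tfrac12\langle ag-a,w\rangle)\psi(\tfrac12\langle -a,ag\rangle)f(ag-a+w)\,da$.

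Next, for $a\in\mathcal{B}_g$ (so $ag-a\in L$) I would apply the transformation property defining $\mathcal{V}_{L,\psi}$ and Lemma~\ref{Bll} to rewrite $\psi(\tfrac12\langle ag-a,w\rangle)f(ag-a+w)=\sigma(ag-a)f(w)=\psi(-\tfrac12 B(ag-a,ag-a))f(w)$, so the integrand becomes $\psi(\tfrac12\langle -a,ag\rangle-\tfrac12 B(ag-a,ag-a))f(w)$. Thus the whole matter comes down to showing this scalar equals $1$ for every $a\in\mathcal{B}_g$; once that is established, the integral is $\mu(\mathcal{B}_g)$ and the result is $\mu(L^{\ast})\mu(\mathcal{B}_g)f(w)=c_gf(w)$.

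The substantive step is therefore the vanishing of $\psi(\tfrac12\langle -a,ag\rangle-\tfrac12 B(ag-a,ag-a))$ on $\mathcal{B}_g$. Since $ag-a\in L\subseteq L^{\ast}$ forces $ag\in L^{\ast}$, we have $B(ag,a-ag)\in\Z$, and the identity $\tfrac12\langle -a,ag\rangle-\tfrac12 B(ag-a,ag-a)=B(ag,a-ag)+q_g(a)$ from the proof of Lemma~\ref{trial151} reduces the task to $\psi(q_g(a))=1$. If $\omega_S=1$ or $t=\lambda_{d_L}\in\Z\setminus 2\Z$ this is Lemma~\ref{trial151}. In the remaining case, write $g'=\omega_s g_0$ with $g_0=\diag(d,d^{-1})\omega_S$; by the remark after Lemma~\ref{trial151} one may replace $g$ by $g_2=d_L^{-1}g_0 d_L$, and writing $a=a'd_L t^{-1}$ with $a'\in L_1$ the phase becomes, up to an integer, $t^{-1}q_{g_0}(a')$. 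Decomposing $a'$ and $g_0$ along the rank-one blocks $W_i$ via $\iota_0$, so that $q_{g_0}(a')$ is the sum of the one-dimensional quantities in $(\ref{3qcal})$, the hypothesis $\mathcal{I}_g=\emptyset$ says exactly that every block $g_0^{(\omega_s(i))}$ avoids the exceptional range of Lemma~\ref{3mide233}, i.e.\ falls under case (i) or (ii) of Lemma~\ref{3mide}; the $2$-adic valuation estimates there then give $\nu_2(t^{-1}q_{g_0}(a'))\ge 0$, hence $t^{-1}q_{g_0}(a')\in\Z$ and $\psi(q_g(a))=1$.

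I expect the main obstacle to be this final paragraph: one must check carefully that the passage to rank-one blocks via $\iota_0$ is compatible with the lattices involved (the blockwise splittings of $L_1$, of $a=a'd_Lt^{-1}$, and of $q_{g_0}$) and that $\mathcal{I}_g=\emptyset$ really does exclude every exceptional alternative of Lemma~\ref{3mide} for all $i$ at once. The valuation estimates themselves are identical to the $\dim W=2$ case already treated, and the integral bookkeeping in the first two paragraphs is routine, paralleling the non-archimedean Lemma~\ref{LLIgin}.
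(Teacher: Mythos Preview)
Your proposal is correct and follows exactly the paper's approach: the paper's proof is just ``See the proof of Lemma~\ref{LLIgin},'' and you have spelled out that computation together with the real-field phase analysis via Lemmas~\ref{trial151} and~\ref{3mide}. Your treatment of the final step (blockwise reduction through $\iota_0$ and the meaning of $\mathcal{I}_g=\emptyset$) is in fact more explicit than anything the paper writes down.
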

\begin{proof}
See the proof of Lemma \ref{LLIgin}.
\end{proof}
\begin{corollary}\label{gLL2}
$\mathfrak{i}^{g}_{L,L}$ is bijective.
\end{corollary}
 For such $g\in d_L^{-1}\mathfrak{W}^{\textrm{eaff}}d_L$, let us define $M[g]=  \mathfrak{i}^g_{L,L}$.  Clearly,  $M[g]\in \End(\mathcal{V}_{L,\psi})$.
\subsubsection{Assume $\mathcal{I}_g\neq \emptyset$} 
 Let $g\in d_L^{-1}\mathfrak{W}^{\textrm{eaff}}d_L$.  Let $\mathcal{C}_g=\{ a\in L^{\ast}\mid a d_g'' g-a\in L\}.$  Let $c_g=\mu(L^{\ast})\mu(\mathcal{C}_g)$. 
\begin{lemma} 
 If $\mathcal{I}_g\neq \emptyset$, $\mathfrak{i}^{g^{-1}}_{L''_g,L}\circ \mathfrak{i}^{g}_{L,L''_g}=c_g\id$.
\end{lemma}
\begin{proof}
See the proof of Lemma \ref{LL'g}
\end{proof}
\begin{corollary}\label{gLL'3}
$\mathfrak{i}^{g}_{L,L''_g}$ is bijective.
\end{corollary}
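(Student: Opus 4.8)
In the case $\mathcal{I}_g\neq\emptyset$ the corollary is a formal consequence of the preceding lemma, so the plan is short. Write $\mathfrak{j}=\mathfrak{i}^{g}_{L,L''_g}\colon\mathcal{V}_{L,\psi}\to\mathcal{V}_{L''_g,\psi}$ and $\mathfrak{j}'=\mathfrak{i}^{g^{-1}}_{L''_g,L}\colon\mathcal{V}_{L''_g,\psi}\to\mathcal{V}_{L,\psi}$. The preceding lemma gives $\mathfrak{j}'\circ\mathfrak{j}=c_g\,\id$ with $c_g=\mu(L^{\ast})\mu(\mathcal{C}_g)$, and the first thing I would do is check that $c_g\neq 0$. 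Here $\mathcal{C}_g=\{a\in L^{\ast}\mid a d_g''g-a\in L\}$ is the preimage of the subgroup $L$ under the additive map $a\mapsto a d_g''g-a$ of $W$, hence a subgroup of $L^{\ast}$, and it is nonzero since it contains $0$; with the Haar measure on the discrete group $L^{\ast}$ normalized as in Section~\ref{selfduall}, every nonzero subgroup has strictly positive mass, so $\mu(\mathcal{C}_g)>0$ and $c_g\neq 0$, exactly as in the non-archimedean case. Therefore $\mathfrak{j}$ is injective, with left inverse $c_g^{-1}\mathfrak{j}'$.

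To obtain surjectivity I would run the computation of Lemma~\ref{LL'g} a second time, now with the roles of $L$ and $L''_g$ interchanged and $g$ replaced by $g^{-1}$ (so $L''_g$ plays the role of the small lattice and $L=L''_g(d_g')^{-1}$ that of the larger one). This produces the companion identity $\mathfrak{j}\circ\mathfrak{j}'=c'_g\,\id$ with $c'_g\neq 0$ by the same argument, so $\mathfrak{j}$ has a two-sided inverse up to nonzero scalars and is bijective. Alternatively, and without invoking the companion identity, I would decompose $\mathcal{V}_{L,\psi}$ and $\mathcal{V}_{L''_g,\psi}$ into their $\sqrt{|L^{\ast}/L|}$ irreducible Heisenberg summands attached to $\psi$; by item~(2) of Section~\ref{inte} (the intertwining property) together with item~(3) there, $\mathfrak{j}$ carries each summand of the source into a single summand of the target, so by Schur's lemma its restriction to each summand is either $0$ or an isomorphism, and it is nonzero on each because $\mathfrak{j}'\circ\mathfrak{j}=c_g\,\id\neq 0$. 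Hence $\mathfrak{j}$ is an isomorphism on every summand, and therefore bijective.

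The only genuinely non-formal point is the nonvanishing of $c_g$ and $c'_g$. Although $0\in\mathcal{C}_g$ makes this trivial once the preceding lemma has been established, that lemma itself is where the explicit construction of $L''_g$ — through the index set $\mathcal{I}_g$ and the scaling $d''$ — really matters: those choices, via the sign computations of Lemmas~\ref{trial151} and~\ref{3mide233}, are exactly what force the phase factor that appears in the evaluation of $\mathfrak{j}'\circ\mathfrak{j}$ to be identically $1$ on $\mathcal{C}_g$, so that the resulting integral collapses to the positive scalar $\mu(L^{\ast})\mu(\mathcal{C}_g)$ instead of an oscillatory sum that could vanish.
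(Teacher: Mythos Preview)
Your proposal is correct and follows the same implicit reasoning as the paper: the corollary is stated there without proof, as an immediate consequence of the preceding lemma $\mathfrak{i}^{g^{-1}}_{L''_g,L}\circ \mathfrak{i}^{g}_{L,L''_g}=c_g\,\id$ together with the intertwining/irreducibility machinery already set up (items (2)--(3) of Section~\ref{inte}). Your Schur-lemma argument on the irreducible summands is exactly the mechanism the paper relies on throughout (cf.\ Lemma~\ref{nonzeroL1L2}); the companion-identity route is a valid alternative but not the one the paper has in mind.
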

For such $g$, let us define $M[g]= \mathfrak{i}_{L''_g,L} \circ\mathfrak{i}^g_{L,L''_g}$. Clearly,  $M[g]\in \End(\mathcal{V}_{L,\psi})$.
\begin{lemma}
$M[g]$ is bijective. 
\end{lemma}
\begin{proof}
It follows from Section \ref{inte}(4), and Corollary \ref{gLL'3}.
\end{proof}
\subsection{On the whole group $\Sp_{2m}(\Q)$}
Recall the  Iwahori decomposition: $$\Sp_{2m}(\Q)=[d_L^{-1}Id_L]\cdot [d_L^{-1}\mathfrak{W}^{\textrm{eaff}}d_L] \cdot [d_L^{-1}Id_L].$$ For an element $g\in \Sp_{2m}(\Q)$, let us \emph{fix } a decomposition:
$$g=i_1\omega_g i_2$$
for $i_1, i_2\in [d_L^{-1}Id_L]$ and $\omega_g\in [d_L^{-1}\mathfrak{W}^{\textrm{eaff}}d_L]$.  
For   $g=1\in \Sp_{2m}(\Q)$,  we require that $i_1=1=i_2$. We   choose the Haar measure $\mu$ such that $M[1]=\id$. Let us define: 
$$M[g]=M[i_1]M[\omega_g]M[i_2].$$
Then $M[g]\in \End(\mathcal{V}_{L,\psi})$ and $M[g]$ is bijective.  Let $h=(w',t)\in \Ha(W)$, $g\in \Sp_{2m}(\Q)$. Similar to the non-archimedean case, we also have:
\begin{lemma}\label{3wholegroup23}
\begin{itemize}
\item[(1)] $\pi_{L,\psi}(h^{g^{-1}})\circ M[g] (f)=M[g] \circ\pi_{L,\psi}(h) (f)$, for $f\in \mathcal{V}_{L_1,\psi}$.
\item[(2)] $\pi_{L_1,\psi_{\chi}}(h^{g^{-1}}) \circ M[g] (f)=M[g]\circ \pi_{L_1,\psi_{\chi}}(h) (f)$, for $ f\in \mathcal{V}_{L_1,\psi_{\chi}}$.
\item[(3)] $M[g] \circ \mathcal{A}_{\chi_1, \chi_2} (f)=\mathcal{A}_{\chi_1,\chi_2} \circ M[g](f)$, for $f\in \mathcal{V}_{L_1,\psi_{\chi_1}}$.
\end{itemize}
\end{lemma}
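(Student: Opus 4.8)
The plan is to obtain all three assertions by the same route as in the non-archimedean case of Section~\ref{pn2}, reducing everything to the elementary compatibilities of the operators $\mathfrak{i}^g_{L,L''}$ listed in Section~\ref{inte}, together with the fixed Iwahori--affine-Weyl factorisation. Recall that for the chosen decomposition $g=i_1\omega_g i_2$, with $i_1,i_2\in[d_L^{-1}Id_L]$ and $\omega_g\in[d_L^{-1}\mathfrak{W}^{\textrm{aff}}d_L]$, one sets $M[g]=M[i_1]M[\omega_g]M[i_2]$, and each of these three factors is, by construction, either a single map $\mathfrak{i}^{g_0}_{L,L''}$ or a composite $\mathfrak{i}_{L'',L}\circ\mathfrak{i}^{g_0}_{L,L''}$ for an appropriate intermediate lattice $L''$ (namely $L''=L'$ on the Iwahori part, and $L''=L$ or $L''=L''_g$ on the affine-Weyl part). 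Since each such $\mathfrak{i}^{g_0}_{L,L''}$ is a well-defined operator on the relevant $L^2$-space (Section~\ref{inte}(1)) and $M[1]=\id$ by the chosen normalisation of the Haar measure $\mu$, it is enough to prove each of the three diagrams for one such factor and then compose along $g=i_1\omega_g i_2$.

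For (1), I would use that by Section~\ref{inte}(2) each $\mathfrak{i}^{g_0}_{L,L''}$ intertwines $\pi_{L,\psi}(h)$ with $\pi_{L'',\psi}(h^{g_0^{-1}})$, while the reverse map $\mathfrak{i}_{L'',L}=\mathfrak{i}^{1}_{L'',L}$ intertwines $\pi_{L'',\psi}(h)$ with $\pi_{L,\psi}(h)$; chaining these and using $h^{(g_1g_2)^{-1}}=(h^{g_2^{-1}})^{g_1^{-1}}$ in $\Ha(W)$ yields $\pi_{L,\psi}(h^{g^{-1}})\circ M[g]=M[g]\circ\pi_{L,\psi}(h)$, first for $g$ in $[d_L^{-1}Id_L]$ and in $[d_L^{-1}\mathfrak{W}^{\textrm{aff}}d_L]$ and then, after multiplying the three factors of $M[g]$, for every $g\in\Sp_{2m}(\Q)$. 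Part (2) follows from (1) by restriction: Section~\ref{inte}(3) shows that each $\mathfrak{i}^{g_0}_{L,L''}$ carries $\mathcal{V}_{L_1,\psi_{\chi\circ\mathfrak{i}}}$ onto $\mathcal{V}_{L_1'',\psi_{\chi}}$, so $M[g]$ preserves each summand $\mathcal{V}_{L_1,\psi_{\chi}}$ of $\mathcal{V}_{L,\psi}$, on which $\pi_{L_1,\psi_{\chi}}$ is simply the restriction of $\pi_{L,\psi}$. For (3), I would establish the real-field analogue of Lemma~\ref{Achi}, namely that each $\mathfrak{i}^{g_0}_{L,L''}$ commutes with $\mathcal{A}_{\chi_1,\chi_2}$; this rests on the commutation $\mathcal{A}_{\chi_1,\chi_2}\circ\mathcal{D}=\mathcal{D}\circ\mathcal{A}_{\chi_1\circ\mathfrak{i},\chi_2\circ\mathfrak{i}}$ recorded when $\mathfrak{i}$ is extended to the representations in the real-field case, and on $\mathcal{A}_{\chi_1,\chi_2}$ being an intertwiner for $\sigma''$. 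Composing along $g=i_1\omega_g i_2$ as before then yields $M[g]\circ\mathcal{A}_{\chi_1,\chi_2}=\mathcal{A}_{\chi_1,\chi_2}\circ M[g]$.

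I expect the essential difficulty to be analytic rather than algebraic. Here $\mathcal{V}_{L,\psi}$ and $\mathcal{V}_{L_1,\psi_{\chi}}$ are Hilbert spaces of $L^2$-sections, the transformation laws hold only for almost all $w\in W$, and the $\mathfrak{i}^{g_0}_{L,L''}$ are defined by integrals against the lattice $L^{''\ast}$; so the hard part will be to confirm that these operators, after the scalar normalisations $c_g$, extend to bounded (and, after normalisation, unitary up to a constant) maps between the completions, and that the formal diagram chases above persist on the $L^2$-level. This is substantially eased by the finiteness of $L^{\ast}/L$, which makes $\mathcal{W}$ and $\mathcal{W}_{\chi}$ finite-dimensional and reduces the defining integrals to finite sums over coset representatives, so that the verification ultimately comes down to the density of smooth compactly supported sections and the continuity already implicit in Section~\ref{inte}(1),~(4).
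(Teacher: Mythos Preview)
Your proposal is correct and follows essentially the same route as the paper: the paper's ``proof'' is merely the sentence ``Similar to the non-archimedean case, we also have,'' which in the non-archimedean section amounts precisely to invoking the compatibilities of $\mathfrak{i}^{g_0}_{L,L''}$ (Lemmas~\ref{coomLL''}, \ref{chiLL''}, \ref{Achi}, recorded in the real case as Section~\ref{inte}(1)--(4) and item~(3) for $\mathcal{A}$) on each factor of the fixed decomposition $g=i_1\omega_g i_2$ and then composing. Your additional analytic remarks about extending the formal identities to the $L^2$-completion go beyond what the paper makes explicit but do not change the strategy.
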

Let us define:                               
$$\pi_{L,\psi}([g,h])=M[g]\pi_{L,\psi}(h).$$ By the above lemma \ref{3wholegroup23}, it is well defined and   $M[g]$ sends  $\mathcal{V}_{L_1,\psi_{\chi}}$ onto itself. Hence  there exists a non-trivial $2$-cocycle $c_{\chi}$ of order $2$ in $\Ha^2(\Sp_{2m}(\Q), T)$, such that 
\begin{align}
\pi_{L,\psi}([g,h])\pi_{L,\psi}([g',h'])f =c_{\chi}(g, g')\pi_{L,\psi}([g,h][g',h'])f.
\end{align}
for $[g,h]$,  $[g',h']\in \Sp_{2m}(\Q)\ltimes \Ha(W)$, $f\in \mathcal{V}_{L_1,\psi_{\chi}}$. 
By Lemma \ref{3wholegroup23}(3), we can see that all  cocycles $c_{\chi}$ are the same. Let 
$$1\longrightarrow T \longrightarrow \Mp_{2m}(\Q) \longrightarrow \Sp_{2m}(\Q) \longrightarrow 1$$
be the central extension of $\Sp_{2m}(\Q)$ by  $T$  associated to $c(-,-)$.   Finally, we  conclude:
\begin{theorem}\label{main3}
 For $[g,t]\in \Mp_{2m}(\Q)$, $h\in \Ha(W)$, $f\in \mathcal{V}_{L, \psi}$, let 
$$\pi_{L,\psi}([(g,t),h])f=tM[g]\pi_{L,\psi}(h) f.$$
Then $\pi_{L,\psi}$ defines a representation of $\Mp_{2m}(\Q) \ltimes \Ha(W)$.  The restriction of $\pi_{L,\psi}$  to $\Ha(W)$ contains $\sqrt{|L^{\ast}/L|}$-number of irreducible components and every  component is a Heisenberg representation associated to $\psi$. 
\end{theorem}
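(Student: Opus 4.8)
The plan is to follow the non-archimedean argument (Theorem \ref{theoremnon2}) essentially verbatim, since the preceding subsections have already supplied every ingredient specific to the real/rational setting. For each $g\in\Sp_{2m}(\Q)$ the operator $M[g]\in\End(\mathcal{V}_{L,\psi})$ is defined, through the fixed Iwahori decomposition $\Sp_{2m}(\Q)=[d_L^{-1}Id_L]\cdot[d_L^{-1}\mathfrak{W}^{\mathrm{aff}}d_L]\cdot[d_L^{-1}Id_L]$, as a product of the partial intertwiners $\mathfrak{i}^{g}_{L,L''}$; it is bijective, it preserves every summand of $\mathcal{V}_{L,\psi}\simeq\bigoplus_{\chi\in\Irr(L_1/L)}\mathcal{V}_{L_1,\psi_{\chi}}$, it is normalized so that $M[1]=\id$, and by Lemma \ref{3wholegroup23} it satisfies the three compatibilities: $M[g]\pi_{L,\psi}(h)=\pi_{L,\psi}(h^{g^{-1}})M[g]$, the same on each $\mathcal{V}_{L_1,\psi_{\chi}}$, and $M[g]\mathcal{A}_{\chi_1,\chi_2}=\mathcal{A}_{\chi_1,\chi_2}M[g]$.

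First I would verify that $\pi_{L,\psi}([g,h]):=M[g]\pi_{L,\psi}(h)$ is a well-defined projective representation of $\Sp_{2m}(\Q)\ltimes\Ha(W)$. Well-definedness over the semidirect product is immediate from $M[g]\pi_{L,\psi}(h)=\pi_{L,\psi}(h^{g^{-1}})M[g]$ (Lemma \ref{3wholegroup23}(1)), exactly as in Section \ref{pn1}. For the cocycle, restrict to a single component $\mathcal{V}_{L_1,\psi_{\chi}}$: there $\pi_{L_1,\psi_{\chi}}$ is an irreducible Heisenberg representation with central character $\psi$, and Lemma \ref{3wholegroup23}(2) says $M[g]$ intertwines it with its $g$-twist; since $g$ fixes the center, the two are isomorphic by Stone--von Neumann, so $M[g]$ differs by a scalar from Weil's metaplectic operator. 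Hence $M[g_1]M[g_2]=c_{\chi}(g_1,g_2)M[g_1g_2]$ for a $2$-cocycle $c_{\chi}\in\Ha^2(\Sp_{2m}(\Q),T)$ which, after the normalization $M[1]=\id$, is of order two --- the cocycle of the Perrin--Rao two-fold cover. Lemma \ref{3wholegroup23}(3) then forces all $c_{\chi}$ to coincide with a single $c$; one defines $\Mp_{2m}(\Q)$ from $c$ and checks, by a routine computation combining the cocycle identity for $c$ with the above commutation relation, that $\pi_{L,\psi}([(g,t),h])f=tM[g]\pi_{L,\psi}(h)f$ obeys the composition law of $\Mp_{2m}(\Q)\ltimes\Ha(W)$.

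Next, the restriction to $\Ha(W)$ is nothing but $\pi_{L,\psi}=\Ind_{\Ha(L)}^{\Ha(W)}\psi_{L}$. The non-degenerate pairing $L_1/L\times L^{\ast}/L_1\to T$ of Section \ref{selfduall} gives $[L_1:L]=\sqrt{[L^{\ast}:L]}=\sqrt{|L^{\ast}/L|}$, and Clifford theory yields $\pi_{L,\psi}\simeq\sqrt{|L^{\ast}/L|}\,\pi_{L_1,\psi}$ together with the explicit decomposition $\mathcal{S}_{L,\psi}=\bigoplus_{\chi\in\Irr(L_1/L)}\mathcal{S}_{L_1,\psi_{\chi}}$ (equivalently on the $\mathcal{V}$-side). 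Each summand $\mathcal{S}_{L_1,\psi_{\chi}}\simeq\pi_{L_1,\psi}$ is, by the first lemma of Section \ref{selfduall}, a Heisenberg representation of $\Ha(W)$ with central character $\psi$; this is exactly the last clause of the theorem.

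The main obstacle --- and the only place the real case truly differs from the non-archimedean one --- is analytic: one must make sense of the operators $\mathfrak{i}^{g}_{L,L''}$ as bounded (up to scalar, unitary) maps between the Hilbert spaces $\mathcal{V}_{L,\psi}$ and $\mathcal{V}_{L'',\psi}$, and verify that chaining them along the Iwahori decomposition produces a well-defined bijective $M[g]$. This is precisely what Section \ref{inte} and the lemmas on $d_L^{-1}Id_L$ and $d_L^{-1}\mathfrak{W}^{\mathrm{aff}}d_L$ establish (the identities $\mathfrak{i}^{g^{-1}}_{L',L}\circ\mathfrak{i}^{g}_{L,L'}=c_g\,\id$ and its variants). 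Granting those, and the reduction on each finite twist $\mathcal{V}_{L_1,\psi_{\chi}}$ to Weil's classical construction, the remaining work is bookkeeping identical to the proof of Theorem \ref{theoremnon2}.
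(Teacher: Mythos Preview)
Your proposal is correct and follows essentially the same route as the paper: the theorem is stated as the culmination of the preceding lemmas in the section, and the paper's argument is exactly the bookkeeping you describe --- use the Iwahori decomposition to define $M[g]$, invoke Lemma \ref{3wholegroup23} to get the commutation with $\pi_{L,\psi}(h)$, the preservation of each $\mathcal{V}_{L_1,\psi_\chi}$, and the commutation with $\mathcal{A}_{\chi_1,\chi_2}$, deduce that the resulting cocycles $c_\chi$ coincide, and read off the Heisenberg decomposition from the Clifford-theory lemma $\pi_{L,\psi}\simeq\sqrt{[L^\ast:L]}\,\pi_{L_1,\psi}$. Your remark about the analytic issue (boundedness/unitarity of the $\mathfrak{i}^g_{L,L''}$) is a fair caveat that the paper itself leaves implicit.
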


\subsection{Appendix}\label{APP}
The objective of this section is to provide a straightforward demonstration of the Iwahori decomposition theorem specifically for the rational symplectic group at the prime number  $2$.  Let $K= \mathbb{F}_2, \Z, \Q$, or $\R$.  Let $\M_{m\times k}(K)$ denote the Matrix ring of rank $m\times k$ over $K$. Let $\GL_{m}(K)$ denote the group of all  invertible elements in $\M_{m\times m}(K)$.   For a matrix $A\in \M_{m\times k}(K)$, let $A^{T}$ denote its transpose matrix. Let $J=\begin{bmatrix}
0 & 1_m\\
-1_m & 0\end{bmatrix}\in\GL_{2m}(K)$, with $1_m$ being the identity element of $\GL_m(K)$. Let $(U, \langle, \rangle)$ be a free symplectic module over $K$ with a  symplectic basis $\{e_1, \cdots, e_m; e_1^{\ast}, \cdots, e_m^{\ast}\}$. For an element $w\in U$, let us write 
$$x=(x_1, \cdots, x_{2m}), \quad e=(e_1, \cdots, e_m, e_1^{\ast}, \cdots, e_m^{\ast}), \quad w=xe^T.$$
Then  we will identity $U$ with $K^{2m}$, and endow $K^{2m}$ with the symplectic form $\langle x, y\rangle=xJy^T$, for $x, y \in K^{2m}$.  For $g\in \Sp(U)$, let us write $e^T g=A_g e^T$, for some $A_g\in \GL_{2m}(K)$. Then the matrix $A_g$ satisfies $A_g JA_g^T=J$.
Let $\Sp_{2m}(K)=\{ A\in \GL_{2m}(K) \mid A_g JA_g^T=J\}$. We identity $\Sp(U)$ with $\Sp_{2m}(K)$ through $g \to A_g$.  For a subset $S\subseteq \{1, \cdots, m\}$, let us define an element   $\omega_S$ of  $ \Sp_{2m}(K)$ as follows: $ (e_i)\omega_S=\left\{\begin{array}{lr}
-e_i^{\ast}& i\in S,\\
 e_i & i\notin S,
 \end{array}\right.$ and $ (e_i^{\ast})\omega_S=\left\{\begin{array}{lr}
e_i^{\ast}& i\notin S,\\
 e_i & i\in S.
 \end{array}\right.$  Let $S_m$ denote the permutation group on $m$ elements. For $s\in S_m$, let $\omega_s$ be an element of $\Sp_{2m}(K)$ defined as $(e_i)\omega_s=e_{s(i)}$ and $(e_j^{\ast}) \omega_s=e_{s(j)}^{\ast}$. Let:
\begin{itemize}
\item  $X=\Span_{K}\{e_1, \cdots, e_m\}$, $X^{\ast}=\Span_{K}\{e^{\ast}_1, \cdots, e^{\ast}_m\}$. 
\item  $M_{X^{\ast}}(K)=\{ g= \begin{pmatrix}
\alpha & 0\\
 0 & (\alpha^{T})^{-1}\end{pmatrix} \mid \alpha\in \GL_m(K)  \} \subseteq \Sp_{2m}(K) $.
 \item  $N_{X^{\ast}}(K)=\{ g= \begin{pmatrix}
1 & \beta\\
 0 & 1\end{pmatrix}  \mid  \beta^T=\beta \} \subseteq \Sp_{2m}(K)$.
 \item $N_{X^{\ast}}^-(K)=\{ g= \begin{pmatrix}
1 &  0\\
 \beta & 1\end{pmatrix}  \mid  \beta^T=\beta \} \subseteq \Sp_{2m}(K)$
 \item  $ P_{X^{\ast}}(K)=\{ g_1g_2 \mid  g_1\in M_{X^{\ast}}(K), g_2\in N_{X^{\ast}}(K) \}$. 
 \item  $T(K)=\{ g= \begin{pmatrix}
\alpha & \\
 & \alpha^{-1}\end{pmatrix}\mid  \alpha=\diag(d_1, \cdots, d_m)\} \subseteq \Sp_{2m}(K)$.
 \item  $N_0(K)=\{ g=\begin{pmatrix}
\alpha & \\
 & (\alpha^{T})^{-1}\end{pmatrix} \mid \alpha= \begin{bmatrix}
1& \ast & \ast\\
 & 1     &  \ast\\
  &      &1
\end{bmatrix}\in\GL_m(K)\} \subseteq \Sp_{2m}(K)$.
\item $N_0^-(K)=\{ g=\begin{pmatrix}
\alpha & \\
 & (\alpha^{T})^{-1}\end{pmatrix} \mid g^T\in N_0(K)\} \subseteq \Sp_{2m}(K)$.
\item $N(K)=N_0(K)N_{X^{\ast}}(K)$.
\item $N^-(K)=N^-_0(K)N^-_{X^{\ast}}(K)$.
\item $B(K)=T(K)N(K)$.
\item $D_0=\{\begin{pmatrix} d &0\\ 0 & d^{-1} \end{pmatrix}  \mid d = \diag(d_1, \cdots, d_m), d_i \in \Z_+,  d_m|d_{m-1}| \cdots |d_1 \}$.
\item $D(2)=\{\begin{pmatrix} d &0\\ 0 & d^{-1} \end{pmatrix} \in D \mid d = \diag(2^{k_1}, \cdots, 2^{k_m}), k_i\in \Z\}$.
 \item   $\mathfrak{W}_{P_{X^{\ast}}}=\{ \omega_S \mid S=\emptyset \textrm{  or } S= \{j, j+1, \cdots, m\} \}$.
 \item   $\mathfrak{W}=\{ \omega_s\omega_S \mid S \subseteq   \{1, \cdots, m\}, s\in S_m\}$.
 \item  $\widetilde{\mathfrak{W}}=\{  \omega_s \omega^{k}_S  \mid S \subseteq   \{1, \cdots, m\}, s\in S_m\}$.(A group)
 \item $D=\{ d^{\omega}\mid \omega\in \mathfrak{W}, d\in D_0\}$.
 \item $\mathfrak{W}^{\textrm{eaff}}=\{ \omega_1 d\omega_2 \mid d\in D_0, \omega_1, \omega_2\in \mathfrak{W}\}=D\rtimes \mathfrak{W}$.
  \item $\mathfrak{W}^{\textrm{eaff}}(2)=D(2) \rtimes \mathfrak{W}$.
 \item $\Gamma(2)=\{ g\in \Sp_{2m}(\Z)\mid g\equiv 1(\bmod 2)\}$.
 \end{itemize}
For $\omega\in \mathfrak{W}$, let $N_{\omega}^-(K)=[\omega N^-(K) \omega^{-1}]\cap N(K)$, $N_{\omega}^{--}(K)=[\omega^{-1} N^-(K) \omega] \cap N(K)$.
\begin{theorem}[Bruhat decomposition]\label{Buhde}
If $K=\F_2$, then:
\begin{itemize}
\item $\Sp_{2m}(K)=\sqcup_{\omega\in \mathfrak{W}_{P_{X^{\ast}}}} P_{X^{\ast}}(K)\omega P_{X^{\ast}}(K)$.
\item $\Sp_{2m}(K)=\sqcup_{\omega\in \mathfrak{W}} B(K)\omega B(K)=\sqcup_{\omega\in \mathfrak{W}} N(K) \omega B(K)=\sqcup_{\omega\in \mathfrak{W}} N_{\omega}^-(K) \omega B(K)=\sqcup_{\omega\in \mathfrak{W}} B(K)\omega N_{\omega}^{--}(K)$.
\end{itemize}
\end{theorem}
\begin{proof}
See  \cite[Sections 2.5-2.9]{Ca1}, and \cite{BoTi}, or \cite[p.60]{DiMi}. 
\end{proof}
\begin{theorem}\label{SPQdecom}
$\Sp_{2m}(\Q)=\sqcup_{\omega_d  \in D_0 } \Sp_{2m}(\Z) \omega_d \Sp_{2m}(\Z)$.
\end{theorem}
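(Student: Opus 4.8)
The plan is to prove the double-coset decomposition $\Sp_{2m}(\Q)=\sqcup_{\omega \in D_0}\Sp_{2m}(\Z)\,\omega\,\Sp_{2m}(\Z)$ by an elementary-divisor (Smith normal form) argument applied to symplectic matrices, exactly in the spirit of the classical result for $\GL_n$. First I would prove the existence part. Given $g\in\Sp_{2m}(\Q)$, clear denominators to write $g=\tfrac{1}{N}g_0$ with $g_0$ an integer matrix and $N\in\Z_{>0}$. Applying Smith normal form over $\Z$ (which is the statement $\GL_{2m}(\Z)\,g_0\,\GL_{2m}(\Z)=\GL_{2m}(\Z)\,\mathrm{diag}(a_1,\dots,a_{2m})\,\GL_{2m}(\Z)$ with $a_1\mid a_2\mid\cdots$) is not quite enough, because the left/right factors must lie in $\Sp_{2m}(\Z)$, not merely $\GL_{2m}(\Z)$. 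The standard fix, which I would carry out, is to run the elementary-divisor algorithm \emph{symplectically}: using the symplectic row/column operations (the elements of $N_{X^{\ast}}(\Z)$, $N_{X^{\ast}}^-(\Z)$, $M_{X^{\ast}}(\Z)$ and the Weyl elements $\omega_s,\omega_S$ over $\Z$, all of which lie in $\Sp_{2m}(\Z)$) one can bring $g$ into diagonal symplectic form $\begin{bmatrix} d & 0\\ 0 & d^{-1}\end{bmatrix}$ with $d=\mathrm{diag}(d_1,\dots,d_m)$, $d_i\in\Q^\times$; the condition that the matrix be symplectic forces the lower-right block to be $d^{-1}$, which is what produces elements of $D$ rather than of the full diagonal torus of $\GL_{2m}$. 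Finally, conjugating by a permutation $\omega_s\in\Sp_{2m}(\Z)$ and by suitable $\omega_S$ to flip signs/invert individual $d_i$, one normalizes to $d=\mathrm{diag}(d_1,\dots,d_m)$ with $d_i\in\Z_{>0}$ and $d_m\mid d_{m-1}\mid\cdots\mid d_1$, i.e.\ to an element of $D_0$. This gives $\Sp_{2m}(\Q)=\bigcup_{\omega\in D_0}\Sp_{2m}(\Z)\,\omega\,\Sp_{2m}(\Z)$.

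For the disjointness, suppose $\Sp_{2m}(\Z)\,\omega\,\Sp_{2m}(\Z)=\Sp_{2m}(\Z)\,\omega'\,\Sp_{2m}(\Z)$ for $\omega=\begin{bmatrix}d&0\\0&d^{-1}\end{bmatrix}$, $\omega'=\begin{bmatrix}d'&0\\0&d'^{-1}\end{bmatrix}\in D_0$. Then $\omega'=\gamma_1\omega\gamma_2$ with $\gamma_i\in\Sp_{2m}(\Z)\subseteq\GL_{2m}(\Z)$, so $\omega$ and $\omega'$ define the same double coset for $\GL_{2m}(\Z)$. The elementary divisors of $\omega$ as an integer-valued matrix (after clearing a common denominator) are an invariant of the $\GL_{2m}(\Z)$ double coset; reading them off $\mathrm{diag}(d_1,\dots,d_m,d_1^{-1},\dots,d_m^{-1})$ and using the normalization $d_m\mid\cdots\mid d_1$ with all $d_i\in\Z_{>0}$ pins down the multiset $\{d_1,\dots,d_m\}$ and hence the tuple $(d_1,\dots,d_m)$ uniquely. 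Therefore $\omega=\omega'$, which is the required disjointness. The only mild subtlety here is bookkeeping with the denominator: one should first scale $\omega$ by $\prod_i \mathrm{den}(d_i)$ or, more cleanly, observe that $d_i\in\Z_{>0}$ already (by the normalization), so $\omega$ has integer entries in the top-left block and the elementary-divisor invariant applies directly to the integer matrix $\mathrm{diag}(d_1,\dots,d_m,\;\prod_j d_j\cdot d_1^{-1},\dots)$ after a harmless global scaling.

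The main obstacle is the first step: upgrading the ordinary Smith normal form over $\Z$ to one where the transforming matrices can be chosen \emph{symplectic}. This is where one must be careful that the algorithm only ever uses operations that preserve membership in $\Sp_{2m}$; the cleanest route is to invoke the structure of $\Sp_{2m}(\Q_2)$ and $\Sp_{2m}(\Z_p)$ locally at each prime (the Cartan decomposition $\Sp_{2m}(\Q_p)=\bigsqcup \Sp_{2m}(\Z_p)\,\omega\,\Sp_{2m}(\Z_p)$ over $D_0$-type elements, which is standard for $p$-adic reductive groups), then glue via strong approximation for $\Sp_{2m}$ — which holds since $\Sp_{2m}$ is simply connected — to pass from the adelic/local statement to the statement over $\Q$ with $\Sp_{2m}(\Z)$. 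I would present the proof via this local-global route, as it sidesteps the need to reprove a symplectic Smith algorithm by hand, and cite the appendix's own Bruhat decomposition (Theorem \ref{Buha}) together with the affine structure recorded in the list preceding the statement ($\mathfrak{W}^{\textrm{aff}}$, $D_0$, $D(2)$) for the $p$-adic input.
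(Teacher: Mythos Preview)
The paper does not give its own proof of this statement; it simply cites Benoist \cite[Theorem 2.1]{Be}. Your proposal therefore supplies genuinely more than the paper does, and both routes you outline---the symplectic elementary-divisor reduction and the local Cartan decomposition plus strong approximation---are standard and correct ways to establish such a Cartan-type decomposition. Benoist's argument is in fact of the first kind: an explicit symplectic Smith normal form algorithm using only moves from $\Sp_{2m}(\Z)$. Your second route (local-global via strong approximation for the simply connected group $\Sp_{2m}$) is a legitimate alternative that avoids rewriting the algorithm by hand, at the cost of importing heavier machinery.

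One point in your disjointness argument deserves tightening. The matrix $\omega=\begin{bmatrix}d&0\\0&d^{-1}\end{bmatrix}$ with $d_i\in\Z_{>0}$ is \emph{not} integral (the lower-right block has denominators), so ``elementary divisors of $\omega$ as an integer-valued matrix'' is not directly meaningful, and your suggested global scaling is awkward because the required scaling factor is not a priori the same for $\omega$ and $\omega'$. The clean fix is to work prime by prime: if $\omega'=\gamma_1\omega\gamma_2$ with $\gamma_i\in\Sp_{2m}(\Z)\subseteq\GL_{2m}(\Z_p)$, then for each prime $p$ the multisets $\{\nu_p(d_i),-\nu_p(d_i)\}_{i=1}^m$ and $\{\nu_p(d_i'),-\nu_p(d_i')\}_{i=1}^m$ coincide (these are the $p$-adic elementary divisors, invariants of the $\GL_{2m}(\Z_p)$ double coset). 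Together with the normalization $d_i\in\Z_{>0}$ and $d_m\mid\cdots\mid d_1$, this forces $d=d'$.
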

\begin{proof}
See \cite[Theorem 2.1]{Be}.
\end{proof}
\subsection{Iwahori decomposition for $\Sp_{2m}(\Q)$} 
Let $\Z_2$ be  the ring of  $2$-adic integers in $\Q_2$. By \cite[Lemma 25]{Ig}, there exists an exact sequence:
$$1\longrightarrow \Gamma(2\Z_2) \longrightarrow \Sp_{2m}(\Z_2) \to \Sp_{2m}(\F_2)\to 1.$$
 Let $\Gamma(2^m\Z_2) =\ker(\Sp_{2m}(\Z_2) \to \Sp_{2m}(\Z_2/2^m\Z_2))$.  Let $I(\Z_2)$, $N(\Z_2)$ denote  the inverse images of $B(\F_2)$ and $N(\F_2)$ in $\Sp_{2m}(\Z_2)$, respectively. Then $I(\Z_2)$ is  an Iwahori  subgroup of $\Sp_{2m}(\Z_2)$.  It is clear that $N(\Z_2)\to N(\F_2)$, $B(\Z_2)\to B(\F_2)$, $T(\Z_2)\to T(\F_2)$ all are surjective.  Then $I(\Z_2)=B(\Z_2)\Gamma(2\Z_2)$.
\begin{theorem}\label{SPQdecomQ23}
$\Sp_{2m}(\Q_2)=\sqcup_{\omega  \in \mathfrak{W}^{\textrm{eaff}}(2) } I(\Z_2) \omega  I(\Z_2)$.
\end{theorem}
\begin{proof}
See \cite{Vi}.
\end{proof}
It is known that there exists an exact sequence:
$$1\longrightarrow \Gamma(2) \longrightarrow \Sp_{2m}(\Z) \to \Sp_{2m}(\mathbb{F}_2)\to 1.$$
Let  $I$ denote the inverse image of $ B(\mathbb{F}_2)$ in $\Sp_{2m}(\Z)$. Let $N=N(\Z)$, $B=B(\Z)$, $T=T(\Z)$. It is clear that $N\to N(\F_2)$, $B\to B(\F_2)$, $T\to T(\F_2)$ all are surjective. By Theorem \ref{Buhde},   there exists the following decomposition:
\begin{equation}\label{sqspZ}
\Sp_{2m}(\Z)=\sqcup_{\omega\in \mathfrak{W}} I\omega I=\sqcup_{\omega\in \mathfrak{W}} I\omega N.
\end{equation}
Note that  $I=\Gamma(2)N$, and  $\Gamma(2)$(resp. $N$) is dense in $\Gamma(2\Z_2)$(resp. $N(\Z_2)$). So $I$ is dense in $I(\Z_2)$, and $I(\Z_2) =I \cdot \Gamma(2^k\Z_2)$, for any $k>0$.
\begin{proposition}[Iwahori decomposition at $2$]\label{Iwah}
$\Sp_{2m}(\Q)=\cup_{\omega \in \mathfrak{W}^{\textrm{eaff}}} I \omega I.$
\end{proposition}

 We will demonstrate the existence of this decomposition through induction on $m$ in the following sections   \ref{sec:induction1}---\ref{sec:induction3}. In Section \ref{sec:induction1},we will prove the case for  $m=1$.  In Section \ref{sec:induction2}, we will prove the case for  $m=2$ and in Section \ref{sec:induction3}, we will prove the case that $m\geq 3$.

 Before giving the proof, we remark that this decomposition is weakly unique with respect to $\omega$. This means that if we write $\omega=\omega_1d_0\omega_2=d\omega_3$, where $\omega_i\in  \mathfrak{W}$,  $d_0\in D_0$, and  $d\in D$, then $d_0$ is uniquely determined by $g$ according to Theorem  \ref{SPQdecom}, and $\omega_3$ is  uniquely determined by $g$ according to Theorem \ref{SPQdecomQ23}.

\subsection{The proof of Prop.\ref{Iwah}: $m=1$}\label{sec:induction1}  Let $\omega=\begin{bmatrix} 0& -1\\ 1& 0\end{bmatrix}$, $n_1=\begin{bmatrix} 1& 1\\ 0& 1\end{bmatrix}$. Then:
\begin{align}
\SL_2(\Z)&=I \cup I \omega N= I \cup I \omega \cup I \omega n_1\\
&=I \cup N \omega I = I \cup \omega I \cup  n_1 \omega I. 
\end{align}
 By Theorem \ref{SPQdecom},  
$$ \SL_2(\Q)= \cup_{d\in D_0} \Big[(I d I)\cup (I d\omega I ) \cup (I d n_1  \omega I) \cup (I \omega d I) \cup (I \omega d \omega I)$$
$$\cup  (I \omega d  n_1 \omega I) \cup (I \omega n_1 d I)\cup (I \omega n_1 d \omega I) \cup (I \omega n_1 d  n_1 \omega I)\Big].$$
Let $d=\diag(t, t^{-1})\in D_0$, with $t\in \Z_+$. 
\begin{itemize}
\item $d n_1  \omega=\begin{bmatrix} 1& t^2\\ 0& 1\end{bmatrix} d \omega \in Id \omega I$.\\
\item  $ \omega d  n_1 \omega=\begin{bmatrix} 0& -1\\ 1& 0\end{bmatrix} \begin{bmatrix}t& 0\\ 0&  t^{-1}\end{bmatrix} \begin{bmatrix} 1& 1\\ 0& 1\end{bmatrix}\begin{bmatrix} 0& -1\\ 1& 0\end{bmatrix} =\begin{bmatrix} -t^{-1}& 0\\t& -t\end{bmatrix}=\begin{bmatrix} 1& 0\\-t^2& 1\end{bmatrix} \begin{bmatrix} -t^{-1}& 0\\0& -t\end{bmatrix}$\\
    $=\begin{bmatrix} 1& 0\\-t^2-1& 1\end{bmatrix} \begin{bmatrix} 1& 1\\0& 1\end{bmatrix}  \begin{bmatrix} 0& -1\\ 1& 0\end{bmatrix} \begin{bmatrix} -t^{-1}& 0\\0& -t\end{bmatrix}\begin{bmatrix}1& t^2\\0& 1\end{bmatrix}$.
    \begin{itemize}
\item If $2\mid t$,  $\begin{bmatrix} 1& 0\\-t^2& 1\end{bmatrix} \in I$. Then $\omega d  n_1 \omega\in I d^{-1} I$.
\item If $ 2\nmid t$, $\omega d  n_1 \omega\in I \omega d^{-1} I.$
     \end{itemize}
\item $\omega n_1 d= \begin{bmatrix} 1& -1\\0& 1\end{bmatrix}\begin{bmatrix} t& 0\\0& t^{-1}\end{bmatrix} \begin{bmatrix}1& 0\\t^2& 1\end{bmatrix}$ $= \begin{bmatrix} 1& t^2-1\\0& 1\end{bmatrix}\begin{bmatrix} t& 0\\0& t^{-1}\end{bmatrix}\begin{bmatrix} 0& -1\\ 1& 0\end{bmatrix} \begin{bmatrix}1& 1\\ 0& 1\end{bmatrix} \begin{bmatrix}1& 0\\t^2-1& 1\end{bmatrix}$.
    \begin{itemize}
\item If $2\mid t$,  $\omega n_1 d\in I d I$.
\item If $ 2\nmid t$, $\omega d  n_1 \omega\in I d \omega  I.$
     \end{itemize}
\item  $\omega n_1 \omega= \begin{bmatrix}
-1& 0\\
1& -1
\end{bmatrix}= \begin{bmatrix}
1& -1\\
0& 1
\end{bmatrix} \begin{bmatrix}
0& -1\\
1& 0
\end{bmatrix} \begin{bmatrix}
1& -1\\
0& 1
\end{bmatrix}\in I\omega I$.\\
\item $\omega n_1 d  n_1 \omega =\begin{bmatrix}1& 0 \\-t^2-1& 1\end{bmatrix}  \begin{bmatrix} -t^{-1}& 0\\0& -t\end{bmatrix}=\begin{bmatrix}1& 0\\ -t^2-2 & 1\end{bmatrix}\begin{bmatrix}
1& 1\\
0& 1
\end{bmatrix} \begin{bmatrix}
0& -1\\
1& 0
\end{bmatrix} \begin{bmatrix} -t^{-1}& 0\\0& -t\end{bmatrix} \begin{bmatrix} 1& t^2\\0& 1\end{bmatrix}  $. 
\begin{itemize}
\item If $2\nmid t$, $\omega n_1 d  n_1 \omega\in I d^{-1} I$.
\item If $ 2\mid t$, $ \omega n_1 d  n_1 \omega \in I \omega d^{-1} I$.
     \end{itemize}
  \end{itemize} 
\subsection{The proof of Prop.\ref{Iwah}: $m=2$}\label{sec:induction2} 
Recall that $\mathfrak{W}=\{ \omega_s \omega_S\mid s=1, s=(12), S=\emptyset, \{2\},\{1\}, \{1,2\}\}.$ 
$$\omega_{\emptyset}= \begin{pmatrix}
1& 0& 0 & 0\\
 0&  1 & 0  &  0\\
0 & 0 & 1   &0 \\
0 & 0  &0    &  1
\end{pmatrix}, \omega_{\{ 1\}}= \begin{pmatrix}
0& 0& -1 & 0\\
 0&  1 & 0  &  0\\
1 & 0 &  0    &0 \\
0 & 0  &0    &  1
\end{pmatrix}, \quad\omega_{\{ 2\}}= \begin{pmatrix}
1& 0& 0 & 0\\
 0&  0 & 0  & -1\\
0& 0 &  1    &0 \\
0 & 1  &0    &  0
\end{pmatrix}, \quad\omega_{\{ 1,2\}}= \begin{pmatrix}
0& 0&-1 & 0\\
 0&  0& 0  & -1\\
1& 0 &  0    &0 \\
0 &1 &0   &  0
\end{pmatrix},$$ $$\omega_{(12)}= \begin{pmatrix}
0& 1& 0 & 0\\
 1&  0& 0  &  0\\
0 & 0 &  0    &1 \\
0 & 0  &1    &  0
\end{pmatrix},
 \omega_{(12)}\omega_{\{ 1\}}=\begin{pmatrix}
0& 1& 0 & 0\\
 0&  0 & -1  & 0\\
0& 0 &  0   &1\\
1& 0  &0    &  0
\end{pmatrix}, \omega_{(12)}\omega_{\{ 2\}} =\begin{pmatrix}
0& 0& 0 & -1\\
 1&  0 & 0  & 0\\
0&1 &  0   &0\\
0& 0  &1  &  0
\end{pmatrix}, \omega_{(12)}\omega_{\{ 1, 2\}} =\begin{pmatrix}
0& 0& 0 & -1\\
 0&  0 & -1  & 0\\
0&1 &  0   &0\\
1& 0  &0 &  0
\end{pmatrix}$$
%
Let 
$$X_{\{1\}}= \{\begin{pmatrix}
1& 0& t & 0\\
 0&  1 & 0  &  0\\
0 & 0 & 1   &0 \\
0 & 0  &0    &  1
\end{pmatrix}\mid t\in \Z\}, \qquad X_{\{2\}}= \{\begin{pmatrix}
1& 0& 0 & 0\\
 0&  1 & 0  & t\\
0 & 0 & 1   &0 \\
0 & 0  &0    &  1
\end{pmatrix}\mid t\in \Z\}$$
$$   X_{(12)\{ 1, 2\}}= \{\begin{pmatrix}
1& 0& 0 & t\\
 0&  1 & t  & 0\\
0 & 0 & 1   &0 \\
0 & 0  &0    &  1
\end{pmatrix}\mid t\in \Z\}, \qquad X_{(12)}= \{\begin{pmatrix}
1& t& 0 & 0\\
 0&  1 & 0  & 0\\
0 & 0 & 1   &0 \\
0 & 0  &-t    &  1
\end{pmatrix}\mid t\in \Z\}.$$
 According to \cite{Ka}, the above four sets correspond to the  positive root system of $C_2$ with respect to the above  Bruhat decomposition. For $K=\F_2$ or $\Z$,  let:
 $$\mathfrak{X}^{\ast}: 0 \subseteq \Span_{K}\{ e^{\ast}_1\} \subseteq  \Span_{K}\{e_2; e_1^{\ast}, e_2^{\ast}\} \subseteq \Span_{K}\{e_1, e_2; e_1^{\ast}, e_2^{\ast}\}.$$
 Let $P_{\mathfrak{X}^{\ast}}(K)$  be defined as the stabilizer of the flag  $\mathfrak{X}^{\ast}$. Moreover, $P_{\mathfrak{X}^{\ast}}(K)$ is characterized by the set of matrices given by
 $$P_{\mathfrak{X}^{\ast}}(K)=\left\{ \begin{pmatrix}
x^{-1}& \alpha_1& b & \alpha_2\\
 0&  a_1 &  \beta_{1}  & b_1 \\
0 & 0 &  x  & 0\\
0 &  c_1 & \beta_{2}  & d_1  
\end{pmatrix}\,\middle\vert\,
\begin{array}{l} g=\begin{bmatrix}
a_1& b_1\\
c_1& d_1
\end{bmatrix} \in \SL_{2}(K),\\
 (\beta_1, \beta_2)^T=xgJ(\alpha_1, \alpha_2)^T, \\
 \textrm{ for } J= \begin{bmatrix}
0& 1\\
-1& 0
\end{bmatrix}\in \SL_{2}(K)
\end{array}\right\}.$$
We let:
\begin{itemize}
\item $N_{\mathfrak{X}^{\ast}}(K)=\{    \begin{pmatrix}
1& \alpha_1& b & \alpha_2\\
 0& 1 &  \beta_{1}  & 0 \\
0 & 0 &  1    & 0\\
0 &  0 & \beta_{2}  &1 
\end{pmatrix} \mid  \beta_1=\alpha_2, \beta_2=-\alpha_1\}$.
\item $ M_{\mathfrak{X}^{\ast}}(K)=\{   \begin{pmatrix}
x^{-1}& 0& 0 & 0\\
 0&  a_1 &  0  & b_1 \\
0 & 0 &  x   & 0\\
0 &  c_1 & 0 & d_1  
\end{pmatrix}  \mid  g=\begin{bmatrix}
a_1& b_1\\
c_1& d_1
\end{bmatrix} \in \SL_{2}(K)\}$.
\end{itemize}
Then 
\begin{itemize}
\item $N_{\mathfrak{X}^{\ast}}(\Z)= X_{(12)} X_{\omega_{(12)}\omega_{\{ 1, 2\}}}X_{\{1\}}$ and $M_{\mathfrak{X}^{\ast}}(\Z)\cap N=X_{\{2\}}$.
\end{itemize}
 Let $P_{\mathfrak{X}^{\ast}}=P_{\mathfrak{X}^{\ast}}(\Z)\Gamma(2)$, $N_{\mathfrak{X}^{\ast}}=N_{\mathfrak{X}^{\ast}}(\Z)\Gamma(2)$, $M_{\mathfrak{X}^{\ast}}=M_{\mathfrak{X}^{\ast}}(\Z) \Gamma(2)$.  It can be seen that 
$P_{\mathfrak{X}^{\ast}}(\Z)\to P_{\mathfrak{X}^{\ast}}(\F_2)$, $M_{\mathfrak{X}^{\ast}}(\Z) \to M_{\mathfrak{X}^{\ast}}(\F_2)$, $N_{\mathfrak{X}^{\ast}}(\Z) \to N_{\mathfrak{X}^{\ast}}(\F_2)$ all are surjective.  Let $\mathfrak{W}'_{\{2\}}=\{\omega_{(12)},\omega_{\{1\}}, \omega_{\emptyset}\}, \mathfrak{W}_{\{2\}}=\{ \omega_{\{2\}}, \omega_{\emptyset}\}$. Then $\mathfrak{W}= \mathfrak{W}_{\{2\}}\mathfrak{W}'_{\{2\}}\mathfrak{W}_{\{2\}}$.
 Hence:
$$\Sp_4(\Z)=\cup_{\omega\in \mathfrak{W}'_{\{2\}}} P_{\mathfrak{X}^{\ast}} \omega P_{\mathfrak{X}^{\ast}}=\cup_{\omega\in\mathfrak{W}'_{\{2\}}} \Gamma(2) P_{\mathfrak{X}^{\ast}}(\Z) \omega P_{\mathfrak{X}^{\ast}}(\Z).$$
\subsubsection{}
  \begin{lemma}
   Let $n\in N_{\mathfrak{X}^{\ast}}(\Z)$, $n_1\in X_{\{2\}} $, $d\in D$. Then:
  \begin{itemize}
  \item[(1)] $\omega_{\{2\}} n \omega_{\{2\}}^{-1}\in N_{\mathfrak{X}^{\ast}}(\Z) \subseteq I$.
   \item[(2)] Let  $ g=\omega_{\{2\}} n_1d$, $ d n_1 \omega_{\{2\}}$, $\omega_{\{2\}} n_1 \omega_{\{2\}}^{-1}$. Then $g\in \Gamma(2)X_{\{2\}} \omega d' X_{\{2\}} \Gamma(2) $, for some $d'\in D$, $\omega=\omega_{\{2\}}$ or $\omega_{\emptyset}$.
  \end{itemize}
  \end{lemma}
   \begin{proof}
  1) By the knowledge of linear algebraic group,  $\omega_{\{2\}} n \omega_{\{2\}}^{-1}\in  N_{\mathfrak{X}^{\ast}}(\Q_2) \cap \Sp_{2m}(\Z)=N_{\mathfrak{X}^{\ast}}(\Z) $. 
2) It is essential follows from the case $m=1$. 
\end{proof}
 \begin{lemma}\label{Nwd}
   Let $\omega \in  \mathfrak{W}'_{\{2\}}$, $n\in N_{\mathfrak{X}^{\ast}}(\Z)$, $d\in D$. Then $$\omega n d\in I d'\omega'  N_{\mathfrak{X}^{\ast}}(\Z) \Gamma(2),$$ for some $d'\in D$, $\omega'=\omega$ or $\omega'=\omega_{\emptyset}$.
  \end{lemma}
  \begin{proof}
  \begin{itemize}
  \item If $\omega=1$, then  $nd$ belongs to $ N_{\mathfrak{X}^{\ast}}(\Z) d$.
  \item If  $\omega=\omega_{(12)}$, we let   $X'_{(12)}=X_{\omega_{(12)}\omega_{\{ 1, 2\}}}X_{\{1\}} $ and write $n=n_1n_2$, for $n_2\in X_{(12)}$ and $n_1\in X'_{(12)}$. Then:
  \begin{align*}
\omega_{(12)} nd & =\omega_{(12)} n_1n_2d = [\omega_{(12)} n_1\omega_{(12)}^{-1}]\cdot  [ \omega_{(1,2)}n_2d]\in N \Gamma(2)  d'\omega'  X_{(12)}  \Gamma(2)
\end{align*}
for some $d'\in D$ and $\omega'= \omega_{(1,2)}$ or $ \omega_{\emptyset}$.
  \item If  $\omega=\omega_{\{1\}}$, the proof is similar as above.
 \end{itemize}
  \end{proof}

  \begin{lemma}\label{Nwdg2}
   Let $\omega \in  \mathfrak{W}$, $n\in N$, $d\in D$. Then $$\omega n d\in I d'\omega'I,$$ for some $d'\in D$, $\omega'\in  \mathfrak{W}$.
  \end{lemma}
  \begin{proof}
  Assume $\omega \in  \mathfrak{W}_{\{2\}}\omega_0  \mathfrak{W}_{\{2\}}$, for some  $\omega_0  \in \mathfrak{W}'_{\{2\}}$. Write $n=n_1n_2$, for $n_2\in X_{\{2\}}$, $n_1\in N_{\mathfrak{X}^{\ast}}(\Z)$. We denote the following:
  \begin{itemize}
  \item $  \omega_{\{1\}}'$ as as either $\omega_{\{1\}}$ or $\omega_{\emptyset}$, 
  \item  $  \omega_{\{2\}}'$ as as either $\omega_{\{2\}}$ or $\omega_{\emptyset}$, 
  \item $  \omega_{(12)}'$ as as either $\omega_{(12)}$ or $\omega_{\emptyset}$.
  \end{itemize} 
  \begin{itemize}
  \item[(1)] Assume $\omega_0 =\omega_{\emptyset}$. Then $\omega=\omega_{\{2\}}$, or $\omega=\omega_{\emptyset}$. If $\omega=\omega_{\emptyset}$, then $nd\in IdI$. 
  If $\omega=\omega_{\{2\}}$, then 
  $$\omega nd=\omega_{\{2\}} n_1n_2 d=\omega_{\{2\}} n_1\omega_{\{2\}}^{-1}\omega_{\{2\}} n_2 d \in N \Gamma(2) X_{\{2\}} \omega' d'  X_{\{2\}} \Gamma(2) \subseteq I \omega' d'I.$$
   \item[(2)]  Assume  $\omega_0 =\omega_{(12)}$.  
  \begin{itemize}
\item If  $\omega= \omega_{(12)} $, by the above lemma,  $\omega n d\in I\omega_{(12)}' d' I$.
\item If $\omega= \omega_{(12)}\omega_{\{2\}}$, then
\begin{align*}
\omega n d&= \omega_{(12)} \omega_{\{2\}} n_1n_2d\\
&=\omega_{(12)} [\omega_{\{2\}} n_1 \omega_{\{2\}}^{-1} ]\omega_{\{2\}} n_2d  \\
&\stackrel{n_1'= \omega_{\{2\}} n_1 \omega_{\{2\}}^{-1} \in N_{\mathfrak{X}^{\ast}}(\Z)}{=} \omega_{(12)}n_1' \omega_{\{2\}} n_2d  \\
&\in  I\omega_{(12)}n_1' X_{\{2\}} d_1\omega_{\{2\}}'  I \\
&\subseteq  I\omega_{(12)}n_1'd_1\omega_{\{2\}}'  I \qquad\qquad( \omega_{(12)} X_{\{2\}} \omega_{(12)}^{-1} =X_{\{1\}})\\
&\subseteq  I\omega'_{(12)}d_1'N_{\mathfrak{X}^{\ast}}(\Z)\omega_{\{2\}}'  I\\
&\subseteq I\omega'_{(12)}d'\omega_{\{2\}}' I.
\end{align*}
\item If $\omega=  \omega_{\{2\}} \omega_{(12)}$, then 
\begin{align*}
\omega n d&=\omega_{\{2\}} \omega_{(12)} n_2n_1d\\
&\xlongequal[n_1'\in X_{\{1\}}]{\omega_{(12)} n_2=n_1'\omega_{(12)}} \omega_{\{2\}} n_1'  \omega_{(12)}n_1d \\
& \in \Gamma(2)  N_{\mathfrak{X}^{\ast}}(\Z) \omega_{\{2\}}  N d_1   \omega_{(12)}' I\\
&=\Gamma(2)  N_{\mathfrak{X}^{\ast}}(\Z) \omega_{\{2\}} N_{\mathfrak{X}^{\ast}}(\Z) X_{\{2\}}   d_1  \omega_{(12)}' I\\
&\subseteq I\omega_{\{2\}} X_{\{2\}}   d_1    \omega_{(12)}' I \qquad\qquad  ( \omega_{\{2\}} X_{\{2\}}   d_1   \subseteq  \Gamma(2) X_{\{2\}}   \omega_{\{2\}}' d_2 X_{\{2\}} \Gamma(2)) \\
&\subseteq I  \omega'_{\{2\}} d_2   X_{\{2\}}    \omega_{(12)}' I \qquad\qquad  ( X_{\{2\}}    \omega_{(12)}' \subseteq  \omega_{(12)}' N) \\
&  \subseteq I  \omega'_{\{2\}}  \omega_{(12)}' d'     I.
\end{align*}
\item If $\omega=  \omega_{\{2\}} \omega_{(12)}\omega_{\{2\}}$, then 
\begin{align*}
&\omega n d\\
&=\omega_{\{2\}} \omega_{(12)}\omega_{\{2\}} n_1n_2d\\
&\xlongequal{n_1'=\omega_{\{2\}} n_1\omega_{\{2\}}^{-1}}\omega_{\{2\}} \omega_{(12)} n_1'\omega_{\{2\}}n_2d \qquad\qquad\qquad\qquad\qquad\qquad(n_1'\in N_{\mathfrak{X}^{\ast}}(\Z)) \\
&\xlongequal{\omega_{\{2\}}n_2d=\gamma_1 n_2' d_1\omega_{\{2\}}' i_1  }\omega_{\{2\}} \omega_{(12)} n_1'\gamma_1 n_2' d_1\omega_{\{2\}}' i_1 \qquad\qquad  ( \gamma_1\in \Gamma(2), n_2'\in X_{\{2\}}, i_1\in I)\\
&=\gamma_1'\omega_{\{2\}}n_1'' \omega_{(12)} n_1'  d_1\omega_{\{2\}}' i_1  \qquad\qquad \qquad\qquad (\omega_{(12)} n_2'=n_1'' \omega_{(12)},   n_1''\in X_{\{1\}} ) \\
&\xlongequal[n \in N, n_1'''\in  N_{\mathfrak{X}^{\ast}}(\Z)]{\omega_{(12)}n_1'  d_1=\gamma_3 n d_2\omega_{(12)}' n_1''' \gamma_4 }
\gamma_1'\omega_{\{2\}}n_1'' \gamma_3 n d_2\omega_{(12)}' n_1''' \gamma_4 \omega_{\{2\}}' i_1\\
&\in I \omega_{\{2\}} X_{\{2\}}d_2\omega_{(12)}' N_{\mathfrak{X}^{\ast}}(\Z)\omega_{\{2\}}'I\\
&\subseteq I \omega_{\{2\}} X_{\{2\}}d_2\omega_{(12)}' \omega_{\{2\}}'I\\
&\subseteq I d' \omega_{\{2\}}' X_{\{2\}} \omega_{(12)}' \omega_{\{2\}}'I  \\
& \subseteq I d' \omega_{\{2\}}' X_{\{2\}}  \omega_{\{2\}}'I \cup I d' \omega_{\{2\}}' \omega_{(12)} X_{\{1\}} \omega_{\{2\}}'I\\
&\subseteq I d' \omega_{\{2\}}' X_{\{2\}}  \omega_{\{2\}}'I \cup I d' \omega_{\{2\}}' \omega_{(12)} \omega_{\{2\}}'  I\\
&\subseteq I d'' \omega_{\{2\}}'I \cup I d' \omega_{\{2\}}' \omega_{(12)} \omega_{\{2\}}'  I.
\end{align*}
\end{itemize} 
  \item[(3)]  Assume  $\omega_0 =\omega_{\{1\}}$.  
  \begin{itemize}
\item If  $\omega= \omega_{\{1\}}= \omega_{\{2\}} \omega_{\{1\}}\omega_{\{2\}} $, by the above lemma,  $\omega_1 n d\in I\omega' d' I$, for some $\omega'=\omega_{\{1\}} $ or $\omega_{\emptyset} $.
\item If $\omega= \omega_{\{1\}} \omega_{\{2\}}=\omega_{\{2\}} \omega_{\{1\}}$, then
\begin{align*}
&\omega n d\\
&= \omega_{\{1\}} \omega_{\{2\}} n_1n_2d\\
&=\omega_{\{1\}} [\omega_{\{2\}} n_1 \omega_{\{2\}}^{-1} ]\omega_{\{2\}} n_2d  \\
&\stackrel{n_1'= \omega_{\{2\}} n_1 \omega_{\{2\}}^{-1} \in N_{\mathfrak{X}^{\ast}}(\Z)}{=} \omega_{\{1\}}n_1' \omega_{\{2\}} n_2d  \\
&\in  I\omega_{\{1\}}n_1' X_{\{2\}} d'\omega_{\{2\}}'  I \\
&\subseteq  I\omega_{\{1\}}n_1'd'\omega_{\{2\}}'  I\\
&\subseteq  I\omega'_{\{1\}}d'N_{\mathfrak{X}^{\ast}}(\Z)\omega_{\{2\}}'  I\\
&\subseteq I\omega'_{\{1\}}d'\omega_{\{2\}}' I.
\end{align*}
 \end{itemize} 
  \end{itemize}
  \end{proof}
  
\subsubsection{The final proof of Lemma \ref{Iwah}: $m=2$}\label{thewholeproof}
According to Theorem\ref{SPQdecom} and (\ref{sqspZ}),  for $A\in \Sp_{4}(\Q)$, we write:
$$A=g\mathfrak{d}g', g=\mathfrak{i} \omega_g \mathfrak{n}, g'=\mathfrak{n}' \omega_{g'} \mathfrak{i}',$$
for some $g,g'\in  \Sp_{4}(\Z)$,  $ \mathfrak{i},  \mathfrak{i}'\in I$, $ \mathfrak{n}, \mathfrak{n}'\in N$, $ \mathfrak{d}\in D_0$, and  $\omega_g,\omega_{g'}\in \mathfrak{W}$.   Hence 
$$A= \mathfrak{i} \omega_g \mathfrak{n}\mathfrak{d}\mathfrak{n}' \omega_{g'}  \mathfrak{i}'= \mathfrak{i} \omega_g \mathfrak{n}[\mathfrak{d}\mathfrak{n}' \mathfrak{d}^{-1}] \omega_{g'}[(\omega_{g'}^{-1})\mathfrak{d}\omega_{g'}]  \mathfrak{i}'.$$
Note that $\mathfrak{d}\mathfrak{n}'\mathfrak{d}^{-1}\in N$.  So $ \mathfrak{i} \omega_g \mathfrak{n}[\mathfrak{d}\mathfrak{n}' \mathfrak{d}^{-1}] \omega_{g'} \in \Sp_{4}(\Z)$. Let us write it as: 
$$\mathfrak{i}\omega_g \mathfrak{n}[\mathfrak{d}\mathfrak{n}' \mathfrak{d}^{-1}]\omega_{g'} =i\omega n,$$
for some $i\in I$, $\omega\in \mathfrak{W}$, $n\in N$. Write $d=(\omega_{g'}^{-1})\mathfrak{d}\omega_{g'}\in D$. Then:
$$A=i\omega n d \mathfrak{i}'.$$
So it suffices to show $\omega nd$ has the decomposition, which has been proved in Lemma \ref{Nwdg2}.

\subsection{The proof of Prop.\ref{Iwah}: $m\geq 3$}  \label{sec:induction3} 
The methodology applied here closely follows that of the preceding case.
\subsubsection{} For $K=\F_2$ or $\Z$,  let:
 $$\mathfrak{X}^{\ast}: 0 \subseteq \Span_{K}\{ e^{\ast}_1\} \subseteq  \Span_{K}\{e_2, \cdots, e_m; e_1^{\ast}, e_2^{\ast} \cdots, e^{\ast}_m\} \subseteq U= \Span_{K}\{e_1, \cdots, e_m; e_1^{\ast}, \cdots, e^{\ast}_m\} .$$ 
  Let $P_{\mathfrak{X}^{\ast}}(K)$  be defined as the stabilizer of the flag  $\mathfrak{X}^{\ast}$ in $\Sp(U)$. Under the basis $\{ e_1, e_2, \cdots, e_m; e_2^{\ast}, \cdots, e_m^{\ast}, e_1^{\ast}\}$ of $U$, 
$$P_{\mathfrak{X}^{\ast}}(K)=\{  \begin{pmatrix}
x^{-1} & \alpha & b\\
 & g      &  \beta^T\\
  &      &x
\end{pmatrix} \mid  g\in \Sp_{2m-2}(K), \beta^T=x gJ\alpha^T, \textrm{ for } J= \begin{bmatrix}
0& 1_{m-1}\\
-1_{m-1}& 0
\end{bmatrix}\in \Sp_{2m-2}(K)\};$$
As $K=\F_2$ or $\Z$, the above $x=1$. Let:
\begin{itemize}
\item $N_{\mathfrak{X}^{\ast}}(K)=\{ \begin{pmatrix}
1 & \alpha & b\\
 &1    &  \beta^T\\
  &      &1
\end{pmatrix} \mid   \beta^T= J\alpha^T\}$.
\item $ M_{\mathfrak{X}^{\ast}}(K)=\{  \begin{pmatrix}
1 & &\\
 & g      &  \\
  &      &1
\end{pmatrix}\mid   g\in \Sp_{2m-2}(K)\}$.
\item $N'_{\mathfrak{X}^{\ast}}(K)$: the unipotent subgroup of $M_{\mathfrak{X}^{\ast}}(K)$. 
\item $\mathfrak{W}_{\mathfrak{X}^{\ast}}$: the corresponding subset of the extended affine Wely group of $M_{\mathfrak{X}^{\ast}}(K)$.
\item $\mathfrak{W}'_{\mathfrak{X}^{\ast}}=\{\omega_{(1,2)}, \omega_{\{ 1\}}, \omega_{\emptyset}\}$.
 \item $P_{\mathfrak{X}^{\ast}}=\Gamma(2) P_{\mathfrak{X}^{\ast}}(\Z)$.
\end{itemize}
 Then:
$$\Sp_{2m}(\Z)=\cup_{\omega\in \mathfrak{W}'_{\mathfrak{X}^{\ast}}} P_{\mathfrak{X}^{\ast}} \omega P_{\mathfrak{X}^{\ast}}=\cup_{\omega\in \mathfrak{W}'_{\mathfrak{X}^{\ast}}} \Gamma(2) P_{\mathfrak{X}^{\ast}}(\Z)  \omega P_{\mathfrak{X}^{\ast}}(\Z) .$$
Under the basis $\{ e_1, e_2, \cdots, e_m; e_2^{\ast}, \cdots, e_m^{\ast}, e_1^{\ast}\}$, we have:
\[\omega_{\{ 1\}}=\begin{pmatrix}
&  & -1\\
 & 1_{2m-2}      &  \\
  1&      &
\end{pmatrix}, \omega_{(12)}=\begin{pmatrix}
\begin{bmatrix}
 0           &             1 &           \\
  1           &            0 &            \\
              &             &1_{m-2}
              \end{bmatrix}                         &  \\
             & \begin{bmatrix}
                             &           &  1 \\
                                 &1_{m-2}   &  \\
                           1      &           & \end{bmatrix}
\end{pmatrix}.  \] 
Let:
\begin{itemize}
\item $X_{\{1\}}=\{\begin{pmatrix}
1 & 0& t\\
 &1_{2m-2}    &  0\\
  &      &1
  \end{pmatrix} \mid t\in \Z\}$, $X_{(12)}=\{\begin{pmatrix}
\begin{bmatrix}
 1         &             t &           \\
  0           &            1 &            \\
              &             &1_{m-2}
              \end{bmatrix}                         &  \\
             & \begin{bmatrix}
                            1 &           &  -t \\
                                 &1_{m-2}   &  \\
                            &           & 1\end{bmatrix}
\end{pmatrix}
 \mid t\in \Z\}$
\end{itemize}
\begin{lemma}
   Let $n\in N_{\mathfrak{X}^{\ast}}(\Z)$, $n_1\in N'_{\mathfrak{X}^{\ast}}(\Z) $, $d\in D$, $\omega\in \mathfrak{W}_{\mathfrak{X}^{\ast}}$. Then:
  \begin{itemize}
  \item[(1)] $\omega n \omega^{-1}\in N_{\mathfrak{X}^{\ast}}(\Z) \subseteq I$.
   \item[(2)] Let  $ g=\omega n_1d$, $ d n_1 \omega$, $\omega n_1 \omega^{-1}$. Then $g\in \Gamma(2) N'_{\mathfrak{X}^{\ast}}(\Z)  \omega' d' N'_{\mathfrak{X}^{\ast}}(\Z)  \Gamma(2) $, for some $d'\in D$, $\omega' \in \mathfrak{W}_{\mathfrak{X}^{\ast}}$.
  \end{itemize}
  \end{lemma}
   \begin{proof}
  Part (1) follows from the Bruhat decomposition. Part (2) follows from the induction.
\end{proof}
\begin{lemma}\label{Nwd2m}
   Let $\omega \in   \mathfrak{W}'_{\mathfrak{X}^{\ast}}$, $n\in N_{\mathfrak{X}^{\ast}}(\Z)$, $d\in D$. Then $$\omega n d\in I d'\omega'  N_{\mathfrak{X}^{\ast}}(\Z) \Gamma(2),$$ for some $d'\in D$, $\omega'=\omega$ or $\omega'=\omega_{\emptyset}$.
  \end{lemma}
  \begin{proof}
  \begin{itemize}
  \item If $\omega=1$, then  $nd$ belongs to $ N_{\mathfrak{X}^{\ast}}(\Z) d$.
  \item If  $\omega=\omega_{(12)}$, we let   $X'_{(12)}$ be the complement of $X_{(12)}$ in $N_{\mathfrak{X}^{\ast}}(\Z) $. Write $n=n_1n_2$, for $n_2\in X_{(12)}$ and $n_1\in X'_{(12)}$. Then:
  \begin{align*}
\omega_{(12)} nd & =\omega_{(12)} n_1n_2d = [\omega_{(12)} n_1\omega_{(12)}^{-1}]\cdot  [ \omega_{(1,2)}n_2d]\in N \Gamma(2)  d'\omega'  X_{(12)}  \Gamma(2)
\end{align*}
for some $d'\in D$ and $\omega'= \omega_{(1,2)}$ or $ \omega_{\emptyset}$.
  \item If  $\omega=\omega_{\{1\}}$, the proof is similar as above.
 \end{itemize}
  \end{proof}
 \begin{lemma}\label{Nwdg2m1}
   Let $\omega \in  \mathfrak{W}$, $n\in N$, $d\in D$. Then $$\omega n d\in I d'\omega'I,$$ for some $d'\in D$, $\omega'\in  \mathfrak{W}$.
  \end{lemma}
  \begin{proof}
  Assume $\omega \in   \mathfrak{W}_{\mathfrak{X}^{\ast}}\omega_0   \mathfrak{W}_{\mathfrak{X}^{\ast}}$, for some  $\omega_0  \in  \mathfrak{W}'_{\mathfrak{X}^{\ast}}$. Write $n=n_1n_2$, for $n_2\in N'_{\mathfrak{X}^{\ast}}(\Z)$, $n_1\in N_{\mathfrak{X}^{\ast}}(\Z)$. We denote the following:
  \begin{itemize}
  \item $  \omega_{\{1\}}'$ as as either $\omega_{\{1\}}$ or $\omega_{\emptyset}$, 
  \item $  \omega_{(12)}'$ as as either $\omega_{(12)}$ or $\omega_{\emptyset}$.
  \end{itemize} 
  \begin{itemize}
  \item[(1)] Assume $\omega_0 =\omega_{\emptyset}$. Then $\omega\in   \mathfrak{W}_{\mathfrak{X}^{\ast}} $.  By induction, we have:
  $$\omega nd\in I \omega' d'I.$$
   \item[(2)]  Assume  $\omega_0 =\omega_{(12)}$.  
  \begin{itemize}
\item If  $\omega= \omega_{(12)} $, by the above lemma,  $\omega n d\in I\omega_{(12)}' d' I$.
\item If $\omega= \omega_{(12)}\omega_{2}$, for $\omega_{2}\in \mathfrak{W}_{\mathfrak{X}^{\ast}}$,   then
\begin{align*}
\omega n d&= \omega_{(12)} \omega_{2} n_1n_2d\\
&=\omega_{(12)} [\omega_{2} n_1 \omega_{2}^{-1}  ]\omega_{2} n_2d  \\
&\stackrel{n_1'= \omega_{2} n_1 \omega_{2}^{-1}  \in N_{\mathfrak{X}^{\ast}}(\Z)}{=} \omega_{(12)}n_1' \omega_{2} n_2d  \\
&\in  I\omega_{(12)}n_1' N'_{\mathfrak{X}^{\ast}}(\Z) d_1\omega_{2}'  I \qquad\qquad( \omega_{2}'\in \mathfrak{W}_{\mathfrak{X}^{\ast}})  \\
&\subseteq  I\omega_{(12)}n_1'd_1\omega_{2}'  I \qquad\qquad\qquad( \omega_{(12)} N'_{\mathfrak{X}^{\ast}}(\Z) \omega_{(12)}^{-1} \subseteq  N)\\
&\subseteq  I\omega'_{(12)}d'N_{\mathfrak{X}^{\ast}}(\Z)\omega_{2}'  I \qquad\qquad\qquad( \textrm{ Lemma \ref{Nwd2m}})\\
&\subseteq I\omega'_{(12)}d'\omega_{2}' I.
\end{align*}
\item If $\omega=  \omega_{2} \omega_{(12)}$,  for $\omega_{2}\in \mathfrak{W}_{\mathfrak{X}^{\ast}}$,  then 
\begin{align*}
\omega n d&=\omega_{2} \omega_{(12)} n_2n_1d\\
&\xlongequal[n'\in N]{\omega_{(12)} n_2=n'\omega_{(12)}} \omega_{2} n'  \omega_{(12)}n_1d \\
& \in \Gamma(2)  \omega_{2}  N d_1   \omega_{(12)}' I\\
& = \Gamma(2)  N_{\mathfrak{X}^{\ast}}(\Z) \omega_{2}  N'_{\mathfrak{X}^{\ast}}(\Z) d_1   \omega_{(12)}' I\\
&\subseteq I\omega_{2} d_2N'_{\mathfrak{X}^{\ast}}(\Z)      \omega_{(12)}' I \qquad\qquad\qquad\qquad  ( \omega_{2} N'_{\mathfrak{X}^{\ast}}(\Z)  d_1   \subseteq  \Gamma(2) N'_{\mathfrak{X}^{\ast}}(\Z)  \omega_{2}' d_2 N'_{\mathfrak{X}^{\ast}}(\Z) \Gamma(2)) \\
&  \subseteq I  \omega_{2}' d_2      \omega_{(12)}' I \qquad\qquad  \qquad\qquad\qquad ( N'_{\mathfrak{X}^{\ast}}(\Z)   \omega_{(12)}' \subseteq  \omega_{(12)}' N) \\
&=I  \omega_{2}' \omega_{(12)}' d' I. 
\end{align*}
\item If $\omega=  \omega'_{2} \omega_{(12)}\omega_{2}$, for $\omega_{2}, \omega_{2}'\in \mathfrak{W}_{\mathfrak{X}^{\ast}}$,  then 
\begin{align*}
&\omega n d\\
&=\omega_{2}' \omega_{(12)}\omega_{2}n d\\
&\subseteq  \omega_{2}' I  \omega_{2}'' d_1      \omega_{(12)}' I \qquad\qquad  ( \textrm{ the above case})\\
&\subseteq I \omega_{2}'  N'_{\mathfrak{X}^{\ast}}(\Z)   \omega_{2}'' d_1      \omega_{(12)}' I  \qquad\qquad  (   \omega_{2}' N_{\mathfrak{X}^{\ast}}(\Z) (\omega_{2}')^{-1} \subseteq N_{\mathfrak{X}^{\ast}}(\Z) )\\
&\subseteq  I \omega_{2}'''   d'_1    N'_{\mathfrak{X}^{\ast}}(\Z)   \omega_{(12)}' I \qquad\qquad  ( \textrm{ the induction})\\
&\subseteq  I \omega_{2}'''   d'_1      \omega_{(12)}' I \qquad\qquad ( (\omega_{(12)}')^{-1}  N'_{\mathfrak{X}^{\ast}}(\Z)  \omega_{(12)}' \subseteq N )\\
&\subseteq  I \omega'   d'   I.
\end{align*}
\end{itemize} 
  \item[(3)]  Assume  $\omega_0 =\omega_{\{1\}}$. The proof is similar as the case $\omega_{(12)}$.  
  \end{itemize}
  \end{proof}
\subsubsection{} According to Theorem\ref{SPQdecom} and (\ref{sqspZ}),   for $A\in \Sp_{2m}(\Q)$, we write:
$$A=g\mathfrak{d}g', g=\mathfrak{i} \omega_g \mathfrak{n}, g'=\mathfrak{n}' \omega_{g'} \mathfrak{i}',$$
for some $g,g'\in  \Sp_{2m}(\Z)$,  $ \mathfrak{i},  \mathfrak{i}'\in I$, $ \mathfrak{n}, \mathfrak{n}'\in N$, $ \mathfrak{d}\in D_0$, and  $\omega_g,\omega_{g'}\in \mathfrak{W}$.   Hence 
$$A= \mathfrak{i} \omega_g \mathfrak{n}\mathfrak{d}\mathfrak{n}' \omega_{g'}  \mathfrak{i}'= \mathfrak{i} \omega_g \mathfrak{n}[\mathfrak{d}\mathfrak{n}' \mathfrak{d}^{-1}] \omega_{g'}[(\omega_{g'}^{-1})\mathfrak{d}\omega_{g'}]  \mathfrak{i}'.$$
Note that $\mathfrak{d}\mathfrak{n}'\mathfrak{d}^{-1}\in N$.  So $ \mathfrak{i} \omega_g \mathfrak{n}[\mathfrak{d}\mathfrak{n}' \mathfrak{d}^{-1}] \omega_{g'} \in \Sp_{2m}(\Z)$. Let us write it as: 
$$\mathfrak{i}\omega_g \mathfrak{n}[\mathfrak{d}\mathfrak{n}' \mathfrak{d}^{-1}]\omega_{g'} =i\omega n,$$
for some $i\in I$, $\omega\in \mathfrak{W}$, $n\in N$. Write $d=(\omega_{g'}^{-1})\mathfrak{d}\omega_{g'}\in D$. Then:
$$A=i\omega n d \mathfrak{i}'.$$ 
So it suffices to show that  $\omega nd$ has the Iwahori decomposition, which has been proved in Lemma \ref{Nwdg2m1}.

\end{document}